\theoremstyle{plain}% default 
\newtheorem{theorem}{Theorem}[section] 
\newtheorem{lem}[theorem]{Lemma}
\newtheorem{prop}[theorem]{Proposition}
\newtheorem{defn}[theorem]{Definition}
\newtheorem{cor}[theorem]{Corollary}
\newtheorem{exmp}[theorem]{Example}
\newtheorem{fact}[theorem]{Fact}
\newtheorem{rmk}[theorem]{Remark}
\newcommand{\leqnomode}{\tagsleft@true}
\newcommand{\reqnomode}{\tagsleft@false}
\newcolumntype{C}[1]{>{\centering\arraybackslash}p{#1}}
\newcolumntype{L}[1]{>{\arraybackslash}p{#1}}
\newcolumntype{R}[1]{>\hfill p{#1}}
\DeclareSymbolFont{letters}{OML}{cmm}{m}{it}
\DeclareMathAlphabet{\mathcal}{OMS}{cmsy}{m}{n}
\newcommand{\FOT}{\ensuremath{\mathbf{FOT}}\xspace}
\newcommand{\FOTd}{\ensuremath{\mathbf{FOT}^\downarrow}\xspace}
\newcommand{\D}{\ensuremath{\mathbf{D}}\xspace}
\newcommand{\FO}{\ensuremath{\mathbf{FO}}\xspace}
\newcommand{\Ind}{\ensuremath{\mathbf{Ind}}\xspace}
\newcommand{\ESO}{\ensuremath{\mathbf{ESO}}\xspace}
\newcommand{\FOp}{\ensuremath{\mathbf{FO}(\forallo)}\xspace}
\newcommand{\LL}{\ensuremath{\mathcal{L}}\xspace}
\newcommand{\dep}{\ensuremath{\mathop{=\!}}\xspace}
\newcommand{\con}{\ensuremath{\mathop{\dep}}\xspace}
\newcommand{\existso}{\ensuremath{\exists^1}\xspace}
\newcommand{\forallo}{\ensuremath{\forall^1}\xspace}
\newcommand{\wcn}{\ensuremath{\mathop{\dot\sim}}\xspace}
\newcommand{\cimp}{\ensuremath{\rightarrowtriangle}\xspace}
\newcommand{\cbimp}{\ensuremath{\leftrightarrowtriangle}\xspace}
\newcommand{\dblsetminus}{\mathbin{{\setminus}\mspace{-5mu}{\setminus}}}
\newcommand{\vvee}{\raisebox{1pt}{\ensuremath{\,\mathop{\mathsmaller{\mathsmaller{\dblsetminus\hspace{-0.23ex}/}}}\,}}}
\newcommand{\foralloi}{\ensuremath{$\forallo$\!\!\textsf{I}}\xspace}
\newcommand{\foralloe}{\ensuremath{$\forallo$\!\!\textsf{E}}\xspace}
\newcommand{\existsoi}{\ensuremath{$\existso$\!\!\textsf{I}}\xspace}
\newcommand{\existsoe}{\ensuremath{$\existso$\!\!\textsf{E}}\xspace}
\newcommand{\vveei}{\ensuremath{$\vvee$\!\textsf{I}}\xspace}
\newcommand{\vveee}{\ensuremath{$\vvee$\!\textsf{E}}\xspace}
\newcommand{\incpro}{\ensuremath{\subseteq\!\textsf{Pro}}\xspace}
\newcommand{\inccmp}{\ensuremath{\subseteq\!\textsf{Cmp}}\xspace}
\newcommand{\inctr}{\ensuremath{\subseteq\!\textsf{Tr}}\xspace}
\newcommand{\incid}{\ensuremath{\subseteq\!\textsf{Id}}\xspace}
\newcommand{\consi}{\ensuremath{\mathsf{con}\textsf{I}}\xspace}
\newcommand{\incwi}{\ensuremath{\subseteq\!\textsf{wI}_R}\xspace}
\newcommand{\incwe}{\ensuremath{\cn\!\subseteq\!\textsf{E}}\xspace}
\newcommand{\foawe}{\ensuremath{\cn\!\lambda\textsf{E}}\xspace}
\newcommand{\incwcon}{\ensuremath{\subseteq\!\textsf{W}_{\textsf{con}}}\xspace}
\newcommand{\incweo}{\ensuremath{\subseteq\!\textsf{W}_{\existso}}\xspace}
\newcommand{\raa}{\ensuremath{\textsf{RAA}}\xspace}
\title{Complete logics for elementary team properties}
\author{Juha Kontinen}  
\address{PL 68 (Pietari Kalmin katu 5)\\
00014 University of Helsinki, Finland
} 
\email{juha.kontinen@helsinki.fi}
\author{Fan Yang}
\address{PL 68 (Pietari Kalmin katu 5)\\
00014 University of Helsinki, Finland
} 
\email{fan.yang.c@gmail.com}
\thanks{This document and all that is needed to prepare
documents for ASL publications are posted in the ASL Typesetting
Office Website, \texttt{http:\weakslash /www.math\weakdot
ucla\weakdot edu/\urltilde asl/asltex}. August 20, 2000.}
\let\cn=\pkg
\begin{document}

\maketitle

\begin{abstract}
In this paper, we introduce a logic based on team semantics, called \FOT, whose expressive power is elementary, i.e., coincides with first-order logic both on the level of sentences and (possibly open) formulas, and we also show that a sublogic of \FOT, called \FOTd, captures exactly downward closed elementary (or first-order) team properties. We axiomatize completely the logic \FOT, and also extend the known partial axiomatization of dependence logic to dependence logic enriched with the logical constants in $\FOTd$.
%apply our results to the problem of finding complete axiomatizations for  fragments of dependence and independence logic.  

\noindent \textbf{Keywords.} Dependence logic, Team semantics,  First-order logic, Axiomatization

\noindent \textbf{MSC 2020:} 03B60
\end{abstract}

\section{Introduction}

In this paper, we introduce logics based on team semantics for characterizing elementary (or first-order) team properties, and we also study the axiomatization problem of these logics.   

Team semantics is a semantical framework originally introduced by Hodges \cite{Hodges1997a}, %%%for  the study of various dependence and independence concepts
  and later systematically developed by V\"a\"an\"anen with the introduction of {\em dependence logic} \cite{Van07dl}, which extends first-order logic with {\em dependence atoms}. Other
notable logics based on team semantics %obtained by extending first-order logic with different dependency atoms 
include  {\em independence logic} introduced by Gr\"adel and V\"a\"an\"anen \cite{D_Ind_GV} (which is first-order logic extended with independence atoms), and {\em inclusion logic} introduced by Galliani \cite{Pietro_I/E} (which is first-order logic extended with inclusion atoms).
%Through the further development of {\em independence logic} by Gr\"adel and V\"a\"an\"anen \cite{D_Ind_GV}, and {\em inclusion and exclusion logic} by Galliani \cite{Pietro_I/E}, team semantics has proven itself a rich and versatile tool of independent interest.
%The richness and versatility of  teams semantics as a novel semantical framework of independence interest have been further demonstrated . 
In team semantics formulas are evaluated in a model over {\em sets} of assignments for the free variables (called {\em teams}) rather than single assignments as in the usual first-order logic. Teams $X$ with the domain $\{v_1,\dots,v_k\}$  are essentially $k$-ary relations $rel(X)=\{(s(v_1),\dots,s(v_k))\mid s\in X\}$, and thus open formulas define team properties. In general, knowing the expressive power of a logic for sentences (with no free variables) does not automatically give a characterization for the expressive  power of open formulas of the same logic.
%This change has the effect that knowing the expressive power of a logic for sentences (with no free variables) does not automatically give a characterization for the expressive  power of open formulas of the same logic. Especially, as teams over $k$ free variables are essentially $k$-ary relations, open formulas define team properties
Such a peculiar phenomenon has sparked several studies on the expressive power of logics based on team semantics. In particular, while it follows straightforwardly from the earlier known results of  Henkin, Enderton, Walkoe, and Hodges \cite{henkin61,Enderton1970,Walkoe1970,Hodges1997b} that dependence logic (\D) and independence logic (\Ind) are both equivalent to existential second-order logic ($\ESO$) on the level of sentences, it turns out that open formulas of \D have different expressive power from open formulas of \Ind: The latter characterize all $\ESO$ team properties \cite{Pietro_I/E}, whereas the former characterize only downward closed $\ESO$ team properties \cite{KontVan09}. Along the same line, a later breakthrough showed that inclusion logic corresponds, over sentences, to {\em positive greatest fixed-point logic} \cite{inclusion_logic_GH}, which is  strictly more expressive than first-order logic as well.  In this paper we define  a team-based logic, called \FOT,  whose expressive power coincides with first-order logic (\FO) both on the level of sentences and  open formulas, in the sense that \FOT-formulas characterize (modulo the empty team) exactly team properties definable by first-order sentences with an extra relation symbol $R$. 
To the best of the authors' knowledge, no such logic has been defined previously. 

In related previous work, it was shown in \cite{Pietro_thesis,Galliani2016,Luck18} that first-order logic  extended with {\em constancy atoms} $\dep(x)$ and \FO extended with classical negation $\sim$ are both equivalent to \FO over sentences, whereas on the level of formulas they are both strictly less expressive than \FO, and thus fail to capture all elementary (or first-order) team properties. It was also illustrated in \cite{Coherence_JK} that a certain simple disjunction  of dependence atoms already defines an NP-complete team property. Therefore, any logic based on team semantics having  the disjunction $\vee$ lifted %inherited 
from first-order logic and in which dependence atoms are expressible will be able to express NP-complete team properties, indicating that $\vee$ is a too expressive connective to be added to  \FOT. %in which dependence atoms should be expressible.  
The logic \FOT  we define in this paper has weaker version of disjunction $\vvee$ and classical negation $\wcn$ as well as weaker quantifiers $\forallo,\existso$. We prove, in  \Cref{sec:fot-fotd-expr}, that our logic \FOT captures elementary team properties (modulo the empty team)
%by going through a similar \ESO translation to those in the previous work, 
 and we also show, as by applying Lyndon's Interpolation Theorem of first-order logic,  that a sublogic of \FOT, denoted as \FOTd, captures exactly  downward closed elementary team properties (modulo the empty team).  
 
 From these results it follows immediately that \FOT and \FOTd are compact with respect to sentences (with no free variables). In fact, it also follows from \cite[Theorem 6.4]{Van07dl} that any team-based first-order logic  that is expressively less than or equal to \ESO is compact with respect to sentences.  In \Cref{compact} of  \Cref{sec:fot-fotd-expr} we provide a detailed proof of the compactness of such logics with respect to arbitrary (possibly open) formulas, %(assuming that the set of variables is countable), 
 which is missing in the literature. For team-based first-order logics the compactness with respect to formulas is not a trivial consequence of the compactness with respect to sentences, as free variables in team-based logics are interpreted over teams, which are  essentially relations instead of single values. Our proof assumes that the set of all free variables occurring in the formulas in question is countable. It is unclear how to treat the uncountable case.  %\todof{Mention Davide and Joni's recent proof for the general case, or perhaps not?}%\todof{added this paragraph, perhaps too long (and too honest)...}

%In the second part of the paper, we axiomatize completely the (essentially first-order) logic \FOT, and we also apply our results to the problem of finding complete axiomatizations for larger and larger fragments of (the essentially existential second-order) dependence and independence logic by axiomatizing consequences in the language of \FOTd in dependence logic enriched with the logical constants in $\FOTd$. 

In the second part of this paper we study the axiomatization problem of our logics \FOT and \FOTd. In  \Cref{sec:FOT} we %axiomatize completely the (essentially first-order)  logic \FOT. We
  introduce a sound and complete system of natural deduction for \FOT that on one hand behaves like the system of \FO to a certain extent (in the sense of Lemma \ref{FO2FOT_eqiv}), while on the other hand incorporates natural and interesting rules for inclusion atoms and their interaction with the weak logical constants. %Since none of dependence logic, independence logic and inclusion logic is effectively axiomatizable in full, our axiomatization for the full logic \FOT is of particular interest in the study of the axiomatization problem of logics based on team semantics.
It is also worth mentioning that inquisitive first-order logic (\textsf{InqBQ}) \cite{Ciardelli_PhD} adopts a similar (though technically different) type of team semantics. Our weak quantifiers $\forallo$ and $\existso$ are essentially the same as the quantifiers in  \textsf{InqBQ}, and our rules for these quantifiers  are also closely related to those ones in the deduction system of \textsf{InqBQ}, which is shown in \cite{Grilletti2021} to be complete for the classical antecedent fragment of \textsf{InqBQ}.
  
%\todof{fragments are axiomatized, rules are similar, but we have rules for atoms of dependencies}

In \Cref{sec:fotd}, we apply our results to the problem of finding  axiomatizations for larger and larger fragments of  dependence logic and its variants by extending  the known partial axiomatization of dependence logic  to $\D$ enriched with the logical constants in $\FOTd$ (denoted as $\D\oplus\FOTd$), which, by our result in the first part of this paper, is expressively equivalent to \D. While \D is not effectively axiomatizable (for it is equivalent to \ESO), a complete axiomatization for first-order consequences of \D-sentences has been given in \cite{Axiom_fo_d_KV}. More precisely,   a system of natural deduction for dependence logic was introduced in \cite{Axiom_fo_d_KV}
for which the completeness theorem\vspace{-2pt}
\begin{equation}\label{cmp}
\Gamma\vdash\theta\iff \Gamma\models \theta\vspace{-2pt}
\end{equation}
holds whenever $\Gamma$ is a set of \D-sentences (with no free variables) and $\theta$ is an \FO-sentence (with no free variables). This result has been, subsequently, generalized to, e.g.,  allow also open formulas  \cite{Kontinen15foc}, %and treats also 
independence logic \cite{Hannula_fo_ind_13} and inclusion logic \cite{YangInc20}, %or 
and extensions of dependence logic by generalized quantifiers  \cite{DBLP:journals/jcss/EngstromKV17}.
%Among the generalizations of this result (which, e.g.,  allows also open formulas  \cite{Kontinen15foc}, and treats also independence logic \cite{Hannula_fo_ind_13} or dependence logic with generalized quantifiers  \cite{DBLP:journals/jcss/EngstromKV17}), the 
A recent new generalization given in  \cite{Yang_neg} extends the known systems for \D and \Ind to cover the case when $\theta$ in (\ref{cmp}) is not necessarily an \FO-formula but merely a formula defining a first-order team property.
%formula whose \ESO-translation is equivalent to a first-order sentence. 
However, since the problem of whether a \D- or \Ind-formula defines a first-order team property is undecidable, the extension 
of \cite{Yang_neg}  is not  effectively represented. 
%it is not possible to give a syntactic characterization for the $\theta$'s for which the completeness theorem (\ref{cmp}) applies. 
Motivated by  \cite{Yang_neg}, we give an  effective extension of \eqref{cmp}, in which $\Gamma\cup\{\theta\}$ is a set of $\D\oplus\FOTd$-formulas and $\theta$ belongs to certain compositionally defined fragment of $\D\oplus\FOTd$. We also conclude, as a corollary, that the system for \D introduced in \cite{Axiom_fo_d_KV} is also already complete with respect to first-order consequences over arbitrary (possibly open) formulas. This answers  an open problem in \cite{Axiom_fo_d_KV} in the affirmative, which is contrary to the folklore belief that the system of \cite{Axiom_fo_d_KV} is too weak to be complete over arbitrary formulas and additional rules have to be added (as done in, e.g., \cite{Kontinen15foc,Yang_neg}).
% $\theta$ is a formula in certain compositionally defined fragment of $\D\oplus\FOTd$ and  $\Gamma$ is a set of $\D\oplus\FOTd$-formulas.
% sentence of our  compositionally defined logic $\FOTd$ 
%and $\Gamma$ is a set of $\D\oplus\FOTd$-sentences. %is syntactically defined and by our results covers downward closed team properties. 
%Finding an effective  axiomatization in the more general case of \Ind enriched with the logical constants in \FOT is left as future work.\todof{need to be updated}

%In recent years, logics based on team semantics have found applications in various other fields, such as database theory \cite{HannulaK16}, formal semantics of natural language \cite{Ciardelli2015,CiardelliIemhoffYang19}, Bayesian statistics \cite{JELIA,CORANDER2019}, social choise theory \cite{PacuitYang2016}, and quantum information theory \cite{PV15}. 

%Identifying axiomatizable fragments of dependence logic and its variants, and providing concrete axiomatizations for these fragments 

Apart from theoretical significance, %by identifying axiomatizable logics based on team semantics or their fragments, and providing concrete axiomatizations for them, 
our results also provide new logical tools for applications of team-based logics in other related areas; such applications have been studied in recent years, e.g., in database theory \cite{HannulaK16}, formal semantics of natural language \cite{Ciardelli2015,CiardelliIemhoffYang19}, Bayesian statistics \cite{JELIA,CORANDER2019}, social choice \cite{PacuitYang2016}, and quantum information theory \cite{PV15}. %In particular, inquisitive logic \cite{InquiLog} adopts, independently, also the team semantics  to provide formal semantics of questions in natural language, and  first-order  inquisitive logic can be viewed as a variant of team-based logic with the weak disjunction $\vvee$ and the weak quantifiers $\forallo,\existso$. The study we provide in this paper for the expressive power and axiomatization problem of these  weaker logical constants will potentially help clarifying properties of first-order inquisitive logic.
%is defined with the same weak logical constants $\vvee,\forallo,\existso$ (in a slightly different setting). 
In Section \ref{sec:app}, we show some  applications of our systems for \FOT and $\D\oplus\FOTd$, including derivations of Armstrong's Axioms  \cite{Armstrong_Axioms} from database theory, and certain dependency interactions involved in Arrow's Theorem  \cite{Arrow50} in social choice.
%\todof{update}
%In the recent formalization of Arrow's Theorem \cite{Arrow50} in social choice   in independence logic \cite{PacuitYang2016}, the weak disjunction $\vvee$ plays a natural role, and the completeness theorem of the type (\ref{cmp}) was crucial for deriving Arrow's Theorem formally. The axiomatization results we obtained in this paper are then expected to contribute to the formal analysis of Arrow's Theorem and other impossibility theorems in social choice.

 An earlier version of this paper has already appeared as \cite{KontinenYang19}. %\todof{or: This paper is a significantly extended version of \cite{KontinenYang19}}

\section{Preliminaries}\label{sec:prel}

%In this section, we introduce the logics we consider in this paper.

We consider first-order vocabularies $\LL$ with the equality symbol $=$. Fix an infinite set \textsf{Var} of first-order variables, and denote its elements by $u,v,x,y,z,\dots$ (with or without subscripts). %\todof{don't need to fix variables, not horribly important} 
An $\LL$-term $t$ is defined inductively as usual, 
and well-formed formulas of {\em First-order Logic} (\FO)  are defined by the grammar:%\vspace{-4pt}
\[
\alpha::= t_1=t_2\mid Rt_1\dots t_k\mid \neg\alpha \mid  (\alpha\wedge\alpha)\mid(\alpha\vee\alpha)\mid \exists x \alpha\mid \forall x\alpha.%\vspace{-4pt}
\]
Throughout the paper, we reserve the first Greek letters $\alpha,\beta,\gamma,\dots$ for first-order formulas. As usual, define $\alpha\to\beta:=\neg\alpha\vee\beta$, $\top:=\forall x(x=x)$ and $\bot:= \exists x(x\neq x)$. We use the letters  $\mathsf{v},\mathsf{x},\mathsf{y},\mathsf{z},\dots$ in sans-serif face  to stand for sequences of variables (of certain length), and sequences of terms are denoted as $\mathsf{t},\mathsf{t}',\dots$ A block of universal quantifiers $\forall x_1\dots\forall x_n$ is sometimes abbreviated as $\forall \mathsf{x}$; similarly for $\exists \mathsf{x}$. We write  $\textsf{Fv}(\alpha)$ for the set of free variables of $\alpha$. Write $\alpha(\mathsf{x})$ to indicate that the free variables of $\alpha$ are among $\mathsf{x}=\langle x_1,\dots,x_n\rangle$.
A formula with no free variables is called a {\em sentence}. We write $\alpha(\mathsf{t}/\mathsf{x})$ for the formula obtained by substituting uniformly a sequence $\mathsf{t}$ of terms  for $\mathsf{x}$ in $\alpha$, where we always assume that each $t_i$ is free for $x_i$.  %\todof{added}

For any $\LL$-model $M$, we use the same notation $M$ also to denote its domain, which is assumed to always have at least two elements.
We write $\LL(R)$ for the vocabulary  expanded  from $\LL$ by adding a fresh relation symbol $R$, and write $(M,R^M)$ for the $\LL(R)$-expansion of $M$ in which the $k$-ary relation symbol $R$ is interpreted as $R^M\subseteq M^k$. 
We sometimes write $\alpha(R)$ to emphasize that the formula $\alpha$ is in the vocabulary  $\LL(R)$ for some $\LL$.

 An assignment of an $\LL$-model $M$ for a set $V\subseteq \textsf{Var}$ of variables is a function $s:V\to M$. For any element $a\in M$, $s(a/x)$ is the assignment defined as $s(a/x)(y)=a$ if $y=x$, and $s(a/x)(y)=s(y)$ otherwise.
%
%\[s(a/x)(y)=\begin{cases}
%a&\text{ if }y=x;\\
%s(y)&\text{otherwise}.
%\end{cases}\]
We also write $s(\mathsf{a}/\mathsf{x})$ for $s(a_1/x_1)\dots(a_n/x_n)$.
%the sequence $\langle s(a_1/x_1),\dots,s(a_n/x_n)\rangle$. 
The interpretation of an \LL-term $t$ under an assignment $s$ of $M$, denoted by $s(t^M)$,  is defined as usual. For a sequence $\mathsf{t}=\langle t_1,\dots,t_n\rangle$ of terms, we write $s(\mathsf{t}^M)$ for $\langle s(t_1^M),\dots,s(t_n^M)\rangle$; similarly for $s(\mathsf{x})$ when $\mathsf{x}$ is a sequence of variables. 

%Given an $\LL(R)$-model $(M,R^M)$ and an assignment $s:\textsf{Var}\to M$, %we define 
%\((M,R^M)\models_sRx_1\dots x_k~\text{ iff }~(s(x_1),\dots,s(x_k))\in R^M.\)

We assume that the reader is familiar with the usual Tarskian semantics of first-order logic. In this paper, we consider  logics with {\em team semantics}. A {\em team} $X$ of $M$ over a set $V$ %\subseteq \textsf{Var}$ 
of variables is a set of assignments $s:V\to M$, where $V$ is called the domain of $X$, denoted  $\textsf{dom}(X)$. In particular, the empty set $\emptyset$ is a team, and the singleton $\{\emptyset\}$ of the empty assignment $\emptyset$ is a team.
%
%%\todof{Maybe add the team semantics of \FO, and compare the connectives/quantifiers inherited from \FO with the weak connectives/quantifiers.}\todof{Also mention here the dependence/independence/inclusion extensions of \FO, and the team properties they define.}\todof{Need a name for the disjunction $\vvee$. Don't like Boolean/classical/intuitionistic/inquisitive disjunction, prefer a ``neutral" name. How about ``team disjunction"?}
%
%%\todof{Introduce somewhere the conventions for sequences: $\mathsf{x}=\langle x_1,\dots,x_n\rangle$, $s(\mathsf{x})$, $|\mathsf{x}|$, and substitution $\phi(\mathsf{t}/\mathsf{x})$} \todof{For equalities, we write $\mathsf{x}=\mathsf{y}$ for $\bigwedge_{i=1}^k(x_i=y_i)$}
%
%%\todof{union closure, flatness of first-order formulas need to be introduced}
%
%%The team semantics of first-order formulas is defined as: \todof{I added this, it is needed for the paper, will improve the presentation}
%
Given a first-order formula $\alpha$,  any $\LL$-model $M$ and team $X$ over $V\supseteq \textsf{Fv}(\alpha)$,  the {\em satisfaction} relation $M\models_X\alpha$ is defined inductively as follows:
\begin{itemize}
\item $M\models_X \lambda$ for $\lambda$ a first-order atom ~~ iff ~~ for all $s\in X$, $M\models_s\lambda$ in the usual  sense.
\item $M\models_X \neg\alpha$  ~~iff~~ for all $s\in X$, $M\not\models_{\{s\}}\alpha$. %\todof{not sure whether to remove brackets or not}
  \item $M\models_X\alpha\wedge\beta$ ~~iff~~ $M\models_X\alpha$ and $M\models_X\beta$.
   \item $M\models_X\alpha\vee\beta$ ~~iff~~ there are $Y,Z\subseteq X$ such that $X=Y\cup Z$, $M\models_Y\alpha$ and $M\models_Z\beta$.
       \item $M\models_X\exists x\alpha$ ~~iff~~ $M\models_{X(F/x)}\alpha$ for some  $F:X\to\wp^+ (M)$, where  $\wp^+(M)=\wp(M)\setminus\{\emptyset\}$ and
    \(X(F/x)=\{s(a/x)\mid s\in X,~a\in F(s)\}.\)
  \item $M\models_X\forall x\alpha$ ~~iff~~ $M\models_{X(M/x)}\alpha$, where 
  \(X(M/x)=\{s(a/x)\mid s\in X,~a\in M\}.\)
\end{itemize}
If $\phi$ is a sentence, we write $M\models\phi$  if $M\models_{\{\emptyset\}}\phi$, where (and hereafter) we always assume that $M$ is in the appropriate vocabulary. For any set $\Gamma$ of formulas, we write $M\models_X\Gamma$ if $M\models_X\phi$ holds for all $\phi\in \Gamma$, where (and hereafter) we always assume that the team $X$ over $M$ has domain $\textsf{dom}(X)\supseteq \textsf{Fv}(\phi)$.  We write $\Gamma\models\phi$ if for all models $M$ and teams $X$, $M\models_X\Gamma$ implies $M\models_X\phi$. If $\emptyset\models\phi$, we write $\models\phi$ and say that $\phi$ is {\em valid}. Write simply $\phi\models\psi$ for $\{\phi\}\models\psi$. If both $\phi\models\psi$ and $\psi\models\phi$, write $\phi\equiv\psi$ and say that $\phi$ and $\psi$ are {\em(semantically or logically) equivalent}.

It is easy to verify that first-order formulas have the following properties: Let $M$ be a model, $X,Y$ teams of $M$, $\{X_i\mid i\in I\}$ a nonempty collection of teams of $M$, and $X\upharpoonright V:=\{s\upharpoonright V\mid s\in X\}$ for any $V\subseteq \textsf{dom}(X)$.
\begin{description}
\item[Locality] If $X\upharpoonright \textsf{Fv}(\phi)=Y\upharpoonright \textsf{Fv}(\phi)$, then %\todof{this is relevant because $\existso$ has ``strict semantics"}
\(M\models_X\phi \iff M\models_Y\phi.\)
\item[Empty team property] $M\models_\emptyset \phi$.
\item[Downward closure]
\([\,M\models_X\phi\text{ and }Y\subseteq X\,]\Longrightarrow M\models_Y\phi.\)
\item[Union closure]
\(M\models_{X_i}\phi\text{ for all }i\in I\Longrightarrow M\models_{\bigcup_{i\in I}X_i}\phi.\)
%\item[Flatness property:]
%\(M\models_X\alpha\iff M\models_{\{s\}}\alpha\) for all $s\in X$.
\end{description}
The empty team property, downward closure property  and union closure property together are equivalent to %\todof{correction}\todof{check the EMPTY SET everywhere!!!!} %{\em flatness property}:
\begin{description}
\item[Flatness property]
\(M\models_X\phi\iff M\models_{\{s\}}\phi\) for all $s\in X$.
\end{description}
%By an easy inductive argument, the flatness property of first-order formulas $\alpha$ can also be formulated as:
%\[M\models_X\alpha\iff M\models_{s}\alpha \text{ for all }s\in X.\]
%\textcolor{blue}{The flatness property of first-order formulas (together with Lemma \ref{fo=fot} below) }

Most prominent team-based logics obtained by extending first-order logic with {\em atoms of dependencies} do not have the flatness property. In particular, {\em dependence atoms} $\dep(t_1\dots t_n,t_1'\dots t_m')$ and {\em exclusion atoms} $t_1\dots t_n\mid t_1'\dots t_n'$ are downward closed but not flat, {\em inclusion atoms} $t_1,\dots,t_n\subseteq t_1',\dots,t_n'$ are closed under unions but not flat, and {\em independence atoms} $t_1,\dots,t_n\perp t_1',\dots,t_m'$ are neither downward nor union closed.  We now recall the semantics of these atoms of dependencies, for which free variables are defined as free variables in the terms occurring in the atoms: %\todof{added}
%Logics based on team semantics  do not in general have the flatness property. For instance, {\em dependence logic} \cite{Van07dl}, which is first-order logic extended with dependence atoms $\dep(t_1\dots t_n,t_1'\dots t_m')$, 
%is downward closed but not flat; %{\em independence logic} \cite{D_Ind_GV}, which is defined as first-order logic extended with independence atoms $t_1,\dots,t_n\perp t_1',\dots,t_k'$, is neither downward closed nor union closed; 
%and {\em inclusion logic} \cite{Pietro_I/E}, which is  first-order logic extended with inclusion atoms $t_1,\dots,t_n\subseteq t_1',\dots,t_n'$, is union closed but not flat. In particular, dependence atoms and inclusion atoms are not flat. We recall their semantics below: \todof{I now allow sequences in the second component of $\dep(\mathsf{t},\mathsf{t}')$}\todof{rephrase}
\begin{itemize}
\item $M\models_X\dep(\mathsf{t},\mathsf{t}')$ ~~iff~~  for all $s,s'\in X$, $s(\mathsf{t}^M)=s'(\mathsf{t}^M)$ implies $s({\mathsf{t}'}^M)=s'({\mathsf{t}'}^M)$.
\item $M\models_X\mathsf{t}\mid\mathsf{t}'$ ~~iff~~  for all $s,s'\in X$, $s(\mathsf{t}^M)\neq s'({\mathsf{t}'}^M)$.
\item $M\models_X \mathsf{t}\subseteq \mathsf{t}'$ ~~iff~~ for all $s\in X$, there is $s'\in X$ such that $s(\mathsf{t}^M)=s'({\mathsf{t}'}^M)$
\item $M\models_X\mathsf{t}\perp\mathsf{t}'$ ~~iff~~  for all $s,s'\in X$, there exists $s''\in X$ such that $s''(\mathsf{t}^M)=s(\mathsf{t}^M)$ and $s''({\mathsf{t}'}^M)=s'({\mathsf{t}'}^M)$.
\end{itemize}
Clearly, $\dep(\mathsf{t},t'_1\dots t'_n)\equiv\dep(\mathsf{t},t'_1)\wedge\dots\wedge\dep(\mathsf{t},t'_n)$. The dependence atom $\dep(\langle\,\rangle,\mathsf{t})$ with the first argument being the empty sequence $\langle\,\rangle$ is abbreviated as $\dep(\mathsf{t})$. Such an atom is called  the {\em constancy atom},  and its semantics reduces to
\begin{itemize}
\item $M\models_X\dep(\mathsf{t})$ ~~iff~~  for all $s,s'\in X$, $s(\mathsf{t}^M)=s'(\mathsf{t}^M)$.
\end{itemize}

In this paper, we study two (non-flat) logics based on team semantics, called \FOT and \FOTd, whose formulas are built from a different (yet similar) set of connectives and quantifiers than those in (the team-based) first-order logic as follows:
%Next, we introduce the two (non-flat) logics based on team semantics, called \FOT and \FOTd, that we study in this paper: % for first-order definable team properties: %We call these logics \FOT and \FOTd, and formulas of them are defined as:
\leqnomode
\begin{align*}
\tag{\FOT}\phi::&=\lambda\mid \mathsf{x}\subseteq \mathsf{y}\mid\wcn\phi\mid(\phi\wedge\phi)\mid(\phi\vvee\phi)\mid \existso x \phi\mid \forallo x\phi\\
\tag{\FOTd}\phi::&=\lambda\mid\neg\delta\mid (\phi\wedge\phi)\mid(\phi\vvee\phi)\mid \existso x \phi\mid \forallo x\phi
\end{align*}\reqnomode
where $\lambda$ is an arbitrary first-order atomic formula,  $\mathsf{x}$ and $\mathsf{y}$ are two sequences of variables of the same length, and $\delta$ is a  quantifier-free and disjunction-free  formula (i.e., $\delta::=\lambda\mid \neg\delta\mid\delta\wedge\delta$).  We call the logical constants $\wcn,\vvee,\existso,\forallo$ {\em weak classical negation}, {\em weak disjunction}, {\em weak existential quantifier} and {\em weak universal quantifier}, respectively. %, as the latter three are weaker in expressive power than their counterparts in the usual first-order logic (in the sense of Theorem \ref{fot2fo}). 
They were introduced earlier in  \cite{AbVan09,KontVan09,Yang_neg} %and they were first .
%The negation \wcn is called {\em weak classical negation} and was introduced in \cite{Yang_neg}, the weak disjunction $\vvee$ was introduced in \cite{AbVan09}, the weak quantifiers $\existso$ and $\forallo$ were first introduced in  \cite{KontVan09}. 
with the team semantics:
%\begin{defn}
% For any $\LL$-formula  $\phi$ of any variant of the logics \FOT and \FOTd, for any $\LL$-model $M$ and any team $X$ over $V\supseteq \textsf{Fv}(\phi)$, we define the satisfaction relation $M\models_X\phi$ inductively as follows:  
\begin{itemize}

%\item $M\models_X \bot$ iff $X=\emptyset$.
%\item $M\models_X \lambda$ iff for all $s\in X$, $M\models_s\lambda$.
%\item $M\models_X \neg\delta$  iff for all $s\in X$, $M\not\models_s\delta$.
%\item $M\models_X \mathsf{x}\subseteq \mathsf{y}$ iff for all $s\in X$, there is $s'\in X$ such that $s(\mathsf{x})=s'(\mathsf{y})$
%\[(s(x_1),\dots,s(x_n))= (s(y_1),\dots,s(y_n)).\] 

%\item $M\models_X \mathsf{x}\mid \mathsf{y}$ iff for all $s,s'\in X$, $s(\mathsf{x})\neq s'(\mathsf{y})$
%\[(s(x_1),\dots,s(x_n))\neq (s(y_1),\dots,s(y_n)).\] 
%  \item $M\models_X\phi\wedge\psi$ iff $M\models_X\phi$ and $M\models_X\psi$.
      \item $M\models_X\wcn\phi$ ~~iff~~ $X=\emptyset$ or $M\not\models_X\phi$.
   \item $M\models_X\phi\vvee\psi$ ~~iff~~ $M\models_X\phi$ or $M\models_X\psi$.
       \item $M\models_X\existso x\phi$ ~iff~  $M\models_{X(a/x)}\phi$ for some $a\in M$, where
    \(X(a/x)\!=\!\{s(a/x)\mid s\!\in\! X\}.\)
  \item $M\models_X\forallo x\phi$ ~~iff~~ $M\models_{X(a/x)}\phi$ for all $a\in M$.
\end{itemize}
%Note the differences and similarities between the team semantics of the connectives and quantifiers in first-order logic and that of their weaker version. 
The logical constants $\vvee,\existso,\forallo$ have essentially first-order expressive power (in the sense of Theorem \ref{fot2fo} below), and are thus weaker than the corresponding usual ones $\vee,\exists,\forall$ (which have essentially existential second-order expressive power, in the sense of \cite[Theorem 6.2]{Van07dl}). The weak classical negation is considered weaker in comparison to the (strong) classical negation $\sim$ in the literature, defined as $M\models_X\,\sim\!\phi$ iff $M\not\models_X\phi$.
It is easy to verify that formulas of the logics \FOT and \FOTd have the locality property and the empty team property, and \FOTd-formulas are, in addition, downward closed.

In \FOT we  write, as usual, $\phi\cimp\psi$ for $\wcn\phi\vvee\psi$, and $\phi\cbimp\psi$ for $(\phi\cimp\psi)\wedge(\psi\cimp\phi)$. The weak classical negation $\wcn$ admits the law of excluded middle and the double negation elimination law, namely, $\phi\vvee\wcn\phi$ and $\wcn\wcn\phi\cbimp\phi$ are both valid formulas. Analogously to first-order logic, in \FOT conjunction and weak disjunction are inter-definable, so do  the two weak quantifiers, that is,
%\begin{equation}\label{cn_def_ve}
$\phi\vvee\psi\equiv \wcn(\wcn\phi\wedge\wcn\psi)$, $\forallo x\phi\equiv\wcn\existso x\wcn\phi$, etc.  

While the weak connectives and quantifiers are clearly different from the corresponding (strong) ones in \FO, the two sets of logical constants are closely related. For instance, $\existso$ and $\vvee$ are definable in terms of $\exists$ and $\vee$ together with constancy atoms (assuming that every model has at least two elements):
\[\existso x\phi\equiv\exists x(\dep(x)\wedge\phi)\text{ and }\phi\vvee\psi\equiv\exists x\exists y\big(\dep(x)\wedge \dep(y)\wedge \big((x=y\wedge \phi)\vee (x\neq y\wedge \psi)\big)\big),\]
where $x,y$ are fresh variables (see Proposition \ref{existso_vvee_definable}).
For any \FO-formula $\alpha$, we write $\alpha^\ast$ for the \FOT-formula obtained from $\alpha$ by replacing the first-order logical constants by their weak versions, namely, replacing $\neg$ by $\wcn$, $\vee$ by $\vvee$, $\exists$ by $\existso$, and $\forall$ by $\forallo$. The \FO-formula $\alpha$ and its \FOT-counterpart $\alpha^\ast$ behave similarly over singleton teams or single assignments, as we show in the next lemma.

\begin{lem}\label{fo=fot}
$M\models_{\{s\}}\alpha \iff M\models_{s}\alpha \iff M\models_{\{s\}}\alpha^\ast$. %\todof{can move the first equivalence to right after ``flatness'' and then comment on why the usual connectives are considered inherited from first-order logic}
\end{lem}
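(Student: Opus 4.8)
The plan is to prove the two biconditionals separately, the first being essentially a definitional unwinding and the second an induction on the structure of $\alpha$.

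First I would dispose of $M\models_{\{s\}}\alpha \iff M\models_{s}\alpha$. This is the well-known ``flatness'' fact for first-order formulas restricted to singleton teams, and it can be read off directly from the flatness property stated in the preliminaries: $M\models_X\alpha$ iff $M\models_{\{s'\}}\alpha$ for all $s'\in X$; taking $X=\{s\}$ gives $M\models_{\{s\}}\alpha\iff M\models_{\{s\}}\alpha$, and one still needs to connect $M\models_{\{s\}}\alpha$ with the Tarskian $M\models_s\alpha$. For safety (in case flatness is only quoted, not proved for singletons at the atomic level) I would just give the one-line induction: at first-order atoms $\lambda$ the team clause for $\{s\}$ says $M\models_s\lambda$ in the usual sense, and the connective/quantifier clauses for team semantics on a singleton team collapse exactly to the Tarskian clauses (e.g.\ $\vee$ on $\{s\}$ forces $Y,Z\subseteq\{s\}$ so one of them contains $s$ and the other is a subteam; $\exists x$ with $F:\{s\}\to\wp^+(M)$ amounts to choosing a nonempty set of witnesses, but by downward closure one witness suffices; $\forall x$ matches directly).

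The substantive half is $M\models_{s}\alpha \iff M\models_{\{s\}}\alpha^\ast$, which I would prove by induction on $\alpha$, carrying the singleton team $\{s\}$ along. The atomic case is immediate since $\alpha^\ast=\alpha$ for atoms and the clause for $M\models_{\{s\}}\lambda$ is $M\models_s\lambda$. For $\alpha=\neg\beta$: $M\models_s\neg\beta$ iff $M\not\models_s\beta$; by the already-established first biconditional and the induction hypothesis this is iff $M\not\models_{\{s\}}\beta^\ast$, and since $\{s\}\neq\emptyset$ the semantics of $\wcn$ gives $M\models_{\{s\}}\wcn\beta^\ast$, i.e.\ $M\models_{\{s\}}(\neg\beta)^\ast$. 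For $\alpha=\beta\wedge\gamma$ it is a direct application of the induction hypothesis to both conjuncts. For $\alpha=\beta\vee\gamma$: the Tarskian clause gives a disjunction $M\models_s\beta$ or $M\models_s\gamma$, which by IH is $M\models_{\{s\}}\beta^\ast$ or $M\models_{\{s\}}\gamma^\ast$, i.e.\ exactly the clause for $M\models_{\{s\}}\beta^\ast\vvee\gamma^\ast$; here the point is that $\vvee$ is the ``strong''/classical disjunction whose clause is a plain disjunction, matching Tarskian $\vee$ on a single assignment. The quantifier cases are analogous: $M\models_s\exists x\beta$ gives some $a\in M$ with $M\models_{s(a/x)}\beta$, and $\{s\}(a/x)=\{s(a/x)\}$ is again a singleton, so by IH $M\models_{\{s(a/x)\}}\beta^\ast$, i.e.\ $M\models_{\{s\}}\existso x\beta^\ast$; the $\forall$ case uses $M\models_{\{s\}(a/x)}\beta^\ast$ for every $a$, matching the $\forallo$ clause.

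I expect no genuine obstacle here; the proof is routine structural induction. The only place requiring a moment of care is the negation case, where one must invoke the empty-team-versus-nonempty-team distinction built into the semantics of $\wcn$ and observe that $\{s\}$ is always nonempty, together with remembering to use the first biconditional (singleton-team versus Tarskian satisfaction) to translate $M\not\models_s\beta$ into $M\not\models_{\{s\}}\beta$ before applying the induction hypothesis; without that intermediate step the induction would not quite line up. A secondary small point is that the two stated biconditionals are not independent—the second uses the first inside the $\neg$ case—so the cleanest exposition proves the first in full and then the second by induction referring back to it.
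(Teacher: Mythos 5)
Your proposal is correct and takes essentially the same route as the paper: the same decomposition into the two biconditionals, each handled by a routine structural induction exploiting that the team clauses on a singleton (nonempty) team collapse to the Tarskian clauses, with the empty team property and downward closure used exactly where you use them in the $\vee$ and $\exists$ cases. The only difference is one of emphasis — the paper spells out the induction for $M\models_{\{s\}}\alpha \iff M\models_s\alpha$ and treats the $\alpha^\ast$ half as immediate, whereas you detail the $\alpha^\ast$ half and sketch the other — which is immaterial.
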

\begin{proof}
The second ``$\iff$" follows from a straightforward inductive argument that uses directly the similarity of the team semantics of \FOT-connectives and quantifiers and the Tarskian semantics of their counterparts in \FO. We now prove the first ``$\iff$"  by induction on $\alpha$. We only give the detailed proof for the non-trivial cases. 

If $\alpha=\beta\vee\gamma$, then 
\begin{align*}
M\models_{\{s\}}\beta\vee\gamma&\iff \exists Y,Z\subseteq \{s\}\text{ s.t. }X=Y\cup Z,~ M\models_Y\beta\text{ and }M\models_Z\gamma\\
&\iff M\models_{\{s\}}\beta\text{ or }M\models_{\{s\}}\gamma\tag{by empty team property}\\
&\iff M\models_s\beta\vee\gamma.\tag{by induction hypothesis}
\end{align*}

If $\alpha=\forall x\beta$, then\allowdisplaybreaks
\begin{align*}
M\models_{\{s\}}\forall x\beta&\iff M\models_{\{s\}(M/x)}\beta\\
&\iff M\models_{\{s(a/x)\}}\beta\text{ for all }a\in M\tag{by flatness}\\
&\iff M\models_{s(a/x)}\beta\text{ for all }a\in M\tag{by induction hypothesis}\\
&\iff M\models_{s}\forall x\beta.
\end{align*}

If $\alpha=\exists x\beta$, then
\begin{align*}
M\models_{\{s\}}\exists x\beta&\iff M\models_{\{s\}(F(s)/x)}\beta\text{ for some }F:\{s\}\to \wp^+(M)\\
&\iff M\models_{\{s(a/x)\}}\beta\text{ for some }a\in  M\tag{by downward closure}\\
&\iff M\models_{s(a/x)}\beta\text{ for some }a\in  M\tag{by induction hypothesis}\\
&\iff M\models_{s}\exists x\beta.
\end{align*}
\end{proof}

We end this section by showing that first-order formulas and dependency  atoms (with variables as arguments) are all definable in \FOT. %For $\mathsf{x}=\langle x_1,\dots,x_n\rangle$ and $\mathsf{t}=\langle t_1,\dots,t_n\rangle$, we write $\phi(\mathsf{t}/\mathsf{x})$ for the formula obtained from $\phi$ by substituting each $x_i$ by $t_i$, where we always assume that each $t_i$ is free for $x_i$.

\begin{lem}\label{indepa-def-fot}
Let $\mathsf{u},\mathsf{v},\mathsf{w},\mathsf{x},\mathsf{y}$ be sequences of variables of certain lengths, and variables in $\mathsf{u}, \mathsf{v}, \mathsf{w}$  do not occur in $\mathsf{x},\mathsf{y}$.
\begin{enumerate}[label=(\roman*)]
\item\label{foteam2fot} $\alpha(\mathsf{x})\equiv\forallo \mathsf{v}(\mathsf{v}\subseteq\mathsf{x}\cimp\alpha^\ast(\mathsf{v}/\mathsf{x}))$; %, and thus \FO-formulas are all definable in \FOT.
\item\label{depatm2fot} $\dep(\mathsf{x},\mathsf{y})\equiv \forallo \mathsf{u}\mathsf{v}\mathsf{w}((\mathsf{u}\mathsf{v}\subseteq \mathsf{x}\mathsf{y}\wedge \mathsf{u}\mathsf{w}\subseteq \mathsf{x}\mathsf{y})\cimp \mathsf{v}=\mathsf{w})$;
\item $\dep(\mathsf{t})\equiv\existso \mathsf{v}(\mathsf{v}=\mathsf{t})$, where variables in $\mathsf{v}$ do not occur in the terms in $\mathsf{t}$;
\item\label{exc2fot} $\mathsf{x}\mid\mathsf{y}\equiv \forallo \mathsf{u}\mathsf{v}\big((\mathsf{u}\subseteq \mathsf{x}\wedge \mathsf{v}\subseteq \mathsf{y})\cimp \mathsf{u}\neq\mathsf{v}\big)$;
\item\label{indepatm2fot} $\mathsf{x}\perp\mathsf{y}\equiv \forallo \mathsf{u}\mathsf{v}((\mathsf{u}\subseteq \mathsf{x}\wedge \mathsf{v}\subseteq \mathsf{y})\cimp \mathsf{u}\mathsf{v}\subseteq \mathsf{x}\mathsf{y})$.
\end{enumerate}
\end{lem}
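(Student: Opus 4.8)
The plan is to verify each of the five equivalences directly from the team semantics of the weak connectives and quantifiers, the inclusion atom, and the flatness of first-order formulas. All of them have the same shape: the right-hand side is a $\forallo$-block (or an $\existso$) over fresh variables, followed by an implication $\cimp$ built from inclusion atoms; unpacking $\forallo\mathsf{u}\,\psi$ as ``$M\models_{X(\mathsf{a}/\mathsf{u})}\psi$ for all tuples $\mathsf{a}$'' and $\chi\cimp\eta$ as ``$M\not\models_Z\chi$ or $M\models_Z\eta$'' (with $Z = X(\mathsf{a}/\mathsf{u})$, and noting $Z\neq\emptyset$ whenever $X\neq\emptyset$) reduces each claim to a quantifier manipulation that mirrors the defining clause of the atom on the left.

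For \ref{foteam2fot}, after introducing fresh $\mathsf{v}$ and passing to $X(\mathsf{a}/\mathsf{v})$, the antecedent $\mathsf{v}\subseteq\mathsf{x}$ says exactly that $\mathsf{a} = s(\mathsf{x}^M)$ for some $s\in X$; quantifying over all $\mathsf{a}$ and using Lemma~\ref{fo=fot} (to replace $\alpha^\ast$ on singletons by $\alpha$) together with flatness of $\alpha$ yields $M\models_X\alpha$. For \ref{depatm2fot} and \ref{exc2fot} and \ref{indepatm2fot}, I would run the same unpacking: the universally quantified fresh tuples range over all possible values, the inclusion-atom antecedents pick out exactly the value-tuples realized in $X$ (so $\mathsf{u}\mathsf{v}\subseteq\mathsf{x}\mathsf{y}$ becomes ``$(\mathsf{a},\mathsf{b}) = (s(\mathsf{x}^M),s(\mathsf{y}^M))$ for some $s\in X$''), and the consequent — $\mathsf{v}=\mathsf{w}$, or $\mathsf{u}\neq\mathsf{v}$, or $\mathsf{u}\mathsf{v}\subseteq\mathsf{x}\mathsf{y}$ — then transcribes verbatim into the defining condition of $\dep$, $\mid$, or $\perp$. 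Here I must be slightly careful that the inclusion atoms and equalities on the right-hand side are evaluated over the shifted team $X(\mathsf{a}\mathsf{b}\cdots/\mathsf{u}\mathsf{v}\cdots)$ rather than over $X$, which is why the fresh variables are needed: it is the constancy of each fresh variable across the shifted team that makes ``$\mathsf{u}\subseteq\mathsf{x}$ holds in $X(\mathsf{a}/\mathsf{u})$'' equivalent to ``$\mathsf{a}\in\mathrm{rel}(X\!\restriction\!\mathsf{x})$''. For the constancy case $\dep(\mathsf{t})\equiv\existso\mathsf{v}(\mathsf{v}=\mathsf{t})$, $\existso\mathsf{v}$ provides a single value $\mathsf{a}$ shared by all assignments, and $\mathsf{v}=\mathsf{t}$ in $X(\mathsf{a}/\mathsf{v})$ then says $s(\mathsf{t}^M)=\mathsf{a}$ for every $s\in X$, i.e., $\mathsf{t}$ is constant on $X$.

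Throughout I also need the empty-team property on both sides so that the degenerate case $X=\emptyset$ is handled uniformly (the $\cimp$ clause and the $\wcn$ inside it make the right-hand sides trivially true on $\emptyset$, matching the atoms), and the locality property to ensure the fresh variables do not interfere. The main obstacle I anticipate is purely bookkeeping: keeping the shifts by fresh tuples straight when the atom has two multi-variable arguments (as in \ref{depatm2fot}, \ref{exc2fot}, \ref{indepatm2fot}), and making sure that ``for all tuples $\mathsf{a},\mathsf{b}$'' on the outside correctly interacts with ``there exists $s$'' hidden in each inclusion-atom antecedent — essentially checking that the prenex $\forallo$-block plus the disjunctive reading of $\cimp$ really does produce the intended $\forall s\,\forall s'\,\exists s''$ (or $\forall s\,\forall s'$) pattern and nothing weaker. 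None of this is deep, but it is where an error would hide, so I would write out one representative case (say \ref{indepatm2fot}, the independence atom, since it has the $\exists s''$) in full and indicate that the others are analogous.
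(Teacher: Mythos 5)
Your proposal is correct and follows essentially the same route as the paper: unpack $\forallo$ and $\cimp$ over the shifted team, use the constancy of the fresh tuple so that the inclusion-atom antecedent expresses membership of $\mathsf{a}$ in the set of values realized in $X$, and invoke locality, flatness, and Lemma~\ref{fo=fot} to transfer between $\alpha^\ast$ on singletons and $\alpha$ on $X$ (the paper, too, only writes out item \ref{foteam2fot} in detail and treats the remaining items as analogous). Your attention to the nonemptiness of the shifted team when reading $\cimp$, and to the empty-team case, matches the implicit bookkeeping in the paper's argument.
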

\begin{proof}
We only give the detailed proof for item (i). %and (iii). 
%For  (i), 
We have that
\begin{align*}
&M\models_X\forallo \mathsf{v}(\mathsf{v}\subseteq \mathsf{x}\cimp\alpha^\ast(\mathsf{v}/\mathsf{x}))\\
\iff& \text{for all sequences }\mathsf{a}\text{ in }M,~M\models_{X(\mathsf{a}/\mathsf{v})}\mathsf{v}\subseteq \mathsf{x}\text{ implies }M\models_{X(\mathsf{a}/\mathsf{v})}\alpha^\ast(\mathsf{v}/\mathsf{x})\\
\iff& \text{for all sequences }\mathsf{a}\text{ in }M,~ \mathsf{a}\in\{s(\mathsf{x})\mid s\in X\}\text{ implies that  for all } s \in X, ~\\
&\qquad\qquad M\models_{\{s(\mathsf{a}/\mathsf{v})\}}\alpha^\ast(\mathsf{v}/\mathsf{x})\tag{by definition of inclusion atom and flatness}\\
\iff& \text{for all sequences }\mathsf{a}\in\{s(\mathsf{x})\mid s\in X\},~ M\models_{\{\emptyset(\mathsf{a}/\mathsf{x})\}}\alpha^\ast(\mathsf{x})\\
\iff&\text{for all } s \in X, ~M\models_{\{s\}}\alpha^\ast(\mathsf{x})\tag{by locality}\\
\iff&\text{for all } s \in X, ~M\models_{\{s\}}\alpha(\mathsf{x})\tag{by Lemma \ref{fo=fot}}\\
\iff&M\models\alpha(\mathsf{x})\tag{by flatness}.
\end{align*}
%
%For item (iii), we have that
%\begin{align*}
%M\models_X\existso \mathsf{v}(\mathsf{v}=\mathsf{t})&\iff \text{there exists a sequence }\mathsf{a}\text{ in }M\text{ s.t. }M\models_{X(\mathsf{a}/\mathsf{v})}\mathsf{v}=\mathsf{t}\\
%&\iff\text{there exists a sequence }\mathsf{a}\text{ in }M\text{ s.t. }s(\mathsf{t})=\mathsf{a}\text{ for all }s\in X\\
%&\iff M\models_X\dep(\mathsf{t}).
%\end{align*}
\end{proof}

\section{Characterizing elementary team properties}\label{sec:fot-fotd-expr}

In this section, we prove that (modulo the empty team) \FOT-formulas characterize elementary (or first-order) team properties, and \FOTd-formulas  characterize downward closed  elementary  team properties. %(modulo the empty team). %\todof{explain}
As a corollary of these expressiveness results, we obtain the compactness theorem of the logics \FOT and \FOTd with respect to sentences (with no free variables). We provide in this section also a proof of compactness theorem for the two logics with respect to arbitrary (possibly open) formulas, which, in the team semantics setting, cannot be derived as a trivial corollary of the compactness with respect to sentences. The compactness theorem will be used in the next sections. %, as free variables in team-based logics are interpreted as relations 

Let us first define formally the relevant notions. A team $X$ of an $\LL$-model $M$ over a domain $\{v_1,\dots,v_k\}$ can also be viewed as a $k$-ary relation $rel(X)\subseteq M^k$  defined as 
\[rel(X)=\{(s(v_1),\dots,s(v_k))\mid s\in X\}.\]
 We call a collection $\mathcal{P}_M\subseteq \wp(M^k)$ of $k$-ary relations (or teams) of an $\LL$-model $M$  a {\em local team property}; and a {\em (global) team property} is a class $\mathcal{P}$ of local team properties $\mathcal{P}_M$ for all $\LL$-models $M$. A formula $\phi(\mathsf{v})$ of a logic based on team semantics clearly defines a team property $\mathcal{P}^{\phi(\mathsf{v})}$ (or  simply  $\mathcal{P}^{\phi}$) such that for all $M$,
 \[\mathcal{P}^\phi_M=\{rel(X)\mid M\models_X\phi(\mathsf{v})\}.\]
% We say that  a team-based logic \textsf{L} {\em characterizes}  a team properties $\mathcal{P}$ if for all $M$, $\mathcal{P}_M=\{\}$ 
The team properties $\mathcal{P}^\phi$ defined by \FOTd-formulas $\phi(\mathsf{v})$ are clearly {\em downward closed}, that is,  $A\subseteq B\in \mathcal{P}_M^\phi$ implies $A\in \mathcal{P}_M^\phi$ for any $M$.
A team property $\mathcal{P}$ is said to be {\em closed under unions} if $A,B\in \mathcal{P}_M$ implies $A\cup B\in \mathcal{P}_M$ for any $M$.
%A team property $\mathcal{P}$ is said to be {\em downward closed} if $A\subseteq B\in \mathcal{P}_M$ implies $A\in \mathcal{P}_M$ for any $M$.
 %Note also that while the empty relation $\emptyset$  may not be contained in a team property $\mathcal{P}_M$,  
 The 
team properties $\mathcal{P}^{\alpha}$ defined by first-order formulas $\alpha$ are clearly both downward closed and union closed, and contain the empty relation $\emptyset$.
% Since  all  team-based logics considered in this paper have the empty team property (thus  $\emptyset$ is in $\mathcal{P}^\phi_M$ for all $\phi$),  
% we will  confine ourselves only to  those team properties $\mathcal{P}$ with the empty relation $\emptyset$ contained in each local property $\mathcal{P}_M$. 
 
 We call a team property $\mathcal{P}$  {\em elementary} or {\em first-order} if there is a first-order $\LL(R)$-sentence $\alpha(R)$ that defines $\mathcal{P}$, in the sense that %\todof{not quite right}
\begin{equation}\label{fo-def-definition}
(M,A)\models\alpha(R) \iff A\in \mathcal{P}_M
\end{equation}
for all $M$ and all nonempty relations $A$. %The \textcolor{blue}{first-order definable 
%team properties} are clearly both downward closed and union closed. 
It is worth noting that we are using the terminology ``definability" in two different semantic settings: Even though every elementary %first-order 
team property $\mathcal{P}$ is (trivially) definable by some first-order $\LL(R)$-sentence $\alpha(R)$ with an extra relation symbol $R$ (in the sense of (\ref{fo-def-definition}) with respect to the usual semantics of first-order logic), it does not necessarily follow that each such first-order team property $\mathcal{P}$ is definable by some   first-order $\LL$-formula $\beta(\mathsf{v})$ without additional symbols in the team semantics sense (i.e., $\mathcal{P}_M=\mathcal{P}^\beta_M$ for all $M$). As an illustration, consider the following two simple team properties of the empty vocabulary $\mathcal{L}_0$:
\[
\mathcal{P}_{\star1}= \{ (M,rel(X))\ | \ M\textrm{ an $\mathcal{L}_0$-model and } |X|\star 1 \}\text{ for }\star\in\{\leq,\geq\}.
 \]
Neither of these two properties can be defined by any first-order formula $\beta(\mathsf{v})$ of the empty vocabulary $\mathcal{L}_0$, because $\mathcal{P}^\beta$ is closed both under unions and downward, whereas $\mathcal{P}_{\leq 1}$ is not closed under unions and $\mathcal{P}_{\geq 1}$ is not closed downward.

In the rest of the section, we show that  every formula $\phi$ of \FOT defines an elementary team property $\mathcal{P}^{\phi}$; and conversely,  every elementary team property $\mathcal{P}$ with the empty relation $\emptyset$ contained in each local property $\mathcal{P}_M$ can be defined by a formula $\phi$ of \FOT (i.e., $\mathcal{P}_M=\mathcal{P}^\phi_M$ for all $M$). In this sense, we say that \FOT-formulas {\em characterize elementary team properties (modulo the empty team)}. By generalizing this proof, we  also show that \FOTd-formulas characterize downward closed elementary team properties (modulo the empty team), in a similar sense. 

Note that in these characterization results, we  confine ourselves only to  those team properties $\mathcal{P}$ with the empty relation $\emptyset$ contained in each local property $\mathcal{P}_M$, because all  team-based logics considered in this paper have the empty team property (thus  $\emptyset$ is in $\mathcal{P}^\phi_M$ for all $\phi$). In fact, it is easy to verify by induction (or it is observed essentially already in \cite{KontinenNurmi2011}) that team-based first-order logics with the known logical constants (such as $\vee,\vvee,\exists,\exists^1$, etc.) enjoy a certain dichotomy with regard to the empty team, that is, for any formula $\phi$, either $\emptyset\in\mathcal{P}^\phi_M$ holds for all $M$, or $\emptyset\notin\mathcal{P}^\phi_M$ holds for all $M$. For this reason, it does not seem to be possible to find a team-based logic that would characterize {\em all} (downward closed) elementary team properties, at least not with the known logical constants in team semantics. The logics \FOT and \FOTd we introduce in this paper provide reasonably close approximations to such full characterizations.
%One may ask whether it is possible 

%Since  all  team-based logics considered in this paper have the empty team property (thus  $\emptyset$ is in $\mathcal{P}^\phi_M$ for all $\phi$),  
% we will  confine ourselves only to  those team properties $\mathcal{P}$ with the empty relation $\emptyset$ contained in each local property $\mathcal{P}_M$. 

Now, we show that the team properties defined by formulas  of \FOT and \FOTd are elementary. %(or first-order).

\begin{theorem}\label{fot2fo}
For any $\LL$-formula $\phi(v_1,\dots,v_k)$  of  \FOT or \FOTd, there exists a first-order $\LL(R)$-sentence $\gamma_\phi(R)$ with a fresh $k$-ary relation symbol $R$ such that for any $\LL$-model $M$ and any team $X$ over $\{v_1,\dots,v_k\}$,
\begin{equation}\label{fot2fo_eq}
M \models_X \phi\iff (M,rel(X)) \models \gamma_\phi(R).
\end{equation}
In addition, if $\phi$ is a formula of \FOTd, $R$ occurs only negatively in $\gamma_\phi(R)$  (i.e., every occurrence of $R$ is in the scope of an odd number of nested negation symbols).
\end{theorem}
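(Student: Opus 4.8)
The natural approach is induction on the structure of $\phi$, building $\gamma_\phi(R)$ compositionally so that the relation symbol $R$ always plays the role of $rel(X)$ for the current team. The subtlety is that when we quantify over a fresh variable $x$ (using $\existso$ or $\forallo$) the ``current team'' $X(a/x)$ lives over a larger domain of variables, so the arity of the relation symbol changes. I would handle this by proving the claim with a slightly more flexible formulation: for each $\phi$ with free variables among $v_1,\dots,v_k$ and each finite set $W\supseteq\{v_1,\dots,v_k\}$ of variables, there is an $\LL(R_W)$-sentence $\gamma_\phi^W(R_W)$ (with $R_W$ of arity $|W|$) such that $M\models_X\phi\iff (M,rel(X))\models\gamma_\phi^W(R_W)$ for all teams $X$ over $W$; by the locality property of \FOT/\FOTd-formulas this is equivalent to the stated version, and the extra bookkeeping makes the quantifier cases go through cleanly.

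For the base cases: a first-order atom $\lambda(\mathsf{v})$ becomes $\forall\mathsf{w}(R\mathsf{w}\to\lambda(\mathsf{w}/\mathsf{v}))$, matching the clause $M\models_X\lambda$ iff $M\models_s\lambda$ for all $s\in X$; the inclusion atom $\mathsf{x}\subseteq\mathsf{y}$ becomes $\forall\mathsf{w}\big(R\mathsf{w}\to\exists\mathsf{w}'(R\mathsf{w}'\wedge \mathsf{w}_{\mathsf{x}}=\mathsf{w}'_{\mathsf{y}})\big)$ where $\mathsf{w}_{\mathsf{x}}$ picks out the coordinates of $\mathsf{w}$ indexed by $\mathsf{x}$; and the negated disjunction-free atom $\neg\delta$ of \FOTd is translated via $M\models_X\neg\delta$ iff $\delta$ fails at every singleton, i.e. $\forall\mathsf{w}(R\mathsf{w}\to\neg\delta^{\mathrm{sing}}(\mathsf{w}))$ — here one should note, and verify by a small sub-induction, that a quantifier- and disjunction-free first-order formula is flat, so ``$\neg$ of it'' in team semantics coincides with the pointwise negation. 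For the inductive step, conjunction is just $\gamma_\phi\wedge\gamma_\psi$; weak disjunction $\phi\vvee\psi$ is $\gamma_\phi\vee\gamma_\psi$ (this directly mirrors the semantic clause $M\models_X\phi$ or $M\models_X\psi$ — which is exactly why we can stay first-order, unlike the ``strong'' $\vee$); weak classical negation $\wcn\phi$ becomes $(\exists\mathsf{w}\,R\mathsf{w})\to\neg\gamma_\phi$, reflecting the clause ``$X=\emptyset$ or $M\not\models_X\phi$''. For $\existso x\phi$ with $\phi$ now having free variables in $W\cup\{x\}$, the team passes from $X$ over $W$ to $X(a/x)$ over $W\cup\{x\}$; so I introduce a defined relation $R'\mathsf{w}x :\leftrightarrow R\mathsf{w}\wedge x=c$ for a witness and write $\exists c\,\gamma_\phi^{W\cup\{x\}}[R'/R_{W\cup\{x\}}]$ — after unfolding the abbreviation this is a genuine $\LL(R_W)$-sentence with $R_W$ in place of $R$; dually for $\forallo$.

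The \textbf{main obstacle}, and the only place the argument is more than bookkeeping, is the polarity claim for \FOTd: that $R$ occurs only negatively in $\gamma_\phi$. The generation clauses above are not obviously polarity-friendly — e.g. the atom clause $\forall\mathsf{w}(R\mathsf{w}\to\lambda)$ already has $R$ negative (good), but the $\existso$/$\forallo$ substitution and, more importantly, the $\wcn$ clause flip polarities; however $\wcn$ is not available in \FOTd, and its \FOTd-analogue $\neg\delta$ has $R$ only under the single leading $\forall\mathsf{w}(R\mathsf{w}\to\cdots)$, hence negative. So I would prove the stronger statement \emph{by the same induction}, tracking that at each step $R$ stays negative: the cases $\lambda$, $\mathsf{x}\subseteq\mathsf{y}$ (here $R$ appears both negatively in the antecedent and positively in the consequent $\exists\mathsf{w}'R\mathsf{w}'$ — so inclusion atoms are \emph{not} in \FOTd, consistent with the grammar), $\neg\delta$, $\wedge$, $\vvee$, and the two weak quantifiers, checking that none of the \FOTd-clauses introduce a positive occurrence. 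The care needed is exactly in the quantifier clauses: I must make sure that substituting the defined $R'$ (which contains $R$ positively inside $R\mathsf{w}\wedge x=c$) into a context where $R_{W\cup\{x\}}$ occurred only negatively yields $R$ occurring only negatively — which holds because negative-of-positive is negative. This parity-of-negations check, threaded uniformly through the induction, is the real content; everything else is a routine unwinding of the semantic clauses against the translation clauses.
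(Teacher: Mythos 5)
Your proposal is correct, and its core---the structural induction with exactly these clause-by-clause translations for first-order atoms, $\neg\delta$, inclusion atoms, $\wedge$, $\vvee$, and $\wcn$, together with the observation that the \FOTd-clauses never produce a positive occurrence of $R$---coincides with the paper's proof; the one genuine divergence is how the weak quantifiers are bookkept. The paper strengthens the induction hypothesis in the opposite direction from you: it keeps $R$ at the fixed arity $k$ of the top-level free variables $\mathsf{v}$ and lets the translation $\gamma_\theta(R,\mathsf{x})$ carry the weakly quantified variables $\mathsf{x}$ as free first-order parameters, so the quantifier cases become one-liners ($\existso y\mapsto\exists y$, $\forallo y\mapsto\forall y$) and no substitution machinery is needed. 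You instead keep the translated object a sentence but let the arity of the relation symbol grow with the team domain, handling $\existso x$ and $\forallo x$ by substituting the defining formula $R\mathsf{w}\wedge x=c$ for the larger-arity symbol and then quantifying $c$. This is sound, and it rests on the same key fact both arguments exploit, namely that the weak quantifiers extend every assignment by one common value, so $rel(X(a/x))=rel(X)\times\{a\}$; your polarity bookkeeping for \FOTd (negative occurrences remain negative after substituting a positively occurring definition) is exactly right. What your route costs is a predicate-substitution (relativization) lemma with capture-avoidance for $c$, plus the tacit assumption that the quantified variable is fresh for the current domain $W$: if $x\in W$ (re-quantification), $rel(X(a/x))$ is no longer $rel(X)\times\{a\}$, and you must either rename bound variables w.l.o.g. (which the paper also implicitly does) or project out the old $x$-coordinate in the defining formula, where $R$ still occurs positively so the polarity argument survives. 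After unfolding your substitutions the two translations are logically equivalent, so the difference is organizational rather than mathematical.
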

\begin{proof}
%The proof follows the same idea as  the proof of Theorem 6.2 in \cite{Van07dl}. 
We prove the theorem by proving a slightly more general claim: For any subformula $\theta(\mathsf{v},\mathsf{x})$ of  $\phi(\mathsf{v})$, there exists a first-order $\LL(R)$-formula $\gamma_\theta(R,\mathsf{x})$ such that for all $\LL$-models $M$, teams $X$ and  sequences $\mathsf{a}$ of elements in $M$,
 %with domain $\{ v_1,\dots,v_k, y_1,\ldots,y_l\}$: 
 \begin{equation}\label{trans_thm_eq0}
M \models_{X(\mathsf{a}/\mathsf{x})} \theta(\mathsf{v},\mathsf{x}) \iff (M, rel(X)) \models \gamma_\theta(R,\mathsf{x})(\mathsf{a}/\mathsf{x}).
\end{equation}

Define the formula $\gamma_\theta$ (found essentially in, e.g., \cite{Pietro_I/E,Yang_neg}) inductively as follows:

\begin{itemize}
\item If $\theta(\mathsf{v},\mathsf{x})=\lambda(\mathsf{v},\mathsf{x})$ is a first-order atom, let $\gamma_\theta(R,\mathsf{x})=\forall \mathsf{v}(R\mathsf{v}\to \lambda(\mathsf{v},\mathsf{x}))$.
\item If $\theta(\mathsf{v},\mathsf{x})=\neg\delta(\mathsf{v},\mathsf{x})$ for some quantifier-free and disjunction-free first-order formula $\delta$, let $\gamma_\theta(R,\mathsf{x})=\forall \mathsf{v}(R\mathsf{v}\to \neg\delta(\mathsf{v},\mathsf{x}))$.
\item If $\theta(\mathsf{v},\mathsf{x})=\rho(\mathsf{vx})\subseteq\sigma(\mathsf{vx})$, where $\rho(\mathsf{vx})$ and $\sigma(\mathsf{vx})$ are two sequences of variables from $\mathsf{vx}$, let $\gamma_\theta(R,\mathsf{x})=\forall \mathsf{u}\exists\mathsf{w}\big(R\mathsf{u}\to( R\mathsf{w}\wedge \rho(\mathsf{ux})=\sigma(\mathsf{wx}))\big)$, where $\mathsf{u}$ and $\mathsf{w}$ are two sequences of fresh variables.
\item If $\theta(\mathsf{v},\mathsf{x})=\theta_0(\mathsf{v},\mathsf{x})\wedge\theta_1(\mathsf{v},\mathsf{x})$, %and does not belong to the case of the first item, 
let $\gamma_{\theta}(R,\mathsf{x})=\gamma_{\theta_0}(R,\mathsf{x})\wedge\gamma_{\theta_1}(R,\mathsf{x})$.
\item If $\theta(\mathsf{v},\mathsf{x})=\theta_0(\mathsf{v},\mathsf{x})\vvee\theta_1(\mathsf{v},\mathsf{x})$, let $\gamma_\theta(R,\mathsf{x})=\gamma_{\theta_0}(R,\mathsf{x})\vee\gamma_{\theta_1}(R,\mathsf{x})$.
\item If $\theta(\mathsf{v},\mathsf{x})=\wcn\theta_0(\mathsf{v},\mathsf{x})$, let $\gamma_\theta(R,\mathsf{x})= \forall \mathsf{v}\neg R\mathsf{v}\vee\neg\gamma_{\theta_0}(R,\mathsf{x})$.
\item If $\theta(\mathsf{v},\mathsf{x})=\existso y\theta_0(\mathsf{v},\mathsf{x}y)$, let $\gamma_\theta(R,\mathsf{x})=\exists y\gamma_{\theta_0}(R,\mathsf{x}y)$.
\item If $\theta(\mathsf{v},\mathsf{x})=\forallo y\theta_0(\mathsf{v},\mathsf{x}y)$, let $\gamma_{\theta}(R,\mathsf{x})=\forall y\gamma_{\theta_0}(R,\mathsf{x}y)$.
\end{itemize}

We only give  the detailed proof of (\ref{trans_thm_eq0})  for some nontrivial cases. If $\theta(\mathsf{v},\mathsf{x})=\lambda(\mathsf{v},\mathsf{x})$ is a first-order atom, then we have that
\begin{align*}
M\models_{X(\mathsf{a}/\mathsf{x})}\lambda(\mathsf{v},\mathsf{x})&\iff M\models_{s}\lambda(\mathsf{v},\mathsf{x})\text{ for all }s\in X(\mathsf{a}/\mathsf{x})\\
&\iff (M,rel(X))\models_{s(\mathsf{a}/\mathsf{x})} R\mathsf{v}\to \lambda(\mathsf{v},\mathsf{x})\text{ for all }s:\{v_1,\dots,v_k\}\to M\\
&\iff (M,rel(X))\models \forall \mathsf{v}(R\mathsf{v}\to \lambda(\mathsf{v},\mathsf{x}))(\mathsf{a}/\mathsf{x}).
\end{align*}

If $\theta=\wcn\theta_0$, then we have that
\begin{align*}
M\models_{X(\mathsf{a}/\mathsf{x})}\wcn\theta_0&\iff X(\mathsf{a}/\mathsf{x})=\emptyset \text{ or }M\not\models_{X(\mathsf{a}/\mathsf{x})}\theta_0\\
&\iff (M,rel(X))\models\forall \mathsf{v}\neg R\mathsf{v}\text{ or }(M, rel(X)) \not\models \gamma_{\theta_0}(R,\mathsf{x})(\mathsf{a}/\mathsf{x}).\tag{by induction hypothesis}\\
&\iff (M, rel(X)) \models \forall \mathsf{v}\neg R\mathsf{v}\ \vee \neg \gamma_{\theta_0}(R,\mathsf{x})(\mathsf{a}/\mathsf{x}).
\end{align*}

If $\phi=\existso y\theta_0(\mathsf{v},\mathsf{x}y)$, then %for any nonempty team $X$, 
we have that
\begin{align*}
M\models_{X(\mathsf{a}/\mathsf{x})}\existso y\theta_0(\mathsf{v},\mathsf{x}y)&\iff M\models_{X(\mathsf{a}b/\mathsf{x}y)}\theta_0(\mathsf{v},\mathsf{x}y)\text{ for some }b\in M\\
&\iff (M, rel(X)) \models \gamma_{\theta_0}(R,\mathsf{x}y)(\mathsf{a}b/\mathsf{x}y)\text{ for some }b\in M\tag{by induction hypothesis}\\
%&\iff (M,a,rel(X))\models\gamma_{\psi(\underline{a}/x)}(R)\text{ for some }a\in M\\
&\iff (M,rel(X))\models\exists y\gamma_{\theta_0}(R,\mathsf{x})(\mathsf{a}/\mathsf{x}).
\end{align*}
\end{proof}

Next we prove the converse direction of \Cref{fot2fo}, from which we can conclude that \FOT formulas characterize exactly elementary team properties (modulo the empty team).

\begin{theorem}\label{fo2fot}
For any first-order $\LL(R)$-sentence $\gamma(R)$ with a $k$-ary relation symbol $R$, there exists an $\LL$-formula $\phi_\gamma(v_1,\dots,v_k)$ of \FOT such that for any $\LL$-model $M$ and any nonempty team $X$ over $\{v_1,\dots,v_k\}$,
\begin{equation*}\label{fo2fot_eq}
M \models_X \phi_\gamma(\mathsf{v})\iff (M,rel(X)) \models \gamma(R).
\end{equation*}
Moreover, if $R$ occurs only negatively in $\gamma(R)$,  $\phi_\gamma$ can also be chosen  in \FOTd.% in the language of \FOTd.
\end{theorem}
\begin{proof}
We may assume w.l.o.g. that the first-order sentence $\gamma(R)$ is in prenex normal form
$Q_1x_1\dots Q_nx_n\alpha(\mathsf{x})$, where $Q_i\in\{\forall,\exists\}$, $\alpha$ is quantifier-free and in negation normal form (i.e., negation occurs only in front of atomic formulas), %We may further assume w.l.o.g. that 
and every occurrence of $R$ is of the form $R\mathsf{x}_i$ for some sequence $\mathsf{x}_i$ of bound variables  (for $R\mathsf{t}\equiv \exists \mathsf{y}(\mathsf{y}=\mathsf{t}\wedge R\mathsf{y})$).
Define the translation $\phi_\gamma(\mathsf{v}):=Q_1^1x_1\dots Q_n^1x_n\phi_\alpha(\mathsf{x},\mathsf{v})$, where $Q_i^1=\forallo$ if $Q_i=\forall$, $Q_i^1=\existso$ if $Q_i=\exists$, and $\phi_\alpha(\mathsf{x},\mathsf{v})$ is obtained from $\alpha(\mathsf{x})$ by first taking the corresponding formula $\alpha^\ast(\mathsf{x})$ in \FOT and then replacing every first-order atom of the form $R\mathsf{x}_i$ by  $\mathsf{x}_i\subseteq \mathsf{v}$. 
%To be precise, $\phi_\alpha(\mathsf{x},\mathsf{v})$ is defined inductively as: %follows:
%\begin{itemize}
%\item if $\alpha=\lambda(\mathsf{x})$ is an atomic formula in which $R$ does not occur,  define $\phi_\lambda(\mathsf{x},\mathsf{v}):=\lambda(\mathsf{x})$;
%\item if $\alpha=R\mathsf{x}_i$,  define $\phi_\alpha(\mathsf{x},\mathsf{v}):=\mathsf{x}_i\subseteq \mathsf{v}$;
%\item if $\alpha=\neg\lambda$ for some (atomic) formula $\lambda$, define $\phi_\alpha:=\wcn\phi_{\lambda}$;
%\item if $\alpha=\alpha_0\wedge\alpha_1$,  define $\phi_\alpha:=\phi_{\alpha_0}\wedge\phi_{\alpha_1}$;
%\item if $\alpha=\alpha_0\vee\alpha_1$, define $\phi_\alpha:=\phi_{\alpha_0}\vvee\phi_{\alpha_1}$.
%\end{itemize}
%
%
We show by induction that for any quantifier-free formula $\alpha(\mathsf{x})$,  any nonempty team $X$ over $\{v_1,\dots,v_k\}$ and $a_1,\dots,a_n\in M$,
\begin{equation}\label{trans_thm_altpf_eq1}
M\models_{X(\mathsf{a}/\mathsf{x})}\phi_{\alpha}(\mathsf{x},\mathsf{v})\iff (M,rel(X))\models\alpha(\mathsf{a}/\mathsf{x}).
\end{equation}
%We only give the proof details for the three nontrivial cases:
%

We only give the detailed proof for the nontrivial cases. If $\alpha=\lambda(\mathsf{x})$ is an atomic formula in which $R$ does not occur, then $\phi_\lambda=\lambda(\mathsf{x})$ and \vspace{-4pt}%\allowdisplaybreaks
\begin{align*}
M\models_{X(\mathsf{a}/\mathsf{x})}\lambda(\mathsf{x})&\iff M\models_{X}\lambda(\mathsf{a}/\mathsf{x})\\
&\iff M\models_{s}\lambda(\mathsf{a}/\mathsf{x})\text{ for all }s\in X\\
&\iff M\models\lambda(\mathsf{a}/\mathsf{x})\tag{since $X\neq\emptyset$ and $R$ does not occur in $\lambda$}\\
&\iff (M,rel(X))\models\lambda(\mathsf{a}/\mathsf{x}).
\end{align*}

If $\alpha=R\mathsf{x}_{i}$, then $\phi_\alpha=\mathsf{x}_i\subseteq \mathsf{v}$ and \vspace{-4pt}
\begin{align*}
M\models_{X(\mathsf{a}/\mathsf{x})}\mathsf{x}_{i}\subseteq \mathsf{v}
\iff& \text{for all }s\in X(\mathsf{a}/\mathsf{x}),\text{ there exists }s'\in X(\mathsf{a}/\mathsf{x})\text{ s.t. }s(\mathsf{x}_{i})=s'(\mathsf{v})\\
%\iff& \text{There exists }s'\in X\text{ s.t. }\mathsf{a}_{i}=s'(\mathsf{v})\\
\iff& \mathsf{a}_{i}\in rel(X)=\{s'(\mathsf{v})\mid s'\in X\}\tag{since $X\neq\emptyset$}\\
\iff& (M,rel(X))\models R\mathsf{x}_{i}(\mathsf{a}/\mathsf{x}).
\end{align*}

If $\alpha=\neg\lambda(\mathsf{x})$, then $\phi_\alpha=\wcn\phi_{\lambda}(\mathsf{x},\mathsf{v})$ and \vspace{-4pt}
\begin{align*}
M\models_{X(\mathsf{a}/\mathsf{x})}\wcn\phi_{\lambda}(\mathsf{x},\mathsf{v})&\iff M\not\models_{X(\mathsf{a}/\mathsf{x})}\phi_{\lambda}(\mathsf{x},\mathsf{v})\tag{since $X(\mathsf{a}/\mathsf{x})\neq\emptyset$}\\
&\iff (M,rel(X))\not\models{\lambda}(\mathsf{a}/\mathsf{x})\tag{by induction hypothesis}\\
&\iff (M,rel(X))\models\neg{\lambda}(\mathsf{a}/\mathsf{x}).
\end{align*}

%The cases when $\alpha=\alpha_0(\mathsf{x})\vee\alpha_1(\mathsf{x})$ and $\alpha=\alpha_0(\mathsf{x})\wedge\alpha_1(\mathsf{x})$ follow easily from the induction hypothesis.
%If $\theta=\theta_0(\mathsf{x})\vee\theta_1(\mathsf{x})$, then
%\begin{align*}
%&M\models_{X(\mathsf{a}/\mathsf{x})}\phi_{\theta_0}(\mathsf{x},\mathsf{v})\vvee\phi_{\theta_1}(\mathsf{x},\mathsf{v})\\
%\iff& M\models_{X(\mathsf{a}/\mathsf{x})}\phi_{\theta_0}(\mathsf{x},\mathsf{v})\text{ or }M\models_{X(\mathsf{a}/\mathsf{x})}\phi_{\theta_1}(\mathsf{x},\mathsf{v})\\
%\iff& (M,\mathsf{a},rel(X))\models{\theta_0}(\mathsf{\underline{a}}/\mathsf{x})\text{ or }(M,\mathsf{a},rel(X))\models{\theta_1}(\mathsf{\underline{a}}/\mathsf{x})\tag{by induction hypothesis}\\
%\iff& (M,\mathsf{a},rel(X))\models\theta_0(\mathsf{\underline{a}}/\mathsf{x})\vee\theta_1(\mathsf{\underline{a}}/\mathsf{x}).
%\end{align*}
%
%If $\theta=\theta_0(\mathsf{x})\wedge\theta_1(\mathsf{x})$, then
%\begin{align*}
%&M\models_{X(\mathsf{a}/\mathsf{x})}\phi_{\theta_0}(\mathsf{x},\mathsf{v})\wedge\phi_{\theta_1}(\mathsf{x},\mathsf{v})\\
%\iff& M\models_{X(\mathsf{a}/\mathsf{x})}\phi_{\theta_0}(\mathsf{x},\mathsf{v})\text{ and }M\models_{X(\mathsf{a}/\mathsf{x})}\phi_{\theta_1}(\mathsf{x},\mathsf{v})\\
%\iff& (M,\mathsf{a},rel(X))\models{\theta_0}(\mathsf{\underline{a}}/\mathsf{x})\text{ and }(M,\mathsf{a},rel(X))\models{\theta_1}(\mathsf{\underline{a}}/\mathsf{x})\tag{by induction hypothesis}\\
%\iff& (M,\mathsf{a},rel(X))\models\theta_0(\mathsf{\underline{a}}/\mathsf{x})\wedge\theta_1(\mathsf{\underline{a}}/\mathsf{x}).
%\end{align*}

Finally, we have that%for any $\LL$-model $M$ and  any nonempty team $X$ over $\{v_1,\dots,v_k\}$, %we have
\begin{align*}
&M\models_X Q_1^1x_1\dots Q_n^1x_n\phi_\alpha(\mathsf{x},\mathsf{v})\\
\iff& Q_1a_1\in M\dots Q_n a_n\in M:~M\models_{X(\mathsf{a}/\mathsf{x})}\phi_\alpha(\mathsf{x},\mathsf{v})\\
\iff& Q_1a_1\in M\dots Q_n a_n\in M:~(M,rel(X))\models\alpha(\mathsf{a}/\mathsf{x})\tag{by \ref{trans_thm_altpf_eq1}}\\
\iff& (M,rel(X))\models Q_1x_1\dots Q_nx_n\alpha(\mathsf{x}).
\end{align*}

This completes the proof for the translation into \FOT. Now, if $R$ occurs only negatively in $\gamma$ (thus also in $\alpha$), we can define alternatively the translation into \FOTd by redefining the following two cases:

%If $\alpha=\neg R\mathsf{x}_{i}$, define $\phi_{\alpha}=\neg\bigwedge_jx_{ij}=v_j$; if $\alpha=\delta(\mathsf{x})$ is a literal  in which $R$ does not occur, define  $\phi_\alpha=\delta(\mathsf{x})$. It is easy to verify that (\ref{trans_thm_altpf_eq1}) still holds for these two cases.
%
%
If $\alpha=\neg R\mathsf{x}_{i}$, define alternatively $\phi_{\alpha}:=\mathsf{x}_{i}\neq\mathsf{v}$ (which is defined as $\neg\bigwedge_{j\in J}\,x_{ij}=v_j)$, and we have that
\begin{align*}
M\models_{X(\mathsf{a}/\mathsf{x})}\mathsf{x}_{i}\neq\mathsf{v}&\iff \text{for all }s\in X(\mathsf{a}/\mathsf{x}),~s(\mathsf{x}_{i})\neq s(\mathsf{v})\\
&\iff \text{for all }s\in X,~\mathsf{a}_{i}\neq s(\mathsf{v})\\
&\iff \mathsf{a}_{i}\notin rel(X)=\{s(\mathsf{v})\mid s\in X\}\\
&\iff (M,rel(X))\models\neg R\mathsf{x}_{i}(\mathsf{a}/\mathsf{x}).
\end{align*}

If $\alpha:=\delta(\mathsf{x})$ is a literal (atomic or negated atomic formula) that does not contain $R$, define alternatively $\phi_\alpha=\delta(\mathsf{x})$. It is easy to verify that (\ref{trans_thm_altpf_eq1}) still holds.
\end{proof}

\begin{cor}
\FOT-formulas  characterize  elementary  team properties (modulo the empty team).
\end{cor}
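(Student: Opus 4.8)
The plan is to combine the two directions already established, Theorem~\ref{fot2fo} and Theorem~\ref{fo2fot}, and simply package them into the statement about team properties. Recall that a (global) team property $\mathcal{P}$ is, by definition, first-order (elementary) if there is a first-order $\LL(R)$-sentence $\alpha(R)$ with $(M,A)\models\alpha(R)\iff A\in\mathcal{P}_M$ for all $M$ and all \emph{nonempty} $A$; and since every team-based logic here has the empty team property, we always have $\emptyset\in\mathcal{P}_M$, so the behavior on the empty team is fixed and plays no role. Thus the content of the corollary is the equivalence: a team property $\mathcal{P}$ is first-order (and contains $\emptyset$ everywhere) if and only if $\mathcal{P}=\mathcal{P}^\phi$ for some \FOT-formula $\phi$.

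For the left-to-right inclusion, suppose $\mathcal{P}$ is first-order, witnessed by an $\LL(R)$-sentence $\gamma(R)$ with $R$ of arity $k$. Apply Theorem~\ref{fo2fot} to obtain an \FOT-formula $\phi_\gamma(v_1,\dots,v_k)$ with $M\models_X\phi_\gamma\iff(M,rel(X))\models\gamma(R)$ for every $\LL$-model $M$ and every nonempty team $X$ over $\{v_1,\dots,v_k\}$. For nonempty $A\subseteq M^k$, choosing a team $X$ with $rel(X)=A$ (e.g.\ $X=\{s : \mathsf{v}\mapsto \mathsf{a} \mid \mathsf{a}\in A\}$) gives $A\in\mathcal{P}^{\phi_\gamma}_M\iff (M,A)\models\gamma(R)\iff A\in\mathcal{P}_M$. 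Since $\emptyset\in\mathcal{P}_M$ by assumption and $\emptyset\in\mathcal{P}^{\phi_\gamma}_M$ by the empty team property, the two local properties coincide at every $M$, so $\mathcal{P}=\mathcal{P}^{\phi_\gamma}$.

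For the right-to-left inclusion, let $\phi(v_1,\dots,v_k)$ be any \FOT-formula. Apply Theorem~\ref{fot2fo} to get a first-order $\LL(R)$-sentence $\gamma_\phi(R)$ with $M\models_X\phi\iff(M,rel(X))\models\gamma_\phi(R)$ for all $M$ and all teams $X$ over $\{v_1,\dots,v_k\}$. For nonempty $A\subseteq M^k$, pick a team $X$ with $rel(X)=A$; then $A\in\mathcal{P}^\phi_M\iff (M,A)\models\gamma_\phi(R)$, which is exactly the defining condition \eqref{fo-def-definition}. Hence $\mathcal{P}^\phi$ is first-order, and $\emptyset\in\mathcal{P}^\phi_M$ for every $M$ by the empty team property. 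Combining the two inclusions yields the corollary.

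There is no real obstacle here; the only point requiring a line of care is the passage between a relation $A$ and a team $X$ realizing it, i.e.\ observing that for every nonempty $A\subseteq M^k$ there is a team $X$ over $\{v_1,\dots,v_k\}$ with $rel(X)=A$, and that the conditions in Theorems~\ref{fot2fo} and~\ref{fo2fot} depend on $X$ only through $rel(X)$ (which is immediate from the locality property and the form of the statements). One should also explicitly flag the ``modulo the empty team'' caveat: the correspondence is exact on nonempty teams/relations, while on the empty team both sides are trivially true, so nothing is lost. If desired, one can note the parallel corollary for \FOTd using the ``$R$ occurs only negatively'' clauses of both theorems, giving that \FOTd-formulas characterize exactly the downward closed first-order team properties.
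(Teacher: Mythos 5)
Your proposal is correct and is exactly the argument the paper intends: the corollary is stated without proof precisely because it is the direct combination of Theorem~\ref{fot2fo} and Theorem~\ref{fo2fot}, with the passage between relations $A$ and teams $X$ realizing them and the ``modulo the empty team'' caveat handled just as you describe.
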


\begin{rmk}
Similarly,  the logic defined by the following syntax with exclusion atoms also characterizes elementary team properties (modulo the empty team):
\[\phi::=\lambda\mid\, \mathsf{x}\,|\, \mathsf{y}\,\mid\wcn\phi\mid\phi\wedge\phi\mid\phi\vvee\phi\mid \existso x \phi\mid \forallo x\phi\]
where $\lambda$ is an arbitrary first-order atomic formula. %Since exclusion atoms are also downward closed, we can also define alternative the syntax of \FOTd as, e.g.,
%\[\phi::=\lambda\mid \mathsf{x}\,|\, \mathsf{y}\mid\phi\wedge\phi\mid\phi\vvee\phi\mid \existso x \phi\mid \forallo x\phi\]
To see why, note that for any 
%Exclusion atoms  can  be translated into \FO, since they can be translated into \FOT (Lemma \ref{indepa-def-fot}\ref{exc2fot}) and then into \FO, or more directly the translation of the 
%
exclusion atom $\rho(\mathsf{vx})\mid\sigma(\mathsf{vx})$, its translation for the equivalence (\ref{trans_thm_eq0}) can be defined as 
\[\gamma_{\rho(\mathsf{vx})\mid\sigma(\mathsf{vx})}(R)=\forall \mathsf{vu}((R\mathsf{v}\wedge R\mathsf{u})\to \rho(\mathsf{vx})\neq \sigma(\mathsf{ux})).\]
For the converse direction, in equivalence (\ref{trans_thm_altpf_eq1}), it is easy to see that the translation for the case $\alpha=\neg R\mathsf{x}_i$ can also be defined  as $\phi_{\alpha}=\mathsf{x}_{i}\mid\mathsf{v}$,
%since
%\begin{align*}
%M\models_{X(\mathsf{a}/\mathsf{x})}\mathsf{x}_{i}\mid\mathsf{v}&\iff \text{for all }s,s'\in X(\mathsf{a}/\mathsf{x}),~s(\mathsf{x}_{i})\neq s'(\mathsf{v})\\
%&\iff \text{for all }s'\in X,~\mathsf{a}_{i}\neq s'(\mathsf{v})\\
%&\iff \mathsf{a}_{i}\notin rel(X)=\{s'(\mathsf{v})\mid s'\in X\}\\
%&\iff (M,rel(X))\models\neg R\mathsf{x}_{i}(\mathsf{a}/\mathsf{x}).
%\end{align*}
and the translation for the case $\alpha=R\mathsf{x}_i$ can thus  be defined alternatively as $\phi_{R\mathsf{x}_i}=\phi_{\neg(\neg R\mathsf{x}_i)}=\wcn \mathsf{x}_{i}\mid\mathsf{v}$. \end{rmk}

To conclude from the above theorems that \FOTd formulas characterize downward closed elementary  team properties (modulo the empty team),  we now prove a characterization theorem for first-order sentences $\alpha(R)$ that define downward closed team properties, by applying Lyndon's Interpolation Theorem of first-order logic, which we recall below.
%Then we can  conclude from the above theorems that \FOTd  characterizes all downward closed first-order definable  team properties.
%We call an occurrence of a predicate symbol $R$ in a formula $\phi$ {\em positive}
% ({\em negative}) if it is in the scope of an even (odd) number of nested negation symbols. 
 %We recall below Lyndon's Interpolation Theorem for first-order logic.

%We say that $\alpha(R)$ is {\em downward monotone with respect to $R$} if for all models $M$, all sets $A, B\subseteq M^k$, and all assignments $s$,
%\[[\,(M,B)\models_s\alpha(R)\text{ and }A\subseteq B\,]\Longrightarrow (M,A)\models_s\alpha(R).\]
%
%
%\begin{defn}
%Let $\alpha(R)$ be a first-order $\LL(R)$-formula with a $k$-ary predicate symbol $R$. 
%\begin{itemize}
%\item $\alpha$ is said to be {\em downward monotone with respect to $R$} if for all models $M$, all sets $A\subseteq B\subseteq M^k$, and all assignments $s$,
%\[(M,B)\models_s\alpha\Longrightarrow (M,A)\models_s\alpha.\]
%\item $\alpha$ is said to be {\em closed under unions with respect to $R$} if for all models $M$, all sets $A, B\subseteq M^k$, and all assignments $s$,
%\[[\,(M,A)\models_s\alpha\text{ and }(M,B)\models_s\alpha\,]\Longrightarrow (M,A\cup B)\models_s\alpha.\]
%\end{itemize}
%\end{defn}

%\begin{defn}
%An occurrence of a predicate symbol $R$ in a formula $\phi$ is called {\em positive}
% ({\em negative}) if it is in the scope of an even (odd) number of nested negation symbols.
%\end{defn}

\begin{theorem}[Lyndon's Interpolation \cite{Lyndon59}]\label{lyndon_inter}
Let $\alpha$ be a first-order $\LL_0$-formula and $\beta$ a first-order $\LL_1$-formula. If $\alpha\models\beta$, then there is a first-order $\LL_0\cap\LL_1$-formula $\delta$ such that $\alpha\models\delta$ and $\delta\models\beta$, and moreover a predicate symbol has a positive (resp. negative) occurrence in $\delta$ only if it has a positive (resp. negative) occurrence in both $\alpha$ and $\beta$.
\end{theorem}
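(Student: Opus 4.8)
The plan is to prove the theorem proof-theoretically: extract $\delta$ from a cut-free sequent derivation of $\alpha\Rightarrow\beta$ while tracking the polarities with which each predicate symbol occurs. Two preliminary remarks reduce the work. First, by the flatness property of first-order formulas, the entailment $\alpha\models\beta$ coincides with ordinary Tarskian entailment, so we may reason classically. Second, the case where $\alpha$ or $\beta$ has free variables reduces to the case of sentences: replace each free variable $z$ by a fresh individual constant $c_z$; an interpolant $\delta'$ for the resulting sentences cannot mention a constant outside $\LL_0\cap\LL_1$, hence it mentions only the $c_z$ with $z\in\textsf{Fv}(\alpha)\cap\textsf{Fv}(\beta)$, and substituting $z$ back for $c_z$ in $\delta'$ yields the desired $\delta$. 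So assume $\alpha$ and $\beta$ are sentences.

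Work in a Gentzen-style sequent calculus $\mathsf{LK}^{=}$ for classical first-order logic in which equality is governed by primitive reflexivity and replacement \emph{rules} rather than by nonlogical axioms, so that cut elimination still holds. Since $\alpha\models\beta$, by completeness the sequent $\alpha\Rightarrow\beta$ is derivable, and by the Hauptsatz it has a cut-free derivation $\pi$. The heart of the argument is the following claim, proved by induction on $\pi$: for every sequent $\Gamma\Rightarrow\Delta$ occurring in $\pi$ and every partition of it into a ``left part'' $\Gamma_1\Rightarrow\Delta_1$ and a ``right part'' $\Gamma_2\Rightarrow\Delta_2$ (so $\Gamma=\Gamma_1,\Gamma_2$ and $\Delta=\Delta_1,\Delta_2$ as multisets), there is a formula $\theta$, in the language common to the two parts, such that $\Gamma_1\Rightarrow\Delta_1,\theta$ and $\theta,\Gamma_2\Rightarrow\Delta_2$ are both derivable, and moreover, for every predicate symbol $P$ distinct from $=$: a positive occurrence of $P$ in $\theta$ forces $P$ to occur positively in some formula of $\Gamma_1$ or negatively in some formula of $\Delta_1$, and also positively in some formula of $\Delta_2$ or negatively in some formula of $\Gamma_2$, and dually for a negative occurrence of $P$ in $\theta$. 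Applying the claim to the endsequent with the partition $\Gamma_1=\{\alpha\}$, $\Delta_1=\emptyset$, $\Gamma_2=\emptyset$, $\Delta_2=\{\beta\}$ yields a formula $\delta:=\theta$ with $\alpha\models\delta$, $\delta\models\beta$ and the required Lyndon condition.

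The induction itself is mostly routine polarity bookkeeping. Atomic initial sequents $A\Rightarrow A$ and the structural rules are handled directly: for $A\Rightarrow A$ one takes $\theta$ to be one of $A$, $\neg A$, $\top$, $\bot$ according to which side the two copies of $A$ fall on. For the logical rules, $\theta$ for the conclusion is built compositionally from the interpolant(s) of the premiss(es): a $\wedge$- or $\vee$-rule combines them with the dual connective, a $\neg$-rule moves the principal formula across the partition and negates the interpolant, and a quantifier rule prefixes $\exists$ or $\forall$ to the interpolant, where the eigenvariable condition is exactly what keeps $\theta$ in the common language. In each case the connective is introduced with a polarity matching the displayed condition, and equality atoms that happen to appear in $\theta$ are harmless since $=$ is exempt from the constraint.

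I expect the main obstacles to be the two points where genuine care is needed. The first is equality: one must use primitive equality rules rather than the replacement axiom $\forall\mathsf{x}\mathsf{y}(\mathsf{x}=\mathsf{y}\wedge P\mathsf{x}\to P\mathsf{y})$, since that axiom contains $P$ with both polarities and, as an antecedent formula, could license spurious occurrences of $P$ in $\theta$ and break the polarity conclusion. The second is the quantifier cases, where one must check that eliminating the eigenvariable does not reintroduce it into $\theta$ and that the attendant term/variable substitutions preserve the polarity accounting. (As an alternative one could argue model-theoretically, refining the Henkin construction for Craig interpolation into a pair of Henkin theories over $\LL_0$ and $\LL_1$ that are jointly ``Lyndon-consistent'' and amalgamating their term models along the common positive and negative diagrams; but the proof-theoretic route keeps the polarity constraint the most visible.)
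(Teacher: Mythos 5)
Your proposal is correct in substance, but note that the paper itself gives no proof of this statement: it is imported as a classical result (cited to Lyndon 1959) and used as a black box in the proof of \Cref{dw_neg}, so there is no internal argument to compare against. What you have written is the standard self-contained route, namely Maehara's partition-interpolation lemma over a cut-free sequent calculus with polarity bookkeeping, and your preliminary reductions (flatness to pass from team-semantic to Tarskian entailment, fresh constants to pass from formulas to sentences) are both sound and appropriate in this paper's setting. You also correctly identify the two genuinely delicate points: equality must be handled by primitive rules on atoms rather than by the replacement axiom (whose mixed polarities would poison the bookkeeping), and the quantifier cases carry the eigenvariable conditions. Two small caveats worth making explicit if you flesh this out: in the non-eigenvariable rules ($\forall$-left, $\exists$-right) the witnessing terms may contain constants or function symbols outside $\LL_0\cap\LL_1$, and the usual extra step of replacing such maximal alien terms by fresh variables and quantifying them out in the interpolant is needed, which your sketch only gestures at; and the polarity condition must be read as exempting $=$ (as you do), a point the paper's statement leaves implicit but which is harmless for its application, where the only predicate at issue is the fresh symbol $R$. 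What your approach buys over the paper's citation is a visible, effective construction of the interpolant with the polarity constraint tracked throughout; what the paper's choice buys is brevity, since only the statement, not the construction, is needed for \Cref{fotd_fodw}.
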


\begin{prop}\label{dw_neg}
A first-order $\LL(R)$-sentence $\alpha(R)$  defines a downward closed team property with respect to $R$ if and only if there is a first-order $\LL(R)$-sentence $\beta(R)$ such that $\alpha\equiv\beta$ and $R$ occurs only negatively in $\beta$. %\todof{empty set test?}
\end{prop}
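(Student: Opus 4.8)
The direction from right to left is immediate: if $\alpha\equiv\beta$ and $R$ occurs only negatively in $\beta$, then by Theorem~\ref{fo2fot} (applied to $\beta$, which is semantically equivalent to $\alpha$) the team property defined by $\alpha$ is definable by an \FOTd-formula, hence downward closed; alternatively one argues directly that a first-order formula in which $R$ occurs only negatively defines a downward closed property of $R$, by a routine induction on the formula (in negation normal form, $R$ appears only in subformulas $\neg R\mathsf t$, and shrinking $R^M$ only makes such subformulas easier to satisfy, so the induction goes through for $\wedge,\vee,\forall,\exists$). The real content is the left-to-right direction.

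So suppose $\alpha(R)$ defines a downward closed team property, i.e.\ whenever $A\subseteq B$ and $(M,B)\models\alpha(R)$ we have $(M,A)\models\alpha(R)$. The plan is to introduce a fresh $k$-ary relation symbol $R'$ and consider the $\LL(R,R')$-sentence expressing ``$R'\subseteq R$'', namely $\forall\mathsf v\,(R'\mathsf v\to R\mathsf v)$. Downward closure of the property defined by $\alpha$ says precisely that
\[
\alpha(R)\wedge\forall\mathsf v\,(R'\mathsf v\to R\mathsf v)\models\alpha(R'/R),
\]
where $\alpha(R'/R)$ is $\alpha$ with $R$ replaced by $R'$. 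Now apply Lyndon's Interpolation Theorem (Theorem~\ref{lyndon_inter}) to this entailment: the left-hand side is a formula over $\LL\cup\{R,R'\}$ and the right-hand side over $\LL\cup\{R'\}$, so their common vocabulary is $\LL\cup\{R'\}$, and there is an interpolant $\delta$ over $\LL\cup\{R'\}$ with $\alpha(R)\wedge\forall\mathsf v(R'\mathsf v\to R\mathsf v)\models\delta$ and $\delta\models\alpha(R'/R)$. The sign condition in Lyndon's theorem is what does the work: $R$ does not occur in $\delta$ at all (it is not in the common vocabulary), and $R'$ occurs in $\delta$ only with the polarity it has in \emph{both} sides; since $R'$ occurs only negatively in $\forall\mathsf v(R'\mathsf v\to R\mathsf v)$ (and not at all in $\alpha(R)$), $R'$ occurs only negatively in $\delta$.

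It remains to see that $\delta$, after renaming $R'$ back to $R$, is semantically equivalent to $\alpha(R)$. One inclusion: $\delta(R/R')\models\alpha(R)$ follows directly from $\delta\models\alpha(R'/R)$ by uniform renaming of the relation symbol (a model of $\delta(R/R')$, read as an $\LL(R',R)$-model interpreting $R'$ as $R$, satisfies $\delta$, hence $\alpha(R'/R)$, hence $\alpha(R)$). For the other inclusion, given $(M,R^M)\models\alpha(R)$, expand it to an $\LL(R,R')$-model by setting $(R')^M=R^M$; then the conjunct $\forall\mathsf v(R'\mathsf v\to R\mathsf v)$ holds trivially, so $(M,R^M,R^M)\models\delta$, and since $R$ does not occur in $\delta$ this is just $(M,R^M)\models\delta(R/R')$. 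Thus $\beta(R):=\delta(R/R')$ satisfies $\alpha\equiv\beta$ with $R$ occurring only negatively in $\beta$, as required.

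\textbf{Main obstacle.} The only subtle point is the bookkeeping with Lyndon's theorem: one must set up the entailment so that the two sides have exactly the right common vocabulary (in particular $R\notin\LL_1$) and so that the polarity of $R'$ on the left is negative. Writing ``$R'\subseteq R$'' as $\forall\mathsf v(R'\mathsf v\to R\mathsf v)=\forall\mathsf v(\neg R'\mathsf v\vee R\mathsf v)$ makes both of these manifest. A minor preliminary point to address is that Lyndon's theorem as stated is for formulas, and to apply it cleanly one wants $\alpha$ in negation normal form so that ``$R$ occurs only negatively'' has its literal syntactic meaning; this is harmless since every first-order formula is equivalent to one in negation normal form and the sign of each predicate occurrence is preserved.
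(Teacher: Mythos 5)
Your proof is correct and takes essentially the same route as the paper: the right-to-left direction is the paper's induction (negation normal form, with $\neg R\mathsf{t}$ the only nontrivial case), and the left-to-right direction is the paper's Lyndon-interpolation argument up to a renaming --- the paper interpolates in the entailment $\alpha(S/R)\wedge\forall\mathsf{x}(R\mathsf{x}\to S\mathsf{x})\models\alpha(R)$, keeping $R$ as the smaller relation so the interpolant is already in $R$, whereas you keep $R$ as the larger relation, interpolate in the fresh symbol $R'$, and rename at the end. The remaining verification that the interpolant is equivalent to $\alpha$, via the expansion interpreting both relation symbols as the same set, is identical in both versions.
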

\begin{proof}
%The direction ``$\Longleftarrow$" is easy to verify. We only give the detailed proof for the nontrivial direction ``$\Longrightarrow$".
``$\Longleftarrow$": Suppose $\alpha$ is a first-order $\LL(R)$-sentence in which the $k$-ary predicate $R$ occurs only negatively, and we  assume w.l.o.g. that $\alpha$ is in negation normal form. We can show by induction that $\alpha$ is downward closed with respect to $R$. 
%\todof{This definition does not match very well with the one given in the preliminaries. Need to think which way to go} 
The only nontrivial case is when $\alpha=\neg R\mathsf{t}$. In this case,   for any model $M$, any $A\subseteq B\subseteq M^k$, and any assignment $s$, 
\[
(M,B)\models_s \neg R\mathsf{t}\Longrightarrow s(\mathsf{t}^M)\notin B\Longrightarrow  s(\mathsf{t}^M)\notin A\Longrightarrow (M,A)\models_s \neg R\mathsf{t}.
\]

``$\Longrightarrow$": 
Suppose that $\alpha$ is a first-order $\LL(R)$-sentence that is downward closed with respect to $R$. It is easy to see that
%\begin{equation}\label{dw_neg_eq1}
\(\alpha\equiv \exists S(\alpha(S/R)\wedge \forall \mathsf{x}(R\mathsf{x}\to S\mathsf{x})),\)
%\end{equation}
where $\alpha(S/R)$ is obtained from $\alpha$ by replacing every occurrence of $R$ by $S$. Put $\gamma=\alpha(S/R)\wedge \forall \mathsf{x}(R\mathsf{x}\to S\mathsf{x})$, and note that $R$ occurs only  negatively in $\gamma$. Then $\gamma\models\alpha$. %, since for any $\LL(R,S)$-model $(M,A,B)$ such that $(M,A,B)\models \gamma(R,S)$, we have $(M,A)\models \exists S\gamma(R,S)$, which %by (\ref{dw_neg_eq1}) 
%implies $(M,A)\models\alpha(R)$.
%
%
Now, by Lyndon's Interpolation Theorem, there is a first-order $\LL(R)$-sentence $\beta(R)$ such that $\gamma(R,S)\models \beta(R)$ and $\beta(R)\models\alpha(R)$, and moreover, $R$ occurs only negatively in $\beta$. 
It remains to  show $\alpha\models\beta$. For any $\LL(R)$-model $(M,A)$ such that $(M,A)\models\alpha(R)$, the $\LL(R,S)$-model $(M,A,A)$ clearly satisfies $(M,A,A)\models \alpha(S/R)\wedge \forall \mathsf{x}(R\mathsf{x}\to S\mathsf{x})$, i.e., $(M,A,A)\models\gamma(R,S)$. Since $\gamma\models \beta$, we conclude $(M,A,A)\models\beta(R)$, thereby $(M,A)\models\beta(R)$.
\end{proof}

\begin{cor}\label{fotd_fodw}
For any $\LL$-formula $\phi(\mathsf{v})$ of \FOTd, there exists a first-order $\LL(R)$-sentence $\gamma_\phi(R)$ with $R$ occurring only negatively such that (\ref{fot2fo_eq}) holds, and vice versa.
%For any $\LL$-formula $\phi(\mathsf{w})$ of \WInc, there exists an $\LL(R)$-sentence $\tau(\phi)$ with $R$ occurring only positively such that (\ref{trans_thm_eq0}) holds, and vice versa.
In particular, \FOTd-formulas  characterize  downward closed elementary team properties (modulo the empty team).
\end{cor}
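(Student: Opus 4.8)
The plan is to obtain the corollary by composing the three results that precede it: Theorem~\ref{fot2fo} supplies the direction from \FOTd to first-order logic, Theorem~\ref{fo2fot} the direction from first-order logic to \FOTd, and Proposition~\ref{dw_neg} is the bridge turning ``downward closed'' into ``$R$ occurs only negatively''.

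First I would dispatch the two halves of the opening sentence. The half ``for every \FOTd-formula $\phi(\mathsf{v})$ there is a first-order $\LL(R)$-sentence $\gamma_\phi(R)$ with $R$ occurring only negatively and satisfying~(\ref{fot2fo_eq})'' is literally the ``in addition'' clause of Theorem~\ref{fot2fo}. For the ``vice versa'' half, given a first-order $\LL(R)$-sentence $\gamma(R)$ in which $R$ occurs only negatively, the ``moreover'' clause of Theorem~\ref{fo2fot} produces a \FOTd-formula $\phi_\gamma(\mathsf{v})$ with $M\models_X\phi_\gamma\iff(M,rel(X))\models\gamma(R)$ for every nonempty team $X$ over $\{v_1,\dots,v_k\}$, which is~(\ref{fot2fo_eq}) modulo the empty team; should the empty team need to be covered as well, one first replaces $\gamma(R)$ by $\gamma(R)\vee\forall\mathsf{v}\,\neg R\mathsf{v}$, which agrees with $\gamma$ on every nonempty relation, still has $R$ occurring only negatively, and is true of the empty relation, thereby matching the empty team property of $\phi_\gamma$.

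Next I would prove the ``in particular'' statement in two inclusions. For ``$\subseteq$'', any \FOTd-formula $\phi(\mathsf{v})$ defines a downward closed team property (already observed in the text), and it is first-order because, writing $X_A$ for the unique team with domain $\{v_1,\dots,v_k\}$ and $rel(X_A)=A$, Theorem~\ref{fot2fo} gives $A\in\mathcal{P}^\phi_M\iff M\models_{X_A}\phi\iff(M,A)\models\gamma_\phi(R)$ for every nonempty $A\subseteq M^k$, so $\gamma_\phi$ witnesses~(\ref{fo-def-definition}). For ``$\supseteq$'', let $\mathcal{P}$ be a downward closed first-order team property witnessed by an $\LL(R)$-sentence $\alpha(R)$, and set $\alpha'(R):=\alpha(R)\vee\forall\mathsf{v}\,\neg R\mathsf{v}$. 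Then $\alpha'$ still witnesses $\mathcal{P}$ on nonempty relations, and it defines a downward closed team property with respect to $R$ in the precise sense of Proposition~\ref{dw_neg}: if $A\subseteq B$ and $(M,B)\models\alpha'(R)$, then either $A=\emptyset$, in which case the new disjunct gives $(M,A)\models\alpha'(R)$, or $A\neq\emptyset$, so $B\neq\emptyset$, so $B\in\mathcal{P}_M$, so $A\in\mathcal{P}_M$ by downward closure, so $(M,A)\models\alpha(R)$ and a fortiori $(M,A)\models\alpha'(R)$. Proposition~\ref{dw_neg} then gives $\beta(R)\equiv\alpha'(R)$ with $R$ occurring only negatively in $\beta$, and Theorem~\ref{fo2fot} converts $\beta$ into a \FOTd-formula $\phi_\beta$ with $\mathcal{P}^{\phi_\beta}=\mathcal{P}$ modulo the empty team.

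The step I expect to require the most care is precisely this empty-team bookkeeping: Theorem~\ref{fo2fot} only speaks about nonempty teams, and a first-order sentence witnessing a downward closed team property need not itself be downward monotone in $R$ all the way down to the empty relation, so one cannot feed $\alpha(R)$ directly into Proposition~\ref{dw_neg}; the passage to $\alpha'(R)$ repairs both defects at once. Everything else is a routine composition of the cited statements, together with the identification of teams with domain $\{v_1,\dots,v_k\}$ with $k$-ary relations.
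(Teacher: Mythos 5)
Your proposal is correct and takes essentially the same route as the paper, which gives no separate argument for this corollary precisely because it is the direct composition of the ``in addition'' clause of Theorem~\ref{fot2fo}, the ``moreover'' clause of Theorem~\ref{fo2fot}, and Proposition~\ref{dw_neg} that you carry out. Your extra bookkeeping via $\alpha'(R)=\alpha(R)\vee\forall\mathsf{v}\,\neg R\mathsf{v}$, ensuring downward monotonicity in $R$ all the way down to the empty relation before invoking Proposition~\ref{dw_neg}, is a sensible refinement of a point the paper leaves implicit under its ``modulo the empty team'' convention.
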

%\begin{proof}
%Follows from \Cref{fot2fo}, where the formula $\gamma_\phi(R)$ has only negative occurrences of $R$ when $\phi$ is a formula of \FOTd or \FOTdn, and from \Cref{fo2fot}, where the translation $\phi_\gamma$ can be a formula of \FOTd or \FOTdn when $R$ occurs only negatively in $\gamma$.
%\end{proof}

%\section{Weak classical negations in \FOTd}
%\todof{I've just realized that we (or at least I) forgot about the weak negation, which was crucial for the argument for my trick in \cite{Yang_neg}: In the proof of the completeness theorem, one needs to compute the weak classical negation of a formula on the consequent side. It is possible to fix it, at least for \FOTd (in fact, the \FOTdn version, but we can make this version the official one). For this reason, I now add the following discussion to compute the weak classical negation for \FOTdn sentences.}

Another corollary of the expressive power result for \FOT and \FOTd is that the two logics are compact with respect to sentences (with no free variables), i.e., for any set $\Gamma$ of sentences in the logics, if every finite subset of $\Gamma$ has a model, then $\Gamma$ itself has a model. In the sequel we will use the compactness of \FOT and dependence logic (\D) with respect to arbitrary (possibly open) formulas in the completeness proofs.  For the usual (single-assignment based) first-order logic or existential second-order logic (\ESO), compactness over arbitrary (possibly open) formulas is an immediate corollary of  compactness over sentences, as one can encode every free variable with a fresh constant symbol in the expanded vocabulary. For team-based logics, however, free variables are interpreted over teams, which correspond to relations instead of single values. Relation symbols always have fixed finite arities, whereas a set of formulas may well contain infinitely many free variables. For this reason, compactness for formulas of team-based logics does not follow immediately from a similar argument to the usual case.

%By essentially using the compactness of  first-order logic, it was shown in \cite{Van07dl}  that compactness theorem holds for sentences of \D.

We now prove the compactness theorem for arbitrary formulas of team-based logics whose translations in the sense of (\ref{fot2fo_eq}) are sentences of \ESO (including \FOT and \FOTd), given the assumption that the formulas in question contain only countably many free variables. Our argument builds on the one in \cite{Van07dl} where \D is shown to be compact with respect to sentences, by using essentially the compactness of first-order logic.
%proof  uses essentially  the compactness of first-order logic, and is based on the proof  in \cite{Van07dl} that \D is compact with respect to sentences.  
 It is not clear how to  adapt our argument to cover also the uncountable case.

 We say that a set $\Gamma$ of formulas of a team-based logic is {\em satisfiable} if $M\models_X\Gamma$ for some model $M$ and some nonempty team $X$ of $M$ with $\mathsf{dom}(X)\supseteq \mathsf{Fv}(\Gamma)=\bigcup_{\phi\in\Gamma}\mathsf{Fv}(\phi)$. That is, $\Gamma$ is satisfiable iff $\Gamma\not\models\bot$.  

\begin{theorem}[Compactness]\label{compact}
%Suppose the set $\Var$ of variables is countable. \todof{Change the assumption to ``$\Gamma$ contains countably many free variables"}
Let $\Gamma$ be a set of formulas in a team-based logic whose translations are \ESO-sentences, and suppose that the set of free variables occurring in $\Gamma$ is countable.
If every finite subset $\Gamma_0$ of $\Gamma$   is satisfiable, then $\Gamma$ is satisfiable. %\todof{Quite an involved proof in the end. Maybe move the proof, to at least the end of the section, or to appendix}
%\todof{rephrase}
In particular, \FOT and \FOTd are compact (with respect to formulas).
\end{theorem}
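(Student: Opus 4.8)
The plan is to reduce the compactness of the team-based logic to the compactness of first-order logic, using the translation from Theorem~\ref{fot2fo} (more precisely its analogue for \ESO, which produces an \ESO-sentence $\gamma_\phi(R_\phi)$ equivalent to $\phi$ in the sense of \eqref{fot2fo_eq}). The subtlety, as the introduction flags, is that free variables in team semantics are interpreted over a team, i.e.\ a single relation, whereas in first-order logic distinct formulas get their own fresh relation symbol $R_\phi$; so a naive union of the translations $\{\gamma_\phi(R_\phi)\mid\phi\in\Gamma\}$ is too weak, since nothing forces the various $R_\phi$ to be the $rel$ of a \emph{common} team over the fixed variable domain. The key device to fix this is to encode the team itself as a relation of a \emph{fixed} arity over a fixed tuple of variables.

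Concretely, here is how I would carry it out. Let $\{v_1,v_2,\dots\}$ enumerate the countable set of free variables occurring in $\Gamma$. For each $\phi\in\Gamma$ with $\textsf{Fv}(\phi)\subseteq\{v_1,\dots,v_{n_\phi}\}$, Theorem~\ref{fot2fo} (in its \ESO\ form) yields a first-order — or \ESO — $\LL(R_{n_\phi})$-sentence $\gamma_\phi$, where $R_{n_\phi}$ is an $n_\phi$-ary relation symbol, such that $M\models_X\phi\iff(M,rel(X\rstr\{v_1,\dots,v_{n_\phi}\}))\models\gamma_\phi$ for every team $X$ with domain $\supseteq\{v_1,\dots,v_{n_\phi}\}$. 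The goal is to find a single model interpreting one ``master'' relation from which all the $R_{n_\phi}$ are read off as projections/restrictions. I would introduce a fresh constant-free device: for each $n$ let $P_n$ be an $n$-ary relation symbol, together with the ``coherence'' first-order axioms $\chi_n := \forall x_1\dots x_{n+1}\bigl(P_{n+1}x_1\dots x_{n+1}\to P_n x_1\dots x_n\bigr)$ and, conversely, a surjectivity-of-projection axiom $\forall x_1\dots x_n\bigl(P_n x_1\dots x_n\to\exists x_{n+1}P_{n+1}x_1\dots x_{n+1}\bigr)$, plus nonemptiness $\exists x_1 P_1 x_1$. These say precisely that the relations $P_n$ arise as $rel(X\rstr\{v_1,\dots,v_n\})$ for a common team $X$ over the full countable domain. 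Then set
\[
\Delta \;=\; \{\,\gamma_\phi(P_{n_\phi}/R_{n_\phi})\mid\phi\in\Gamma\,\}\;\cup\;\{\chi_n\mid n\in\omega\}\;\cup\;\{\text{nonemptiness axioms}\}.
\]
This $\Delta$ is a countable set of \ESO-sentences (equivalently, after Skolemisation, a set of first-order sentences in an expanded vocabulary, to which classical compactness applies).

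Now the argument runs as follows. \textbf{Finite satisfiability of $\Delta$:} given a finite $\Delta_0\subseteq\Delta$, collect the finitely many $\phi$'s it mentions into a finite $\Gamma_0\subseteq\Gamma$; by hypothesis there are $M$ and a nonempty team $X$ with $M\models_X\Gamma_0$; expand $M$ by interpreting each $P_n$ as $rel(X\rstr\{v_1,\dots,v_n\})$. By the translation equivalence, $(M,\dots)\models\gamma_\phi(P_{n_\phi}/R_{n_\phi})$ for each $\phi\in\Gamma_0$, and the coherence and nonemptiness axioms hold by construction — so $\Delta_0$ is satisfied. \textbf{Compactness applied:} by first-order/\ESO compactness there is a model $N$ of all of $\Delta$, with interpretations $P_n^N$. \textbf{Reconstructing the team:} define $X^\ast := \{\,s:\{v_1,v_2,\dots\}\to N \mid (s(v_1),\dots,s(v_n))\in P_n^N \text{ for all } n\,\}$; the coherence axioms $\chi_n$ and the projection-surjectivity axioms guarantee $rel(X^\ast\rstr\{v_1,\dots,v_n\})=P_n^N$ for every $n$, and nonemptiness gives $X^\ast\neq\emptyset$. (This inverse-limit reconstruction is where countability of the variable set is used: with $\omega$-many coordinates one takes a countable union/limit of compatible finite tuples; for an uncountable index set the projection axioms need not pin down a single team, and $X^\ast$ could collapse, which is exactly why the uncountable case is open.) Finally, for each $\phi\in\Gamma$, since $(N,P_{n_\phi}^N)\models\gamma_\phi$ and $P_{n_\phi}^N = rel(X^\ast\rstr\{v_1,\dots,v_{n_\phi}\})$, the translation equivalence \eqref{fot2fo_eq} (together with locality, so that only the restriction of $X^\ast$ to $\textsf{Fv}(\phi)$ matters) yields $N\models_{X^\ast}\phi$. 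Hence $N\models_{X^\ast}\Gamma$ with $X^\ast$ nonempty, i.e.\ $\Gamma$ is satisfiable. The ``in particular'' clause for \FOT\ and \FOTd\ is then immediate, since by Theorem~\ref{fot2fo} their translations are (first-order, hence in particular \ESO) sentences.

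The main obstacle is precisely the bookkeeping around the shared team versus the many translation relations: making sure the coherence axioms are strong enough that the reconstructed $X^\ast$ really does restrict down to each $P_n^N$ on the nose (both inclusions), and being careful that the translations from Theorem~\ref{fot2fo} can be taken uniformly so that $\gamma_\phi$ depends only on $\textsf{Fv}(\phi)$ and substituting $P_{n_\phi}$ for $R_{n_\phi}$ is harmless. A minor additional point is handling the \ESO\ (rather than purely first-order) case — but since an \ESO-sentence $\exists\bar S\,\psi(\bar S)$ with $\psi$ first-order can be replaced, for satisfiability purposes, by the first-order sentence $\psi(\bar S)$ in the vocabulary expanded by $\bar S$ (choosing the existential witnesses $\bar S$ to be \emph{fresh} for each $\phi$ so there is no unwanted interaction), ordinary first-order compactness suffices, and everything above goes through verbatim.
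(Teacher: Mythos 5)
Your proposal is correct and follows essentially the same route as the paper: translate each formula into a first-orderized \ESO-sentence with fresh existential witnesses, add a countable coherent chain of relations encoding the restrictions of one common team to the initial segments $\{v_1,\dots,v_n\}$ of the variable list, apply first-order compactness, and rebuild the team as an inverse limit of compatible tuples, which is exactly where countability of $\mathsf{Fv}(\Gamma)$ enters. The only divergence is bookkeeping: you substitute a single master chain $P_n$ directly into translations padded to the domain $\{v_1,\dots,v_{n_\phi}\}$ (harmless, by locality), whereas the paper keeps a separate relation $R_\phi$ for each formula and glues these to chain relations $S_n$ over $\mathsf{Fv}(\Gamma_n)$ via additional axioms.
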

\begin{proof}
Let $\LL$ be the vocabulary of  $\Gamma$ and $\mathsf{Fv}(\Gamma)=\{v_n\mid n\in\mathbb{N}\}$. By assumption, %It follows from \cite{Van07dl} and \Cref{fot2fo} that 
for every  $\phi\in \Gamma$, there is an $\LL(R_\phi)$-sentence $\gamma_{\phi}(R_\phi)$ in \ESO with a fresh $|\mathsf{Fv}(\phi)|$-ary relation symbol $R_\phi$ such that for every $\LL$-model $M$ and team $X$ over the domain $\mathsf{Fv}(\phi)$, 
\begin{equation}\label{cmpct_eq1}
M \models_X \phi\iff (M,rel(X)) \models \gamma_{\phi}(R_\phi).
\end{equation}
We may assume that the \ESO-sentence $\gamma_{\phi}(R_\phi)$ is of the form $\exists R_{\phi}^1\dots \exists R_{\phi}^{m_\phi}\alpha_{\phi}$, where $\alpha_{\phi}$ is a first-order sentence with the relation symbol $R_\phi$, %in the vocabulary $\LL_\phi=\LL(R_\phi,R_\phi^1,\dots,R_\phi^{m_\phi})$, 
and each relation symbol $R_\phi^i$ is fresh. For every $n\in \mathbb{N}$, let
\[\Gamma_n=\{\phi\in \Gamma\mid \textsf{Fv}(\phi)\subseteq \{v_1,\dots,v_n\}\}\]
be the set of formulas in $\Gamma$ whose free variables are among $v_1,\dots,v_n$, and introduce a $|\textsf{Fv}(\Gamma_n)|$-ary fresh relation symbol $S_n$.  Clearly, $\Gamma_n\subseteq\Gamma_{n+1}$ and $\Gamma=\bigcup_{n\in\mathbb{N}}\Gamma_n$. %(since the set $\textsf{Var}=\{v_n\mid n\in\mathbb{N}\}$). 
Consider the set $\Gamma'=\{\alpha_\phi\mid \phi\in \Gamma\}\cup\Delta$ of  first-order sentences, where $\alpha_\phi(R_\phi,R_{\phi}^1,\dots, R_{\phi}^{m_\phi})$ is the first-order part of the \ESO-translation of the formula $\phi$,
\begin{align*}
\Delta=\big\{\exists \mathsf{v}R_\phi\mathsf{v},~\forall \mathsf{v}(R_\phi\mathsf{v}\to \exists \mathsf{u}S_n\mathsf{vu}),\forall &\mathsf{v}\big(S_n\mathsf{v}\to R_\phi\sigma_\phi(\mathsf{v})\big),\\
&\forall \mathsf{v}(S_n\mathsf{v}\to \exists \mathsf{u}S_{n+1}\mathsf{vu})\mid n\in\mathbb{N},~\phi\in \Gamma_n\big\},
\end{align*}
%$\mathsf{v}$ lists variables in $\textsf{Fv}(\Gamma_n)$ and 
$\mathsf{\sigma}_\phi(\mathsf{v})$ lists the free variables in $\phi(\mathsf{v})$, and we assume w.l.o.g.  the variables are ordered  as shown. %\todof{improve}
Intuitively, the theory $\Delta$ ensures that the relation $S_n$ encodes the team over the domain $\textsf{Fv}(\Gamma_n)$ that glues %over domain $\textsf{Fv}(\Gamma_n)\subseteq\{v_1,\dots,v_n\}$ that is consistent with 
all relations $R_\phi$ with $\phi\in\Gamma_n$.

We now show that every finite subset $\Gamma_0'\subseteq \Gamma'$ has a model. Consider the corresponding finite subset $\Gamma_0=\{\psi\in \Gamma\mid \alpha_\psi\in \Gamma_0'\}$ of $\Gamma$. By assumption, there is an \LL-model $N$ and nonempty team $Y$ such that $N\models_{Y} \Gamma_0$, where, by locality, we assume that $\textsf{dom}(Y)=\textsf{Fv}(\Gamma)$.  %\todo{here I actually enlarge the domain of the team $Y$, in any arbitrary way}
For every $\psi\in \Gamma_0$, we have that $N\models_{Y_\psi}\psi$ by the locality property, where $Y_\psi=Y\upharpoonright \mathsf{Fv}(\psi)$ is the restriction of $Y$ over the free variables in $\psi$. %is a team over the finite domain $\mathsf{Fv}(\psi)$. 
Thus, by (\ref{cmpct_eq1}) we know that $(N,rel(Y_\psi))$ is an $\LL(R_\psi)$-model of $\gamma_{\psi}(R_\psi)=\exists R_{\psi}^1\dots \exists R_{\psi}^{m_\psi}\alpha_{\psi}$. It follows that some expansion $(N,rel(Y_\psi),\mathcal{R}_\psi^N)$ of  the model $(N,rel(Y_\psi))$ in the vocabulary $\LL_\psi=\LL(R_\psi,R_{\psi}^1\dots R_{\psi}^{m_\psi})$ is a model of the first-order sentence $\alpha_\psi$, where 
\(\mathcal{R}_\psi^N=\langle (R_\psi^1)^N,\dots,(R_\psi^{m_\psi})^N\rangle.\)
%
%Let $\LL'=\{R_\phi\mid \phi\in \Gamma\}\cup(\bigcup_{\psi\in \Gamma_0}\LL_\psi)\cup \{S_n\mid n\in \mathbb{N}\}$.
%Pick $a\in N$. Define a team $Y'$ on $N$ over the domain $\textsf{Fv}(\Gamma)$ as
%\[Y'=\{s:\textsf{Fv}(\Gamma)\to N\mid s\upharpoonright \textsf{Fv}(\Gamma_0)\in Y\text{ and }s(x)=a\text{ for all }x\in \textsf{Fv}(\Gamma)\setminus \textsf{Fv}(\Gamma_0)\}.\]
Consider the %$\LL'$-
model 
\(N'=(N,\langle R_\phi^{N}\rangle_{\phi\in \Gamma},\langle \mathcal{R}_\psi^N\rangle_{\psi\in\Gamma_0},\langle S_n^N\rangle_{n\in\mathbb{N}})\) in the vocabulary $\{R_\phi\mid \phi\in \Gamma\}\cup(\bigcup_{\psi\in \Gamma_0}\LL_\psi)\cup \{S_n\mid n\in \mathbb{N}\}$, where 
%\begin{itemize}
%\item 
$R_\phi^{N}=rel(Y\upharpoonright \textsf{Fv}(\phi))$ for every $\phi\in \Gamma$,
%\item  
and $S_n^N=rel(Y\upharpoonright \textsf{Fv}(\Gamma_n))$.
%\end{itemize}
It is  not hard to verify that $N'\models\Delta$ (as $S_n$'s and $R_\phi$'s are interpreted according to the nonempty team $Y$).
On the other hand, since $N'$ is an expansion of $(N,rel(Y_\psi),\mathcal{R}_\psi^N)$ for every $\psi\in\Gamma_0$, we have that $N'\models\{\alpha_\psi\mid\psi\in\Gamma_0\}$. Hence $N'\models\Gamma_0'$.
%On the other hand, since $R_\xi^{N}=rel(Y\upharpoonright \textsf{Fv}(\phi_\xi))$ for every $\xi\in I$ and $S_n^N=rel(Y\upharpoonright \textsf{Fv}(\Gamma_n))$ for every $n\in\mathbb{N}$, it is not hard to verify that $N'\models\Delta$. 

 Now, by compactness of first-order logic, $\Gamma'$ has a model $M'$ in the vocabulary $\LL'=\bigcup_{\psi\in \Gamma}\LL_\psi\cup \{S_n\mid n\in \mathbb{N}\}$. For every $\phi\in \Gamma$, let $(M,R_\phi^M)$ be the reduct of $M'$ to the vocabulary $\LL(R_\phi)$. Since $M'\models\alpha_\phi$, we have that $(M,R_\phi^M)\models\gamma_{\phi}(R_\phi)$, which by (\ref{cmpct_eq1}) implies that $M\models_{X_\phi}\phi$ for the team $X_\phi=\{s:\mathsf{Fv}(\phi)\to M\mid s(\mathsf{v})\in R_\phi^M\}$ of $M$ over the finite domain $\mathsf{Fv}(\phi)$, where $\mathsf{v}$ lists $\mathsf{Fv}(\phi)$. Since $M'\models\exists \mathsf{v}R_\phi\mathsf{v}$, we  know that $X_\phi\neq\emptyset$. Consider the team $X$ over $\mathsf{Fv}(\Gamma)$, defined as
\[X=\{s:\mathsf{Fv}(\Gamma)\to M\mid s\upharpoonright \mathsf{Fv}(\phi)\in X_\phi\text{ for all }\phi\in \Gamma\}.\]
We claim that for every $\phi\in \Gamma$, $X\upharpoonright \mathsf{Fv}(\phi)=X_\phi$, which would then imply that $X\neq\emptyset$ and  that $M\models_X\Gamma$ by locality, i.e., $\Gamma$ is satisfiable.

By definition, it suffices to show that $X_\phi\subseteq X\upharpoonright \mathsf{Fv}(\phi)$. Let $s\in X_\phi$. For every $n\in\mathbb{N}$, put $X_{n}=\{t:\textsf{Fv}(\Gamma_n)\to M\mid t(\mathsf{v})\in S_n^{M'}\}$, where $\mathsf{v}$ lists $\textsf{Fv}(\Gamma_n)$; that is, $X_n$ is the team associated with the relation $S_n^{M'}$.
 Let $k\in\mathbb{N}$ be (the least number) such that  $\phi\in \Gamma_k$. For every $n\geq k$,  we find inductively an extension $s_n\in X_{n}$ of $s$ over the domain $\textsf{Fv}(\Gamma_n)$ such that $s_n$ extends $s_{m}$ for all $k\leq m\leq n$.
%$s_n\upharpoonright\textsf{Fv}(\Gamma_m)\in X_{S_m}$ for all $m\leq n$. 
This would then imply that $s_n\upharpoonright \textsf{Fv}(\psi)\in X_\psi$ for all  $\psi\in\Gamma_n$, since 
\(M'\models\forall \mathsf{v}\big(S_{n}\mathsf{v}\to R_{\psi}\sigma_{\psi}(\mathsf{v})\big).\)
Therefore the assignment $\hat{s}=\bigcup_{n\geq k}s_n$ is in the team $X$, thereby $s\in X\upharpoonright \mathsf{Fv}(\phi)$.

Now, since $M'\models \forall \mathsf{v}(R_\phi\mathsf{v}\to\exists \mathsf{u}S_k\mathsf{vu})$, there exists an $s_k\in X_k$ over the domain $\textsf{Fv}(\Gamma_k)$ such that $s_k\upharpoonright \textsf{Fv}(\phi)=s\upharpoonright \textsf{Fv}(\phi)$. 
Suppose $s_n\in X_n$ over $\textsf{Fv}(\Gamma_n)$ for $n\geq k$ has already been defined.  Then, since 
\(M'\models\forall \mathsf{v}(S_n\mathsf{v}\to \exists \mathsf{u}S_{n+1}\mathsf{vu}),\)
there exists an $s_{n+1}\in X_{n+1}$ over  $\textsf{Fv}(\Gamma_{n+1})$ such that $s_{n+1}\upharpoonright \textsf{Fv}(\Gamma_n)=s_n\upharpoonright \textsf{Fv}(\Gamma_n)$.%$s_{n+1}(\mathsf{v})=s_n(\mathsf{v})$ for $\mathsf{v}$ listing $\textsf{Fv}(\Gamma_n)$. %Hence, we conclude by induction hypothesis that for all $m\leq n+1$, $s_{n+1}\upharpoonright\textsf{Fv}(\Gamma_m)\in X_{S_m}$. 
%Moreover, for any $\psi\in\Gamma_{n+1}$, since 
%\[M'\models\forall \mathsf{v}\big(S_{n+1}\mathsf{v}\to R_{\psi}\sigma_{\psi}(\mathsf{v})\big),\]
%we have that $s_{n+1}\upharpoonright \mathsf{Fv}(\psi)\in X_{\psi}$. %Hence, we conclude by induction hypothesis that for all $m\leq n+1$, $s_{n+1}\upharpoonright\textsf{Fv}(\Gamma_m)\in X_{S_m}$ and $s_{n+1}\upharpoonright \mathsf{Fv}(\phi_j)\in X_j$ for all $\phi_j\in\Gamma_m$.
\end{proof}

Let us remark again that the assumption that $\mathsf{Fv}(\Gamma)$ is countable seems to be crucial for the above proof. How to generalize the result to the uncountable case is left as future work.

\section{An axiomatization of \FOT}\label{sec:FOT}

%We use $\dep(x)$ as a shorthand for the \FOT-formula $\existso y(y=x)$. 

In this section, we introduce a system of natural deduction for \FOT, and prove the soundness and completeness theorem.  

For the convenience of our proofs, we present our system of natural deduction in sequent style. %\todof{a set $\mathbb{V}ar$ of second-order variables (constants (0-ary function), relation variables)? }

\begin{defn}
The system of natural deduction for \FOT consists of the rules  in \Cref{rules_weak_cnt_qtf} and \Cref{tb_rules_fot}, where  letters  in sans-serif face (such as $\mathsf{x},\mathsf{y}$) stand for sequences of variables,  
$c$ is a constant symbol, $\dep(\mathsf{t})$ is short for $\existso \mathsf{x}(\mathsf{x}=t)$,  %$\con(\mathsf{t})$ is short for $\bigwedge_i\dep(t_i)$, 
and an inclusion atom  such as $c\mathsf{x}\subseteq v\mathsf{y}$ is short for $\existso u(u=c\wedge u\mathsf{x}\subseteq v\mathsf{y})$, where $\mathsf{x}$ and $u$ are fresh (sequences of) variables. %\todof{removed the shorthand $\textsf{con}(\mathsf{x})$}%\todof{find simpler rules}
%\[\AxiomC{}\RightLabel{\textsf{Id}}\UnaryInfC{$\Gamma,\phi\vdash\phi$}\DisplayProof\quad\quad\AxiomC{$\Gamma\vdash\phi$}\AxiomC{$\Delta,\phi\vdash\psi$}\RightLabel{\textsf{Cut}}\BinaryInfC{$\Gamma,\Delta\vdash\psi$}\DisplayProof\]
%\todof{add some free variable requirements}
%\todof{need to think whether to use binary or unary tree for some rules, and whether to add weakening}

We write $\Gamma\vdash_{\FOT}\phi$ or simply $\Gamma\vdash\phi$, if the sequent $\Gamma\vdash\phi$ is derivable in the system. Write $\phi\dashv\vdash\psi$ if $\phi\vdash\psi$ and $\psi\vdash\phi$.
%\todof{The inverse direction of \incwi is also sound, and this can perhaps be used to characterize Arrow's Theorem.}
\end{defn}

\begin{table}[t]
\centering
\caption{Rules for the identity and some logical constants}\setlength{\tabcolsep}{6pt}
\begin{tabular}{|C{0.445\linewidth}C{0.495\linewidth}|}
\hline
\AxiomC{}\noLine\UnaryInfC{}\RightLabel{$=$\textsf{I}}\UnaryInfC{$\Gamma\vdash t=t$}\noLine\UnaryInfC{}\DisplayProof&\AxiomC{}\noLine\UnaryInfC{$\Gamma\vdash t=t'$} \AxiomC{}\noLine\UnaryInfC{$\Gamma\vdash\phi(t/x)$} \RightLabel{$=$\textsf{Sub}}\BinaryInfC{$\Gamma\vdash\phi(t'/x)$}\noLine\UnaryInfC{}\DisplayProof\\
%Existential quantifier introduction&Existential quantifier elimination\\
%\multicolumn{2}{|c|}{\def\ScoreOverhang{0.3pt}
% \def\defaultHypSeparation{\hskip .1in}\AxiomC{}\noLine\UnaryInfC{$\phi\vvee\psi$}\RightLabel{\vveee}\UnaryInfC{$\exists x\exists y\big(\!\dep(x)\!\wedge\! \dep(y)\wedge \!((x=y\wedge \phi)\!\vee\! (x\neq y\wedge \psi))\big)$}\noLine\UnaryInfC{}\DisplayProof}\\
\AxiomC{}\noLine\UnaryInfC{$\Gamma\vdash\phi$} \AxiomC{}\noLine\UnaryInfC{$\Gamma\vdash\psi$} \RightLabel{$\wedge$\textsf{I}}\BinaryInfC{$\Gamma\vdash\phi\wedge\psi$}\noLine\UnaryInfC{}\DisplayProof& \def\defaultHypSeparation{\hskip .1in}\AxiomC{}\noLine\UnaryInfC{$\Gamma\vdash\phi\wedge\psi$}\RightLabel{$\wedge$\textsf{E}}\UnaryInfC{$\Gamma\vdash\phi$}\noLine\UnaryInfC{}\DisplayProof\quad
\AxiomC{}\noLine\UnaryInfC{$\Gamma\vdash\phi\wedge\psi$}\RightLabel{$\wedge$\textsf{E}}\UnaryInfC{$\Gamma\vdash\psi$}\noLine\UnaryInfC{}\DisplayProof
 \\%\hline
\def\ScoreOverhang{0.5pt}
 \def\defaultHypSeparation{\hskip .1in}\AxiomC{}\noLine\UnaryInfC{$\Gamma\vdash\phi$}\RightLabel{\vveei}\UnaryInfC{$\Gamma\vdash\phi\vvee\psi$}\noLine\UnaryInfC{}\DisplayProof\quad
\AxiomC{}\noLine\UnaryInfC{$\Gamma\vdash\phi$}\RightLabel{\vveei}\UnaryInfC{$\Gamma\vdash\psi\vvee\phi$}\noLine\UnaryInfC{}\DisplayProof
 &\def\ScoreOverhang{0.5pt}
 \def\defaultHypSeparation{\hskip .1in}\AxiomC{$\Gamma\vdash\phi\vvee\psi$}\AxiomC{}\noLine\UnaryInfC{$\Gamma,\phi\vdash\chi$} \AxiomC{}\noLine\UnaryInfC{$\Gamma,\psi\vdash\chi$} \RightLabel{\vveee}\TrinaryInfC{$\Gamma\vdash\chi$}\noLine\UnaryInfC{}\DisplayProof\\%\hline
\multirow{2}{*}{\AxiomC{}\noLine\UnaryInfC{$\Gamma\vdash\phi(t/x)$}\AxiomC{}\noLine\UnaryInfC{$\Gamma\vdash\dep(t)$}\RightLabel{\existsoi}\BinaryInfC{$\Gamma\vdash\existso x\phi$}\noLine\UnaryInfC{}\DisplayProof}&  
\AxiomC{$\Gamma\vdash\existso x\phi$\quad\quad\!\!$\Gamma,\phi(v/x),\dep(v)\vdash\psi$}
\RightLabel{\existsoe~(a)}\UnaryInfC{$\Gamma\vdash\psi$}\noLine\UnaryInfC{}\DisplayProof\\
&\AxiomC{$\Gamma\vdash\existso x\phi$} 
\AxiomC{}\noLine\UnaryInfC{$\Gamma,\phi(c/x)\vdash\psi$}
\RightLabel{\existsoe~(b)}\BinaryInfC{$\Gamma\vdash\psi$}\DisplayProof
 \\%\hline
\AxiomC{$\Gamma,\dep(v)\vdash\phi(v/x)$}\RightLabel{\foralloi~(a)}\UnaryInfC{$\Gamma\vdash\forallo x\phi$}\DisplayProof
&
\AxiomC{}\noLine\UnaryInfC{$\Gamma\vdash\forallo x\phi$}\AxiomC{}\noLine\UnaryInfC{$\Gamma\vdash\dep(t)$} \RightLabel{\foralloe}\BinaryInfC{$\Gamma\vdash\phi(t/x)$}\DisplayProof\\
\AxiomC{}\noLine\UnaryInfC{$\Gamma\vdash\phi(c/x)$}\RightLabel{\foralloi~(b)}\UnaryInfC{$\Gamma\vdash\forallo x\phi$}\noLine\UnaryInfC{}\DisplayProof%\todof{the referee seems to be right: this is redundant under the correct formulation of the first rule: since $\dep(c)$ is axiom}
&{\footnotesize (a). $v\notin\textsf{Fv}(\Gamma\cup\{\phi,\psi\})$\quad\quad\quad\quad\quad\quad\quad\quad\quad\quad\quad\quad (b). $c$ does not occur in $\Gamma,\phi,\psi$}\\\hline
\end{tabular}
\label{rules_weak_cnt_qtf}
\end{table}%

\begin{table}[H]
\centering
\caption{Other rules for \FOT}\setlength{\tabcolsep}{6pt}
\vspace{8pt}
\begin{tabular}{|C{0.23\linewidth}C{0.45\linewidth}C{0.22\linewidth}|}
\hline
%\multicolumn{2}{|C{0.94\linewidth}|}{
%\def\ScoreOverhang{0.5pt}
%\AxiomC{}\noLine\UnaryInfC{$\phi\in\Gamma$}\RightLabel{\textsf{AssmI}}\UnaryInfC{$\Gamma\vdash\phi$}\noLine\UnaryInfC{}\DisplayProof}\\%\hline
\multirow{2}{*}{\def\ScoreOverhang{0.5pt}
\AxiomC{}\noLine\UnaryInfC{}\noLine\UnaryInfC{$\phi\in\Gamma$}\RightLabel{\textsf{AssmI}}\UnaryInfC{$\Gamma\vdash\phi$}\DisplayProof}&
\def\ScoreOverhang{0.5pt}
\AxiomC{}\noLine\UnaryInfC{$\Gamma,\phi\vdash\bot$} \RightLabel{\wcn\textsf{I}}\UnaryInfC{$\Gamma\vdash\wcn\phi$} \DisplayProof
\quad~\AxiomC{}\noLine\UnaryInfC{$\Gamma\vdash\phi$} \AxiomC{}\noLine\UnaryInfC{$\Gamma\vdash\wcn\phi$} \RightLabel{$\wcn\mathsf{E}$}\BinaryInfC{$\Gamma\vdash\psi$} \DisplayProof
&\def\ScoreOverhang{0.5pt}
\AxiomC{}\noLine\UnaryInfC{}\noLine\UnaryInfC{}\RightLabel{\consi}\UnaryInfC{$\Gamma\vdash\dep(\mathsf{c})$}\DisplayProof%\hline
%~\AxiomC{}\noLine\UnaryInfC{$\Gamma\vdash\bot$} \RightLabel{\emph{ex falso}}\UnaryInfC{$\Gamma\vdash\phi$} \noLine\UnaryInfC{}\DisplayProof
\\%\hline
&\AxiomC{}\noLine\UnaryInfC{$\Gamma,\wcn\phi\vdash\bot$} \RightLabel{\raa}\UnaryInfC{$\Gamma\vdash\phi$} \DisplayProof&
\AxiomC{}\noLine\UnaryInfC{$\Gamma\vdash\dep(\mathsf{t})$}\RightLabel{\consi}\UnaryInfC{$\Gamma\vdash\dep(f\mathsf{t})$}\DisplayProof\\%\hline
\multicolumn{3}{|C{0.96\linewidth}|}{\quad\AxiomC{}\noLine\UnaryInfC{}\RightLabel{\incid}\UnaryInfC{$\Gamma\vdash\mathsf{x}\subseteq\mathsf{x}$}\DisplayProof\quad\quad\quad\quad\quad\quad\quad\quad
\AxiomC{}\noLine\UnaryInfC{$\Gamma\vdash\mathsf{x}\subseteq\mathsf{y}$}\AxiomC{$\Gamma\vdash\mathsf{y}\subseteq\mathsf{z}$}\RightLabel{\inctr}\BinaryInfC{$\Gamma\vdash\mathsf{x}\subseteq\mathsf{z}$}\DisplayProof}\\
%&{\footnotesize [$\{i_1,\dots,k_k\}\subseteq\{1,\dots,n\}$]}\\
\multicolumn{3}{|C{0.96\linewidth}|}{\AxiomC{}\noLine\UnaryInfC{$\Gamma\vdash x_1\dots x_n\subseteq y_1\dots y_n$}\RightLabel{\incpro~(a)}\UnaryInfC{$\Gamma\vdash x_{i_1}\dots x_{i_k}\subseteq y_{i_1}\dots y_{i_k}$}\noLine\UnaryInfC{}\DisplayProof
\quad
\AxiomC{}\noLine\UnaryInfC{$\Gamma\vdash\mathsf{x}\subseteq \mathsf{y}$}\AxiomC{}\noLine\UnaryInfC{$\Gamma\vdash\alpha(\mathsf{y}/\mathsf{z})$}\RightLabel{\inccmp~(b)}\BinaryInfC{$\Gamma\vdash\alpha(\mathsf{x}/\mathsf{z})$}\noLine\UnaryInfC{}\DisplayProof}\\
%\def\ScoreOverhang{0.5pt}
%\AxiomC{}\noLine\UnaryInfC{$\Gamma\vdash\dep(\mathsf{x})$}\AxiomC{}\noLine\UnaryInfC{$\Gamma\vdash\mathsf{y}\subseteq\mathsf{z}$}\RightLabel{\incwcon}\BinaryInfC{$\Gamma\vdash \mathsf{x}\mathsf{y}\subseteq\mathsf{x}\mathsf{z}$}\noLine\UnaryInfC{}\DisplayProof
%&\AxiomC{}\noLine\UnaryInfC{$\Gamma\vdash\con(\mathsf{x})$}\AxiomC{}\noLine\UnaryInfC{$\Gamma\vdash\mathsf{x}\subseteq \mathsf{y}$}\RightLabel{\incweo}\BinaryInfC{$\Gamma\vdash\existso \mathsf{z}(\mathsf{zx}\subseteq\mathsf{wy})$}\noLine\UnaryInfC{}\DisplayProof\\
\multicolumn{3}{|C{0.96\linewidth}|}{\def\ScoreOverhang{0.5pt}
\AxiomC{}\noLine\UnaryInfC{$\Gamma\vdash\dep(\mathsf{x})$}\AxiomC{}\noLine\UnaryInfC{$\Gamma\vdash\mathsf{y}\subseteq\mathsf{z}$}\RightLabel{\incwcon}\BinaryInfC{$\Gamma\vdash \mathsf{x}\mathsf{y}\subseteq\mathsf{x}\mathsf{z}$}\noLine\UnaryInfC{}\DisplayProof
\quad\quad
\AxiomC{}\noLine\UnaryInfC{$\Gamma\vdash\wcn\mathsf{x}\subseteq \mathsf{y}$}\AxiomC{}\noLine\UnaryInfC{$\Gamma,\mathsf{c}\subseteq \mathsf{x},\wcn\mathsf{c}\subseteq \mathsf{y}\vdash\phi$}\RightLabel{\incwe~(c)}\BinaryInfC{$\Gamma\vdash \phi$}\noLine\UnaryInfC{}\DisplayProof}\\
\multicolumn{3}{|C{0.96\linewidth}|}{\AxiomC{}\noLine\UnaryInfC{$\Gamma\vdash\con(\mathsf{x})$}\AxiomC{}\noLine\UnaryInfC{$\Gamma\vdash\mathsf{x}\subseteq \mathsf{y}$}\RightLabel{\incweo}\BinaryInfC{$\Gamma\vdash\existso \mathsf{z}(\mathsf{zx}\subseteq\mathsf{wy})$}\noLine\UnaryInfC{}\DisplayProof
\quad\AxiomC{}\noLine\UnaryInfC{$\Gamma\vdash\wcn\lambda(\mathsf{x})$}\AxiomC{}\noLine\UnaryInfC{$\Gamma,\mathsf{c}\subseteq \mathsf{x},\wcn\lambda(\mathsf{c})\vdash\phi$}\RightLabel{\foawe~(c)}\BinaryInfC{$\Gamma\vdash \phi$}\noLine\UnaryInfC{}\DisplayProof}\\
\multicolumn{3}{|C{0.96\linewidth}|}{
\def\ScoreOverhang{0.5pt}
\AxiomC{$\Gamma,\existso \mathsf{z}R\mathsf{z},\phi(R)\vdash\bot$}\RightLabel{\incwi~~(d)}\UnaryInfC{$\Gamma,\phi(\mathsf{v})\vdash\bot$}\noLine\UnaryInfC{}\DisplayProof}\\
%\hline
\multicolumn{3}{|L{.96\linewidth}|}{\footnotesize (a). $\{i_1,\dots,i_k\}\subseteq\{1,\dots,n\}$\quad (b). $\alpha$ is $\wcn$ and inclusion atom-free, and the free variables of $\alpha(\mathsf{z})$ are among $\mathsf{z}$. \quad (c). $\mathsf{c}$ is a sequence of constant symbols  not occurring in $\Gamma$ or $\phi$, and $\lambda$ is a first-order atom.}\\
\multicolumn{3}{|L{.96\linewidth}|}{{\footnotesize (d). $\Gamma$ is a set of sentences in which $R$ does not occur, $\phi(R)$ is an inclusion atom-free sentence in which  $R$ occurs only in the form $R\mathsf{x}$, and $\phi(\mathsf{v})$ is the formula with free variables $\mathsf{v}$ obtained from $\phi(R)$ by replacing every $R\mathsf{x}$ by $\mathsf{x}\subseteq\mathsf{v}$.
%with the relation symbol $R$, and $\phi(\mathsf{v})$ is a formula with free variables $\mathsf{v}$ obtained from $\phi(R)$ by replacing every $R\mathsf{t}$ by $\existso \mathsf{x}(\mathsf{x}=\mathsf{t}\wedge\mathsf{x}\subseteq\mathsf{v})$.
%
%$\phi(\mathsf{v})$ is a formula in which the free variables $\mathsf{v}$ occur only in inclusion atoms,  all inclusion atoms are of the form $\mathsf{x}\subseteq\mathsf{v}$ for some sequence $\mathsf{x}$ of bound variables, and $\phi(R)$ is a sentence with a fresh relation symbol $R$ obtained from $\phi(\mathsf{v})$ by replacing every inclusion atom  $\mathsf{x}\subseteq\mathsf{v}$ by $R\mathsf{x}$.
}}\\\hline
\end{tabular}
\label{tb_rules_fot}
\end{table}

The weak disjunction $\vvee$ admits the usual introduction and elimination rule, whereas the usual elimination rule is not sound for $\vee$.
The soundness of the introduction and elimination rule for $\existso$ follows from the equivalence $\existso x\phi\equiv \exists x(\dep(x)\wedge\phi)$. %(\ref{cons2existo}).
The introduction and elimination rule for $\forallo$ have a similar flavor.  When applying these introduction and elimination rules for the two weak quantifiers $\existso$ and $\forallo$ (e.g., in Proposition \ref{atm_var_prf}\ref{atm_var_prf_vivj}\ref{atm_var_prf_foatm} below), we will often introduce fresh constant symbols $c$, which we  assume to be always available. {Our $\forallo$-introduction and $\existso$-elimination rules both have a variable version and a constant version. It is possible that one version is actually derivable from the other; we leave it as future work to determine whether this is the case. }
The weak classical negation $\wcn$ admits the classical rules. The constancy atom introduction rule \consi characterizes the fact that constants or terms formed by constants have constant values.
 The rules \incid, \incpro, \inctr and \inccmp for inclusion atoms were first introduced  in \cite{Hannula_fo_ind_13} for first-order independence logic, where the first three rules (i.e., identity, projection and transitivity) are known to  axiomatize completely the implication problem of inclusion dependencies in database theory \cite{inclusion_dep_CFP_82}. The last rule \inccmp, inclusion atom compression rule, which applies to formulas $\alpha$ without any occurrence of $\wcn$ and inclusion atom only, describes essentially the flatness of such formulas through their interaction with inclusion atoms.
  %The rules for constancy atoms in the second cell are evident. 
%Note that constancy atoms $\dep(t)$ were not included in the syntax of \FOT,  they  are viewed as shorthands for the formulas $\existso y(y=t)$. %For the rules in the third cell of  \Cref{rules_fot_inter_inc_con}, 
The two weakening rules \incwcon and \incweo for inclusion atoms extend the length of an inclusion atom. These rules for inclusion atoms  are also sound  if constants are allowed to occur as arguments in inclusion atoms (i.e., to allow inclusion atoms, e.g., of the form $c\mathsf{x}\subseteq v\mathsf{y}$). 
%We verify in the next proposition that these more general rules for inclusion atoms are actually derivable in our system.
Such more general rules for inclusion atoms are easily derivable in our system, using the fact $\vdash\existso x(x=c)$ (which follows from \consi and \existsoi) and the rules for identity. 
The elimination rules \incwe and \foawe  describe the meanings of a negated inclusion atom $\wcn\mathsf{x}\subseteq \mathsf{y}$ and a negated first-order atom $\lambda(\mathsf{x})$ by  providing a witness $\mathsf{c}$ for the failure of the atoms. These two rules are designed for deriving Proposition \ref{atm_var_prf}\ref{atm_var_prf_vivj}\ref{atm_var_prf_foatm} (which are key for the normal form lemma, Lemma \ref{FOT_NF_prf}). %The two rules can also be formulated, in a more complex form, without any mention of the constant symbols.
%These three rules  will play an important role in the derivation of the normal form for \FOT (\Cref{FOT_NF_prf}) and also in \Cref{atm_var_prf}. 
The rule \incwi  simulates the transformation in \Cref{fo2fot}, which will be crucial for the proof of the completeness theorem (\Cref{completeness_fot}). 
%ultilized in the proof of the completeness theorem (Theorem \ref{completeness_fot}) in a reverse manner with respect to a fresh relation symbol $R$ (assumed to be always available). 
%\textcolor{blue}{Let us emphasize that this and the other rules of our system are axiom schemas. One should thus read the relation symbol $R$ in the assumption of the rule \incwi as  a {\em relation variable} (as in second-order logic), and condition (d) concerning  the occurrences of the variable $R$ is of the same nature as the usual side conditions for individual variables $x$ (such as condition (a) of the rule \foralloi in Table \ref{rules_weak_cnt_qtf}).}

%\todof{comment: The system has first-order logic system as the core, together with some rules characterizing inclusion atoms. 1. The main benefit is to characterize the behavior of inclusion atoms and its interaction with the other logical constants in the logic. 2. Some rules are similar to those in inquisitive logic. 3. The last rule is complex, but needed mostly in completeness theorem only.}\todof{new symbols may occur in the derivations, which means that the system is not necessarily analytic. Like the cut rule, new symbols can be introduced}\todof{The $R$ is added also in \cite{Kontinen15foc}}

Let us also mention that closely related to \FOT is the {\em inquisitive first-order logic} (\textsf{InqBQ}) \cite{Ciardelli_PhD}, which adopts a similar type of team semantics in a different setting. The disjunction and quantifiers in \textsf{InqBQ} have also essentially the same semantics as our  weak disjunction $\vvee$ and weak  quantifiers $\forallo$ and $\existso$.  A sound system of natural deduction for \textsf{InqBQ} was introduced in \cite{Ciardelli_PhD}, and it was shown in \cite{Grilletti2021} to be complete for the classical antecedent fragment of \textsf{InqBQ}. The introduction and elimination rules for the quantifiers in this system correspond exactly to the introduction and elimination rules for $\forallo$ and $\existso$ in our system for \FOT, as  the so-called {\em rigid terms} in \textsf{InqBQ} can be interpreted in our setting as terms of constant values.

Over constant values, our system can actually be viewed, in the sense of \Cref{FO2FOT_eqiv} below, as an extension of the standard system of first-order logic. These two systems are bridged via the rule \incwi in a way to be presented in the proof of the completeness theorem (\Cref{completeness_fot}). %Let us make some further comments on this somewhat less natural rule \incwi. 
The additional relation symbol $R$ in the rule \incwi is introduced for the purpose of proving the completeness theorem; a similar trick of introducing additional relation symbol in a deduction system for a team-based logic was used also in \cite{Kontinen15foc}.    Most derivations in our system (see illustrations in \Cref{sec:app}), however, do not involve the use of the additional symbol $R$ or the rule \incwi. %Whether this additional  symbol $R$ can be removed from the rule \incwi, or 
How  to simplify the rule \incwi is left as future work.
%
%, which appears to be less natural for it contains an additional relation symbol $R$. Let us make two comments here.  
%After all, 
%Let us also stress that the main contribution of our system of \FOT is that it contains a complete set of rules that characterize the behavior of inclusion and constancy atoms, and their interactions with other logical constants.  %The main role of the rule \incwi is to bridge the two system, as it will become clear from the proof of completeness theorem (\Cref{completeness_fot}). %This idea is characterized in Lemma \ref{FO2FOT_eqiv}. %with rules characterizing the behavior of inclusion and constancy atoms. 
%\todof{to improve}

%the above system is closely related to the system for first-order inquisitive logic, which was introduced in ... and shown to be complete with respect to 

\begin{theorem}[Soundness]\label{atm2rel}
$\Gamma\vdash_\FOT\phi\Longrightarrow\Gamma\models\phi$.
\end{theorem}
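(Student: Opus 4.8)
The plan is to prove soundness by induction on the length of the derivation $\Gamma \vdash_\FOT \phi$, i.e., by checking that each inference rule of the system preserves the semantic consequence relation $\models$. Since the system is presented in sequent style, the base cases are the axiom schemes (such as the identity rules and \consi), and the inductive step treats each introduction and elimination rule in turn, assuming the premises of the rule are already semantically valid and showing the conclusion is.

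First I would dispose of the ``classical'' rules: the rules for $\wcn$ are sound because $\wcn$ satisfies the law of excluded middle $\phi \vvee \wcn\phi$ and double negation elimination (noted in the preliminaries), and $\vvee$ has the standard introduction and elimination rules because of its purely ``local'' semantics $M\models_X \phi\vvee\psi$ iff $M\models_X\phi$ or $M\models_X\psi$; these go through exactly as in classical propositional logic. For the weak quantifiers $\existso,\forallo$, I would use the semantic equivalences recorded earlier, namely $\existso x\phi \equiv \exists x(\dep(x)\wedge\phi)$ and dually for $\forallo$, together with the fact that $\dep(\mathsf{t})$ abbreviates $\existso\mathsf{x}(\mathsf{x}=\mathsf{t})$, so that their introduction and elimination rules reduce to routine manipulations involving teams of the form $X(a/x)$. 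The constancy-atom rule \consi is sound because a closed term $\mathsf{t}$ built from constants has the same value $s(\mathsf{t}^M)$ for every $s$ in any team, so $M\models_X \dep(\mathsf{t})$ always holds.

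Next I would handle the inclusion-atom rules. The rules \incid (reflexivity $\mathsf{x}\subseteq\mathsf{x}$), \incpro (projection), \inctr (transitivity) and \inccmp correspond exactly to the axioms for the implication problem of inclusion dependencies, and their soundness is a direct unwinding of the semantic clause $M\models_X \mathsf{t}\subseteq\mathsf{t}'$ iff every $s\in X$ has some $s'\in X$ with $s(\mathsf{t}^M)=s'({\mathsf{t}'}^M)$; one simply chases witnesses $s'$ through $X$. The weakening rules \incwcon and \incweo, and the generalized forms allowing constants as arguments, are verified using the observation that $\vdash\existso x(x=c)$ and $c\mathsf{x}\subseteq v\mathsf{y}$ abbreviates $\existso u(u=c\wedge u\mathsf{x}\subseteq v\mathsf{y})$. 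The rules \incwe and \foawe are sound because a failure of $\mathsf{x}\subseteq\mathsf{y}$ (resp.\ of a first-order literal) on a team $X$ yields, by the semantics, a concrete assignment $s$ witnessing the failure, whose values can be named by fresh constants $\mathsf{c}$; here one uses that these rules discharge the witness constants with the usual eigenvariable-style side conditions.

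The main obstacle I expect is the rule \incwi, which (as the text indicates) internalizes the translation of \Cref{fo2fot}: it encodes the passage from a first-order $\LL(R)$-sentence back to an \FOT-formula, with $R$-atoms becoming inclusion atoms $\mathsf{x}_i\subseteq\mathsf{v}$. Verifying soundness of this rule amounts to re-running the semantic argument behind the equivalence (\ref{trans_thm_altpf_eq1}) in the proof of \Cref{fo2fot}, checking that the syntactic manipulation performed by the rule matches a valid semantic transformation on arbitrary teams (not just the specific ones appearing there). I would isolate this as a separate lemma, proving that the premise and conclusion sequents of \incwi have the same models by appealing directly to \Cref{fo2fot} (and \Cref{fot2fo}) applied to the relevant subformulas, rather than re-deriving the translation from scratch. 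Once \incwi is checked, the remaining structural and first-order quantifier rules (\foralloi, \foralloe, \existsoi, \existsoe, and the identity rules) are routine, and assembling the induction completes the proof.
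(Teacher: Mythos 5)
Your proposal is correct and follows essentially the same route as the paper: a rule-by-rule semantic check in which the classical, quantifier, and standard inclusion rules are routine, \incwe/\foawe are handled by naming a witnessing assignment with the fresh constants, and the soundness of \incwi is reduced to the semantic content of \Cref{fo2fot}. The paper carries out exactly this plan, verifying \incwi contrapositively by normalizing $\phi(\mathsf{v})$ to prenex negation normal form and invoking the proof of \Cref{fo2fot} together with Lemma \ref{fo=fot} to pass between $M\models_X\phi(\mathsf{v})$ and $(M,rel(X))\models\phi(R)$, which is the same bridge you sketch.
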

\begin{proof}
We only give detailed proof for the soundness of the nontrivial rules. %\incwe and \incwi. %The soundness of \foawe

\inccmp: It suffices to show that $\mathsf{x}\subseteq\mathsf{y},\alpha(\mathsf{y}/\mathsf{z})\models \alpha(\mathsf{x}/\mathsf{z})$. Suppose that $M\models_X\mathsf{x}\subseteq\mathsf{y}$ and  $M\models_X\alpha(\mathsf{y}/\mathsf{z})$. Put $X[\mathsf{x}]=\{s(\mathsf{x})\mid s\in X\}$. Consider 
\(X_0=\{s\in X\mid s(\mathsf{y})\in X[\mathsf{x}]\}\subseteq X.\)
Since $M\models_X\mathsf{x}\subseteq\mathsf{y}$, it is easy to verify that $X[\mathsf{x}]=X_0[\mathsf{y}]$. Now, since $\alpha$ is downward closed, we have that $M\models_{X_0}\alpha(\mathsf{y}/\mathsf{z})$, which then implies that $M\models_X\alpha(\mathsf{x}/\mathsf{z})$ by locality (as free variables in the formula $\alpha(\mathsf{z})$ are among $\mathsf{z}$).

\incwcon: It suffices to show that $\dep(\mathsf{x}),\mathsf{y}\subseteq\mathsf{z}\models\mathsf{xy}\subseteq\mathsf{xz}$. Suppose that $M\models_X\dep(\mathsf{x})$ and $M\models_X\mathsf{y}\subseteq\mathsf{z}$. The latter implies that for any $s\in X$,  there exists $s'\in X$ such that

\noindent   $s(\mathsf{y})=s'(\mathsf{z})$. But since $M\models_X\dep(\mathsf{x})$, we also have  $s(\mathsf{x})=s'(\mathsf{x})$. Thus $s(\mathsf{xy})=$$s'(\mathsf{xz})$, thereby  $M\models_X\mathsf{xy}\subseteq\mathsf{xz}$.

\incweo:  It suffices to show that $\dep(\mathsf{x}),\mathsf{x}\subseteq\mathsf{y}\models\existso \mathsf{z}(\mathsf{zx}\subseteq\mathsf{wy})$. Suppose that $M\models_X\dep(\mathsf{x})$ and $M\models_X\mathsf{x}\subseteq\mathsf{y}$. Pick $s_0\in X$. Then there exists $s'_0\in X$ such that $s_0(\mathsf{x})=s'_0(\mathsf{y})$. Let $\mathsf{a}=s_0'(\mathsf{w})$. We show that $M\models_{X(\mathsf{a}/\mathsf{z})}\mathsf{zx}\subseteq\mathsf{wy}$. For any $s\in X(\mathsf{a}/\mathsf{z})$, clearly $s'_0(\mathsf{a}/\mathsf{z})\in X(\mathsf{a}/\mathsf{z})$. Since  $M\models_X\dep(\mathsf{x})$, we have that $s(\mathsf{x})=s_0(\mathsf{x})$. Thus 
\(s(\mathsf{zx})=\mathsf{a}s_0(\mathsf{x})=s'_0(\mathsf{w})s_0'(\mathsf{y})=s'_0(\mathsf{a}/\mathsf{z})(\mathsf{wy}).\)
%as required.

\incwe: Suppose $\Gamma\models\wcn\mathsf{x}\subseteq \mathsf{y}$ and $\Gamma,\mathsf{c}\subseteq \mathsf{x},\wcn\mathsf{c}\subseteq \mathsf{y}\models\phi$, and suppose that for some $\LL$-model $M$ and nonempty team $X$, $M\models_X\Gamma$. Then we have that $M\models_X\wcn\mathsf{x}\subseteq \mathsf{y}$, which implies that there exists $s\in X$ such that for the $\LL(\mathsf{c})$-model $(M,s(\mathsf{x}))$, we have $(M,s(\mathsf{x}))\models_X\mathsf{c}\subseteq \mathsf{x}\wedge\wcn\mathsf{c}\subseteq \mathsf{y}$. Thus, by the assumption, $(M,s(\mathsf{x}))\models_X\phi$, which gives $M\models_X\phi$ since constant symbols in $\mathsf{c}$ do not occur in $\phi$.

\foawe: Suppose $\Gamma\models\wcn\lambda(\mathsf{x})$ and $\Gamma,\mathsf{c}\subseteq \mathsf{x},\wcn\lambda(\mathsf{c})\models\phi$, and suppose that  for some $\LL$-model $M$ and nonempty team $X$, $M\models_X\Gamma$.  Then we have that $M\models_X\wcn\lambda(\mathsf{x})$, which implies that $M\not\models_X\lambda(\mathsf{x})$. Since $\lambda(\mathsf{x})$ is flat, this means that  there exists $s\in X$ such that for the $\LL(\mathsf{c})$-model $(M,s(\mathsf{x}))$, $(M,s(\mathsf{x}))\not\models_X\lambda(\mathsf{c})$, or $(M,s(\mathsf{x}))\models_X\wcn\lambda(\mathsf{c})$. Clearly, we also have that $(M,s(\mathsf{x}))\models_X\mathsf{c}\subseteq \mathsf{x}$. Hence, by assumption we conclude that $(M,s(\mathsf{x}))\models_X\phi$, which gives $M\models_X\phi$ since $\mathsf{c}$ do not occur in $\phi$.

 \incwi: Suppose $\Gamma,\phi(\mathsf{v})\not\models\bot$, where the formulas in $\Gamma\cup\{\phi\}$ are in the vocabulary $\mathcal{L}$ with $R\notin \mathcal{L}$. %Clearly, every \FOT-formula can be turned into a (semantically) equivalent formula in prenex and negation normal form (cf. \Cref{prenex_nnf}). 
 We may  w.l.o.g. assume that  $\phi(\mathsf{v})$ is in prenex and negation normal form (cf. Corollary \ref{prenex_nnf}). Then there exist an $\mathcal{L}$-model $M$ and a nonempty team $X$ such that $M\models_X\Gamma$ and $M\models_X\phi(\mathsf{v})$. 
 %Let
% \[\Phi_\ast(R)=\{\phi_\ast(R)\mid \phi(\mathsf{v})\in \Phi(\mathsf{v})\},\]
 %where each
 Consider the \FO-sentence  $\phi_\ast(R)$ obtained from the inclusion atom-free \FOT-sentence $\phi(R)$ by replacing every logical constant in \FOT by its counterpart in \FO, i.e., by replacing 
%\begin{center}
$\wcn$ by $\neg$, $\vvee$ by $\vee$, $\forallo$ by $\forall$, and $\existso$ by $\exists$.
%\end{center} 
Now, by (the proof of) \Cref{fo2fot}, we have  $(M,rel(X))\models\phi_\ast(R)$ in \FO.
% 
% By (the proof of) \Cref{fo2fot}, $(M,rel(X))\models\phi_\ast(R)$ in \FO for all $\phi(\mathsf{v})\in\Phi$, where $\phi_\ast(R)$ is an \FO-sentence obtained from the inclusion atom-free \FOT-sentence $\phi(R)$ by replacing every logical constant in \FOT by its counterpart in \FO, i.e., by replacing 
%%\begin{center}
%$\wcn$ by $\neg$, $\vvee$ by $\vee$, $\forallo$ by $\forall$, and $\existso$ by $\exists$.
%%\end{center} 
By Lemma \ref{fo=fot}, $(M,rel(X))\models_{\{\emptyset\}}\phi(R)$ in \FOT follows%\footnote{This follows from the observation that $M\models_s\psi_\ast$ iff $M\models_{\{s\}}\psi$ for any inclusion atom-free \FOT-formula $\psi$.}
. Since $\Gamma$ is a set of sentences in which $R$ does not occur, we also have $(M,rel(X))\models_{\{\emptyset\}}\Gamma$. Also, since $X\neq\emptyset$,  $(M,rel(X))\models\existso \mathsf{z}R\mathsf{z}$. Hence, we conclude $\Gamma,\existso \mathsf{z}R\mathsf{z},\phi(R)\not\models\bot$.
\end{proof}

%The introduction and elimination rules for $\forallo$ and $\existso$ can be applied to constant symbols $c$ in the sense of the following proposition, since we have $\vdash\dep(c)$ by the axiom \consi.
%
%\begin{prop}\label{FOT_der_rules_q_con}
%\begin{enumerate}[label=(\roman*)]
%\item\label{FOTd_der_rules_foralloi_con} If $\Gamma\vdash\phi(c/x)$, then $\Gamma\vdash\forallo x\phi$, whenever the constant symbol $c$ does not occur in $\Gamma,\phi,\psi$.
%\item\label{FOTd_der_rules_existsoe_con} If $\Gamma\vdash\existso x\phi(c/x)$ and $\Gamma,\phi(c/x)\vdash\psi$, then $\Gamma\vdash\psi$, whenever the constant symbol $c$ does not occur in $\Gamma,\phi,\psi$.
%\item\label{FOTd_der_rules_forallo_existso_con} $\Gamma,\forallo x\phi\vdash\phi(c/x)$ and %and $\Gamma,\phi(c/x)\vdash\forallo x\phi$
%%\item\label{FOTd_der_rules_existso_sub} 
%$\Gamma,\phi(c/x)\vdash\existso x\phi$.%, and  $\Gamma,\phi(c/x)\vdash\psi$ implies $\Gamma,\existso x\phi\vdash\psi$.
%%\item\label{FOTd_der_rules_forallo_sub} If $\Gamma,\phi(c/x)\vdash\psi(c/x)$, then $\Gamma,\forallo x\phi\vdash\forallo x\psi$.
%\end{enumerate}
%\end{prop}
%\begin{proof}
%Obvious.
%\end{proof}

We list some  basic facts concerning  the logical constants in \FOT  in the following proposition. %The proofs are standard and thus left to the reader.%See Appendix for the proof. 

\begin{prop}\label{FOT_der_rules}
\begin{enumerate}[label=(\roman*)]
%\item\label{FOTd_der_rules_foralloi_con} If $\Gamma\vdash\phi(c/x)$, then $\Gamma\vdash\forallo x\phi$.
%\item\label{FOTd_der_rules_existsoe_con} If $\Gamma\vdash\existso x\phi(c/x)$ and $\Gamma,\phi(c/x)\vdash\psi$, then $\Gamma\vdash\psi$.
\item\label{FOTd_der_rules_forallo_existso_con} $\Gamma,\forallo x\phi\vdash\phi(c/x)$ and %and $\Gamma,\phi(c/x)\vdash\forallo x\phi$
%%\item\label{FOTd_der_rules_existso_sub} 
$\Gamma,\phi(c/x)\vdash\existso x\phi$.%, and  $\Gamma,\phi(c/x)\vdash\psi$ implies $\Gamma,\existso x\phi\vdash\psi$.
%%\item\label{FOTd_der_rules_forallo_sub} If $\Gamma,\phi(c/x)\vdash\psi(c/x)$, then $\Gamma,\forallo x\phi\vdash\forallo x\psi$.
\item\label{FOTd_der_rules_qf_conj_disj} Let $Q^1\in\{\forallo, \existso\}$. Then $Q^1 x\phi\wedge\psi\dashv\vdash Q^1 x(\phi\wedge\psi)$ and $Q^1 x\phi\vvee\psi\dashv\vdash Q^1 x(\phi\vvee\psi)$, whenever $x\notin\textsf{Fv}(\psi)$.
%\item\label{FOTd_der_rules_qf_disj} $Q^1 x\phi\vvee\psi\dashv\vdash Q^1 x(\phi\vvee\psi)$ whenever $x\notin\textsf{Fv}(\psi)$.
\item\label{FOT_der_rules_cn_swap} $\vdash\phi\vvee\wcn\phi$, $\wcn\wcn\phi\vdash\phi$, and $\Gamma,\phi\vdash\psi\iff\Gamma,\wcn\psi\vdash\wcn\phi$.
\item\label{FOT_der_rules_qf_cn} $\wcn \forallo x\phi\dashv\vdash \existso x\wcn\phi$ and $\wcn \existso x\phi\dashv\vdash \forallo x\wcn\phi$. %(where $\overline{Q}^1=\forallo$ if $Q^1=\existso$, and $\overline{Q}^1=\existso$ if $Q^1=\forallo$), 
\item\label{FOT_der_rules_conjdis_cn} $\wcn(\phi\vvee\psi)\dashv\vdash\wcn\phi\wedge\wcn\psi$ and $\wcn(\phi\wedge\psi)\dashv\vdash\wcn\phi\vvee\wcn\psi$.
\end{enumerate}
\end{prop}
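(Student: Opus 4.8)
The plan is to derive all five clauses directly inside the sequent system of \cref{rules_weak_cnt_qtf,tb_rules_fot}, proving them in the listed order so that each item may invoke the previous ones. The workhorses throughout are the classical negation rules ($\wcn\mathsf{I}$, $\wcn\mathsf{E}$) and \raa, the constancy rule \consi (which supplies constancy atoms $\dep(c)$ for any constant $c$), and the constant forms of the quantifier rules, namely \foralloe, \existsoi, and the $(b)$-variants of the $\forallo$-introduction and $\existso$-elimination rules. Two templates recur: to prove a sequent $\Gamma\vdash\wcn\eta$, apply $\wcn\mathsf{I}$ and reduce to $\Gamma,\eta\vdash\bot$; to prove $\Gamma,\wcn\eta\vdash\chi$, apply \raa and reduce to $\Gamma,\wcn\eta,\wcn\chi\vdash\bot$; each reduced goal is then closed by $\wcn\mathsf{E}$ after producing the missing half of a contradiction with \vveei (or a \vveee case split), \existsoi, \existsoe$(b)$, or \foralloe.

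For (i): from $\Gamma,\forallo x\phi$ apply \consi to obtain $\dep(c)$ and then \foralloe at the term $c$, giving $\phi(c/x)$; dually, from $\Gamma,\phi(c/x)$ apply \consi to get $\dep(c)$ and then \existsoi. For (ii), the usual distribution of a weak quantifier over $\wedge$ and $\vvee$ when $x\notin\textsf{Fv}(\psi)$: fix a constant $c$ new to everything in sight; in one direction use (i) to peel $\phi(c/x)$ off $Q^1x\phi$, note $\psi=\psi(c/x)$, combine by $\wedge\mathsf{I}$ (resp.\ \vveei) into $(\phi\wedge\psi)(c/x)$ (resp.\ $(\phi\vvee\psi)(c/x)$), and re-quantify by the $(b)$-introduction rule (resp.\ \existsoi); in the other direction unpack $Q^1x(\phi\wedge\psi)$ or $Q^1x(\phi\vvee\psi)$ to its $c$-instance via (i), split by $\wedge\mathsf{E}$ or \vveee, and re-quantify. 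The one case needing a little care is $\existso$ over $\vvee$, where the \vveee case split and the \existsoe$(b)$ elimination of the leading $\existso$ have to be interleaved; the side conditions hold because $c$ is new.

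For (iii): $\wcn\wcn\phi\vdash\phi$ is immediate ($\wcn\wcn\phi,\wcn\phi\vdash\bot$ by $\wcn\mathsf{E}$, then \raa), and $\phi\vdash\wcn\wcn\phi$ is symmetric ($\phi,\wcn\phi\vdash\bot$, then $\wcn\mathsf{I}$); the contraposition equivalence is the routine pair of derivations using weakening and $\wcn\mathsf{E}$, closed by $\wcn\mathsf{I}$ for ``$\Rightarrow$'' and by \raa for ``$\Leftarrow$''. For $\vdash\phi\vvee\wcn\phi$: assuming $\wcn(\phi\vvee\wcn\phi)$, since $\phi\vdash\phi\vvee\wcn\phi$ by \vveei we get $\wcn(\phi\vvee\wcn\phi),\phi\vdash\bot$ by $\wcn\mathsf{E}$, hence $\wcn(\phi\vvee\wcn\phi)\vdash\wcn\phi$ by $\wcn\mathsf{I}$, hence $\wcn(\phi\vvee\wcn\phi)\vdash\phi\vvee\wcn\phi$ by \vveei, and a second $\wcn\mathsf{E}$ yields $\bot$; \raa concludes. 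Then (iv) and (v) follow by the two templates above: e.g.\ $\existso x\wcn\phi\vdash\wcn\forallo x\phi$ reduces via $\wcn\mathsf{I}$ to $\existso x\wcn\phi,\forallo x\phi\vdash\bot$, closed by \existsoe$(b)$ (fresh $c$) using (i) to pit $\phi(c/x)$ against $\wcn\phi(c/x)$; $\wcn\forallo x\phi\vdash\existso x\wcn\phi$ reduces via \raa to deriving $\forallo x\phi$ from $\wcn\existso x\wcn\phi$, obtained by the $(b)$-introduction rule plus a further \raa applying (i) with $\wcn\phi$ for $\phi$; the two $\wcn\existso$-clauses of (iv) are dual. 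The two De Morgan laws of (v) use \vveei/\vveee, with the harder direction $\wcn(\phi\wedge\psi)\vdash\wcn\phi\vvee\wcn\psi$ deduced via \raa from the easy direction of the other De Morgan law together with double-negation from (iii).

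Since the system is classical over $\wcn$ and the quantifier rules are the expected ones modulo the constancy bookkeeping, none of this is genuinely hard; the only things one must not be careless about are the freshness side conditions of the $(b)$-variants of the introduction/elimination rules (each introduced constant must be new to the ambient $\Gamma$ and to the formulas named by the rule) and the bootstrap order, so that the quantifier-negation and De Morgan laws reuse the earlier items rather than being re-derived from the primitive rules. I expect the $\existso$-over-$\vvee$ case of (ii) and the contraposition-based derivation of $\wcn(\phi\wedge\psi)\vdash\wcn\phi\vvee\wcn\psi$ in (v) to be the only places where a reader needs to pause.
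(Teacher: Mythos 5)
The paper itself gives no proof of this proposition (it is explicitly ``left to the reader''), so the only question is whether your derivations go through in the system of Tables 1 and 2. Items (i), (iii), (iv) and (v) do: your uses of \consi, \foralloe, \existsoi, the $(b)$-rules, $\wcn$\textsf{I}/$\wcn$\textsf{E} and \raa are all legitimate, the freshness conditions are respected because assumed constants are discharged before re-quantification, and the bootstrap of (v) from (iii) and the first De Morgan law is fine.

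There is, however, a genuine gap in item (ii), and it is not where you expected it. The direction $\forallo x(\phi\vvee\psi)\vdash\forallo x\phi\vvee\psi$ (with $x\notin\textsf{Fv}(\psi)$) cannot be obtained by ``unpack to the $c$-instance, split by \vveee, re-quantify'': after the \vveee split the disjunct $\phi(c/x)$ sits \emph{in the context}, so $c$ is no longer fresh and neither \foralloi~(b) nor \foralloi~(a) applies; indeed $\phi(c/x)\nvdash\forallo x\phi$, so no re-quantification from that branch alone can work. This is the team-logic analogue of the constant-domain schema $\forall x(\phi\vee\psi)\to\forall x\phi\vee\psi$, which is classically but not intuitionistically provable, so classical reasoning is unavoidable here: first derive $\psi\vvee\wcn\psi$ (your item (iii), whose proof uses no quantifier rules and can be moved ahead of (ii)), then in the case $\wcn\psi$ unpack $\forallo x(\phi\vvee\psi)$ at a fresh $c$, dispose of the $\psi$-branch by $\wcn$\textsf{E}, obtain $\Gamma,\wcn\psi\vdash\phi(c/x)$ with $c$ still fresh to the context, and only then apply \foralloi~(b) and \vveei; the case $\psi$ is immediate by \vveei. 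By contrast, the case you flagged as delicate, $\existso$ over $\vvee$, is unproblematic exactly as you otherwise describe, since \existsoe~(b) and \vveee interleave without any eigenconstant clash. With this repair (and the corresponding reordering so that (iii) precedes the $\vvee$-half of (ii)), the proposal is correct.
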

\begin{proof}
Item \ref{FOTd_der_rules_forallo_existso_con} follows easily from \consi (the fact $\vdash\dep(c)$ in particular), \foralloe and \existsoi. The proofs for the other items are routine and thus left to the reader.
\end{proof}

In the sequel, we often abbreviate a sequence $Q^1_1x_1\dots Q^1_nx_n$ of quantifications (with each $Q^1_i\in\{\forallo,\existso\}$) as $Q^1\mathsf{x}$. We say that a formula  is in prenex and negation normal form if it is of the form $Q^1\mathsf{x}\theta$, where $\theta$ is a quantifier-free formula with negation $\wcn$ occurring only in front of atomic formulas.

\begin{cor}\label{prenex_nnf}
Every \FOT-formula $\phi$ is provably equivalent to a formula $Q^1\mathsf{x}\theta$ in prenex and negation normal form. %(i.e., $\theta$ is quantifier-free  with negation $\wcn$ occurring only in front of atomic formulas). 
\end{cor}
\begin{proof}
Apply  exhaustively  Proposition \ref{FOT_der_rules}\ref{FOTd_der_rules_qf_conj_disj}-\ref{FOT_der_rules_conjdis_cn}. 
\end{proof}

\begin{cor}\label{replacement_deduction_thm}
\begin{enumerate}[label=(\roman*)]
\item \textbf{Replacement Lemma:}  $\theta\dashv\vdash\chi\Longrightarrow\phi\dashv\vdash\phi(\chi/\theta)$, where the formula $\phi(\chi/\theta)$ is obtained from $\phi$ by replacing an occurrence of $\theta$ in $\phi$ by $\chi$.
\item \textbf{Deduction Theorem:} $\Gamma,\phi\vdash\psi \iff \Gamma\vdash\phi\cimp\psi.$
\end{enumerate}
\end{cor}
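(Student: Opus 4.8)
The plan is to prove both parts of Corollary~\ref{replacement_deduction_thm} by standard structural inductions, leaning on the basic facts already collected in Proposition~\ref{FOT_der_rules} and Corollary~\ref{prenex_nnf}.

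For the \textbf{Deduction Theorem}, the direction $\Gamma\vdash\phi\cimp\psi\Longrightarrow\Gamma,\phi\vdash\psi$ is easy: recall $\phi\cimp\psi$ is by definition $\wcn\phi\vvee\psi$, so from $\Gamma\vdash\wcn\phi\vvee\psi$ and $\Gamma,\phi\vdash\phi$ we apply \vveee with the two cases $\Gamma,\phi,\wcn\phi\vdash\psi$ (by $\wcn\mathsf{E}$) and $\Gamma,\phi,\psi\vdash\psi$ (by \textsf{AssmI}). For the converse $\Gamma,\phi\vdash\psi\Longrightarrow\Gamma\vdash\wcn\phi\vvee\psi$, I would first derive $\Gamma\vdash\phi\vvee\wcn\phi$ (Proposition~\ref{FOT_der_rules}\ref{FOT_der_rules_cn_swap}) and then use \vveee: in the left branch $\Gamma,\phi\vdash\psi$ gives $\Gamma,\phi\vdash\psi\vvee\wcn\phi\equiv\wcn\phi\vvee\psi$ by \vveei (up to commutativity of $\vvee$, which is itself an easy consequence of \vveei and \vveee); in the right branch $\Gamma,\wcn\phi\vdash\wcn\phi$ gives $\Gamma,\wcn\phi\vdash\wcn\phi\vvee\psi$ by \vveei. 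This part is routine and I expect no obstacle.

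For the \textbf{Replacement Lemma} I would argue by induction on the structure of $\phi$, treating $\theta\dashv\vdash\chi$ as the hypothesis. The base case is $\phi=\theta$, where $\phi(\chi/\theta)=\chi$ and the claim is exactly the hypothesis; if $\phi$ does not contain the distinguished occurrence of $\theta$ there is nothing to prove. For the inductive step one case per connective: if $\phi=\phi_0\wedge\phi_1$ and the occurrence lies in $\phi_0$, then by IH $\phi_0\dashv\vdash\phi_0(\chi/\theta)$, and one combines this with $\wedge\textsf{I}$, $\wedge\textsf{E}$ to get $\phi_0\wedge\phi_1\dashv\vdash\phi_0(\chi/\theta)\wedge\phi_1$; the cases $\phi=\phi_0\vvee\phi_1$ (using \vveei, \vveee) and $\phi=\wcn\phi_0$ (using Proposition~\ref{FOT_der_rules}\ref{FOT_der_rules_cn_swap}: $\phi_0\dashv\vdash\phi_0(\chi/\theta)$ implies $\wcn\phi_0\dashv\vdash\wcn\phi_0(\chi/\theta)$) are similar; and for $\phi=\existso x\phi_0$ or $\phi=\forallo x\phi_0$ one uses the introduction and elimination rules for the weak quantifiers together with the IH, being mildly careful that $\chi$ and $\theta$ have the same free variables so no variable-capture issue arises. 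Each step is a one- or two-line derivation.

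The only mild subtlety — and the step I would flag as the main point requiring care rather than a genuine obstacle — is the quantifier case of the Replacement Lemma: from $\phi_0\dashv\vdash\phi_0(\chi/\theta)$ one must pass through the side-condition-laden rules \existsoi/\existsoe and \foralloi/\foralloe (variant (b) with a fresh constant $c$ is cleanest here, since it avoids the $v\notin\textsf{Fv}(\Gamma\cup\{\phi,\psi\})$ bookkeeping), and one should note that the replaced subformula $\chi$ introduces no new free variables relative to $\theta$, so the fresh-constant requirement is met. Everything else is bookkeeping, and the two statements together follow.
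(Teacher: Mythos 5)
Your proposal is correct and takes essentially the same route as the paper's (much terser) proof: item (i) by a routine structural induction on $\phi$ using the basic facts of Proposition \ref{FOT_der_rules}, and item (ii) via Proposition \ref{FOT_der_rules}\ref{FOT_der_rules_cn_swap} together with the \vvee and \wcn rules. The one point to make explicit in your quantifier cases is that the induction hypothesis must be applied to the substitution instance $\phi_0(c/x)\vdash\phi_0(\chi/\theta)(c/x)$ rather than to $\phi_0\vdash\phi_0(\chi/\theta)$ itself, which is justified by the routine meta-fact that derivations are preserved under substituting a fresh constant for a free variable (the free-variable comparison of $\chi$ and $\theta$ that you flag is not the real issue there).
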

\begin{proof}
Item (i) is proved by a routine inductive argument that uses Proposition \ref{FOT_der_rules}\ref{FOTd_der_rules_forallo_existso_con} in the cases for the quantifiers $\forallo$ and $\existso$. Item (ii) is proved easily by using  Proposition \ref{FOT_der_rules}\ref{FOT_der_rules_cn_swap}.
\end{proof}

%A routine inductive proof that uses \Cref{FOT_der_rules} shows that the usual Replacement Lemma holds for our logic, that is, 
%\[\theta\dashv\vdash\chi\Longrightarrow\phi\dashv\vdash\phi(\chi/\theta),\] where $\phi(\chi/\theta)$ is obtained from $\phi$ by replacing an occurrence of $\theta$ in $\phi$ by $\chi$.
% It is also easy to prove that the deduction theorem  holds:
% \[\Gamma,\phi\vdash\psi \iff \Gamma\vdash\phi\cimp\psi.\] 
 
 In the following proposition, we list some derivable technical clauses that will be used in the proof of the completeness theorem. %See Appendix for the proof. 
\begin{prop}\label{atm_var_prf}
Let $\xi$ and $\eta$ be two sequences of variables of the same length. 
\begin{enumerate}[label=(\roman*)]
 \item\label{atm_var_prf_repeat} 
 %$\mathsf{x}\mathsf{x}\mathsf{\xi}\subseteq \mathsf{v}\mathsf{u}\mathsf{\eta}\wedge \con(\mathsf{x})\dashv\vdash \existso \mathsf{y}(\mathsf{y}=\mathsf{x}\wedge \mathsf{x}\mathsf{y}\mathsf{\xi}\subseteq \mathsf{v}\mathsf{u}\mathsf{\eta})$, 
 % $\wcn\mathsf{x}\mathsf{x}\mathsf{\xi}\subseteq \mathsf{v}\mathsf{u}\mathsf{\eta}\wedge \con(\mathsf{x})\dashv\vdash \forallo \mathsf{y}(\mathsf{y}=\mathsf{x}\cimp \wcn\mathsf{x}\mathsf{y}\mathsf{\xi}\subseteq \mathsf{v}\mathsf{u}\mathsf{\eta})$,\todof{supply the proof}\todof{repetition on the left is fine, not needed}
 $\mathsf{x}\mathsf{y}\mathsf{\xi}\subseteq \mathsf{v}\mathsf{v}\mathsf{\eta}\dashv\vdash \mathsf{x}=\mathsf{y}\wedge \mathsf{x}\mathsf{\xi}\subseteq \mathsf{v}\mathsf{\eta} $.
 \item\label{atm_var_prf_vivj} $\xi\subseteq \eta\dashv\vdash \forallo\mathsf{x}(\mathsf{x}\subseteq \xi\cimp\mathsf{x}\subseteq\eta)$, where variables in $\mathsf{x}$ are fresh.%\todof{but this seem to have created a circle, but not so, if one does it in a clever order: $\mathsf{v}_i\mathsf{x}_j\subseteq\mathsf{v}_j\mathsf{v}_k\vdash\forallo \mathsf{yz}(\mathsf{yz}\subseteq\mathsf{v}_i\mathsf{x}_j\cimp\mathsf{yz}\subseteq\mathsf{v}_j\mathsf{v}_k)$ and then continue with item (iv)}

\item\label{atm_var_prf_vix} $\con(\mathsf{z})\vdash\mathsf{w}\xi\subseteq \mathsf{z}\eta\cbimp(\mathsf{w}=\mathsf{z}\wedge \xi\subseteq\eta)$.

%and $\wcn\mathsf{v}\xi\subseteq \mathsf{x}\eta\wedge\con(\mathsf{x})\dashv\vdash(\wcn \mathsf{v}=\mathsf{x}\vvee\wcn \xi\subseteq\eta)\wedge\con(\mathsf{x})$.

%\item\label{atm_var_prf_xiyi} $\con(\mathsf{y})\vdash\mathsf{x}\xi\subseteq \mathsf{y}\eta\cbimp (\mathsf{x}= \mathsf{y}\wedge \xi\subseteq\eta)$

\item\label{atm_var_prf_vext} $\con(\mathsf{x})\vdash\mathsf{x}\subseteq\mathsf{v}\cbimp\existso\mathsf{y}(\mathsf{x}\mathsf{y}\subseteq \mathsf{v}\mathsf{u})$, where variables in $\mathsf{y}$ are fresh.

\item\label{atm_var_prf_foatm} If $\lambda(\mathsf{z})$ is a first-order atom, then $\lambda(\mathsf{z})\dashv\vdash \forallo \mathsf{w}(\mathsf{w}\subseteq \mathsf{z}\cimp \lambda(\mathsf{w}))$, where variables in $\mathsf{w}$ are fresh.
%\[\lambda(\mathsf{x},\mathsf{v})\wedge\con(\mathsf{x})\dashv\vdash\forallo \mathsf{y}(\mathsf{y}\subseteq \mathsf{v}\cimp \lambda(\mathsf{x},\mathsf{y}))\wedge \con(\mathsf{x})\]
%\[\text{ and }\wcn\lambda(\mathsf{x},\mathsf{v})\wedge\con(\mathsf{x})\dashv\vdash\existso \mathsf{y}(\mathsf{y}\subseteq \mathsf{v}\wedge \wcn\lambda(\mathsf{x},\mathsf{y}))\wedge\con(\mathsf{x}).\]
\end{enumerate}
\end{prop}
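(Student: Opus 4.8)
The plan is to establish each clause by exhibiting an explicit derivation in the calculus of \Cref{rules_weak_cnt_qtf,tb_rules_fot}, freely invoking the Deduction Theorem and the Replacement Lemma (\Cref{replacement_deduction_thm}), the derived rules of \Cref{FOT_der_rules}, and the classical behaviour of $\cimp$ and $\wcn$. Throughout, I adopt the usual convention that the displayed quantified and ``fresh'' variable/constant blocks are chosen disjoint from the surrounding free variables, handling accidental clashes by renaming bound variables.

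First I would dispatch the routine clauses (i), (iii), (iv). For (i): ``$\vdash$'' projects $\mathsf{x}\mathsf{y}\xi\subseteq\mathsf{v}\mathsf{v}\eta$ by \incpro to $\mathsf{x}\xi\subseteq\mathsf{v}\eta$ and to $\mathsf{x}\mathsf{y}\subseteq\mathsf{v}\mathsf{v}$, reading off $\mathsf{x}=\mathsf{y}$ from the latter by transferring $\bigwedge_i v_i=v_i$ along the inclusion with \inccmp (legitimate, since $\bigwedge_i u_i=u_i'$ is $\wcn$- and inclusion-atom-free); ``$\dashv$'' uses \incpro with a repeated coordinate block to reach $\mathsf{x}\mathsf{x}\xi\subseteq\mathsf{v}\mathsf{v}\eta$ from $\mathsf{x}\xi\subseteq\mathsf{v}\eta$, then rewrites the second copy of $\mathsf{x}$ to $\mathsf{y}$ by $=$\textsf{Sub} using $\mathsf{x}=\mathsf{y}$. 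Clause (iii) is essentially \incwcon: for ``$\dashv$'', reduce $\mathsf{w}\xi\subseteq\mathsf{z}\eta$ to $\mathsf{z}\xi\subseteq\mathsf{z}\eta$ via $=$\textsf{Sub} and $\mathsf{w}=\mathsf{z}$, which is \incwcon applied to $\con(\mathsf{z})$ and $\xi\subseteq\eta$; for ``$\vdash$'', use \incpro to get $\mathsf{w}\subseteq\mathsf{z}$ and $\xi\subseteq\eta$, extract a constant witness $\mathsf{c}=\mathsf{z}$ of $\con(\mathsf{z})=\existso\mathsf{u}(\mathsf{u}=\mathsf{z})$ by $\existso$-elimination, and transfer $\mathsf{u}=\mathsf{c}$ along $\mathsf{w}\subseteq\mathsf{c}$ by \inccmp to obtain $\mathsf{w}=\mathsf{z}$. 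Clause (iv) is \incweo for ``$\vdash$'' (with its free block instantiated to $\mathsf{u}$, followed by a coordinate permutation carried out within the scope of $\existso$ via $\existso$-elimination, \incpro and $\existso$-introduction), and \incpro for ``$\dashv$'' (after stripping $\existso$). None of these needs anything beyond \incid, \incpro, \inctr, \inccmp, \incwcon, \incweo, \consi, $=$\textsf{Sub}, and the (un)folding of $\con$.

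The crux is clauses (ii) and (v), and it is precisely for their nontrivial directions that the rules \incwe and \foawe were included in the system. In both, the ``$\vdash$'' direction is easy: by $\forallo$-introduction over a fresh constant block $\mathsf{c}$ and the Deduction Theorem it suffices to derive, in (ii), $\mathsf{c}\subseteq\eta$ from $\mathsf{c}\subseteq\xi$ (use \inctr with $\xi\subseteq\eta$), and in (v), $\lambda(\mathsf{c})$ from $\mathsf{c}\subseteq\mathsf{z}$ (use \inccmp with $\lambda(\mathsf{z})$, a first-order atom being $\wcn$- and inclusion-atom-free). For ``$\dashv$'' of (ii), argue by \raa: from the assumption $\wcn(\xi\subseteq\eta)$, the rule \incwe supplies fresh constants $\mathsf{c}$ with $\mathsf{c}\subseteq\xi$ and $\wcn(\mathsf{c}\subseteq\eta)$; instantiating the hypothesis $\forallo\mathsf{x}(\mathsf{x}\subseteq\xi\cimp\mathsf{x}\subseteq\eta)$ at $\mathsf{c}$ by $\forallo$-elimination — admissible because $\con(\mathsf{c})$ is provable by \consi — and detaching with $\mathsf{c}\subseteq\xi$ yields $\mathsf{c}\subseteq\eta$, contradicting $\wcn(\mathsf{c}\subseteq\eta)$ by $\wcn\mathsf{E}$. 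The ``$\dashv$'' direction of (v) is the same argument with \foawe in place of \incwe and the first-order atom $\lambda$ in place of $\cdot\subseteq\eta$.

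I expect the main obstacle to lie in organizing these two \raa-arguments correctly: one must apply \incwe / \foawe so that the freshness side conditions on the witnessing constants are met, and one must instantiate the universal quantifier precisely over those constants, so that the constancy premise of $\forallo$-elimination is discharged by \consi rather than over $\xi$ or $\mathsf{z}$, for which no constancy atom is available. A pervasive but minor bookkeeping issue is the handling of $=$\textsf{Sub}: to rewrite a single designated occurrence of a variable block one passes through a formula containing a fresh auxiliary block that marks the substitution site, so that the single-hole format of $=$\textsf{Sub} suffices.
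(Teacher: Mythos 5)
Your proposal is correct and follows essentially the same route as the paper: the easy directions of (ii) and (v) via \foralloi, \inctr and \inccmp, their hard directions via the rules \incwe and \foawe (you organize these as \raa-arguments with \foralloe at the fresh constants justified by \consi, while the paper passes to the contrapositive and uses $\existso$-introduction — the same essential application of those two rules), and (i), (iii), (iv) via \incpro, \inccmp, \incwcon, \incweo and the identity rules. The only micro-variation is the forward direction of (iii), where the paper obtains $\mathsf{w}=\mathsf{z}$ from \incwcon together with item (i) instead of your elimination of the $\existso$ in $\con(\mathsf{z})$ followed by \inccmp at the witnessing constants; both derivations go through.
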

\begin{proof}
Item \ref{atm_var_prf_repeat}: %The first clause follows from the definition of $\con(\mathsf{x})=\existso \mathsf{y}(\mathsf{y}=\mathsf{x})$ and rules of identity. Next, we derive the second clause. 
For the right to left direction, we first derive $\mathsf{x}\mathsf{\xi}\subseteq \mathsf{v}\mathsf{\eta}\vdash \mathsf{xx}\mathsf{\xi}\subseteq \mathsf{vv}\mathsf{\eta}$ by applying the rule \incpro (which can be applied to add repeated arguments to an inclusion atom). Then, we apply the rules of identity to obtain $ \mathsf{xx}\mathsf{\xi}\subseteq \mathsf{vv}\mathsf{\eta},\mathsf{x}=\mathsf{y}\vdash \mathsf{xy}\mathsf{\xi}\subseteq \mathsf{vv}\mathsf{\eta}$.

%The right to left direction follows from \incpro (applied to the repeated arguments in the inclusion atom) and rules of identity.
 For the left to right direction, $\mathsf{x}\mathsf{y}\mathsf{\xi}\subseteq \mathsf{v}\mathsf{v}\mathsf{\eta}\vdash\mathsf{x}\mathsf{\xi}\subseteq \mathsf{v}\mathsf{\eta}$ follows from \incpro. Next, by \incpro and rules of identity, we have  $\mathsf{x}\mathsf{y}\mathsf{\xi}\subseteq \mathsf{v}\mathsf{v}\mathsf{\eta}\vdash\mathsf{x}\mathsf{y}\subseteq \mathsf{v}\mathsf{v}\vdash\mathsf{x}\mathsf{y}\subseteq \mathsf{v}\mathsf{v}\wedge\mathsf{v}=\mathsf{v}$. Finally, viewing formula $\mathsf{v}=\mathsf{v}$ as $(\mathsf{x}=\mathsf{y})(\mathsf{vv}/\mathsf{xy})$, we applying \inccmp to conclude $\mathsf{x}\mathsf{y}\subseteq \mathsf{v}\mathsf{v},\mathsf{v}=\mathsf{v}\vdash \mathsf{x}=\mathsf{y}$.
 
% rules of identity and \inccmp we have $\mathsf{x}\mathsf{y}\mathsf{\xi}\subseteq \mathsf{v}\mathsf{v}\mathsf{\eta}\vdash\mathsf{x}\mathsf{y}\subseteq \mathsf{v}\mathsf{v}\vdash\mathsf{x}\mathsf{y}\subseteq \mathsf{v}\mathsf{v}\wedge\mathsf{v}=\mathsf{v}\vdash\mathsf{x}=\mathsf{y}$.

%\vspace{0.5\baselineskip}

%\noindent 
Item \ref{atm_var_prf_vivj}:  For the right to left direction, by Proposition \ref{FOT_der_rules}\ref{FOT_der_rules_cn_swap}\ref{FOT_der_rules_qf_cn}, it suffices to  show the contrapositive $\wcn\xi\subseteq\eta\vdash \existso\mathsf{x}(\mathsf{x}\subseteq \xi\wedge\wcn\mathsf{x}\subseteq\eta)$. For any sequence $\mathsf{c}$ of fresh constant symbols, we have $ \mathsf{c}\subseteq \xi,\wcn\mathsf{c}\subseteq\eta\vdash \existso\mathsf{x}(\mathsf{x}\subseteq \xi\wedge\wcn\mathsf{x}\subseteq\eta)$ by Proposition \ref{FOT_der_rules}\ref{FOTd_der_rules_forallo_existso_con}. Then the desired clause follows from \incwe.
%which by \Cref{FOT_der_rules}\ref{FOTd_der_rules_forallo_existso_con} is reduced to showing $\wcn\mathsf{v}\xi\subseteq \mathsf{u}\eta\vdash \mathsf{cd}\subseteq \mathsf{v}\xi\wedge\wcn\mathsf{cd}\subseteq\mathsf{u}\eta$ for some sequences $\mathsf{c},\mathsf{d}$ of fresh constant symbols. But this follows directly from \incwe. 
For the other direction, by \foralloi, it suffices to show that $\xi\subseteq \eta\vdash \mathsf{c}\subseteq \xi\cimp\mathsf{c}\subseteq\eta$ for $\mathsf{c}$ a sequence of fresh constant symbols, which is further reduced to showing that $\xi\subseteq \eta, \mathsf{c}\subseteq \xi\vdash\mathsf{c}\subseteq\eta$. But this follows from \inctr.

%\vspace{0.5\baselineskip}

%\noindent 
Item \ref{atm_var_prf_vix}:  We first show $\con(\mathsf{z})\vdash\mathsf{w}\xi\subseteq \mathsf{z}\eta\cimp(\mathsf{w}=\mathsf{z}\wedge \xi\subseteq\eta)$, which is equivalent to $\con(\mathsf{z}),\mathsf{w}\xi\subseteq \mathsf{z}\eta\vdash\mathsf{w}=\mathsf{z}\wedge \xi\subseteq\eta$. By \incwcon and \incpro we have $\con(\mathsf{z}),\mathsf{w}\subseteq\mathsf{z}\vdash\mathsf{w}\mathsf{z}\subseteq\mathsf{z}\mathsf{z}$. By item \ref{atm_var_prf_repeat}, $\mathsf{w}\mathsf{z}\subseteq\mathsf{z}\mathsf{z}\vdash\mathsf{w}=\mathsf{z}$. Hence, by \incpro the desired clause follows. 
Next, we show $\con(\mathsf{z}),\mathsf{w}=\mathsf{z}, \xi\subseteq\eta\vdash\mathsf{w}\xi\subseteq \mathsf{z}\eta$. Again by \incwcon we have that $\xi\subseteq\eta,\con(\mathsf{z})\vdash\mathsf{z}\xi\subseteq\mathsf{z}\eta$, and thus the desired clause follows from rules of identity.

%\vspace{0.5\baselineskip}

%\noindent 
Item \ref{atm_var_prf_vext}:  The direction $\con(\mathsf{x}),\mathsf{x}\subseteq\mathsf{v}\vdash\existso\mathsf{y}(\mathsf{x}\mathsf{y}\subseteq \mathsf{v}\mathsf{u})$ is given by $\subseteq\!\textsf{W}_{\existso}$, and the other direction $\con(\mathsf{x}),\existso\mathsf{y}(\mathsf{x}\mathsf{y}\subseteq \mathsf{v}\mathsf{u})\vdash\mathsf{x}\subseteq\mathsf{v}$ follows easily from \incpro.

%\vspace{0.5\baselineskip}

%\noindent 
Item \ref{atm_var_prf_foatm}:  We first show the left to right direction, which, by \foralloi,  is reduced to  $\lambda(\mathsf{z})\vdash \mathsf{c}\subseteq \mathsf{z}\cimp \lambda(\mathsf{c})$ for $\mathsf{c}$ a sequence of fresh constant symbols. But this follows from \inccmp.
Conversely, we show the contrapositive $\wcn\lambda(\mathsf{z})\vdash\existso \mathsf{w}(\mathsf{w}\subseteq \mathsf{z}\wedge \wcn\lambda(\mathsf{w}))$. For any sequence $\mathsf{c}$ of fresh constant symbols, we have $ \mathsf{c}\subseteq \mathsf{z},\wcn\lambda(\mathsf{c})\vdash \existso \mathsf{w}(\mathsf{w}\subseteq \mathsf{z}\wedge \wcn\lambda(\mathsf{w}))$ by Proposition \ref{FOT_der_rules}\ref{FOTd_der_rules_forallo_existso_con}. Then the desired clause follows from \foawe.
% by \Cref{FOTd_der_rules}\ref{FOTd_der_rules_forallo_existso_con}, is reduced to showing $\con(\mathsf{x}), \mathsf{cd}\subseteq \mathsf{xv}\cimp \lambda(\mathsf{c},\mathsf{d})\vdash\lambda(\mathsf{x},\mathsf{v})$ for some sequences $\mathsf{c}$ and $\mathsf{d}$ of fresh constant symbols. This is equivalent to the contrapositive  $\con(\mathsf{x}), \wcn\lambda(\mathsf{x},\mathsf{v}), \mathsf{cd}\subseteq \mathsf{xv}\vdash\wcn\lambda(\mathsf{c},\mathsf{d})$, which is given by \inccmp.
\end{proof}

Note that in the above proof, in items \ref{atm_var_prf_vivj} and \ref{atm_var_prf_foatm}, when Proposition \ref{FOT_der_rules}\ref{FOTd_der_rules_forallo_existso_con} (or the rule \existsoi) and the rule \foralloi are applied, we have introduced fresh constant symbols $c$, which, as commented already, we assume to be always available. The reader can compare these derivations with similar derivations  in the system of the usual first-order logic that involve the use of the standard introduction rules for the standard first-order quantifiers $\exists$ and $\forall$ instead. In those cases, one often applies similar tricks and introduces fresh first-order variables $x$, which are taken from some fixed infinite set $\mathsf{Var}$ of first-order variables. %we fixed from the start. 
In the present setting, %our assumption 
we essentially assumes (in a similar manner) an infinite set $\mathbf{Var}$ of second-order variables that contains infinitely many function variables of every arity (with constant variables identified as  $0$-ary function variables) and infinitely many relation variables of every arity.%\todof{enough? ... or too much?} 

To prove the completeness theorem we also need the following three  lemmas. The first lemma emphasizes the fact that all variables quantified by the weak quantifiers have constant values, %by explicitly annotating this using constancy atoms in the syntax of a formula, 
the second one proves a normal form for \FOT-formulas, and the last one shows that  derivations in the system of \FO can be simulated in the system of \FOT. %See Appendix for the proofs of the last two lemmas.
%restricted to inclusion atom-free formulas, the system of \FOT behaves like the system of \FO.
\begin{lem}\label{montone_constancy_lm}
Let $\phi(\mathsf{v})=Q^1\mathsf{x}\theta(\mathsf{x},\mathsf{v})$ be a formula in prenex and negation normal form.
%, and $\mu(\mathsf{x},\mathsf{v})$  a (first-order or inclusion) literal (i.e., an atom or negated atom) in $\phi$.  
%Then  $\phi\dashv\vdash\phi(\mu\wedge\con(\mathsf{x})/\mu)$, where $\phi(\mu\wedge\con(\mathsf{x})/\mu)$ 
Then  $\phi\dashv\vdash\phi_{\mathsf{con}}$, where $\phi_{\mathsf{con}}$
is the formula obtained from $\phi$ by replacing every (first-order or inclusion) literal $\mu(\mathsf{x},\mathsf{v})$ (i.e., an atom or negated atom)  by $\mu\wedge\con(\mathsf{x})$.

%If $\con(\mathsf{x})\vdash\theta\cbimp \chi$, then $\con(\mathsf{x})\wedge \phi(\theta)\dashv\vdash \con(\mathsf{x})\wedge\phi(\chi)$, where $\phi(\chi)$ is obtained from $\phi(\theta)$ by replacing $\theta$ by $\chi$.
\end{lem}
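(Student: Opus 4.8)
The plan is to prove $\phi\dashv\vdash\phi_{\mathsf{con}}$ as a chain of provable equivalences, each step legitimised by the Replacement Lemma (Corollary~\ref{replacement_deduction_thm}). We may assume that the bound variables $x_1,\dots,x_n$ are pairwise distinct and disjoint from $\mathsf{v}$, and (the empty prefix case being trivial) that $n\geq 1$; write $\con(\mathsf{x})$ for $\con(x_1,\dots,x_n)$.

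The one delicate point is to internalise proof-theoretically the semantic fact that a weak quantifier forces its variable to take a single value across the team. So I would first establish the single-quantifier equivalence
\[
Q^1 x\,\psi\dashv\vdash Q^1 x\bigl(\con(x)\wedge\psi\bigr),\qquad Q^1\in\{\existso,\forallo\},
\]
for arbitrary $\psi$. For $Q^1=\existso$: from $\existso x\psi$, use \existsoe to pass to the hypotheses $\psi(v/x)$ and $\con(v)$ with $v$ fresh, and then \existsoi (with the available $\con(v)$) to reintroduce the quantifier over $\con(x)\wedge\psi$; conversely, apply \existsoe to $\existso x(\con(x)\wedge\psi)$, extract the conjunct $\psi(v/x)$, and reintroduce by \existsoi. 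For $Q^1=\forallo$: in both directions instantiate with a fresh constant $c$ via \foralloe, supply $\con(c)$ by \consi, form or decompose the conjunction, and reintroduce the quantifier by \foralloi. Applying this equivalence at each of the $n$ quantifiers of $\phi$ via the Replacement Lemma gives
\[
\phi\dashv\vdash Q_1^1 x_1\bigl(\con(x_1)\wedge Q_2^1 x_2(\con(x_2)\wedge\cdots\wedge Q_n^1 x_n(\con(x_n)\wedge\theta)\cdots)\bigr).
\]

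Next I would bring all the constancy atoms down to the matrix. Using Proposition~\ref{FOT_der_rules}\ref{FOTd_der_rules_qf_conj_disj} — which moves a conjunct not containing the quantified variable in and out of the quantifier — together with commutativity and associativity of $\wedge$ and the Replacement Lemma, each $\con(x_i)$ can be pushed inward past $Q_{i+1}^1 x_{i+1},\dots,Q_n^1 x_n$ (legitimate since the $x_j$ are pairwise distinct), yielding
\[
\phi\dashv\vdash Q_1^1 x_1\cdots Q_n^1 x_n\bigl(\con(x_1)\wedge\cdots\wedge\con(x_n)\wedge\theta\bigr).
\]
A routine derivation, unfolding $\con(\mathsf{t})$ as $\existso\mathsf{x}(\mathsf{x}=\mathsf{t})$ and using \consi and the identity rules, gives $\con(x_1)\wedge\cdots\wedge\con(x_n)\dashv\vdash\con(\mathsf{x})$, so by Replacement $\phi\dashv\vdash Q_1^1 x_1\cdots Q_n^1 x_n(\con(\mathsf{x})\wedge\theta)$. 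Finally, I would show $\con(\mathsf{x})\wedge\theta\dashv\vdash\theta_{\mathsf{con}}$ by induction on the quantifier-free negation normal form matrix $\theta$: for a literal this is just commutativity of $\wedge$; for $\theta_0\wedge\theta_1$ one duplicates $\con(\mathsf{x})$ (using $\con(\mathsf{x})\dashv\vdash\con(\mathsf{x})\wedge\con(\mathsf{x})$) and recurses on each conjunct; for $\theta_0\vvee\theta_1$ one uses the provable distributivity $\chi\wedge(\alpha\vvee\beta)\dashv\vdash(\chi\wedge\alpha)\vvee(\chi\wedge\beta)$ (via \vveei, \vveee and the $\wedge$-rules) and recurses. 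One last application of the Replacement Lemma turns $Q_1^1 x_1\cdots Q_n^1 x_n(\con(\mathsf{x})\wedge\theta)$ into $Q_1^1 x_1\cdots Q_n^1 x_n\,\theta_{\mathsf{con}}=\phi_{\mathsf{con}}$, completing the chain.

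Everything except the single-quantifier step is routine; that step is where the content lies, and the only things needing care are the freshness side conditions on \existsoe, \foralloi and \consi and — in the second part — the bookkeeping of which bound variables occur free in which subformula, which is harmless once the $x_i$ are taken pairwise distinct.
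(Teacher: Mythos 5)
Your proposal is correct and follows essentially the same route as the paper's proof: first establish $Q^1\mathsf{x}\,\theta\dashv\vdash Q^1\mathsf{x}(\theta\wedge\con(\mathsf{x}))$ via the weak-quantifier introduction/elimination rules and \consi, then distribute the constancy atom through the negation normal form matrix onto the literals using the $\wedge$/$\vvee$ distributivity equivalences and the Replacement Lemma. Your extra bookkeeping (one quantifier at a time, then merging $\con(x_1)\wedge\cdots\wedge\con(x_n)$ into $\con(\mathsf{x})$) is just a more explicit rendering of the step the paper dismisses as easy.
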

\begin{proof}
By  Proposition \ref{FOT_der_rules}\ref{FOTd_der_rules_qf_conj_disj}, $Q^1\textsf{I}$ and $Q^1\textsf{E}$, it is easy to prove  that 
\(Q^1 \mathsf{x}\theta(\mathsf{x},\mathsf{v})\dashv\vdash Q^1 \mathsf{x}(\theta(\mathsf{x},\mathsf{v})\wedge \con(\mathsf{x})).\)
%where $\con(\mathsf{x})=\bigwedge_i\dep(x_i)$ and each constancy atom $\dep(x_i):=\existso y(y=x_i)$. 
We then push the formula $\con(\mathsf{x})$ inside the quantifier-free formula $\theta$ in negation normal form all the way to the front of literals by using Replacement Lemma (Corollary \ref{replacement_deduction_thm}(i)) and the (standard) equivalences
\((\theta_0\wedge \theta_1)\wedge \con(\mathsf{x})\dashv\vdash(\theta_0\wedge \con(\mathsf{x}))\wedge (\theta_1\wedge \con(\mathsf{x}))\)
\(\text{ and }(\theta_0\vvee \theta_1)\wedge \con(\mathsf{x})\dashv\vdash(\theta_0\wedge \con(\mathsf{x}))\vvee (\theta_1\wedge \con(\mathsf{x})).\)
\end{proof}
%\begin{proof}
%First, it is easy to prove (by applying \Cref{FOTd_der_rules}\ref{FOTd_der_rules_qf_conj_disj}, $Q^1\textsf{I}$ and $Q^1\textsf{E}$) that 
%\(Q^1 \mathsf{x}\theta(\mathsf{x},\mathsf{v})\dashv\vdash Q^1 \mathsf{x}(\theta(\mathsf{x},\mathsf{v})\wedge \con(\mathsf{x}))\).
%%where $\con(\mathsf{x})=\bigwedge_i\dep(x_i)$ and each constancy atom $\dep(x_i):=\existso y(y=x_i)$. 
%Next we push the formula $\con(\mathsf{x})$ inside the quantifier-free formula $\theta$ in negation normal form all the way to the front of literals by using the standard equivalences
%\((\theta_0\wedge \theta_1)\wedge \con(\mathsf{x})\dashv\vdash(\theta_0\wedge \con(\mathsf{x}))\wedge (\theta_1\wedge \con(\mathsf{x}))\text{ and }(\theta_0\vvee \theta_1)\wedge \con(\mathsf{x})\dashv\vdash(\theta_0\wedge \con(\mathsf{x}))\vvee (\theta_1\wedge \con(\mathsf{x}))\)
%\end{proof}

\begin{lem}\label{FOT_NF_prf}
For every \FOT-formula $\phi$, we have $\phi(\mathsf{v})\dashv\vdash Q^1\mathsf{x}\theta(\mathsf{x},\mathsf{v})$, where $\theta(\mathsf{x},\mathsf{v})$ is a quantifier-free formula in negation normal form in which first-order atoms are of the form $\lambda(\mathsf{x})$,  and inclusion atoms  are of the form $\mathsf{x}_{i}\subseteq \mathsf{v}$ for some variables $\mathsf{x_i} $ from $\mathsf{x}$. 
\end{lem}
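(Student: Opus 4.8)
The plan is to normalise the \FOT-formula $\phi$ in three stages. First I would put $\phi$ into prenex negation normal form $Q^1\mathsf{x}\,\theta_0(\mathsf{x},\mathsf{v})$ via Corollary~\ref{prenex_nnf}, and then apply Lemma~\ref{montone_constancy_lm} to pass to a provably equivalent formula in which every literal $\mu$ sits next to the conjunct $\con(\mathsf{x})$ recording that the weakly quantified variables are constant-valued. I shall maintain throughout the construction the invariant that every literal not yet of the desired shape has, as a neighbouring conjunct inside all the weak quantifiers binding its variables, the constancy atom $\con(\mathsf{z})$ for the tuple $\mathsf{z}$ of all weakly quantified variables currently in scope (initially $\mathsf{z}=\mathsf{x}$). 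This is exactly what makes the conditional equivalences of Proposition~\ref{atm_var_prf}\ref{atm_var_prf_vix} and \ref{atm_var_prf_vext} usable as unconditional provable equivalences $\mu\wedge\con(\mathsf{z})\dashv\vdash\nu\wedge\con(\mathsf{z})$ that can be fed into the Replacement Lemma (Corollary~\ref{replacement_deduction_thm}(i)); the invariant is re-established after each step using Lemma~\ref{montone_constancy_lm}, Proposition~\ref{FOT_der_rules}\ref{FOTd_der_rules_qf_conj_disj}, and the introduction and elimination rules for the weak quantifiers (which encode precisely that weakly quantified variables are constant-valued). The intermediate formulas need not stay prenex; I re-prenex only at the very end.

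Second, I would repeatedly eliminate literals that are not of the required form. Call an inclusion literal \emph{good} when it is $\pm(\mathsf{x}_i\subseteq\mathsf{v})$ with $\mathsf{x}_i$ a tuple of weakly quantified variables and right-hand side exactly $\mathsf{v}$, a first-order literal \emph{good} when all its arguments are weakly quantified, and a literal \emph{bad} otherwise. Given a bad inclusion literal $\pm(\mathsf{y}\subseteq\mathsf{z}')$: permute both sides by \incpro so that the weakly quantified (hence, by the invariant, constant) entries of $\mathsf{z}'$ come first and peel them off as identity atoms by Proposition~\ref{atm_var_prf}\ref{atm_var_prf_vix}; delete the repeated entries of the remaining right-hand side as further identity atoms by Proposition~\ref{atm_var_prf}\ref{atm_var_prf_repeat}; unfold the leftover atom, whose right-hand side is now a repetition-free subtuple of $\mathsf{v}$, via Proposition~\ref{atm_var_prf}\ref{atm_var_prf_vivj} so that the two resulting inclusion atoms each have a \emph{fresh, constant} left-hand side; and finally pad the right-hand side of each of these up to all of $\mathsf{v}$ using Proposition~\ref{atm_var_prf}\ref{atm_var_prf_vext} and \incpro. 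After applying the Replacement Lemma, floating the freshly introduced weak quantifiers outward by Proposition~\ref{FOT_der_rules}\ref{FOTd_der_rules_qf_conj_disj} (licit since the variables are fresh), and restoring the constancy invariant, the bad inclusion literal has been replaced by good inclusion literals, good identity atoms, and possibly some new identity literals — which are first-order but may still mention free variables and hence be bad. For a bad first-order literal $\pm\lambda$ I would apply Proposition~\ref{atm_var_prf}\ref{atm_var_prf_foatm} to its entire argument tuple and then, in the same step, peel off its constant positions and pad up exactly as above, so that the literal is replaced by a single good first-order atom $\lambda(\mathsf{w})$ together with good identity atoms and good inclusion literals, introducing no bad literal whatsoever.

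Third, I would prove termination by a simple measure: assign to each formula the finite multiset of its bad literals, ordering a bad inclusion literal above a bad first-order literal, and take the induced multiset order. Eliminating a bad inclusion literal removes one inclusion-ranked element and adds only first-order-ranked (or good) elements; eliminating a bad first-order literal removes a first-order-ranked element and adds no bad element; the re-marking steps add only good literals. Hence the measure strictly decreases with each elimination, so after finitely many steps no bad literal remains, and re-prenexing (Proposition~\ref{FOT_der_rules}\ref{FOTd_der_rules_qf_conj_disj}, \ref{FOT_der_rules_qf_cn}) together with floating out the existential quantifiers carried by the constancy conjuncts $\con(\mathsf{t})=\existso\mathsf{x}(\mathsf{x}=\mathsf{t})$ yields a formula $Q^1\mathsf{x}\,\theta(\mathsf{x},\mathsf{v})$ in prenex negation normal form of the required shape — the atoms contributed by the constancy conjuncts being of the form $\mathsf{x}'=\mathsf{x}$ with both sides weakly quantified, hence good.

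The main obstacle, where the real work lies, is the second stage: one must check that the case analysis on the shapes of literals (first-order atom, negated first-order atom, inclusion atom, negated inclusion atom, with arbitrary mixtures of free and weakly quantified variables on the two sides, and with negations pushed onto atoms throughout) genuinely reduces via the handful of equivalences in Proposition~\ref{atm_var_prf}, and — more delicately — that the constancy invariant can be threaded into each freshly opened weak-quantifier scope without damage. The termination bookkeeping is then routine, and the steps that move quantifiers past $\wedge$ and $\vvee$ or dualise them under $\wcn$ are exactly Proposition~\ref{FOT_der_rules}.
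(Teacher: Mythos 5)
Your overall strategy is the same as the paper's: prenex/negation normal form (Corollary \ref{prenex_nnf}), the constancy normal form of Lemma \ref{montone_constancy_lm}, and then literal-by-literal rewriting via the five equivalences of Proposition \ref{atm_var_prf} fed through the Replacement Lemma, with a termination argument at the end. But there is a concrete defect in the order of operations inside your bad-inclusion-literal pipeline. After you peel the constant (weakly quantified) right-hand entries via Proposition \ref{atm_var_prf}\ref{atm_var_prf_vix} and remove right-hand repetitions via \ref{atm_var_prf}\ref{atm_var_prf_repeat}, you are left with an atom $\mathsf{y}'\subseteq\mathsf{u}$ whose \emph{right} side $\mathsf{u}$ is a repetition-free subtuple of $\mathsf{v}$ but whose \emph{left} side $\mathsf{y}'$ is still an arbitrary mix of bound and free variables, possibly with repetitions. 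Unfolding via \ref{atm_var_prf}\ref{atm_var_prf_vivj} yields $\forallo\mathsf{w}(\mathsf{w}\subseteq\mathsf{y}'\cimp\mathsf{w}\subseteq\mathsf{u})$, and while $\mathsf{w}\subseteq\mathsf{u}$ can indeed be padded by \ref{atm_var_prf}\ref{atm_var_prf_vext} and \incpro, the companion atom $\mathsf{w}\subseteq\mathsf{y}'$ cannot: its right-hand side is the \emph{old left-hand side}, which may contain bound variables and repeated or permuted free variables, so \ref{atm_var_prf}\ref{atm_var_prf_vext} simply does not apply to it. Hence your claim that a bad inclusion literal ``has been replaced by good inclusion literals, good identity atoms, and possibly some new identity literals'' is false as stated: the elimination step re-creates a bad \emph{inclusion} literal, and with it your multiset measure (``eliminating a bad inclusion literal adds only first-order-ranked or good elements'') fails to decrease, so the termination argument as given does not go through.

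The gap is fixable, and the paper's proof shows how: perform the unfolding of \ref{atm_var_prf}\ref{atm_var_prf_vivj} \emph{first} (its Step 3, after the first-order atoms have been treated by \ref{atm_var_prf}\ref{atm_var_prf_foatm}), so that every remaining inclusion atom has a fresh, fully bound, constant left-hand side; only \emph{then} peel the bound right-hand entries (\ref{atm_var_prf}\ref{atm_var_prf_vix}), remove repetitions (\ref{atm_var_prf}\ref{atm_var_prf_repeat}) and pad (\ref{atm_var_prf}\ref{atm_var_prf_vext}). With that ordering, the later steps never recreate a configuration handled by an earlier one, which is exactly the paper's (brief) termination argument. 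Alternatively, you could keep your per-literal processing but fold a second peel/dedup pass on $\mathsf{w}\subseteq\mathsf{y}'$ into the same macro-step (and note that this second-generation atom, having a fully bound fresh left side, needs no further unfolding), adjusting the termination measure accordingly. The rest of your bookkeeping — the constancy invariant making the conditional equivalences of \ref{atm_var_prf}\ref{atm_var_prf_vix} and \ref{atm_var_prf}\ref{atm_var_prf_vext} usable under the Replacement Lemma, the treatment of both polarities, and the quantifier-floating via Proposition \ref{FOT_der_rules} — matches the paper and is fine.
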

\begin{proof}
%See Appendix.
By Corollary \ref{prenex_nnf}, we may assume that  $\phi(\mathsf{v})$ is in prenex and negation normal form.
%We first  turn $\phi(\mathsf{v})$ into an equivalent formula in prenex and negation normal form by  exhaustively applying \Cref{FOT_der_rules}\ref{FOTd_der_rules_qf_conj_disj}-\ref{FOT_der_rules_conjdis_cn}. 
Assume that the bound variables of $\phi(\mathsf{v})$ are among $\mathsf{x}$. By Lemma \ref{montone_constancy_lm} we may also assume that every literal $\mu(\mathsf{x},\mathsf{v})$ in $\phi$ is replaced by $\mu(\mathsf{x},\mathsf{v})\wedge \con(\mathsf{x})$ (call such a formula a formula in  {\em constant normal form}). %To obtain the required normal form it remains to transform every (first-order or inclusion) atom in $\phi$ in the required form. We shall do this in several steps, %and in the following we assume that every formula generated in these intermediate steps are turned into prenex and negation normal form by applying \Cref{FOT_Prenex_NNF_lm}, and decorated by constancy atoms.
%every literal $\mu(\mathsf{x},\mathsf{v})$  in the formula is replaced by $\mu(\mathsf{x},\mathsf{v})\wedge \con(\mathsf{x})$.
Observe that now in $\phi(\mathsf{v})$ a generic first-order atom is of the form $\lambda(\mathsf{x},\mathsf{v})$, and a generic inclusion atom is of the form $\eta\xi\rho\sigma\subseteq\eta'\xi'\rho'\sigma'$ (modulo permutation  by \incpro), where $|\eta|=|\eta'|\geq 0$, $|\xi|=|\xi'|\geq 0$, $|\rho|=|\rho'|\geq 0$ and $|\sigma|=|\sigma'|\geq 0$,%\vspace{-4pt}
\begin{itemize}
\item $(\eta,\eta')=(\mathsf{x}_i,\mathsf{x}_j)$ for some bound variables $\mathsf{x}_i,\mathsf{x}_j$  from $\mathsf{x}$; %sequences $\mathsf{x}_i,\mathsf{x}_j$ of  variables bound in $\phi$,
\item $(\xi,\xi')=(\mathsf{x}_i,\mathsf{v}_i)$ for some bound variables $\mathsf{x}_i$ from $\mathsf{x}$, and free variables $\mathsf{v}_i$ from $\mathsf{v}$;
%sequence $\mathsf{x}_i$ of variables bound  in $\phi$, and some sequence $\mathsf{v}_i$ of variables free in $\phi$,
\item $(\rho,\rho')=(\mathsf{v}_i,\mathsf{x}_i)$ for some free variables $\mathsf{v}_i$ from $\mathsf{v}$, and bound variables $\mathsf{x}_i$ from $\mathsf{x}$;

% sequence $\mathsf{x}_i$ of variables bound in $\phi$, and some sequence $\mathsf{v}_i$ of variables free in $\phi$,
\item $(\sigma,\sigma')=(\mathsf{v}_i,\mathsf{v}_j)$ for some free variables $\mathsf{v}_i,\mathsf{v}_j$ from $\mathsf{v}$.%\vspace{-4pt}
%sequences  $\mathsf{v}_i,\mathsf{v}_j$ of  variables in $\phi$,
\end{itemize}
To obtain the required normal form we have to transform every (first-order or inclusion) atom in $\phi$ to the required form. We achieve this in several steps.

In \underline{Step 1} of our transformation, we replace in $\phi(\mathsf{v})$ every inclusion atom of the form $\eta\xi\mathsf{v}_i\sigma\subseteq\eta'\xi'\mathsf{x}_i\sigma'$  by  $\mathsf{v}_i=\mathsf{x}_i\wedge \eta\xi\sigma\subseteq\eta'\xi'\sigma'$. %and every negated inclusion atom $\wcn\eta\xi\mathsf{v}_i\sigma\subseteq\eta'\xi'\mathsf{x}_i\sigma'$ by the formula $\wcn\mathsf{v}_i=\mathsf{x}_i\vvee \eta\xi\sigma\subseteq\eta'\xi'\sigma'$. 
Note that by Proposition \ref{atm_var_prf}\ref{atm_var_prf_vix}, we have%\vspace{-4pt}
\[\con(\mathsf{x}_i)\wedge\eta\xi\mathsf{v}_i\sigma\subseteq\eta'\xi'\mathsf{x}_i\sigma'\dashv\vdash\con(\mathsf{x}_i)\wedge(\mathsf{v}_i=\mathsf{x}_i\wedge \eta\xi\sigma\subseteq\eta'\xi'\sigma')\]
\[\text{and }\con(\mathsf{x}_i)\wedge\wcn\eta\xi\mathsf{v}_i\sigma\subseteq\eta'\xi'\mathsf{x}_i\sigma'\dashv\vdash\con(\mathsf{x}_i)\wedge\wcn(\mathsf{v}_i=\mathsf{x}_i\wedge \eta\xi\sigma\subseteq\eta'\xi'\sigma').\]
Hence, by Replacement Lemma (Corollary \ref{replacement_deduction_thm}(i)), the resulting formula $\phi_1(\mathsf{v})$ is provably equivalent to $\phi$. We  further assume  (here and also in the other steps) that $\phi_1(\mathsf{v})$ is turned into prenex, negation and constant normal form by applying  Corollary \ref{prenex_nnf} %\Cref{FOT_der_rules}\ref{FOTd_der_rules_qf_conj_disj}-\ref{FOT_der_rules_conjdis_cn} 
and Lemma \ref{montone_constancy_lm}.

In \underline{Step 2}, we  replace in $\phi_1(\mathsf{v})$ every first-order atom $\lambda(\mathsf{x},\mathsf{v})$ by  
\(\forallo \mathsf{y}\mathsf{z}(\mathsf{y}\mathsf{z}\subseteq \mathsf{x}\mathsf{v}\cimp\lambda(\mathsf{y},\mathsf{z})).\)
By Proposition \ref{atm_var_prf}\ref{atm_var_prf_foatm}, the resulting formula $\phi_2(\mathsf{v})$ is provably equivalent to $\phi_1(\mathsf{v})$. Up to now, every first-order atom in the formula is transformed to the required  form. The steps afterwards will not generate first-order atoms not in normal form.

In  \underline{Step 3}, we apply Proposition \ref{atm_var_prf}\ref{atm_var_prf_vivj} to replace in $\phi_2(\mathsf{v})$ every inclusion atom  $\eta\xi\mathsf{v}_i\subseteq\eta'\xi'\mathsf{v}_j$ by  
\(\forallo \mathsf{wyz}(\mathsf{wy}\mathsf{z}\subseteq \eta\xi\mathsf{v}_i\cimp\mathsf{wyz}\subseteq \eta'\xi'\mathsf{v}_j),\) and denote the resulting formula by $\phi_3(\mathsf{v})$.
In  \underline{Step 4}, we apply Proposition \ref{atm_var_prf}\ref{atm_var_prf_vix} to replace in $\phi_3(\mathsf{v})$ every inclusion atom  $\mathsf{x}_i\mathsf{x}_k\subseteq\mathsf{x}_j\mathsf{v}_k$ by  $\mathsf{x}_i=\mathsf{x}_j\wedge\mathsf{x}_k\subseteq \mathsf{v}_k$, and denote the resulting formula by $\phi_4(\mathsf{v})$. 
Up to now every inclusion atom in the formula is transformed to the form $\mathsf{x}_i\subseteq\mathsf{v}_i$, where $\mathsf{x}_i$ are  bound variables  and $\mathsf{v}_i$ are  free variables in $\phi_4(\mathsf{v})$. Yet, $\mathsf{v}_i$ may contain repetitions, and it may  be only a subsequence of $\mathsf{v}$. Handling these requires two additional steps. %of transformation.

In  \underline{Step 5}, we remove repetitions on the right side of the inclusion atoms, by applying Proposition \ref{atm_var_prf}\ref{atm_var_prf_repeat}  to replace in $\phi_4(\mathsf{v})$ every inclusion atom of the form $\mathsf{x}_i\mathsf{x}_j\mathsf{x}_k\subseteq\mathsf{v}_i\mathsf{v}_i\mathsf{v}_j$ by $\mathsf{x}_i=\mathsf{x}_j\wedge \mathsf{x}_i\mathsf{x}_k\subseteq\mathsf{v}_i\mathsf{v}_j$. Denote the resulting  formula by $\phi_5(\mathsf{v})$.
In  \underline{Step 6}, we extend the length of those shorter inclusion atoms. Assuming $\mathsf{v}=\mathsf{v}_i\mathsf{v}_j$, we apply  Proposition  \ref{atm_var_prf}\ref{atm_var_prf_vext}  to replace in $\phi_5(\mathsf{v})$ every inclusion atom of the form $\mathsf{x}_i\subseteq\mathsf{v}_i$ by $\existso \mathsf{y}(\mathsf{x}_i\mathsf{y}\subseteq\mathsf{v}_i\mathsf{v}_j)$. Denote the resulting  formula by $\phi_6(\mathsf{v})$. 

As before we assume that $\phi_6(\mathsf{v})$ is turned into prenex and negation normal form, but now we shall apply Lemma \ref{montone_constancy_lm} in a reverse manner to remove the added constancy atoms  for each literal in $\phi_6$. Finally, the resulting provably equivalent formula is in the required normal form. Note that our transformation clearly terminates, because we have performed the steps of the transformation in such an order that each step  will not generate new formulas for which the transformations in the previous steps apply.
%under the order we have chosen in this proof to perform the steps, each step  will not generate new formulas for which the transformations in the previous steps apply.
\end{proof}

%\begin{prop}\label{R_inc_lm}
%Let $\psi(\mathsf{v})= Q^1\mathsf{x}\theta(\mathsf{x},\mathsf{v})$ be a formula in the normal form given by \Cref{FOT_NF_prf}, and $\psi(R)$ be the formula with a fresh relation symbol $R$ obtained from $\psi(\mathsf{v})$ by replacing every inclusion atom $\mathsf{x}_i\subseteq \mathsf{v}$ by $R(\mathsf{x}_i)$. Then, $\Gamma,\existso\mathsf{z}R\mathsf{z}\vdash\wcn\psi(R)$ implies $\Gamma\vdash\wcn\psi(\mathsf{v})$, whenever $R$ does not occur in $\Gamma$.
%\end{prop}
%\begin{proof}
%By \Cref{FOT_Prenex_NNF_lm}, we may assume that $\wcn\psi(R)=\overline{Q}^1\mathsf{x}\eta(\mathsf{x},\mathsf{v})$ as well as $\wcn\psi(\mathsf{v})=\overline{Q}^1\mathsf{x}\eta(R)$ are in prenex and negation normal form.
%\end{proof}

\begin{lem}\label{FO2FOT_eqiv}
Let $\Delta\cup\{\delta\}$ be a set of \FO-formulas whose free variables are among $\mathsf{x}$. If $\Delta\vdash_\FO\delta$, then $\Delta^\ast(\mathsf{c}/\mathsf{x})\vdash_\FOT\delta^\ast(\mathsf{c}/\mathsf{x})$, where $\Delta^\ast=\{\alpha^\ast\mid \alpha\in\Delta\}$, $(\cdot)^\ast$ is the \linebreak operation that replaces every  logical constant in \FO by its counterpart in \FOT, and $\mathsf{c}$ is a sequence of fresh constant symbols. 
 %$\gamma^\ast$ is an \FOT-formula obtained from the \FO-formula $\gamma$ by replacing every  logical constant in \FO by its counterpart in \FOT. %i.e., by replacing 
%\begin{center}
%$\neg$ by $\wcn$, $\vee$ by $\vvee$, $\forall$ by $\forallo$, and  $\exists$ by $\existso$.
%\end{center} 
In particular, if $\Delta\cup\{\delta\}$ is a set of \FO-sentences, then $\Delta\vdash_\FO\delta$ implies $\Delta^\ast\vdash_\FOT\delta^\ast$.
\end{lem}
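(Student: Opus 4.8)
The plan is to prove Lemma~\ref{FO2FOT_eqiv} by induction on the length of the \FO-derivation $\Delta\vdash_\FO\delta$, simulating each natural deduction rule of \FO inside the system of \FOT under the starred translation with fresh constants substituted for the free variables. The key point to set up first is that the substitution $(\cdot)^\ast(\mathsf{c}/\mathsf{x})$ commutes with the logical constants in the obvious way: $(\neg\alpha)^\ast(\mathsf{c}/\mathsf{x})=\wcn\alpha^\ast(\mathsf{c}/\mathsf{x})$, $(\alpha\vee\beta)^\ast(\mathsf{c}/\mathsf{x})=\alpha^\ast(\mathsf{c}/\mathsf{x})\vvee\beta^\ast(\mathsf{c}/\mathsf{x})$, $(\exists y\,\alpha)^\ast(\mathsf{c}/\mathsf{x})=\existso y\,\alpha^\ast(\mathsf{c}/\mathsf{x})$ (after renaming $y$ away from $\mathsf{c}$), and similarly for $\wedge,\forall$. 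So every \FO-proof rule, once starred, becomes a statement purely about \FOT-connectives applied to \emph{sentences} (all free variables having been replaced by constants), which is the regime in which \FOT behaves classically.

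The induction then proceeds rule by rule. The propositional rules ($\wedge\textsf{I}/\textsf{E}$, assumption, $\raa$, $\neg\textsf{I}/\textsf{E}$, and $\vee\textsf{I}$) translate directly to the corresponding \FOT rules from \Cref{rules_weak_cnt_qtf} and \Cref{tb_rules_fot}, since $\wcn$ admits the classical rules and $\vvee$ admits the ordinary introduction rules. The \FO rule $\vee\textsf{E}$ translates to \vveee, which \emph{is} sound and derivable for $\vvee$ (unlike for $\vee$), so this case also goes through. For the identity rules, $=\textsf{I}$ becomes the \FOT rule $=\textsf{I}$, and $=\textsf{Sub}$ becomes $=\textsf{Sub}$; one should note that a first-order atom $\lambda$ is its own star, so atomic reasoning is unchanged. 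For the quantifier rules, the crucial observation is that after substituting fresh constants for free variables, introducing or eliminating a weak quantifier $\existso,\forallo$ over a \emph{constant} $c$ is exactly what \Cref{FOT_der_rules}\ref{FOTd_der_rules_forallo_existso_con} and the $(b)$-versions of \existsoe, \foralloi, \foralloe, \existsoi provide: e.g. \FO's $\forall\textsf{E}$ (instantiating $\forall y\,\alpha$ at a term, which after our substitution is a term over $\mathsf{c}$) becomes \foralloe together with \consi, which gives $\vdash\dep(t)$ for any term $t$ built from constants and function symbols; \FO's $\exists\textsf{I}$ becomes \existsoi again using \consi to discharge the $\dep(t)$ premise; \FO's $\forall\textsf{I}$ and $\exists\textsf{E}$ become the $(b)$-variants \foralloi$(b)$ and \existsoe$(b)$, whose eigenconstant side-conditions match the eigenvariable side-conditions of the \FO rules (the freshness of the new constant is guaranteed because $\mathsf{c}$ and the \FO-derivation use only finitely many constants). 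The ``in particular'' clause is the special case $\mathsf{x}=\langle\,\rangle$, where the substitution is vacuous and $\Delta^\ast(\langle\rangle/\langle\rangle)=\Delta^\ast$.

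The main obstacle, and the place requiring care rather than routine bookkeeping, is the bookkeeping around free variables and eigenvariables: the \FO-system's quantifier rules carry eigenvariable conditions stated in terms of variables that are \emph{free} in $\Delta\cup\{\delta\}$, whereas in \FOT we have replaced all such variables by constants, so we must check that the translation of an \FO-subderivation that temporarily introduces a fresh variable $z$ (as in $\forall\textsf{I}$ or $\exists\textsf{E}$) can be matched by an \FOT-subderivation introducing a fresh \emph{constant} $c_z$, and that the induction hypothesis can be applied to that subderivation with $\mathsf{x}$ extended by $z$ and $\mathsf{c}$ extended by $c_z$. This is why the lemma is stated with an arbitrary sequence $\mathsf{x}$ containing all free variables rather than only for sentences: the statement is engineered to be stable under the recursion. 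Concretely, when the last rule is $\forall y\,\textsf{I}$ deriving $\forall y\,\alpha$ from $\alpha$ (with $y\notin\textsf{Fv}(\Delta)$), the subderivation $\Delta\vdash_\FO\alpha$ has free variables among $\mathsf{x}y$; the induction hypothesis (with $\mathsf{x}y$ in place of $\mathsf{x}$ and $\mathsf{c}c_y$ in place of $\mathsf{c}$) gives $\Delta^\ast(\mathsf{c}/\mathsf{x})\vdash_\FOT\alpha^\ast(\mathsf{c}c_y/\mathsf{x}y)$ — here $c_y$ does not occur in $\Delta^\ast(\mathsf{c}/\mathsf{x})$ — and then \foralloi$(b)$ yields $\Delta^\ast(\mathsf{c}/\mathsf{x})\vdash_\FOT\forallo y\,\alpha^\ast(\mathsf{c}/\mathsf{x})=(\forall y\,\alpha)^\ast(\mathsf{c}/\mathsf{x})$, as required; the $\exists\textsf{E}$ case is dual, using \existsoe$(b)$. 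With this free-variable-to-constant discipline in hand, every other rule is a mechanical translation, and the induction closes.
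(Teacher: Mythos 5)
Your proposal is correct and follows essentially the same route as the paper's proof: induction on the depth of the \FO-derivation, simulating each rule in the \FOT-system, with \consi supplying the $\dep(t)$ premises for \existsoi and \foralloe on constant terms, and the constant-versions \existsoe$(b)$/\foralloi$(b)$ replacing the eigenvariable conditions by fresh eigenconstants. The paper handles the same free-variable bookkeeping you describe (including extra variables occurring only in instantiating terms, which it replaces by additional fresh constants $\mathsf{c}'$), so your argument matches it in substance.
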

\begin{proof}
We prove that $\Delta\vdash_\FO\delta$ implies $\Delta^\ast(\mathsf{c}/\mathsf{x})\vdash_\FOT\delta^\ast(\mathsf{c}/\mathsf{x})$ by induction on the depth of the proof tree of $\Delta\vdash_\FO\delta$. If the proof tree has depth 1, then either  $\delta\in \Delta$ or $\delta$ is the identity axiom $t=t$. In both cases  $\Delta^\ast(\mathsf{c}/\mathsf{x})\vdash_\FOT \delta^\ast(\mathsf{c}/\mathsf{x})$ trivially holds. %in our system. 

If the proof tree has depth $>1$, and the last step of the derivation of $\Delta\vdash_\FO\delta$ is an application of a rule for $\neg$ or $\wedge$ or $\vee$ in \FO, then we derive $\Delta^\ast(\mathsf{c}/\mathsf{x})\vdash_\FOT\delta^\ast(\mathsf{c}/\mathsf{x})$ by directly applying the induction hypothesis and the corresponding (classical)  rules for $\wcn$ or $\wedge$ or $\vvee$ in our system for \FOT.

If the last step of the derivation of $\Delta\vdash_\FO\delta$ is an application of the $\exists\textsf{I}$ rule:%\vspace{-8pt}
\begin{center}
\AxiomC{$~\vdots~\pi$}\noLine\UnaryInfC{$\Delta\vdash\alpha(t/x)$}\RightLabel{$\exists$\textsf{I}}\UnaryInfC{$\Delta\vdash\exists x\alpha$}\DisplayProof%\vspace{-2pt}
\end{center}
\noindent where the free variables occurring in $\Delta(\mathsf{y})$ and $\exists x\alpha(x,\mathsf{y})$ are among $\mathsf{y}$, and the variables and constant symbols occurring in the term $t$ are, respectively, among $\mathsf{v}\mathsf{y}$ and $\mathsf{d}$ (denoted as $t(\mathsf{v}\mathsf{y},\mathsf{d})$), then the corresponding derivation in \FOT is:%\vspace{-8pt}
\begin{center}
\AxiomC{$~\vdots~\pi^\ast$}\noLine\UnaryInfC{$\Delta^\ast(\mathsf{c}/\mathsf{y})\vdash\alpha^\ast(t(\mathsf{c}'/\mathsf{v},\mathsf{c}/\mathsf{y},\mathsf{d})/x,\mathsf{c}/\mathsf{y})$}\AxiomC{}\noLine\UnaryInfC{}\RightLabel{\consi}\UnaryInfC{$\Delta^\ast(\mathsf{c}/\mathsf{y})\vdash\con(\mathsf{c'cd})$}\RightLabel{\consi}\UnaryInfC{$\Delta^\ast(\mathsf{c}/\mathsf{y})\vdash\dep(t(\mathsf{c}'/\mathsf{v},\mathsf{c}/\mathsf{y},\mathsf{d}))$}\RightLabel{\existsoi}\BinaryInfC{$\Delta^\ast(\mathsf{c}/\mathsf{y})\vdash\existso x\alpha^\ast(x,\mathsf{c}/\mathsf{y})$}\noLine\UnaryInfC{}\DisplayProof%\vspace{-4pt}
\end{center}
\noindent where $\pi^\ast$ is, by  induction hypothesis, the derivation in \FOT that corresponds to $\pi$.

If the last step of the derivation of $\Delta\vdash_\FO\delta$ is an application of the $\exists\textsf{E}$ rule:%\vspace{-8pt}
\begin{center}
\AxiomC{$~\vdots~\pi_1$}\noLine\UnaryInfC{$\Delta\vdash\exists x\alpha$}\AxiomC{$~\vdots~\pi_2$}\noLine\UnaryInfC{$\Delta,\alpha(v/x)\vdash\delta$}\RightLabel{$\exists$\textsf{E}}\BinaryInfC{$\Delta\vdash\delta$}\DisplayProof%\vspace{-4pt}
\end{center}
\noindent where $ v\notin\textsf{Fv}(\Delta\cup\{\alpha,\delta\})$ and the free variables occurring in $\Delta,\exists x\alpha,\delta$ are among $\mathsf{y}$, then the corresponding derivation in \FOT is:%\vspace{-8pt}
\def\ScoreOverhang{0.5pt}
\begin{center}
\AxiomC{$~\vdots~\pi_1^\ast$}\noLine\UnaryInfC{$\Delta^\ast(\mathsf{c}/\mathsf{y})\vdash\existso x\alpha^\ast(x,\mathsf{c}/\mathsf{y})$}\AxiomC{$~\vdots~\pi_2^\ast$}\noLine\UnaryInfC{$\Delta^\ast(\mathsf{c}/\mathsf{y}),(\alpha(v/x))^\ast(d/v,\mathsf{c}/\mathsf{y})\vdash\delta^\ast(\mathsf{c}/\mathsf{y})$}\UnaryInfC{$\Delta^\ast(\mathsf{c}/\mathsf{y}),\alpha^\ast(d/x,\mathsf{c}/\mathsf{y})\vdash\delta^\ast(\mathsf{c}/\mathsf{y})$}\RightLabel{\existsoe}\BinaryInfC{$\Delta^\ast(\mathsf{c}/\mathsf{y})\vdash\delta^\ast(\mathsf{c}/\mathsf{y})$}\DisplayProof%\vspace{-4pt}
\end{center}
\noindent where $\pi_1^\ast,\pi_2^\ast$ are, by induction hypothesis, the derivations that correspond to $\pi_1,\pi_2$ in \FOT respectively, and $d$ is a fresh constant symbol.

If the last step of the derivation of $\Delta\vdash_\FO\delta$ is an application of the $\forall\textsf{I}$ rule:%\vspace{-8pt}
\begin{center}
\AxiomC{$~\vdots~\pi$}\noLine\UnaryInfC{$\Delta\vdash\alpha(v/x)$}\RightLabel{$\forall$\textsf{I}}\UnaryInfC{$\Delta\vdash\forall x\alpha$}\DisplayProof%\vspace{-4pt}
\end{center}
\noindent where $ v\notin\textsf{Fv}(\Delta\cup\{\alpha\})$ and the free variables occurring in $\Delta,\forall x\alpha$ are among $\mathsf{y}$, then the corresponding derivation in \FOT is:%\vspace{-8pt}
\begin{center}
\AxiomC{$~\vdots~\pi^\ast$}\noLine\UnaryInfC{$\Delta^\ast(\mathsf{c}/\mathsf{y})\vdash(\alpha(v/x))^\ast(d/v,\mathsf{c}/\mathsf{y})$}\RightLabel{\foralloi, \consi}\UnaryInfC{$\Delta^\ast(\mathsf{c}/\mathsf{y})\vdash\forallo x\alpha^\ast(x,\mathsf{c}/\mathsf{y})$}\DisplayProof%\vspace{-4pt}
\end{center}
where $d$ is a fresh constant symbol.

If the last step of the derivation of $\Delta\vdash_\FO\delta$ is an application of the $\forall\textsf{E}$ rule:%\vspace{-8pt}
\begin{center}
\AxiomC{$~\vdots~\pi$}\noLine\UnaryInfC{$\Delta\vdash\forall x\alpha$}\RightLabel{$\forall$\textsf{E}}\UnaryInfC{$\Delta\vdash\alpha(t/x)$}\DisplayProof%\vspace{-4pt}
\end{center}
\noindent where the free variables occurring in $\Delta,\forall x\alpha$ are among $\mathsf{y}$ and $t=t(\mathsf{vy},\mathsf{d})$ for some sequence $\mathsf{d}$ of constant symbols, then the corresponding derivation in \FOT is:%\vspace{-8pt}
\begin{center}
\AxiomC{$~\vdots~\pi^\ast$}\noLine\UnaryInfC{$\Delta^\ast(\mathsf{c}/\mathsf{y})\vdash\forallo x\alpha^\ast(x,\mathsf{c}/\mathsf{y})$}
\AxiomC{}\RightLabel{\consi}\UnaryInfC{$\Delta^\ast(\mathsf{c}/\mathsf{y})\vdash\con(\mathsf{c'cd})$}\RightLabel{\consi}\UnaryInfC{$\Delta^\ast(\mathsf{c}/\mathsf{y})\vdash\dep(t(\mathsf{\mathsf{c}'/\mathsf{v},\mathsf{c}/\mathsf{y},d}))$}\RightLabel{\foralloe}\BinaryInfC{$\Delta^\ast(\mathsf{c}/\mathsf{y})\vdash\alpha^\ast(t(\mathsf{c}'/\mathsf{v},\mathsf{c}/\mathsf{y},\mathsf{d})/x,\mathsf{c}/\mathsf{y})$}\DisplayProof%\vspace{-4pt}
\end{center}
\end{proof}

Finally, we are in a position to prove the completeness theorem of our system. {We first prove the weak completeness, and then derive the strong completeness as a consequence of the weak one and the compactness theorem (Theorem \ref{compact}) we proved in Section \ref{sec:fot-fotd-expr}. Finding a direct proof of the strong completeness is left for future work.}

\begin{theorem}[Weak completeness]\label{completeness_fot}
%For any set $\Gamma\cup\{\phi\}$ of \FOT-formulas, we have 
$\models\phi\iff\vdash_\FOT\phi$. 
\end{theorem}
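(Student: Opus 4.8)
The converse direction $\vdash_{\FOT}\phi\Rightarrow\models\phi$ is the Soundness Theorem~\ref{atm2rel}, so the plan is to prove completeness, $\models\phi\Rightarrow\vdash_{\FOT}\phi$. First I would reduce the problem to refuting $\wcn\phi$: by \raa it suffices to derive $\wcn\phi\vdash_{\FOT}\bot$. Semantically, $\models\phi$ is equivalent to $\wcn\phi\models\bot$: if some \emph{nonempty} team $X$ had $M\models_X\wcn\phi$, then $M\not\models_X\phi$, contradicting validity, so $\wcn\phi$ is satisfied only by the empty team, which is exactly the condition $\wcn\phi\models\bot$. Hence the goal becomes the schematic assertion: a formula satisfied only by the empty team is refutable.

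The next step would be to put the formula into the normal form of Lemma~\ref{FOT_NF_prf}: $\wcn\phi\dashv\vdash\psi$, where $\psi(\mathsf{v})=Q^1\mathsf{x}\,\theta(\mathsf{x},\mathsf{v})$ is in prenex and negation normal form, every first-order atom of $\theta$ is of the shape $\lambda(\mathsf{x})$, and every inclusion atom of $\theta$ is of the shape $\mathsf{x}_i\subseteq\mathsf{v}$. Since $\dashv\vdash$ implies $\equiv$ by Theorem~\ref{atm2rel}, it suffices to prove $\psi\vdash_{\FOT}\bot$. This normal form is exactly the shape that the rule \incwi can act on: let $\psi(R)$ be the inclusion-atom-free $\FOT$-sentence obtained from $\psi$ by replacing every inclusion atom $\mathsf{x}_i\subseteq\mathsf{v}$ by $R\mathsf{x}_i$, so that $R$ occurs in $\psi(R)$ only in the form $R\mathsf{x}_i$ with $\mathsf{x}_i$ a tuple of bound variables, $\psi(R)$ is a sentence (the free variables $\mathsf{v}$ occur only inside those inclusion atoms), and $\psi(\mathsf{v})$ is recovered from $\psi(R)$ by the reverse substitution. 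This matches side condition~(d) of \incwi with $\Gamma=\emptyset$, so it suffices to derive $\existso\mathsf{z}R\mathsf{z},\psi(R)\vdash_{\FOT}\bot$.

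To obtain that derivation I would detour through first-order logic. Let $\psi_\ast(R)$ be the first-order $\LL(R)$-sentence obtained from $\psi(R)$ by replacing $\wcn,\vvee,\existso,\forallo$ with $\neg,\vee,\exists,\forall$; by the normal form, $\psi_\ast(R)$ is in prenex negation normal form with $R$ occurring only as $R\mathsf{x}_i$ with $\mathsf{x}_i$ bound, i.e.\ it is precisely of the shape to which the translation of Theorem~\ref{fo2fot} applies, and one checks that the $\FOT$-formula $\phi_{\psi_\ast}$ produced there unwinds to our $\psi(\mathsf{v})$. Thus, by Theorem~\ref{fo2fot}, $M\models_X\psi(\mathsf{v})\iff(M,rel(X))\models\psi_\ast(R)$ for every nonempty team $X$. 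Since $\psi\equiv\wcn\phi$ is satisfied only by the empty team, no $\LL(R)$-model $(M,A)$ with $A\neq\emptyset$ satisfies $\psi_\ast(R)$ --- otherwise the nonempty team $X=\{s:\{v_1,\dots,v_k\}\to M\mid s(\mathsf{v})\in A\}$ would satisfy $\psi$ --- so $\psi_\ast(R)\wedge\exists\mathsf{z}R\mathsf{z}$ is first-order unsatisfiable. By the completeness theorem for first-order logic, $\psi_\ast(R),\exists\mathsf{z}R\mathsf{z}\vdash_{\FO}\bot$; since these are first-order sentences, Lemma~\ref{FO2FOT_eqiv} converts this into $\psi(R),\existso\mathsf{z}R\mathsf{z}\vdash_{\FOT}\bot$ (Lemma~\ref{FO2FOT_eqiv} delivers the $\ast$-translation of the first-order falsum, which is $\FOT$-provably equivalent to $\bot$). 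Applying \incwi yields $\psi(\mathsf{v})\vdash_{\FOT}\bot$, hence $\wcn\phi\vdash_{\FOT}\bot$, hence $\vdash_{\FOT}\phi$ by \raa.

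The main obstacle --- really the point where all the ingredients are engineered to interlock --- is the step $\existso\mathsf{z}R\mathsf{z},\psi(R)\vdash_{\FOT}\bot\ \Rightarrow\ \psi(\mathsf{v})\vdash_{\FOT}\bot$ via \incwi: it works only because Lemma~\ref{FOT_NF_prf} forces the atoms of $\psi$ into exactly the restricted shapes demanded both by side condition~(d) of \incwi and by the hypothesis of Theorem~\ref{fo2fot}, and because \incwi is designed to be the proof-theoretic inverse of that translation. I would also verify the degenerate cases in which $\psi$ has no free variables, or no inclusion atoms, so that $R$ is nullary or does not occur: the argument above still goes through verbatim, and there one may in fact bypass \incwi and conclude directly from Lemma~\ref{fo=fot} together with first-order completeness.
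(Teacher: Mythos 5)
Your proposal is correct and follows essentially the same route as the paper's proof: normal form via Lemma~\ref{FOT_NF_prf}, the rule \incwi to pass between $\psi(\mathsf{v})$ and $\psi(R)$, Lemma~\ref{FO2FOT_eqiv} to transfer first-order derivations, the translation of Theorem~\ref{fo2fot}, and completeness of \FO. The only difference is presentational: you argue directly from $\models\phi$ to a refutation of $\wcn\phi$, whereas the paper argues contrapositively from $\nvdash\phi$ to a countermodel, using the same ingredients in their contrapositive forms.
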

\begin{proof}
%Since \FOT (being expressively equivalent to \FO) is compact, we may assume that $\Gamma$ is finite. \todof{compactness argument is not essential, can change the rule instead} Now, 
Suppose that $\nvdash_\FOT\phi$. %Since \FOT (being expressively equivalent to \FO) is compact, we may w.l.o.g.  assume that $\Gamma$ is finite. 
By  \raa we derive $\wcn\phi\nvdash_\FOT\bot$, which is equivalent to $\psi(\mathsf{v})\nvdash_\FOT\bot$, where
%$\Psi(\mathsf{v})\nvdash_\FOT\bot$, where \[\Psi(\mathsf{v})=\{\psi'(\mathsf{v})\mid \psi\in \Gamma\cup\{\wcn\phi\}\}\] 
%and each 
$\psi(\mathsf{v})=Q^1\mathsf{x}\theta(\mathsf{x},\mathsf{v})$ is the formula $\wcn\phi$ in the normal form given by Lemma \ref{FOT_NF_prf}.
Let $R$ be a fresh relation symbol (which is assumed to be always available), and 
%\[\Psi(R)=\{\psi'(R)\mid \psi'(\mathsf{v})\in\Psi(\mathsf{v})\}\] 
%where each formula 
let $\psi(R)$ be the inclusion atom-free sentence obtained from $\psi(\mathsf{v})$ by replacing every inclusion atom $\mathsf{x}_i\subseteq \mathsf{v}$ by $R\mathsf{x}_i$. 
By  \incwi we have $\existso \mathsf{z}R\mathsf{z}, \psi(R)\nvdash_\FOT\bot$.  
%Let 
%\[\Psi_\ast(R)=\{\psi'_\ast(R)\mid \psi'(R)\in\Psi(R)\},\] 
%where each $\psi'_\ast(R)$ is 
%
Consider the \FO-formula  $\psi_\ast(R)$ obtained from the \FOT-formula $\psi(R)$ by replacing every logical constant in \FOT by its counterpart in \FO. It follows from  Lemma \ref{FO2FOT_eqiv} that $\exists \mathsf{z}R\mathsf{z},\psi_\ast(R)\nvdash_\FO\bot$.
%It follows from  \Cref{FO2FOT_eqiv} that $\exists \mathsf{z}R\mathsf{z},\Psi_\ast(R)\nvdash_\FO\bot$, where $\Psi_\ast=\{\psi'_\ast(R)\mid \psi'(R)\in\Psi(R)\}$ and each $\psi'_\ast(R)$ is the \FO-formula obtained from the \FOT-formula $\psi'(R)$ by replacing every logical constant in \FOT by its counterpart in \FO. 
Now, by the completeness theorem of \FO, there exists a model $(M,R^M)$ such that $R^M\neq\emptyset$ and $(M,R^M)\models\psi_\ast(R)$. It then follows from (the proof of) \Cref{fo2fot} that $M\models_{X_R}\psi(\mathsf{v})$, where 
\(X_R=\{s:\{v_1,\dots,v_k\}\to M\mid s(\mathsf{v})\in R^M\}\)
is the (nonempty) team associated with $R$. Hence $\wcn\phi\not\models\bot$, thereby $\not\models\phi$.
\end{proof}

\begin{cor}[Strong completeness]\label{completeness_fot_str}
For any set $\Gamma\cup\{\phi\}$ of \FOT-formulas that contains countably many free variables, we have that
%Suppose the set $\Var$ of variables is countable. Then
 $\Gamma\models\phi\iff\Gamma\vdash_\FOT\phi$. 
\end{cor}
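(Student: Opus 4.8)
The soundness half, $\Gamma\vdash_\FOT\phi\Rightarrow\Gamma\models\phi$, is immediate: any derivation uses only finitely many members of $\Gamma$, and each rule is sound by \Cref{atm2rel}. For the converse the plan is to reduce to the already-proved \emph{weak} completeness theorem (\Cref{completeness_fot}) by combining compactness with the Deduction Theorem.

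Assume $\Gamma\models\phi$. The first step is to note that this is equivalent to the unsatisfiability of $\Gamma\cup\{\wcn\phi\}$: since the empty team satisfies everything, any witness to the failure of $\Gamma\models\phi$ lives on a nonempty team $X$, and on such an $X$ the condition $M\not\models_X\phi$ is exactly $M\models_X\wcn\phi$; conversely $M\models_X\Gamma\cup\{\wcn\phi\}$ with $X\neq\emptyset$ contradicts $\Gamma\models\phi$. Now $\wcn\phi$ is again an \FOT-formula, \FOT-formulas translate to \FO- (hence \ESO-) sentences by \Cref{fot2fo}, and $\mathsf{Fv}(\Gamma\cup\{\phi\})$ is countable by hypothesis; so \Cref{compact} applies and yields a finite $\Gamma_0=\{\psi_1,\dots,\psi_n\}\subseteq\Gamma$ with $\Gamma_0\cup\{\wcn\phi\}$ unsatisfiable, i.e., $\Gamma_0\models\phi$.

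The second step is to package $\Gamma_0$ into a single formula. I would check, by a routine unwinding of the team semantics, that $\Gamma_0\models\phi$ is equivalent to $\models\psi_1\cimp(\psi_2\cimp(\cdots\cimp(\psi_n\cimp\phi)\cdots))$: on a nonempty team $X$ one has $M\models_X\chi_1\cimp\chi_2$ iff $M\not\models_X\chi_1$ or $M\models_X\chi_2$ (the clause $X=\emptyset$ in $\wcn\chi_1$ being vacuous), so by induction $M\models_X\psi_1\cimp(\cdots\cimp(\psi_n\cimp\phi)\cdots)$ iff $M\models_X\Gamma_0$ implies $M\models_X\phi$, while the empty team satisfies both sides. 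Weak completeness (\Cref{completeness_fot}) then gives $\vdash_\FOT\psi_1\cimp(\psi_2\cimp(\cdots\cimp(\psi_n\cimp\phi)\cdots))$, and $n$ applications of the Deduction Theorem (\Cref{replacement_deduction_thm}(ii)) turn this into $\psi_1,\dots,\psi_n\vdash_\FOT\phi$, hence $\Gamma_0\vdash_\FOT\phi$ and a fortiori $\Gamma\vdash_\FOT\phi$.

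The only substantive ingredient is the first step's appeal to compactness \emph{with respect to (open) formulas}, \Cref{compact}, which is exactly where the countability assumption on the free variables enters; the remaining work is bookkeeping around the empty-team clause of $\wcn$ and the iteration of the Deduction Theorem, so I do not expect any real obstacle there.
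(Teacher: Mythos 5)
Your proof is correct and follows essentially the same route as the paper: reduce to a finite $\Gamma_0$ via compactness for formulas (\Cref{compact}, which is where countability of the free variables is used), then apply weak completeness (\Cref{completeness_fot}) to an implicational packaging of $\Gamma_0$ and unwind it with the Deduction Theorem. The only cosmetic difference is that you use an iterated implication $\psi_1\cimp(\cdots\cimp(\psi_n\cimp\phi)\cdots)$ where the paper uses the single formula $\wcn\bigwedge\Gamma\vvee\phi$.
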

\begin{proof}
Suppose $\Gamma\models\phi$. Since $\Gamma,\wcn\phi\models\bot$ and $\Gamma\cup\{\phi\}$ contains countably many free variables, by compactness of \FOT (\Cref{compact}) we may assume that $\Gamma$ is a finite set. Now, we have that $\models\wcn\bigwedge\Gamma\vvee\phi$. Thus, by the weak completeness, we obtain $\vdash_{\FOT}\wcn\bigwedge\Gamma\vvee\phi$, which, by \vveee and $\wcn\mathsf{E}$, implies $\Gamma\vdash_\FOT\phi$.
%Follows from the weak completeness theorem and the compactness of \FOT (\Cref{compact}).
\end{proof}

%{Let us point out that in the above argument, in addition to compactness, the weak classical negation $\wcn$ also plays a subtle role. }\todof{rephrase}

\section{Partial axiomatization of $\D\oplus\FOTd$}\label{sec:fotd}

%\todof{Is there an application of $\Gamma\vdash\forallo x\alpha$ (in the context of Arrow's Thm)? }

%\subsection{Downward closed logic}

%\todof{This is too much. Use the alternative version below instead.}It follows from the proof of \Cref{fo2fot} that every formula of \FOTdn is equivalent to a formula in the form
%\[\phi(\mathsf{y})=Q^1\mathsf{x}\bigvvee_{i\in I}\bigwedge_{j\in J}\delta_{ij}(\mathsf{x},\mathsf{y}),\]
%where each $\delta_{ij}(\mathsf{x},\mathsf{y})$ is first-order and quantifier-free. It is then easy to see that the weak classical negation of a \FOTdn-sentence $\phi$ is uniformly definable as:
%\[\wcn\phi\equiv \overline{Q}^1\mathsf{x}\bigvvee_{i\in I}\bigwedge_{j\in J}\neg\delta_{ij}(\mathsf{x},\mathsf{y}),\]
%where $\overline{Q}_i=\existso$ whenever $Q_i=\forallo$, and $\overline{Q}_i=\forallo$ whenever $Q_i=\existso$.

%\todof{This is simpler:} 

%\todof{motivate this section, cf intro}

Having axiomatized completely the logic \FOT, in this section we turn to the weaker logic \FOTd that captures downward closed elementary team properties (Corollary \ref{fotd_fodw}). 
%Recall that dependence logic (\D) captures downward closed existential second-order team properties \cite{KontVan09}.
Recall that the stronger downward closed logic {\em dependence logic} (\D) is the extension of first-order logic with dependence atoms. More precisely, \D-formulas are defined as:%by the grammar:
\[\phi::=\lambda\mid\dep(\mathsf{t},\mathsf{t}')\mid\neg\alpha \mid  \phi\wedge\phi\mid\phi\vee\phi\mid \exists x \phi\mid \forall x\phi\]
where $\lambda$ is a first-order atom, $\mathsf{t},\mathsf{t}'$ are sequences of terms, and $\alpha$ is a first-order formula\footnote{In this syntax we only allow negation $\neg$  to occur  in front of  first-order formulas $\alpha$. Such treatment of negation is slightly more general than that in most literature on dependence logic, where formulas are often assumed to be in negation normal form, and negation $\neg$ is thus allowed only in front of first-order atomic formulas.
 %In literature on dependence logic, formulas are often assumed to be in negation normal form (i.e., negation $\neg$ is allowed only in front of first-order atomic formulas). In our setting,  negation $\neg$ is allowed to occur in front of arbitrary first-order formulas $\alpha$. 
Since a first-order negated formula $\neg\alpha$ can always be transformed into negation normal form, this difference is not essential. Our choice of the syntax actually gives rise to more general and standard deduction rules for negation.
%But the slightly more general syntax we take gives rise to more general and standard deduction rules for negation.}
 }.  We now consider an extension  $\D\oplus\FOTd$ of \D obtained by enriching the syntax of \D with  the weak  connectives  and quantifiers $\vvee,\existso,\forallo$ from \FOTd:
 \[\phi::=\lambda\mid\dep(\mathsf{t},\mathsf{t}')\mid\neg\alpha \mid  \phi\wedge\phi\mid\phi\vee\phi\mid\phi\vvee\phi\mid \exists x \phi\mid \forall x\phi\mid\existso x\phi\mid\forallo x\phi,\]
where $\lambda$, $\mathsf{t},\mathsf{t}'$, and  $\alpha$ are as before (in particular, a first-order formula $\alpha$ cannot contain $\vvee,\existso$, or $\forallo$). Since \D is known to already  capture all downward closed existential second-order team properties \cite{Van07dl,KontVan09}, and the logical constants $\vvee,\existso,\forallo$ preserve downward closure and have essentially only first-order expressive power (as shown in Theorem \ref{fot2fo}), the logic $\D\oplus\FOTd$ with the enriched syntax is actually expressively equivalent to \D. We give a sketch of this proof below.
 \begin{theorem}\label{D=D+FOTd}
For every formula $\phi$ in $\D\oplus\FOTd$, there is a formula $\psi$ in $\D$ such that $\phi\equiv\psi$; and vice versa. In other words, $\D\oplus\FOTd$ is equi-expressive with \D.
\end{theorem}
\begin{proof}
The ``vice versa" direction is trivial. For the non-trivial direction, we show that for every $\D\oplus\FOTd$-formula $\phi(v_1,\dots,v_k)$ in some vocabulary $\mathcal{L}$, there exists an $\mathcal{L}(R)$-sentence $\gamma_\phi(R)$ with a fresh $k$-ary relation symbol $R$ occurring only negatively such that equivalence (\ref{fot2fo_eq}) from Theorem \ref{fot2fo} holds for any $\mathcal{L}$-model $M$ and nonempty team $X$. This would then suffice for the following reasons: We know by \cite[Theorem 4.9]{KontVan09} that for every such existential second-order sentence $\gamma_\phi(R)$, there exists an $\mathcal{L}$-formula $\psi(v_1,\dots,v_k)$ in $\D$ such that equivalence (\ref{fot2fo_eq}) from Theorem \ref{fot2fo} with respect to $\psi$ holds for any $\mathcal{L}$-model $M$ and nonempty team $X$. This then implies that $\phi$ and $\psi$ are equivalent over arbitrary models and nonempty teams. Over the empty team, on the other hand, both $\phi$ and $\psi$ are satisfied (by the empty team property of the logics). Putting all these together, we would be able to conclude that $\phi\equiv\psi$.

Now, to give the translation from $\D\oplus\FOTd$ into existential second-order logic, we combine our translation from $\FOTd$ into first-order logic as given in Theorem \ref{fot2fo}, and the translation from $\D$ into existential second-order logic as given in \cite[Theorem 6.2]{Van07dl}. 
%by induction on the $\D\oplus\FOTd$-formula $\phi(\mathsf{v})$. 
 If  the $\D\oplus\FOTd$-formula $\phi(\mathsf{v})$ is a dependence atom, or if its main connective or quantifier is one from $\D$,  we define $\gamma_\phi(R)$ in the same way as in \cite[Theorem 6.2]{Van07dl}. E.g., $\gamma_{\theta_0\vee\theta_1}(R,\mathsf{x})=\exists S_1\exists S_2(\forall \mathsf{u}(R\mathsf{u}\to S_1\mathsf{u}\vee S_2\mathsf{u})\wedge \gamma_{\theta_0}(S_1,\mathsf{x})\wedge \gamma_{\theta_1}(S_2,\mathsf{x}))$, etc. If $\phi(\mathsf{v})$ is a first-order (negated) formula, or if its main connective or quantifier is $\vvee,\forallo,$ or $\existso$, %from $\FOTd$, 
 we define $\gamma_\phi(R)$ in the same way as in Theorem \ref{fot2fo}. The formula $\gamma_\phi(R)$ is clearly existential second-order  and it also satisfies equivalence (\ref{fot2fo_eq}) from Theorem \ref{fot2fo}.
\end{proof}

%meaning that  every \D-formula $\phi(\mathsf{x})$ in some vocabulary $\mathcal{L}$ is equivalent to an existential second-order $\mathcal{L}(R)$-sentence $\gamma_\phi(R)$ with a fresh relation symbol in the sense of equivalence (\ref{fot2fo_eq}) in Theorem \ref{fot2fo}, and vice versa. The translation from \D into existential second-order logic \ESO (as given in \cite[Theorem 6.2]{Van07dl}) is defined inductively in a similar manner to the proof of our Theorem \ref{fot2fo}, where the translated formulas $\chi_\phi(R)$ belong to \ESO instead. One can easily augment the translation of \cite[Theorem 6.2]{Van07dl} with the cases for the weak connectives  and quantifiers $\vvee,\existso,\forallo$, and it is easy to see 
% 
% It can be easily seen from the proof of Theorem \ref{fot2fo} that enriching  the syntax of \D with the weak connectives  and quantifiers $\vvee,\existso,\forallo$ from \FOTd does not increase the expressive power of the logic. That is, the logic $\D\oplus\FOTd$, defined as follows, has the same expressive power as \D:
%\[\phi::=\lambda\mid\dep(\mathsf{t},\mathsf{t}')\mid\neg\alpha \mid  \phi\wedge\phi\mid\phi\vee\phi\mid\phi\vvee\phi\mid \exists x \phi\mid \forall x\phi\mid\existso x\phi\mid\forallo x\phi.\]

Denote by \FOp the extension of \FO by the weak universal quantifier $\forallo$. We say that a $\D\oplus\FOTd$-formula $\theta$ is {\em pseudo-flat}  if the universal quantifiers $\forall$ in $\theta$ quantify over \FOp-formulas only. For instance, the formula $\exists x(\dep(x)\vvee \forall y\forallo z\alpha)$ is pseudo-flat, and all \FOp-formulas are clearly pseudo-flat. A typical \D-formula $\forall x\exists y(\dep(x,y)\wedge\phi)$ is not pseudo-flat, whereas $\forallo x\exists y(\dep(x,y)\wedge (x\neq y\vee y=0))$ is pseudo-flat. 
%no universal quantifier $\forall$ in $\theta$ quantify over dependence atoms, $\vvee$ or $\existso$ (i.e., any subformula $\forall x\eta$ of $\theta$ is such that $\eta$ contains no dependence atom or $\vvee$ or $\existso$). 
 %Clearly \FOp formulas are all pseudo-flat.

In this section, we introduce a system of natural deduction for the logic $\D\oplus\FOTd$ by extending the systems of \cite{Axiom_fo_d_KV} and \cite{Yang_neg} for \D so that this new system is sound and  complete for certain pseudo-flat consequences, in the sense that
\begin{equation}\label{eq_fotd_consequence}
\Gamma\models\theta\iff\Gamma\vdash\theta
\end{equation}
holds whenever $\Gamma$ is a set of $\D\oplus\FOTd$-formulas, and $\theta$ is an \FOp-formula or a pseudo-flat $\D\oplus\FOTd$-sentence.
%a fragment of $\D\oplus\FOTd$, denoted as $\FOp$ and defined as:
%\[\theta::=\lambda\mid\neg\alpha \mid  \theta\wedge\theta\mid\theta\vee\theta\mid\theta\vvee\theta\mid \exists x\theta\mid \forall x\eta\mid\existso x\theta\mid\forallo x\theta\]
%where %$\lambda$ is a first-order atom, $\alpha$ is a first-order formula, and 
%$\eta$ is $\vvee,\existso$-free.
Recall that the system of \D given in \cite{Axiom_fo_d_KV} is complete for first-order consequences in the sense that (\ref{eq_fotd_consequence}) holds for $\Gamma$ being a set of \D-sentences and $\theta$ being an \FO-sentence. In our completeness proof, we will generalize the argument in \cite{Axiom_fo_d_KV}, and also apply and generalize a trick developed in \cite{Yang_neg} which extends the completeness result in \cite{Axiom_fo_d_KV} to allow the formula $\theta$ to be in any language that %defines first-order properties, or 
is closed under the weak classical negation \wcn, for which the {\em reductio ad absurdum} (\textsf{RAA}) rule is sound.
%is closed under the weak classical negation \wcn (or equivalently defines first-order properties).
%The trick of \cite{Yang_neg} is  based on the crucial property that the double negation elimination law holds for for \wcn (i.e., $\wcn\wcn\phi\equiv\phi$). 
In our argument, we make use of the fact that  \FOp formulas and pseudo-flat $\D\oplus\FOTd$-sentences also admit certain \textsf{RAA} rule (see Lemma \ref{cn_defin_in_FOTd}).
%
%It follows from this expressivity result that $\D\oplus\FOTd$ is not effectively axiomatizable in full. In this section, we introduce a system of natural deduction for the logic $\D\oplus\FOTd$ by extending the systems of \cite{Axiom_fo_d_KV} and \cite{Yang_neg} for \D so that this new system is sound and partially complete in the sense that
%\begin{equation}\label{eq_fotd_consequence}
%\Gamma\models\theta\iff\Gamma\vdash\theta
%\end{equation}
%holds for $\Gamma$ and $\theta$ satisfying any of the following conditions:
%\begin{description}
%\item[Condition A:] $\Gamma$ is a set of $\D\oplus\FOTd$-formulas
%\item[Condition B:] 
%\end{description}
%
%
As a corollary of our result, we will also be able to conclude that the original system for \D as introduced in \cite{Axiom_fo_d_KV} is already complete with respect to first-order consequences over arbitrary formulas (under the assumption that the set of free variables occurring in the formulas in question is countable). This answers an open problem in \cite{Axiom_fo_d_KV} in the affirmative. %\todof{to be improved}

%In this section, we introduce a system of natural deduction for the logic $\D\oplus\FOTd$ by extending the systems of \cite{Axiom_fo_d_KV} and \cite{Yang_neg} for \D so that this new system is sound and complete for \FOTd-consequences in the sense that
%\begin{equation}\label{eq_fotd_consequence}
%\Gamma\models\theta\iff\Gamma\vdash\theta
%\end{equation}
%holds whenever $\Gamma$ is a set of sentences in $\D\oplus\FOTd$, and $\theta$ is a pseudo-flat $\D\oplus\FOTd$-sentence.
%%a fragment of $\D\oplus\FOTd$, denoted as $\FOp$ and defined as:
%%\[\theta::=\lambda\mid\neg\alpha \mid  \theta\wedge\theta\mid\theta\vee\theta\mid\theta\vvee\theta\mid \exists x\theta\mid \forall x\eta\mid\existso x\theta\mid\forallo x\theta\]
%%where %$\lambda$ is a first-order atom, $\alpha$ is a first-order formula, and 
%%$\eta$ is $\vvee,\existso$-free.
%The system of \D given in \cite{Axiom_fo_d_KV} is complete for first-order consequences in the sense that (\ref{eq_fotd_consequence}) holds for $\Gamma$ being a set of \D-sentences and $\theta$ being an \FO-sentence. Our completeness proof generalizes the argument in \cite{Axiom_fo_d_KV}, and also exploits a trick developed in \cite{Yang_neg} which extends the completeness result in \cite{Axiom_fo_d_KV} to allow the sentence $\theta$ to be in any language that is closed under the weak classical negation \wcn (or equivalently defines first-order properties), such as in the pseudo-flat fragment of $\D\oplus\FOTd$. \todof{not accurate, to be improved}

Let us start by observing that \FOp-formulas are all flat.
\begin{fact}\label{fop_flat}
\FOp-formulas  are flat.
\end{fact}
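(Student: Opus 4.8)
The plan is to establish the claim by a straightforward induction on the structure of the \FOp-formula $\phi$, proving that $M\models_X\phi$ holds if and only if $M\models_{\{s\}}\phi$ holds for every $s\in X$ (i.e.\ the flatness property as stated in \Cref{sec:prel}). Since every immediate subformula of an \FOp-formula is again an \FOp-formula, the induction hypothesis will be available for all subformulas occurring inside $\neg$, the quantifiers, and the connectives, even when those subformulas are not themselves \FO-formulas. The cases where $\phi$ is a first-order atom or of the form $\neg\alpha$, $\phi_0\wedge\phi_1$, $\phi_0\vee\phi_1$, $\exists x\phi_0$ or $\forall x\phi_0$ are handled exactly by the usual argument showing that \FO-formulas are flat: $\neg\alpha$ is immediate from the semantics of $\neg$; $\wedge$ is immediate from the induction hypothesis; for $\vee$ one partitions $X$ into the set of those $s$ with $M\models_{\{s\}}\phi_0$ and the set of those $s$ with $M\models_{\{s\}}\phi_1$, using the empty team property (which itself follows from the induction hypothesis); and for $\exists$ and $\forall$ one combines the induction hypothesis with $\{s\}(M/x)=\{s(a/x)\mid a\in M\}$ and the observation that on a singleton a choice function $F\colon\{s\}\to\wp^+(M)$ may be taken to pick a single element.

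The one genuinely new case is $\phi=\forallo x\psi$, and here I would simply unfold the semantics of $\forallo$, apply the induction hypothesis to $\psi$ over each team $X(a/x)$, reorder the two universal quantifications ``for all $a\in M$'' and ``for all $s\in X$'', and fold the semantic clause for $\forallo$ back up over singletons, using $\{s\}(a/x)=\{s(a/x)\}$:
\begin{align*}
M\models_X\forallo x\psi
&\iff M\models_{X(a/x)}\psi\text{ for all }a\in M\\
&\iff M\models_{\{s(a/x)\}}\psi\text{ for all }a\in M\text{ and all }s\in X\\
&\iff M\models_{\{s\}(a/x)}\psi\text{ for all }s\in X\text{ and all }a\in M\\
&\iff M\models_{\{s\}}\forallo x\psi\text{ for all }s\in X,
\end{align*}
where the second ``$\iff$'' uses the induction hypothesis (in both directions), the third is a trivial reindexing, and the last is the semantic clause for $\forallo$ evaluated on singletons. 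This closes the induction.

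I do not expect any real obstacle: the whole argument is purely definitional. The two points to be careful about are (i) to phrase the induction hypothesis for the entire class of \FOp-formulas, so that it covers subformulas nested under $\neg$, $\vee$ or the quantifiers; and (ii) to keep in mind that it is essential that the weak existential quantifier $\existso$ does \emph{not} belong to \FOp — for instance $\existso x(x=y)$ is true in every singleton team but false on a two-element team assigning two distinct values to $y$, so $\existso$ does not preserve flatness, which is precisely why \FOp is defined with $\forallo$ only.
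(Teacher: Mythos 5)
Your proof is correct, and it takes a slightly different (though equally elementary) route from the paper's. The paper does not prove the singleton characterization directly: it invokes the already-stated fact that flatness is equivalent to the conjunction of the empty team property, downward closure, and union closure, observes that the first two are clear for \FOp, and then runs an induction only for union closure, where the sole non-trivial case is $\phi=\forallo x\psi$ (using $\bigl(\bigcup_{i\in I}X_i\bigr)(a/x)=\bigcup_{i\in I}X_i(a/x)$ and the induction hypothesis). You instead prove the biconditional $M\models_X\phi\iff M\models_{\{s\}}\phi$ for all $s\in X$ by induction over the whole grammar, redoing the standard \FO cases and adding the $\forallo$ case by unfolding its semantics, applying the induction hypothesis to each $X(a/x)$, and exchanging the two universal quantifiers; that computation is right, including its behaviour on the empty team. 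The paper's factorization buys a shorter proof (one induction with one non-trivial case), while your direct induction is self-contained and does not rely on the three-property characterization of flatness; your closing observation that $\existso$ would break flatness (e.g.\ $\existso x(x=y)$ is the constancy atom $\dep(y)$) is a correct and useful sanity check, though not needed for the statement.
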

\begin{proof}
Formulas $\phi$ of \FOp  are clearly downward closed and satisfy the empty team property. It is then sufficient to show, by induction, that $\phi$ is closed under unions. For the only non-trivial case $\phi=\forallo x\psi$, suppose that $M\models_{X_i}\forallo x\psi$ for any $i\in I\neq\emptyset$. For any $a\in M$, we have that $M\models_{X_i(a/x)}\psi$. Thus, by induction hypothesis, $M\models_{X(a/x)}\psi$ for $X=\bigcup_{i\in I}X_i$. Hence we conclude $M\models_{X}\forallo x\psi$.
\end{proof}

%Let $\theta$ be a pseudo-flat $\D\oplus\FOTd$-formula. Define 

For any $\D\oplus\FOTd$-formula $\theta$, denote by $\theta_\ast$  the first-order formula obtained from $\theta$ by replacing every dependence atom $\dep(\mathsf{t},\mathsf{t}')$ by $\top$, every weak logical constant  by its counterpart in \FO, i.e., by replacing $\vvee$ by $\vee$, $\existso$ by $\exists$, and $\forallo$ by $\forall$. We now show that $\theta$ and $\neg\theta_\ast$ are contradictory to each other in case $\theta$ is a pseudo-flat sentence or a (possibly open) formula in \FOp (which is also pseudo-flat).

%%The formula $\theta$ in the consequent position in the restricted completeness theorem (\ref{eq_fotd_consequence}) that we will prove is required to be an \FOTd-sentence, an \FOTd-formula without free variables. 
%While $\D\oplus\FOTd$ is not closed under the weak classical negation $\wcn$, 
%let us now show that the weak classical negations $\wcn\theta$ of pseudo-flat $\D\oplus\FOTd$-sentences are definable in $\D\oplus\FOTd$ as $\neg\theta_\ast$, where $\theta_\ast$ is the first-order formula obtained from $\theta$ by replacing every dependence atom $\dep(\mathsf{t},\mathsf{t}')$ by $\top$, every weak logical constant  by its counterpart in \FO, i.e., by replacing $\vvee$ by $\vee$, $\existso$ by $\exists$, and $\forallo$ by $\forall$. 
%% the syntactic negation $\neg\theta$  obtained by pushing negation to the very front of first-order and dependence atoms in $\theta$ using the definitions:
%%\[\neg\neg\alpha:=\alpha,~\neg\dep(\mathsf{t},\mathsf{t}'):=\bot,\]
%%\[\neg(\theta_0\wedge\theta_1):=\neg\theta_0\vee\neg\theta_1, ~\neg(\theta_0\vee\theta_1):=\neg\theta_0\wedge\neg\theta_1,~\neg(\theta_0\vvee\theta_1):=\neg\theta_0\wedge\neg\theta_1,\]
%%\[ \neg\forall x\eta:=\exists x\neg\eta,~\neg\exists x\theta_0:=\forall x\neg\theta_0,~ \neg\forallo x\theta_0:=\existso x\neg\theta_0 \text{ and }\neg\existso x\theta_0:=\forallo x\neg\theta_0.\]
%%Clearly, such defined $\neg\theta$ is actually a first-order sentence. Note also that the formulas in the above equations are not in general equivalent, but $\neg\theta\equiv\wcn\theta$ holds in case $\theta$ is a pseudo-flat sentence.
\begin{lem}\label{cn_defin_in_FOTd}
Let $\theta$ be a $\D\oplus\FOTd$-formula. 
\begin{enumerate}[label=(\roman*)]
%\item\label{neg2cn} If $M\models_X\neg\theta_\ast$, then $M\not\models_X\theta$.
\item If $\theta$ is pseudo-flat, then $M\models_{\{s\}}\theta\iff M\models_{\{s\}}\theta_\ast$.
\item\label{raa_star_sound}\label{neg2cn} If $\theta$ is a pseudo-flat sentence or a \FOp-formula, then 
\(\Gamma,\neg\theta_\ast\models\bot\iff\Gamma\models\theta.\)
\end{enumerate}
%\begin{enumerate}[label=(\roman*)]
%\item If $\theta$ is an \FOp-formula, then $M\models_X\neg\theta_\ast$ implies $M\not\models_X\theta$.
%\item If $\theta$ is a sentence, then $M\models_{\{\emptyset\}}\neg\theta_\ast\iff M\not\models_{\{\emptyset\}}\theta$. In particular, $M\models_X\neg\theta_\ast$ implies $M\not\models_X\theta$.
%\end{enumerate}
%For any pseudo-flat $\D\oplus\FOTd$-sentence $\theta$, $M\models_{\{\emptyset\}}\neg\theta_\ast\iff M\not\models_{\{\emptyset\}}\theta$.
\end{lem}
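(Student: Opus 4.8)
The plan is to prove the two parts in sequence, with part (i) feeding directly into part (ii). For part (i), I would argue by induction on the structure of the pseudo-flat $\D\oplus\FOTd$-formula $\theta$. The base cases are first-order atoms $\lambda$ and dependence atoms $\dep(\mathsf{t},\mathsf{t}')$: for atoms the claim is immediate since $\lambda_\ast=\lambda$ and singleton-team semantics agrees with Tarskian semantics (cf.\ Lemma \ref{fo=fot}), and for dependence atoms $M\models_{\{s\}}\dep(\mathsf{t},\mathsf{t}')$ always holds (a single assignment trivially satisfies the functional dependence condition) while $(\dep(\mathsf{t},\mathsf{t}'))_\ast=\top$, which is also satisfied by $\{s\}$. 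The inductive cases for $\neg\alpha$, $\wedge$, $\vee$, $\vvee$, $\exists$, $\existso$ all go through by a routine unwinding of the team semantics over a singleton team exactly as in the second ``$\iff$'' of Lemma \ref{fo=fot} (using the empty team property for $\vee$ and $\vvee$, and downward closure for $\exists$). The one case that uses the pseudo-flatness hypothesis is $\theta=\forall x\psi$: here pseudo-flatness guarantees $\psi$ is an \FOp-formula, hence flat by Fact \ref{fop_flat}, so $M\models_{\{s\}(M/x)}\psi\iff M\models_{\{s(a/x)\}}\psi$ for all $a\in M$, and then the induction hypothesis applies to $\psi$; the subcase $\theta=\forallo x\psi$ is handled directly since $\{s\}(a/x)=\{s(a/x)\}$ is again a singleton, no flatness needed.

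For part (ii), I would first dispose of the $\Longleftarrow$ direction, which is the easy one: if $\Gamma\models\theta$ and $M\models_X\Gamma$ with $M\models_X\neg\theta_\ast$, then $M\models_X\theta$; since $\theta$ is pseudo-flat, $\theta_\ast$ is flat (either $\theta$ is an \FOp-formula so $\theta_\ast$ is first-order hence flat, or $\theta$ is a pseudo-flat sentence so $\theta_\ast$ is a first-order sentence, still flat), and by part (i) $M\models_{\{s\}}\theta\iff M\models_{\{s\}}\theta_\ast$ for each $s\in X$; combining with $M\models_X\neg\theta_\ast$, i.e.\ $M\not\models_{\{s\}}\theta_\ast$ for all $s\in X$, forces $X=\emptyset$, whence $M\models_\emptyset\bot$ by the empty team property, so $\Gamma,\neg\theta_\ast\models\bot$. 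For the $\Longrightarrow$ direction, suppose $\Gamma,\neg\theta_\ast\models\bot$ and let $M\models_X\Gamma$ for a nonempty $X$; I must show $M\models_X\theta$. Since $X\neq\emptyset$ and $M\models_X\neg\theta_\ast$ together yield $M\models_X\bot$ (impossible for nonempty $X$), we get $M\not\models_X\neg\theta_\ast$, i.e.\ there is $s\in X$ with $M\models_{\{s\}}\theta_\ast$, hence $M\models_{\{s\}}\theta$ by part (i). This gives satisfaction on some singleton but I need it on all of $X$. The clean way to close this gap is to observe that when $\theta$ is an \FOp-formula it is flat by Fact \ref{fop_flat}, so one should argue that $M\models_{\{s\}}\theta$ for \emph{every} $s\in X$: indeed for each $s\in X$, locality lets us replace $X$ by any team with the same restriction to $\mathsf{Fv}(\theta_\ast)=\mathsf{Fv}(\theta)$, and running the same argument shows $M\not\models_{\{s\}}\neg\theta_\ast$ would have to fail only if $\Gamma$ could be satisfied by $\{s\}$ — so instead I apply locality and flatness directly: $\{s\}\subseteq X$, and since $\neg\theta_\ast$ is \emph{not} downward closed in general I cannot argue that way, so the correct route is: $M\models_X\Gamma$ and $\Gamma,\neg\theta_\ast\models\bot$ give $M\not\models_X\neg\theta_\ast$, which for the flat formula $\theta_\ast$ means there is no $s\in X$ with $M\not\models_{\{s\}}\theta_\ast$, i.e.\ $M\models_{\{s\}}\theta_\ast$ for \emph{all} $s\in X$; by part (i), $M\models_{\{s\}}\theta$ for all $s\in X$, and by flatness of $\theta$ (the \FOp\ case) we conclude $M\models_X\theta$.

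For the remaining case where $\theta$ is a pseudo-flat \emph{sentence}, the free-variable issue disappears: we may take $X=\{\emptyset\}$ throughout, $\mathsf{Fv}(\theta)=\emptyset$, and the argument above collapses — $M\models\Gamma$ and $\Gamma,\neg\theta_\ast\models\bot$ yield $M\not\models_{\{\emptyset\}}\neg\theta_\ast$, so $M\models_{\{\emptyset\}}\theta_\ast$, hence $M\models_{\{\emptyset\}}\theta$ by part (i), i.e.\ $M\models\theta$. The main obstacle I anticipate is precisely the reasoning just flagged in the \FOp-formula case: $\neg\theta_\ast$ is flat but one must be careful that ``$M\not\models_X\neg\theta_\ast$ for nonempty $X$'' correctly delivers $M\models_{\{s\}}\theta_\ast$ for \emph{all} $s\in X$ and not merely for some $s$, which hinges on the flatness (equivalently the union/downward closure) of the first-order formula $\theta_\ast$ rather than of $\theta$ itself — and then part (i) plus flatness of $\theta$ transports this back to $M\models_X\theta$. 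Once that bookkeeping is done carefully, the lemma follows.
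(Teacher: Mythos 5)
Your part (i) and the $\Longleftarrow$ direction of (ii) follow essentially the paper's route (induction with the $\forall x$-case handled via Fact \ref{fop_flat}, and the $\Longleftarrow$ direction via part (i); note that your ``forces $X=\emptyset$'' step tacitly uses the downward closure of the $\D\oplus\FOTd$-formula $\theta$ to pass from $M\models_X\theta$ to $M\models_{\{s\}}\theta$, which the paper states explicitly). The pseudo-flat sentence case of the $\Longrightarrow$ direction is also fine, since for a sentence locality collapses the ``some $s$'' versus ``all $s$'' distinction.

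However, in the $\Longrightarrow$ direction for an open \FOp-formula $\theta$ there is a genuine gap, exactly at the point you flagged and then tried to close. Your final step claims that $M\not\models_X\neg\theta_\ast$, ``for the flat formula $\theta_\ast$'', means $M\models_{\{s\}}\theta_\ast$ for \emph{all} $s\in X$. This is false: by the team semantics of $\neg$, $M\not\models_X\neg\theta_\ast$ only says that \emph{some} $s\in X$ satisfies $\theta_\ast$; flatness of $\theta_\ast$ is irrelevant here, since you would need $M\models_X\theta_\ast$, which does not follow from $M\not\models_X\neg\theta_\ast$ (take $\theta_\ast$ to be $x=y$ and $X$ containing one assignment with $s(x)=s(y)$ and one with $s(x)\neq s(y)$: then $X$ satisfies neither $x=y$ nor $\neg(x=y)$). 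The missing ingredient is the downward closure of the formulas in $\Gamma$: the paper assumes $M\not\models_X\theta$, uses flatness of $\theta$ to pick a single $s\in X$ with $M\not\models_{\{s\}}\theta$, transfers $\Gamma$ to that singleton via downward closure ($M\models_{\{s\}}\Gamma$), obtains $M\models_{\{s\}}\neg\theta_\ast$ from part (i), and then applies the hypothesis $\Gamma,\neg\theta_\ast\models\bot$ to the \emph{team} $\{s\}$ to get the contradiction $\{s\}=\emptyset$. Equivalently, in your direct formulation you must apply the entailment hypothesis to each singleton subteam $\{s\}\subseteq X$ (legitimate because $M\models_{\{s\}}\Gamma$ by downward closure of $\Gamma$), concluding $M\models_{\{s\}}\theta_\ast$ for every $s\in X$; only then do part (i) and the flatness of $\theta$ yield $M\models_X\theta$. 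As written, your argument applies the hypothesis only to $X$ itself and therefore delivers just one good singleton, which does not suffice.
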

\begin{proof}
%We first show by induction that for every model $M$ and assignment $s$,
%\begin{equation}\label{cn_defin_in_FOTd_eq1}
%M\models_{\{s\}}\theta\iff M\models_{\{s\}}\theta_\ast.
%\end{equation}
(i) This is proved by an inductive argument, for which most cases follow essentially from Lemma \ref{fo=fot}. We only illustrate two non-trivial cases. If $\theta=\dep(\mathsf{t},\mathsf{t}')$, then both $M\models_{\{s\}}\dep(\mathsf{t},\mathsf{t}')$ and $M\models_{\{s\}}\top$ always hold. If $\theta=\forall x\eta$ where $\eta$ is in \FOp, then  $\eta$ is flat (by Fact \ref{fop_flat}), and the rest of the proof is essentially the same as the corresponding case in Lemma \ref{fo=fot}. 

%Now, for item (i), if $M\models_X\neg\theta_\ast$, then for an arbitrary $s\in X$, we have by definition that $M\not\models_{\{s\}}\theta_\ast$, which by (\ref{cn_defin_in_FOTd_eq1}) implies that $M\not\models_{\{s\}}\theta$. Hence $M\not\models_X\theta$, since $\theta$ is downward closed.
%
%For item (ii),  

(ii) Suppose $\Gamma\models\theta$, and suppose $M\models_X\Gamma$ and $M\models_X\neg\theta_\ast$. If $X\neq\emptyset$, then for an arbitrary $s\in X$, we have that $M\not\models_{\{s\}}\theta_\ast$, which implies $M\not\models_{\{s\}}\theta$ by item (i). Hence, by the downward closure of $\theta$, we conclude that $M\not\models_X\theta$, which contradicts the assumption that $\Gamma\models\theta$. Hence, $X=\emptyset$ and $M\models_X\bot$.

Conversely, suppose $\Gamma,\neg\theta_\ast\models\bot$, and suppose $M\models_X\Gamma$ and $M\not\models_X\theta$. We  derive a contradiction.  Let us first treat the case when $\theta$ is a pseudo-flat sentence with no free variable. By locality, $M\not\models_X\theta$ implies that $M\not\models_{\{\emptyset\}}\theta$ for the empty assignment $\emptyset$. Thus, by item (i), we obtain $M\models_{\{\emptyset\}}\neg\theta_\ast$. Observe that $\neg\theta_\ast$ is also a sentence with no free variable, thus $M\models_X\neg\theta_\ast$, by locality again. Now, since $M\models_X\Gamma$, by assumption we are forced to conclude that $M\models_X\bot$, meaning that $X=\emptyset$. But in view of the empty team property, this contradicts the assumption that $M\not\models_X\theta$. Hence we are done.

Next, let us consider the case when $\theta$ is a (possibly open) \FOp-formula. Since now $\theta$ is flat (by Fact \ref{fop_flat}), we have that $M\not\models_{\{s\}}\theta$ for some $s\in X$.
This means, by item (i), that $M\models_{\{s\}}\neg\theta_\ast$. Since formulas in $\Gamma$ are all downward closed, we then have that $M\models_{\{s\}}\Gamma$. But then, the assumption $\Gamma,\neg\theta_\ast\models\bot$ implies that $M\models_{\{s\}}\bot$, which is not the case. Hence we are also done.
\end{proof}

%Throughout this section, we assume that every model has at least two elements. 
Before presenting our axiomatization, let us analyze the behavior of the weak quantifiers and connectives and their interaction with the usual logical constants and dependence atoms in the enriched language $\D\oplus\FOTd$. By Theorem \ref{D=D+FOTd}, the weak quantifiers and connectives can be eliminated in the logic, as, e.g., for every \D-formula $\phi$, $\forallo x\phi$ is expressible in $\D$. Furthermore, the weak existential quantifier $\existso$ and  disjunction $\vvee$ are actually both definable in  $\D\oplus\FOTd$ in terms of the constancy atoms and the standard existential quantifier and disjunction, under the assumption that every model has at least two elements (which we postulate throughout the paper). %\todof{$\forallo$ is also definable but not uniformly definable}%We now verify this fact in detail.
\begin{prop}\label{existso_vvee_definable}
\begin{enumerate}[label=(\roman*)]
\item $\existso x\phi\equiv\exists x(\dep(x)\wedge\phi)$.
\item $\phi\vvee\psi\equiv\exists x\exists y\big(\dep(x)\wedge \dep(y)\wedge \big((x=y\wedge \phi)\vee (x\neq y\wedge \psi)\big)\big)$, where $x,y$ are fresh. %variables.
\end{enumerate}
\end{prop}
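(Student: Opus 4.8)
The plan is to verify both equivalences directly from the team-semantic clauses, using the semantics of $\dep(x)$ (the constancy atom), $\exists$, $\vee$ (strong) versus $\existso$, $\vvee$ (weak), together with the locality property of all formulas involved. Throughout, fix a model $M$ (with $|M|\geq 2$) and a team $X$ whose domain contains $\textsf{Fv}(\phi)$ (and $\textsf{Fv}(\psi)$ in part (ii)), with $x,y$ fresh variables not in $\textsf{dom}(X)$.

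For part (i), the key observation is that $M\models_X\exists x(\dep(x)\wedge\phi)$ iff there is $F\colon X\to\wp^+(M)$ with $M\models_{X(F/x)}\dep(x)\wedge\phi$. Now $M\models_{X(F/x)}\dep(x)$ says that all assignments in $X(F/x)$ agree on $x$; since $X$ is taken nonempty (the $X=\emptyset$ case is trivial by the empty team property), this forces there to be a single $a\in M$ with $X(F/x)=X(a/x)$ — indeed each $F(s)\subseteq\{a\}$, hence $F(s)=\{a\}$ as $F$ has values in $\wp^+(M)$. Conversely $X(a/x)$ arises from the constant function $F(s)=\{a\}$, which satisfies $\dep(x)$. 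So $M\models_X\exists x(\dep(x)\wedge\phi)$ iff $M\models_{X(a/x)}\phi$ for some $a\in M$, which is exactly the clause for $M\models_X\existso x\phi$. This is essentially routine; I would present it in a short displayed chain of ``iff''s.

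For part (ii), set $\chi:=\exists x\exists y\bigl(\dep(x)\wedge\dep(y)\wedge\bigl((x=y\wedge\phi)\vee(x\neq y\wedge\psi)\bigr)\bigr)$. For the direction $\phi\vvee\psi\models\chi$: if $M\models_X\phi$, pick distinct $a\neq b$ in $M$ (using $|M|\geq 2$) and take the witnessing functions for $x,y$ to be constantly $\{a\}$, so that $x=y$ holds everywhere, the disjunction is split as $Y=X(a/x)(a/y)$ and $Z=\emptyset$, and $M\models_Y x=y\wedge\phi$ by locality while $M\models_\emptyset x\neq y\wedge\psi$ by the empty team property; the symmetric choice (constantly $\{a\}$ for $x$, $\{b\}$ for $y$) handles the case $M\models_X\psi$. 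For the converse $\chi\models\phi\vvee\psi$: unwinding the semantics, there are $a,b\in M$ such that $X(a/x)(b/y)$ splits as $Y\cup Z$ with $M\models_Y x=y\wedge\phi$ and $M\models_Z x\neq y\wedge\psi$; but on $X(a/x)(b/y)$ the value of $x=y$ is constant (it is $a=b$), so one of $Y,Z$ must equal the whole team $X(a/x)(b/y)$ and the other must be empty — if $a=b$ then $Z$ contains no assignment satisfying $x\neq y$, forcing $Z=\emptyset$ and $Y=X(a/x)(b/y)$, whence $M\models_X\phi$ by locality; if $a\neq b$ the symmetric reasoning gives $M\models_X\psi$. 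In either case $M\models_X\phi\vvee\psi$.

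I do not anticipate a genuine obstacle here; the only point requiring care is the bookkeeping in part (ii), where one must (a) invoke $|M|\geq 2$ to produce the distinguishing pair of elements in the left-to-right direction, and (b) argue in the right-to-left direction that over a team on which $x=y$ has a constant truth value, any strong disjunction $(x=y\wedge\phi)\vee(x\neq y\wedge\psi)$ collapses to one disjunct — so that the strong $\vee$ of the right-hand side is effectively forced to behave like the weak $\vvee$. Both equivalences then follow, and locality is used repeatedly to strip off the harmless extra variables $x,y$ from the team.
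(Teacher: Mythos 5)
Your proof is correct and follows essentially the same route as the paper's: in part (ii) you choose constant witnessing functions (using $|M|\geq 2$) for the left-to-right direction, and for the converse you use the constancy atoms to collapse the strong disjunction to a single disjunct on the whole team, finishing with locality, exactly as the paper does; your detailed verification of part (i) fills in what the paper dismisses as easy.
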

\begin{proof}
Item (i) is easy to prove. We only give detailed proof for item (ii). Suppose $M\models_X\phi\vvee\psi$. Then either $M\models_X\phi$ or $M\models_X\psi$. Pick distinct $a,b\in M$. Define  functions $F:X\to\wp^+(M)$ and $G:X(F/x)\to \wp^+(M)$ as $F(s)=\{a\}$, and $G(s)=\{a\}$ if $M\models_X\phi$ and $G(s)=\{b\}$ if $M\models_X\psi$.
%\[G(s)=\begin{cases}
%\{a\}&\text{ if }M\models_X\phi\\
%\{b\}&\text{ if }M\models_X\psi.
%\end{cases}\]
Put $Y=X(F/x)(G/y)$. It is easy to see that $M\models_Y\dep(x)\wedge\dep(y)$, and $M\models_Yx=y\wedge\phi$ in case $M\models_X\phi$, and $M\models_Yx\neq y\wedge\psi$ in case $M\models_X\psi$.

Conversely, suppose there are suitable functions $F,G$ such that for  the team $Y=X(F/x)(G/y)$, $M\models_Y\dep(x)\wedge\dep(y)$ and $M\models_Y(x=y\wedge \phi)\vee (x\neq y\wedge \psi)$. Since $x,y$ have constant values in $Y$, it must be that either $M\models_Yx=y\wedge \phi$ or $M\models_Yx\neq y\wedge \psi$. Hence we conclude that $M\models_Y\phi\vvee\psi$. Since $x,y\notin \textsf{Fv}(\phi)\cup\textsf{Fv}(\psi)$, we have by the locality property that $M\models_X\phi\vvee\psi$.
\end{proof}

%The weak existential quantifier $\existso$ is definable in $\D\oplus\FOTd$ in terms of the standard existential quantifier $\exists$ and the constancy atom, since $\existso x\phi\equiv\exists x(\dep(x)\wedge\phi)$.

%By Theorem \ref{D=D+FOTd}, for every \D-formula $\phi$, the formula $\forallo x\phi$ is expressible in $\D$

%The weak universal quantifier $\forallo$ can also be eliminated from the logic $\D\oplus\FOTd$, but it is shown in \cite{Pietro_uniform} to be not (uniformly) definable in $\D$, and thus neither in $\D\oplus\FOTd$.

The weak universal quantifier $\forallo$, however, cannot be defined uniformly in terms of the other logical constants in $\D\oplus\FOTd$, for otherwise (by Proposition \ref{existso_vvee_definable}) it would also be  uniformly definable in \D, which contradicts the results in  \cite{Pietro_uniform}.
We characterize the interaction between  $\forallo$ and the standard quantifiers in the next lemma. 
\begin{lem}\label{wsqf_int}{For any two distinct variables $x$ and $y$,} we have that
$\exists x\forallo y\phi(x,y,\mathsf{z})\equiv\forallo y\exists x(\dep(\mathsf{z},x)\wedge \phi)$ and $\forall x\forallo y\phi\equiv\forallo y\forall x\phi$.
\end{lem}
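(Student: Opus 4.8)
The plan is to prove both equivalences by unwinding the team semantics directly, using the fact that $\forallo$ and $\forall$ both insert \emph{all} elements of the model (the difference being that $\forallo$ keeps the team "joined" at the fresh variable, while $\forall$ splits nothing either — the key contrast is really with $\exists$). For the second identity $\forall x\forallo y\phi\equiv\forallo y\forall x\phi$, the argument is essentially a routine commutation of two universal-style supplementation operations: $M\models_X\forall x\forallo y\phi$ iff $M\models_{X(M/x)}\forallo y\phi$ iff for all $b\in M$, $M\models_{X(M/x)(b/y)}\phi$; on the other side $M\models_X\forallo y\forall x\phi$ iff for all $b\in M$, $M\models_{X(b/y)(M/x)}\phi$. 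So it suffices to observe that the teams $X(M/x)(b/y)$ and $X(b/y)(M/x)$ coincide (as $x\neq y$, the two supplementations act on different columns and commute), which is an immediate computation from the definitions of $X(M/x)$ and $X(b/y)$.

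For the first identity $\exists x\forallo y\phi(x,y,\mathsf{z})\equiv\forallo y\exists x(\dep(\mathsf{z},x)\wedge\phi)$ I would argue both directions explicitly. For the left-to-right direction, suppose $M\models_X\exists x\forallo y\phi$, witnessed by $F:X\to\wp^+(M)$ with $M\models_{X(F/x)(M/y)}\phi$. Fix $b\in M$; I claim $M\models_{X(b/y)}\exists x(\dep(\mathsf{z},x)\wedge\phi)$. The natural witness is the function $F':X(b/y)\to\wp^+(M)$ defined by transporting $F$ along the bijection $s\mapsto s(b/y)$ between $X$ and $X(b/y)$ — but one must check $\dep(\mathsf{z},x)$ holds on $X(b/y)(F'/x)$, which is \emph{not} automatic since $F$ need not be constant on $\mathsf{z}$-equivalence classes. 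The fix is standard: replace $F$ by a refinement that picks, for each $\mathsf{z}$-value appearing in $X$, a single element of some $F(s)$ with that $\mathsf{z}$-value; downward closure of $\phi$ (an \FOTd-formula, so downward closed — or $\phi$ here ranges over $\D\oplus\FOTd$, still downward closed) guarantees the smaller team still satisfies $\phi$. This is the step I expect to be the main obstacle: getting the bookkeeping right so that the chosen witness simultaneously satisfies the dependence atom $\dep(\mathsf{z},x)$ and keeps $\phi$ true after adding $\forallo y$, using downward closure and the fact that $\forallo y$ (inserting all of $M$) commutes with restricting the team.

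For the right-to-left direction of the first identity, suppose $M\models_X\forallo y\exists x(\dep(\mathsf{z},x)\wedge\phi)$, so for each $b\in M$ there is $F_b:X(b/y)\to\wp^+(M)$ with $M\models_{X(b/y)(F_b/x)}\dep(\mathsf{z},x)\wedge\phi$. Because of the dependence atom, each $F_b$ is (up to shrinking, again by downward closure) effectively a function of the $\mathsf{z}$-value alone, and because $\forallo y$ does not alter the $\mathsf{z}$-column, all the $F_b$ can be taken to agree — they are all induced by one function $g$ from $\mathsf{z}$-values to $M$. Setting $F(s)=\{g(s(\mathsf{z}))\}$ gives a witness with $X(F/x)(M/y)=\bigcup_{b\in M}X(b/y)(F_b/x)$ (modulo the harmless $y$-column), and union closure is not needed since for \emph{downward}-closed $\phi$ we instead argue pointwise: every assignment in $X(F/x)(M/y)$ lies in some $X(b/y)(F_b/x)$, and by flatness-free reasoning plus downward closure along each slice we get $M\models_{X(F/x)(M/y)}\phi$. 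Wait — downward closure alone does not yield satisfaction of a union; the correct tool is that $\phi$ after the quantifier prefix, evaluated slice by slice, must be handled via the semantics of $\forallo$ directly: $M\models_{X(F/x)(M/y)}\phi$ iff $M\models_{X(F/x)(b/y)}\phi$ for all $b$, and $X(F/x)(b/y)=X(b/y)(F_b/x)$ with the unified $F$, which is where we land. I would write this last manipulation carefully as the technical heart of the proof.
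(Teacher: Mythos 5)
Your second equivalence, and the left-to-right half of the first, are essentially the paper's argument. The only difference in the left-to-right half is that the paper sidesteps your ``refinement'' bookkeeping by assuming w.l.o.g.\ (via locality) that $\mathsf{dom}(X)=\mathsf{z}$ and then shrinking $F$ to singletons by downward closure, after which $\dep(\mathsf{z},x)$ holds automatically. Your refinement as literally stated is slightly off: the team obtained by assigning to every $s\in X$ the chosen value $g(s(\mathsf{z}))$ need not be a subteam of $X(F/x)$, since the chosen element may come from $F(s')$ for a different $s'$ with the same $\mathsf{z}$-value; so downward closure alone does not apply, and you need to restrict to the variables $x,y,\mathsf{z}$ and invoke locality first --- which is exactly what the paper's w.l.o.g.\ step accomplishes. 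That part is repairable.

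The genuine gap is in the right-to-left half of the first equivalence, at the sentence ``all the $F_b$ can be taken to agree --- they are all induced by one function $g$''. Nothing supports this: $\dep(\mathsf{z},x)$ is evaluated separately inside each team $X(b/y)(F_b/x)$, where $y$ is constant, so it only makes each individual $F_b$ a function of the $\mathsf{z}$-value; it imposes no coherence between $F_a$ and $F_b$ for $a\neq b$, and downward closure cannot create it. Moreover the step cannot be patched, because this direction is false: take $M=\{0,1\}$, $\mathsf{z}=\langle z\rangle$, $X=\{s\}$ with $s(z)=0$, and $\phi:=(x=y)$. Then $M\models_X\forallo y\exists x(\dep(z,x)\wedge x=y)$, since after fixing $y:=b$ one may choose $x:=b$ and on a singleton team $\dep(z,x)$ is trivial; but $M\not\models_X\exists x\forallo y\,(x=y)$, since a single nonempty set of $x$-values would have to equal $\{b\}$ for every $b\in M$. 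So only the entailment $\exists x\forallo y\phi\models\forallo y\exists x(\dep(\mathsf{z},x)\wedge\phi)$ holds, which is precisely the one direction recorded syntactically in Proposition \ref{FOTd_der_rules}\ref{forallo_ex_swap}. For what it is worth, the paper's own proof of the converse makes the same unsupported identification (it asserts $F_b(s_0(b/y))=F_a(s_0(a/y))$ without justification), so your proposal mirrors the paper's argument, defect included; but as a proof of the stated biconditional it fails at exactly this point.
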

\begin{proof}
The second equivalence is clear. We only verify the first equivalence. Suppose $M\models_X\exists x\forallo y\phi(x,y,\mathsf{z})$, and we may w.l.o.g. assume also that $\textsf{dom}(X)=\mathsf{z}$. Then there exists a function $F:X\to \wp^+(M)$ such that for all $a\in M$, $M\models_{X(F/x)(a/y)}\phi(x,y,\mathsf{z})$. Since $\phi$ is downward closed, we may w.l.o.g. assume that  $F(s)$ is a singleton for each $s\in X$. Let $a\in M$ be arbitrary. We define a function $F':X(a/y)\to \wp^+(M)$ as
\(F'(s)=F(s\upharpoonright \textsf{dom}(X)).\)
Clearly, $M\models_{X(a/y)(F'/x)}\dep(\mathsf{z},x)$. Observe that $X(a/y)(F'/x)=X(F/x)(a/y)$. Thus $M\models_{X(a/y)(F'/x)}\phi$, as required.

Conversely, suppose $M\models_X\forallo y\exists x(\dep(\mathsf{z},x)\wedge \phi)$. Pick $a\in M$. By assumption, there exists a function $F_a:X(a/y)\to \wp^+(M)$ such that $M\models_{X(a/y)(F_a/x)}\dep(\mathsf{z},x)\wedge \phi$. Define a function $F':X\to \wp^+(M)$ as
\(F'(s)=F_a(s(a/y))\). For any $b\in M$, it suffices to show that $M\models_{X(F'/x)(b/y)}\phi$. Now, since $M\models_{X(b/y)(F_b/x)}\dep(\mathsf{z},x)$, we have that for all $s_0\in X$, $F_b(s_0(b/y))=F_a(s_0(a/y))$ is a singleton. We claim that $X(b/y)(F_b/x)=X(F'/x)(b/y)$, which would then imply $M\models_{X(F'/x)(b/y)}\phi$, since we have $M\models_{X(b/y)(F_b/x)}\phi$ by assumption. Indeed, if $s\in X(F'/x)(b/y)$, then 
\[s(x)=F'(s\upharpoonright \textsf{dom}(X))=F_a((s\upharpoonright \textsf{dom}(X)(a/y))=F_b((s\upharpoonright \textsf{dom}(X)(b/y)),\]
which implies that $s\in X(b/y)(F_b/x)$. The converse direction is symmetric.
%
%It then follows that
%\begin{align*}
%&s\in X(F'/x)(b/y)\\
%\iff& s(x)=F'(s\upharpoonright \textsf{dom}(X))=F_a((s\upharpoonright \textsf{dom}(X)(a/y))=F_b((s\upharpoonright \textsf{dom}(X)(b/y))\\
%\iff& s(x)=F_b((s\upharpoonright \textsf{dom}(X)(b/y))\\
%\iff&s\in X(b/y)(F_b/x),
%\end{align*}
%namely $X(b/y)(F_b/x)=X(F'/x)(b/y)$. Thus, since $M\models_{X(b/y)(F_b/x)}\phi$ by assumption, we obtain that $M\models_{X(F'/x)(b/y)}\phi$, as required.
\end{proof}

Let us now define the system for $\D\oplus\FOTd$.

\begin{defn}
The system of natural deduction for $\D\oplus\FOTd$ consists of all rules of the system of \D defined in \cite{Axiom_fo_d_KV} (including particularly those rules in \Cref{rules_d}), %the rule \textsf{RAA} for $\phi$ in the rule being an \FOTd-sentence, 
all rules in  \Cref{rules_weak_cnt_qtf} from \Cref{sec:FOT} where dependence atoms $\dep(t)$ are now read as atomic formulas (instead of shorthands),  and the rules in \Cref{rules_d_fotd},
%following rules,
%,rules_vvee_existso}, 
where   letters  in sans-serif face stand for sequences of variables,  and $\alpha$ ranges over first-order formulas only. %\todof{may need to add \consi, because of Proposition \ref{FOT_der_rules}\ref{FOTd_der_rules_forallo_existso_con} }\todof{no need to add \consi, derivable, see new version of our book}%\todof{added the last two rules}%\todof{new rules needed for handling the second component of $\dep(\mathsf{x},\mathsf{y})$? $\dep(\mathsf{x},\mathsf{y})\dashv\vdash\bigwedge_i\dep(\mathsf{x},y_i)$}
\end{defn}

%\begin{center}
\begin{table}[t]
\centering
\caption{Some rules (adapted) from the system  \cite{Axiom_fo_d_KV} of \D\vspace{-4pt}}\setlength{\tabcolsep}{6pt}
\begin{tabular}{|C{0.46\linewidth}C{0.46\linewidth}|}
\hline
\multicolumn{2}{|c|}{\AxiomC{}\noLine\UnaryInfC{$\Gamma,\alpha\vdash\bot$} \RightLabel{$\neg$\textsf{I} }\UnaryInfC{$\Gamma\vdash\neg\alpha$} \DisplayProof
\quad\quad\AxiomC{}\noLine\UnaryInfC{$\Gamma\vdash\alpha$} \AxiomC{}\noLine\UnaryInfC{$\Gamma\vdash\neg\alpha$} \RightLabel{$\neg\mathsf{E}$ }\BinaryInfC{$\Gamma\vdash\phi$} \DisplayProof\quad\quad\AxiomC{}\noLine\UnaryInfC{$\Gamma,\neg\alpha\vdash\bot$} \RightLabel{\textsf{RAA} }\UnaryInfC{$\Gamma\vdash\alpha$} \DisplayProof}\\
\AxiomC{}\noLine\UnaryInfC{$\Gamma\vdash\phi$}\RightLabel{$\vee$\textsf{I}}\UnaryInfC{$\Gamma\vdash\phi\vee\psi$}\noLine\UnaryInfC{}\DisplayProof\quad\quad\AxiomC{}\noLine\UnaryInfC{$\Gamma\vdash\phi$}\RightLabel{$\vee$\textsf{I}}\UnaryInfC{$\Gamma\vdash\psi\vee\phi$}\noLine\UnaryInfC{}\DisplayProof &\AxiomC{}\noLine\UnaryInfC{$\Gamma\vdash\phi\vee\psi$}\AxiomC{}\noLine\UnaryInfC{$\Gamma,\phi\vdash\chi$}\RightLabel{$\vee$\textsf{Mon}}\BinaryInfC{$\Gamma\vdash\chi\vee\psi$}\noLine\UnaryInfC{}\DisplayProof
\\
% \AxiomC{}\noLine\UnaryInfC{$\Gamma,\alpha\vdash\bot$} \RightLabel{$\neg$\textsf{I}}\UnaryInfC{$\Gamma\vdash\neg\alpha$} \noLine\UnaryInfC{}\DisplayProof&\AxiomC{}\noLine\UnaryInfC{$\Gamma,\neg\alpha\vdash\bot$} \RightLabel{$\neg\mathsf{E}$}\UnaryInfC{$\Gamma\vdash\alpha$} \noLine\UnaryInfC{}\DisplayProof\\\hline
\AxiomC{}\noLine\UnaryInfC{$\Gamma\vdash\phi$}\RightLabel{$\forall$\textsf{I} {\footnotesize(a)}}\UnaryInfC{$\Gamma\vdash\forall x\phi$}\noLine\UnaryInfC{}\DisplayProof&\AxiomC{}\noLine\UnaryInfC{$\Gamma\vdash\phi(t/x)$}\RightLabel{$\exists$\textsf{I}}\UnaryInfC{$\Gamma\vdash\exists x\phi$}\noLine\UnaryInfC{}\DisplayProof
\\
\AxiomC{}\noLine\UnaryInfC{$\Gamma\vdash\forall x\phi$}\RightLabel{$\forall$\textsf{E}}\UnaryInfC{$\Gamma\vdash\phi(t/x)$}\noLine\UnaryInfC{}\DisplayProof&\AxiomC{$\Gamma\vdash\exists x\phi$} 
\AxiomC{}\noLine\UnaryInfC{$\Gamma,\phi(v/x)\vdash\psi$}
\RightLabel{$\exists$\textsf{E} {\footnotesize(b)}}\BinaryInfC{$\Gamma\vdash\psi$}\noLine\UnaryInfC{}\DisplayProof \\%
\multicolumn{2}{|c|}{\def\ScoreOverhang{0.5pt}\AxiomC{$\Gamma\vdash\exists \mathsf{x}\forall y\phi(\mathsf{x},y,\mathsf{z})$}\RightLabel{$\mathsf{DepI}$}\UnaryInfC{$\Gamma\vdash\forall y\exists \mathsf{x}(\dep(\mathsf{z},\mathsf{x})\wedge\phi)$}\DisplayProof}\\
\multicolumn{2}{|c|}{\AxiomC{}\noLine\UnaryInfC{\quad\quad\quad$\displaystyle\Gamma\vdash\forall \mathsf{x}\exists\mathsf{y}\big(\bigwedge_{i\in I}\dep(\sigma^i_{\mathsf{xyv}},y_i)\,\wedge\phi(\mathsf{x},\mathsf{y},\mathsf{v})\big)$\quad\quad\quad}\RightLabel{$\mathsf{DepE}$ (c)}\UnaryInfC{$\displaystyle\Gamma\vdash\forall \mathsf{x}\exists\mathsf{y}\Big(\phi(\mathsf{x},\mathsf{y},\mathsf{v})\wedge\forall \mathsf{x}'\exists\mathsf{y}'\big(\phi(\mathsf{x}',\mathsf{y}',\mathsf{v})\wedge\bigwedge_{i\in I}(\sigma^i_{\mathsf{xyv}}=\sigma^i_{\mathsf{x}'\mathsf{y}'\mathsf{v}}\to y_i=y_i'\big)\Big)$}\DisplayProof}\\ 
\multicolumn{2}{|l|}{\footnotesize (a) $x\notin\textsf{Fv}(\Gamma)$\quad\quad(b) $v\notin\textsf{Fv}(\Gamma\cup\{\phi,\psi\})$  \quad\quad(c) {each $\sigma^i_{\mathsf{xyv}}$ is a subsequence of $\mathsf{xyv}$.}}\\\hline
%\multicolumn{2}{|c|}{ \AxiomC{}\noLine\UnaryInfC{$\Gamma\vdash\exists x\forall y\phi(x,y,\mathsf{z})$}\RightLabel{$\mathsf{DepI}$}\UnaryInfC{$\Gamma\vdash\forall y\exists x(\dep(\mathsf{z},x)\wedge\phi)$}\noLine\UnaryInfC{}\DisplayProof}\\\hline
\end{tabular}
\label{rules_d}
\end{table}%
%\vspace{-10pt}
%\end{center}
%\begin{center}
\begin{table}[t]
\centering
\caption{Additional rules for $\D\oplus\FOTd$}
\setlength{\tabcolsep}{6pt}
\begin{tabular}{|C{0.46\linewidth}C{0.46\linewidth}|}
\hline
%\multirow{2}{*}{
\AxiomC{}\noLine\UnaryInfC{}\noLine\UnaryInfC{}\noLine\UnaryInfC{} \RightLabel{\textsf{Dom}}\UnaryInfC{$\Gamma\vdash\exists x\exists y(x\neq y)$}\noLine\UnaryInfC{}\DisplayProof
&
\AxiomC{$\Gamma,\neg\theta_\ast\vdash\bot$}\RightLabel{$\textsf{RAA}_\ast$ {\footnotesize (a)}}\UnaryInfC{$\Gamma\vdash\theta$}\DisplayProof\\
\AxiomC{$\Gamma\vdash\phi\vee\bot$} \RightLabel{$\bot\vee$\textsf{E}}\UnaryInfC{$\Gamma\vdash\phi$}\noLine\UnaryInfC{}\DisplayProof
&\AxiomC{$\Gamma\vdash\forallo x\alpha$} \RightLabel{$\forallo\forall$\textsf{Trs}}\UnaryInfC{$\Gamma\vdash\forall x\alpha$}\noLine\UnaryInfC{}\DisplayProof\\
\multicolumn{2}{|c|}{\AxiomC{$\Gamma\vdash\forallo x\phi\vee\psi$}\RightLabel{$\forallo$\textsf{Ext} {\footnotesize (b)}}\doubleLine\UnaryInfC{$\Gamma\vdash \exists yz\forallo x((y=z\wedge \phi)\vee(y\neq z\wedge\psi))$}\noLine\UnaryInfC{}\noLine\UnaryInfC{}\DisplayProof }
\\
\AxiomC{$\Gamma,\con(\mathsf{x})\vdash\dep(\mathsf{y})$}
\RightLabel{$\mathsf{DepWI}$}\UnaryInfC{$\Gamma\vdash\dep( \mathsf{x},\mathsf{y})$}\noLine\UnaryInfC{}\noLine\UnaryInfC{}\DisplayProof
&\AxiomC{$\Gamma\vdash\dep(\mathsf{x},\mathsf{y})$} \AxiomC{$\Gamma\vdash\con(\mathsf{x})$} \RightLabel{$\mathsf{DepWE}$}\BinaryInfC{$\Gamma\vdash\dep(\mathsf{y})$}\noLine\UnaryInfC{}\noLine\UnaryInfC{}\DisplayProof\\%\hline
\AxiomC{$\Gamma\vdash\dep(\mathsf{x})$}\AxiomC{$\Gamma\vdash\dep(\mathsf{y})$}
\RightLabel{$\mathsf{conExt}$}\BinaryInfC{$\Gamma\vdash\dep(\mathsf{\mathsf{xy}})$}\noLine\UnaryInfC{}\DisplayProof
&\AxiomC{$\Gamma\vdash\dep(\mathsf{xyz})$}\RightLabel{$\mathsf{conW}$}\UnaryInfC{$\Gamma\vdash\dep(\mathsf{y})$}\noLine\UnaryInfC{}\DisplayProof\\
\multicolumn{2}{|c|}{\AxiomC{$\Gamma,\alpha(\mathsf{x})\vdash\phi$}\AxiomC{$\Gamma,\neg\alpha(\mathsf{x})\vdash\phi$}
\RightLabel{$\mathsf{conTrs}$}\BinaryInfC{$\Gamma,\con(\mathsf{x})\vdash\phi$}\noLine\UnaryInfC{}\noLine\UnaryInfC{}\DisplayProof}\\
\multicolumn{2}{|l|}{\footnotesize  %(a) $\delta$ is $\vvee$ and quantifier-free.\quad 
 (a) $\theta$ is an \FOp-formula or a pseudo-flat sentence. \quad(b) $x\notin\textsf{Fv}(\psi)$, $y,z$ are fresh.}\\\hline
\end{tabular}
\label{rules_d_fotd}
\end{table}
%\end{center}

The axiom \textsf{Dom}  stipulates that the domain of a model has at least two elements, which we assume throughout the paper (and especially in this section). This domain assumption is often postulated in the literature for first-order models, as models with singleton domains is considered trivial  in many respects. For dependence logic in particular, over models with single elements in the domain, all dependence atoms become trivially true, since for a fixed set of variables, there is only one  assignment over such domains. In the present section, the axiom \textsf{Dom} or the corresponding domain assumption is also required for the  fact that the weak disjunction $\vvee$ is definable in terms of  $\vee$ in \D (see Proposition \ref{existso_vvee_definable}(ii) and Proposition \ref{FOTd_der_rules}\ref{vvee_def_prop}), and the soundness of the rule $\forallo$\textsf{Ext}. {Finding a deduction system for the same logic $\D\oplus\FOTd$ without this domain assumption is left as future work. }
The standard introduction rule $\neg$\textsf{I} and elimination rule $\neg$\textsf{E}  for negation apply only to negation in front of first-order formulas.  The rule $\textsf{RAA}_\ast$ is crucial for our completeness proof when applying a trick  developed in \cite{Yang_neg}, where a similar rule  was formulated originally in terms of the weak classical negation $\wcn$ instead. The soundness of $\textsf{RAA}_\ast$ follows from Lemma \ref{cn_defin_in_FOTd}\ref{raa_star_sound}.
%The negation $\neg\theta_\ast$ in the rule should be interpreted as the syntactic negation of $\theta$, which is also the defining formula of the weak classical negation $\wcn\theta$ in \FOp (as verified in \Cref{cn_defin_in_FOTd}\ref{raa_star_sound}). %In fact, in \cite{Yang_neg} this rule $\textsf{RAA}_\neg$ was formulated originally in terms of the weak classical negation $\wcn$. 
The falsum-disjunction elimination rule $\bot\vee$\textsf{E} %, $\vvee$\textsf{wI} 
and the universal quantifiers transition rule $\forallo\forall$\textsf{Trs}  are self-explanatory. %\todof{The other direction of $\forallo\forall$\textsf{Trs} is derivable, for arbitrary formulas} 
The invertible 
rule $\forallo$\textsf{Ext} is an adaption of a similar rule in the system of \D in \cite{Axiom_fo_d_KV}. It is  also inspired by a similar equivalence given in \cite{Hierarchies_Ind_GHK}. %in the context of independence logic. 
%The bottom to up direction of this rule is not needed in the proof of the completeness theorem, but we include it in the system anyway. 
%The role of the rule is to extend the scope of the weak universal quantifier $\forallo$ in a disjunction. 
%The other rules $\vvee\mathsf{wI}$ and $\forallo\forall\mathsf{Trs}$  are self-explanatory. 
The weak introduction $\mathsf{DepWI}$ and elimination rule $\mathsf{DepWE}$ for dependence atoms were first introduced in \cite{VY_PD} in the context of propositional dependence logic. These two rules characterize the equivalence $\dep(\mathsf{x},\mathsf{y})\equiv \dep(\mathsf{x})\to\dep(\mathsf{y})$, which was first identified in \cite{AbVan09}, and later discussed also in the context of inquisitive logic in \cite{Ciardelli2015}. %It is interesting to note that the introduction rule $\mathsf{DepWI}$  is not in general sound in the non-downward closed logic \FOT. 
The constancy atom extension and weakening rules $\mathsf{conExt}$ and $\mathsf{conW}$ characterize the fact that $\dep(\mathsf{xy})\equiv\dep(\mathsf{x})\wedge\dep(\mathsf{y})$. These two rules for dependence atoms were not included in the original system of \D in \cite{Axiom_fo_d_KV}, where dependence atoms $\dep(\mathsf{t},t')$ have in the second component only single terms $t'$ instead of sequences of terms. %as in our more relaxed syntax.
The constancy atom transition rule $\mathsf{conTrs}$ characterizes the entailment $\con(\mathsf{x})\models\alpha(\mathsf{x})\vvee\neg\alpha(\mathsf{x})$. We will show in Proposition \ref{FOTd_der_rules}\ref{vvee_wi} below that this semantic entailment is also syntactically derivable in the system.

%We add these two rules for dependence atoms  to the original system of \D in \cite{Axiom_fo_d_KV} (where dependence atoms $\dep(\mathsf{t},t')$ have in the second component only single terms $t'$ instead of sequences of terms), since we have a more relaxed syntax for dependence atoms. 

%weak introduction and elimination rule for dependence atoms, $\mathsf{DepWI}$ and $\mathsf{DepWE}$ %, which were not in the system of \cite{Axiom_fo_d_KV}, 
%characterize the equivalence 
%\(\dep(\mathsf{x},y)\equiv \con(\mathsf{x})\to\dep(y)\).
%given the deduction theorem.  

\begin{theorem}[Soundness]\label{soundness}
%For any set $\Gamma\cup\{\phi\}$ of formulas in the logic $\D\oplus\FOTd$, 
$\Gamma\vdash\phi\Longrightarrow\Gamma\models\phi$.
\end{theorem}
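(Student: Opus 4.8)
The plan is to prove soundness by induction on the length of the derivation of $\Gamma \vdash \phi$, showing that each rule of the system for $\D\oplus\FOTd$ preserves the semantic consequence relation $\models$. The routine bookkeeping is to observe that the rules fall into three groups according to the definition of the system: (i) the rules of the system for \D from \cite{Axiom_fo_d_KV} (including those in \Cref{rules_d}), whose soundness is established there; (ii) the rules in \Cref{rules_weak_cnt_qtf} common with the \FOT-system, whose soundness was already verified in the proof of \Cref{atm2rel}; and (iii) the genuinely new rules in \Cref{rules_d_fotd}. So the bulk of the work is to check group (iii), and I would present detailed verifications only for those rules, remarking that the others are covered by the cited results.

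For the new rules I would proceed as follows. The axiom \textsf{Dom} is sound because we are working under the standing assumption (postulated throughout \Cref{sec:fotd}) that every model has at least two elements. The rules $\bot\vee\textsf{E}$, $\vvee\textsf{wI}$, and $\forallo\forall\textsf{Trs}$ I would justify directly from the team semantics: for $\vvee\textsf{wI}$ use that $M\models_X\phi$ implies $M\models_X\phi\vvee\psi$; for $\forallo\forall\textsf{Trs}$ use the equivalence $\forall x\forallo y\,\phi\equiv\forallo y\forall x\,\phi$ (and the companion one for $\exists$) established in \Cref{wsqf_int}; the invertible rule $\forallo\textsf{Ext}$ is likewise justified by the first equivalence of \Cref{wsqf_int}. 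The weak dependence-atom rules $\dep(\cdot)\mathsf{wI}$ and $\dep(\cdot)\mathsf{wE}$, and the constancy rules $\mathsf{conExt}$, $\mathsf{conW}$, reduce to the semantic fact that $\dep(\mathsf{x}\mathsf{y})\equiv\dep(\mathsf{x})\wedge\dep(\mathsf{y})$ together with the reading of $\dep(\mathsf{t})$ as ``$\mathsf{t}$ has a constant value''; these are short direct checks. The remaining weak connective and quantifier rules of \Cref{rules_weak_cnt_qtf} ($\vvee\textsf{I}$, $\vvee\textsf{E}$, $\existso\textsf{I}$, $\existso\textsf{E}$, $\forallo\textsf{I}$, $\forallo\textsf{E}$, \consi) have the same soundness proofs as in the \FOT-setting, relying on the equivalences $\existso x\phi\equiv\exists x(\dep(x)\wedge\phi)$ and $\phi\vvee\psi\equiv\exists x\exists y(\dep(x)\wedge\dep(y)\wedge((x=y\wedge\phi)\vee(x\neq y\wedge\psi)))$ from \Cref{existso_vvee_definable}, which hold precisely because of the \textsf{Dom} assumption.

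The one rule that requires real care, and which I expect to be the main obstacle, is $\textsf{RAA}_\ast$: from $\Gamma,\neg\theta_\ast\vdash\bot$ conclude $\Gamma\vdash\theta$, with the side condition that $\theta$ is a pseudo-flat $\D\oplus\FOTd$-sentence or an \FOp-formula. Ordinary \raa is unsound in team semantics (non-flat formulas need not satisfy the law of excluded middle), so this step cannot be handled by a generic argument; its soundness is exactly the content of \Cref{cn_defin_in_FOTd}\ref{raa_star_sound}, which says $\Gamma,\neg\theta_\ast\models\bot\iff\Gamma\models\theta$ for $\theta$ in the admissible fragment. I would therefore discharge this case by citing \Cref{cn_defin_in_FOTd}\ref{raa_star_sound} directly, after noting that the side condition on $\theta$ in the rule is precisely the hypothesis of that lemma. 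With that, together with $\neg\textsf{I}$ and $\neg\textsf{E}$ being sound because they apply only to negation in front of first-order formulas (where flatness makes the classical reasoning valid), all rules are accounted for and the induction goes through.

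\begin{proof}
We argue by induction on the length of the derivation, showing that each rule preserves $\models$; it then follows that $\Gamma\vdash\phi$ implies $\Gamma\models\phi$. The rules of the system split into three groups. The rules of the system for \D of \cite{Axiom_fo_d_KV} (in particular those in \Cref{rules_d}) are sound by the soundness theorem established there. The rules in \Cref{rules_weak_cnt_qtf} have already been shown sound in the proof of \Cref{atm2rel}; note that their soundness does not depend on the presence of inclusion atoms, and the soundness of $\existso\textsf{I}$, $\existso\textsf{E}$, $\forallo\textsf{I}$, $\forallo\textsf{E}$, $\vvee\textsf{I}$, $\vvee\textsf{E}$ rests on the equivalences $\existso x\phi\equiv\exists x(\dep(x)\wedge\phi)$ and $\phi\vvee\psi\equiv\exists x\exists y\big(\dep(x)\wedge\dep(y)\wedge((x=y\wedge\phi)\vee(x\neq y\wedge\psi))\big)$ from \Cref{existso_vvee_definable}, which hold by the \textsf{Dom} assumption in force throughout this section.

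It remains to treat the rules in \Cref{rules_d_fotd}. The axiom \textsf{Dom} is sound by the standing assumption that every model has at least two elements. The standard rules $\neg\textsf{I}$ and $\neg\textsf{E}$ apply only to negation in front of first-order formulas, so their soundness is the usual one, using that first-order formulas are flat. The rule $\bot\vee\textsf{E}$ and $\vvee\textsf{wI}$ are immediate from the semantics (for the latter, $M\models_X\phi$ implies $M\models_X\phi\vvee\psi$). The rules $\forallo\forall\textsf{Trs}$ and the invertible rule $\forallo\textsf{Ext}$ are sound by the equivalences $\forall x\forallo y\,\phi\equiv\forallo y\forall x\,\phi$ and $\exists x\forallo y\,\phi(x,y,\mathsf{z})\equiv\forallo y\exists x(\dep(\mathsf{z},x)\wedge\phi)$ of \Cref{wsqf_int}. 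The rules $\dep(\cdot)\mathsf{wI}$, $\dep(\cdot)\mathsf{wE}$, $\mathsf{conExt}$ and $\mathsf{conW}$ are sound by the semantic fact $\dep(\mathsf{x}\mathsf{y})\equiv\dep(\mathsf{x})\wedge\dep(\mathsf{y})$ together with the reading of $\dep(\mathsf{t})$ as constancy of $\mathsf{t}$; these are direct verifications from the team semantics of constancy atoms.

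Finally, the rule $\textsf{RAA}_\ast$ allows one to infer $\Gamma\vdash\theta$ from $\Gamma,\neg\theta_\ast\vdash\bot$ under the side condition that $\theta$ is a pseudo-flat $\D\oplus\FOTd$-sentence or an \FOp-formula. This side condition is precisely the hypothesis of \Cref{cn_defin_in_FOTd}\ref{raa_star_sound}, which gives $\Gamma,\neg\theta_\ast\models\bot\iff\Gamma\models\theta$. Hence, if $\Gamma,\neg\theta_\ast\models\bot$ (which holds by the induction hypothesis applied to the premise), then $\Gamma\models\theta$, establishing the soundness of $\textsf{RAA}_\ast$. This completes the induction.
\end{proof}
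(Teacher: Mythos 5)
Your overall strategy (induction on derivations, citing the soundness of the \D-rules from \cite{Axiom_fo_d_KV} and of the Table~\ref{rules_weak_cnt_qtf} rules from \Cref{atm2rel}, and discharging $\textsf{RAA}_\ast$ via \Cref{cn_defin_in_FOTd}\ref{raa_star_sound}) matches the paper, but exactly the rules the paper singles out for detailed verification are the ones you dispose of with incorrect or insufficient justifications. For $\forallo\forall\textsf{Trs}$ you appeal to the commutation $\forall x\forallo y\,\phi\equiv\forallo y\forall x\,\phi$ of \Cref{wsqf_int}; that equivalence concerns swapping two \emph{different} quantifiers over two different variables and says nothing about the rule, which passes from $\forallo x\alpha$ to $\forall x\alpha$ over the \emph{same} variable. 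Its soundness is the implication $\forallo x\alpha\models\forall x\alpha$, which holds only because $\alpha$ is first-order: one writes $X(M/x)=\bigcup_{a\in M}X(a/x)$ and uses union closure (flatness) of $\alpha$ — this hypothesis on $\alpha$ is essential and absent from your argument. Likewise, for the invertible rule $\forallo\textsf{Ext}$ you cite the first equivalence of \Cref{wsqf_int}, i.e.\ $\exists x\forallo y\,\phi\equiv\forallo y\exists x(\dep(\mathsf{z},x)\wedge\phi)$, but that is unrelated to the rule; what is needed is the equivalence $\forallo x\phi\vee\psi\equiv\exists y z\,\forallo x((y=z\wedge\phi)\vee(y\neq z\wedge\psi))$, a genuinely nontrivial semantic fact whose proof (the bulk of the paper's soundness argument) uses the downward closure of $\phi$ and $\psi$ to make the split team disjoint, the two-element domain assumption to choose distinct markers $a,b$, and a careful locality argument in the right-to-left direction showing that the subteams $Y_a\upharpoonright\textsf{dom}(X)$ do not depend on $a$. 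Nothing in your proposal supplies this equivalence.

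A further, smaller gap: you treat $\dep(\cdot)\mathsf{wI}$ as a "short direct check" from $\dep(\mathsf{x}\mathsf{y})\equiv\dep(\mathsf{x})\wedge\dep(\mathsf{y})$, but the rule passes from $\Gamma,\con(\mathsf{x})\models\dep(\mathsf{y})$ to $\Gamma\models\dep(\mathsf{x},\mathsf{y})$, and its soundness is not a fact about constancy atoms alone: given $M\models_X\Gamma$ and $s,s'\in X$ with $s(\mathsf{x})=s'(\mathsf{x})$, one must pass to the subteam $\{s,s'\}$, and it is the \emph{downward closure of every formula in $\Gamma$} (a property of the $\D\oplus\FOTd$ language) that guarantees $M\models_{\{s,s'\}}\Gamma$ so that the premise can be applied there. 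Without invoking this closure property the step fails, so this case, like the two above, needs an explicit argument rather than a reduction to the cited equivalences.
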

\begin{proof}
%See Appendix.
We only give detailed proof for the soundness of the nontrivial rules.

%$\vvee$\textsf{wI}: It suffices to show that $\dep(\mathsf{x})\models\alpha(\mathsf{x})\vvee\neg\alpha(\mathsf{x})$. Suppose $M\models_X\dep(\mathsf{x})$. For an arbitrary $s\in X$, either $M\models_{s}\alpha(\mathsf{x})$ or $M\not\models_{s}\alpha(\mathsf{x})$. Now, by the assumption, $s(\mathsf{x})=\mathsf{a}$ for some fixed sequence $\mathsf{a}$ of elements in $M$. Thus, $M\models_X\alpha(\mathsf{x})$ or $M\models_X\neg\alpha(\mathsf{x})$, thereby $M\models_X\alpha(\mathsf{x})\vvee\neg\alpha(\mathsf{x})$.

$\forallo\forall$\textsf{Trs}: It suffices to show that $\forallo x\alpha\models\forall x\alpha$. Suppose $M\models_X\forallo x\alpha$. Then for all $a\in M$, $M\models_{X(a/x)}\alpha$. Observe that $X(M/x)=\bigcup_{a\in M}X(a/x)$. Since $\alpha$ is closed under unions, we then conclude that $M\models_{X(M/x)}\alpha$, thus $M\models_X\forall x\alpha$.

$\forallo$\textsf{Ext}: It suffices to show that $\forallo x\phi\vee\psi\equiv\exists yz\forallo x((y=z\wedge \phi)\vee(y\neq z\wedge\psi))$.
For the left to right direction, suppose $M\models_X\forallo x\phi\vee\psi$, and we may w.l.o.g. also assume that $x,y,z\notin \textsf{dom}(X)$. Then there exist $Y,Z\subseteq X$ such that $X=Y\cup Z$, $M\models_Y\forallo x\phi$ and $M\models_Z\psi$. Since $\forallo x\phi$ and $\psi$ are downward closed, we may w.l.o.g. assume that $Y$ and $Z$ are disjoint. 
Let $a,b$ be two distinct elements in $M$ (which is assumed to have at least two elements). Define $F:X\to \wp^+(M)$ as $F(s)=\{a\}$, and define $G:X(F/y)\to \wp^+(M)$ by taking %\vspace{-6pt}%\todof{here for simplicity, I use the ``set definition" for the functions for existential quantifier also in dependence logic, need to put this in the preliminaries}\todof{And I used the slightly ``nonstandard" definition for the function $G$ in order to incorporate both \D and \Ind case.}
%\[a\in G(s)\iff s\upharpoonright \textsf{dom}(X)\in Y,\quad \text{ and }\quad b\in G(s)\iff s\upharpoonright \textsf{dom}(X)\in Z.\]
$G(s)=\{a\}$ if $s\upharpoonright \textsf{dom}(X)\in Y$, and $G(s)=\{b\}$ if $s\upharpoonright \textsf{dom}(X)\in Z$.
%\[G(s)=\begin{cases}
%\{a\},&\text{if }s\upharpoonright \textsf{dom}(X)\in Y;\\
%\{b\},&\text{if }s\upharpoonright \textsf{dom}(X)\in Z.
%\end{cases}\]
Putting $X'=X(F/y)(G/z)$ we show that $M\models_{X'(c/x)}(y=z\wedge \phi)\vee(y\neq z\wedge\psi)$ for arbitrary $c\in M$.
 Define $Y'=\{s\in X'(c/x)\mid s(z)=a\}$ and $Z'=X'(c/x)\setminus Y'$. Clearly, $Y'\cup Z'=X'(c/x)$, $M\models_{Y'}y=z$ and $M\models_{Z'}y\neq z$. Since $M\models_Z\psi$ and $x,y,z\notin \textsf{dom}(Z)$, we have $M\models_{Z'}\psi$. Also, since $M\models_Y\forallo x\phi$, we have $M\models_{Y(c/x)}\phi$. Since $\textsf{dom}(X)\cup\{x\}\subseteq \textsf{Fv}(\phi)$ and $Y(c/x)\upharpoonright (\textsf{dom}(X)\cup\{x\})=Y'\upharpoonright (\textsf{dom}(X)\cup\{x\})$, we conclude that  $M\models_{Y'}\phi$.

For the right to left direction,  suppose $M\models_X\exists yz\forallo x((y=z\wedge \phi)\vee(y\neq z\wedge\psi))$ and $x,y,z\not\in\textsf{dom}(X)$. Then there exist appropriate functions $F,G$ such that for any $a\in M$, there exists $Y_a\subseteq X(F/y)(G/z)(a/x)=X'(a/x)$ such that $M\models_{Y_a}y=z\wedge \phi$ and $M\models_{X'(a/x)\setminus Y_a}y\neq z\wedge \psi$. 
We claim that for any $a,b\in M$, $Y_a\upharpoonright \textsf{dom}(X)=Y_b\upharpoonright \textsf{dom}(X)$. Indeed, for any $s\in Y_a\upharpoonright \textsf{dom}(X)$, there exists an extension $\hat{s}$ of $s$ in $Y_a$. Since $M\models_{Y_a}y=z$, we have $\hat{s}(y)=\hat{s}(z)$. Let $t=\hat{s}(b/x)\in X'(b/x)\supseteq Y_b$. Since $t(y)=\hat{s}(y)=\hat{s}(z)=t(z)$ and $M\models_{X'(b/x)\setminus Y_b}y\neq z$, we must have that $t\in Y_b$, and thus $s=\hat{s}\upharpoonright \textsf{dom}(X)=t\upharpoonright \textsf{dom}(X)\in Y_b\upharpoonright \textsf{dom}(X)$. This shows that $Y_a\upharpoonright \textsf{dom}(X)\subseteq Y_b\upharpoonright \textsf{dom}(X)$. The other inclusion is proved similarly.

%Thus, for any $a,b\in M$, we have that%since 
%\[M\models_{Y_a}y=z, ~M\models_{X'(a/x)\setminus Y_a}y\neq z, ~M\models_{Y_b}y=z,\text{ and }M\models_{X'(b/x)\setminus Y_b}y\neq z,\]
%%we must have that $Y_a\upharpoonright \textsf{dom}(X)=Y_b\upharpoonright \textsf{dom}(X)$.
%We claim that for any $a,b\in M$, $Y_a\upharpoonright \textsf{dom}(X)=Y_b\upharpoonright \textsf{dom}(X)$. %and $Z_a\upharpoonright \textsf{dom}(X)=Z_b\upharpoonright \textsf{dom}(X)$.
% Indeed, for any $s\in Y_a\subseteq X'(a/x)$, we have  $s(y)=s(z)$. For  $s'=s(b/x)\in X'(b/x)$, we must also have $s'(y)=s'(z)$, thus $s'\in Y_b$. Hence, $s\upharpoonright  \textsf{dom}(X)=s' \upharpoonright\textsf{dom}(X)\in Y_b\upharpoonright\textsf{dom}(X)$. This shows that $Y_a\upharpoonright \textsf{dom}(X)\subseteq Y_b\upharpoonright \textsf{dom}(X)$. The other inclusion is proved similarly.
 %
 
 Now, to show $M\models_X\forallo x\phi\vee\psi$, let $Y=Y_a\upharpoonright \textsf{dom}(X)$ and $Z=X\setminus Y$ for any $a\in M$. Since $M\models_{X'(a/x)\setminus Y_a}\psi$ and 
 \(Z=X\setminus (Y_a\upharpoonright \textsf{dom}(X))=(X'(a/x)\setminus Y_a)\upharpoonright \textsf{dom}(X),\)
 we obtain $M\models_Z\psi$ by locality. Meanwhile, for any $b\in M$,  we have $Y=Y_a\upharpoonright \textsf{dom}(X)=Y_b\upharpoonright \textsf{dom}(X)$. Since $M\models_{Y_b}\phi$ and $Y_b\upharpoonright (\textsf{dom}(X)\cup\{x\})=Y(b/x)$, we obtain $M\models_{Y(b/x)}\phi$.

$\mathsf{DepWI}$: Suppose $\Gamma,\dep(\mathsf{x})\models\dep(\mathsf{y})$ and suppose $M\models_X\Gamma$. For any $s,s'\in X$, if $s(\mathsf{x})=s'(\mathsf{x})$, then $M\models_{\{s,s'\}}\dep(\mathsf{x})$. Since $\Gamma$ is downward closed, we have that $M\models_{\{s,s'\}}\Gamma$. Thus, by assumption we obtain $M\models_{\{s,s'\}}\dep(\mathsf{y})$, hence $s(\mathsf{y})=s'(\mathsf{y})$.
\end{proof}

In the following proposition we list some derivable clauses that  will be used in our  proof of the completeness theorem. %See Appendix for the proof. 
In addition, the clauses in Proposition \ref{FOT_der_rules}\ref{FOTd_der_rules_forallo_existso_con}\ref{FOTd_der_rules_qf_conj_disj} are still derivable in the system of $\D\oplus\FOTd$ by the same derivations (where $\vdash\dep(c)$ is given by item \ref{constancy_dep_lm_1} below).

\begin{prop}\label{FOTd_der_rules}
\begin{enumerate}[label=(\roman*)]
\item\label{term_dep_atm_eli} $\dep(\mathsf{t},\mathsf{t}')\dashv\vdash \exists \mathsf{x}\mathsf{y}(\dep(\mathsf{x},\mathsf{y})\wedge \mathsf{x}=\mathsf{t}\wedge \mathsf{y}=\mathsf{t}')$, where the variables in $\mathsf{x}\mathsf{y}$ do not occur in the sequences $\mathsf{t}, \mathsf{t}'$ of terms.
\item\label{constancy_dep_lm_decompose} $\dep(\mathsf{x},\mathsf{yz})\dashv\vdash\dep(\mathsf{x},\mathsf{y})\wedge\dep(\mathsf{x},\mathsf{z})$.
\item\label{constancy_dep_lm_1} $\vdash\dep(\mathsf{x},c)$, and in particular $\vdash\dep(c)$ for any constant symbol $c$.
\item\label{constancy_dep_lm_2} $\dep(c\mathsf{x},\mathsf{y})\dashv\vdash\dep(\mathsf{x},\mathsf{y})$  for any constant symbol $c$. %\todof{not sure whether this is derivable in the system, but follows from Armstrong's axioms}
\item\label{forall1_triv_dep_ind} $\forallo vQ\mathsf{u}(\phi\wedge\dep(v\mathsf{x},\mathsf{y}))\dashv\vdash\forallo vQ\mathsf{u}(\phi\wedge\dep(\mathsf{x},\mathsf{y}))$ and $\forallo v Q\mathsf{u}(\phi\wedge\dep(\mathsf{x},v))\dashv\vdash\forallo vQ\mathsf{u}\phi $.
\item\label{existso_def_prop} $\existso x\phi\dashv\vdash \exists x(\dep(x)\wedge \phi)$.
\item\label{vvee_wi} $\dep(\mathsf{x})\vdash\alpha(\mathsf{x})\vvee\neg\alpha(\mathsf{x})$.
\item\label{vvee_def_prop} $\phi\vvee\psi\dashv\vdash\exists x\exists y\big(\dep(x)\wedge \dep(y)\wedge ((x=y\wedge \phi)\vee (x\neq y\wedge \psi))\big)$, where $x,y\notin \mathsf{Fv}(\{\phi,\psi\})$.% variables.
\item\label{forallo_ex_swap} $\exists x\forallo y\phi(x,y,\mathsf{z})\vdash\forallo y\exists x(\dep(\mathsf{z},x)\wedge \phi)$ and $\forall x\forallo y\phi\dashv\vdash\forallo y\forall x\phi$.
\end{enumerate}
\end{prop}
\begin{proof}
%(i) By the elimination rule for the usual universal quantifier $\forall$, we have $\forall x\phi\vdash\phi$ and thus $\forall x\phi,\dep(x)\vdash\phi$, which yield by \foralloi that $\forall x\phi\vdash\forallo x\phi$.

%We have
%\[\AxiomC{$\forall x\phi$} \RightLabel{$\forall$\textsf{E}}\UnaryInfC{$\phi$}\AxiomC{[$\dep(x)$]} \RightLabel{\foralloi}\BinaryInfC{$\forallo x\phi$}\DisplayProof
%\]

%(ii) 
%See Appendix.
%the following derivation:
%\[\AxiomC{$\forallo y\phi(x,y,\mathsf{z})$} \AxiomC{[$\dep(y)$]} \RightLabel{\foralloe}\BinaryInfC{$\phi(x,y,\mathsf{z})$} \RightLabel{$\forall$\textsf{I} ($w$ fresh)}\UnaryInfC{$\forall w\phi(x,y,\mathsf{z})$} \RightLabel{$\exists$\textsf{I}}\UnaryInfC{$\exists x\forall w\phi(x,y,\mathsf{z})$} \RightLabel{$\dep(\cdot)$\textsf{I}}\UnaryInfC{$\forall w\exists x(\dep(\mathsf{z},x)\wedge \phi(x,y,\mathsf{z}))$} \RightLabel{$\forall$\textsf{E}}\UnaryInfC{$\exists x(\dep(\mathsf{z},x)\wedge \phi(x,y,\mathsf{z}))$}\RightLabel{\foralloi}\UnaryInfC{$\forallo y\exists x(\dep(\mathsf{z},x)\wedge \phi)$}\DisplayProof
%\]
Item \ref{term_dep_atm_eli}: A routine derivation using rules for identity and $\exists$.

Item \ref{constancy_dep_lm_decompose}: For the left to right direction, we only give the detailed derivation of $\dep(\mathsf{x},\mathsf{yz})\vdash\dep(\mathsf{x},\mathsf{y})$. First, we derive that $\dep(\mathsf{x},\mathsf{yz}),\dep(\mathsf{x})\vdash\dep(\mathsf{yz})\vdash\dep(\mathsf{y})$ by applying $\mathsf{DepWE}$ and $\mathsf{conW}$. Thus, $\dep(\mathsf{x},\mathsf{yz})\vdash\dep(\mathsf{x},\mathsf{y})$ follows by applying $\mathsf{DepWI}$.
 
 Conversely, we have by $\mathsf{DepWE}$ that $\dep(\mathsf{x},\mathsf{y}),\dep(\mathsf{x})\vdash \dep(\mathsf{y})$ and $\dep(\mathsf{x},\mathsf{z}),\dep(\mathsf{x})\vdash \dep(\mathsf{z})$, which then imply, by $\mathsf{conW}$, that $\dep(\mathsf{x},\mathsf{y}),\dep(\mathsf{x},\mathsf{z})\dep(\mathsf{x})\vdash\dep(\mathsf{yz})$. Hence, we conclude by applying $\mathsf{DepWI}$ that $\dep(\mathsf{x},\mathsf{y}),\dep(\mathsf{x},\mathsf{z})\vdash\dep(\mathsf{x},\mathsf{yz})$.

Item \ref{constancy_dep_lm_1}: By $=$\textsf{I} we have $\vdash c=c\wedge \mathsf{x}=\mathsf{x}$, which implies  $\vdash\exists u\forall v (u=c\wedge \mathsf{x}=\mathsf{x})$. Now, by \textsf{DepI} we derive $\vdash\forall v\exists u(\dep(\mathsf{x},u)\wedge u=c\wedge \mathsf{x}=\mathsf{x})$, which yields $\vdash\dep(\mathsf{x},c)$.

%\[\AxiomC{}\RightLabel{axiom of identity}\UnaryInfC{$c=c\wedge \mathsf{z}=\mathsf{z}$}\RightLabel{introduction rules for $\forall$ and $\exists$}\UnaryInfC{$\exists x\forall y (x=c\wedge \mathsf{z}=\mathsf{z})$}\RightLabel{$\dep(\cdot)$\textsf{I}}\UnaryInfC{$\forall y\exists x(\dep(\mathsf{z},x)\wedge x=c\wedge \mathsf{z}=\mathsf{z})$}\RightLabel{substitution rule of identity $=$}\UnaryInfC{$\dep(\mathsf{z},c)$}\DisplayProof\]

Item \ref{constancy_dep_lm_2}:  For the direction $\dep(c\mathsf{x},\mathsf{y})\vdash\dep(\mathsf{x},\mathsf{y})$, by  \textsf{DepWE} we have 
\(\dep(c\mathsf{x},\mathsf{y}),\dep(c),\con(\mathsf{x})\linebreak\vdash\dep(\mathsf{y}).\)
Since $\vdash\dep(c)$ by item (\ref{constancy_dep_lm_1}), we conclude  that $\dep(c\mathsf{x},\mathsf{y})\vdash\dep(\mathsf{x},\mathsf{y})$ by \textsf{DepWI}. {The other direction is derived similarly by applying $\mathsf{DepWE}$, $\mathsf{conW}$ and $\mathsf{DepWI}$.}% and  item (\ref{constancy_dep_lm_1}).
%Item \ref{FOTd_der_rules_forallo_existso_con} follows immediately from \foralloe, \existsoi and $\vdash\dep(c)$ (item \ref{constancy_dep_lm_1}). Item \ref{FOTd_der_rules_qf_conj_disj} is proved easily by applying item \ref{FOTd_der_rules_forallo_existso_con}.
%

%\vspace{-4pt}

Item \ref{forall1_triv_dep_ind}: By Proposition \ref{FOT_der_rules}\ref{FOTd_der_rules_forallo_existso_con}, \foralloi it suffices to show $Q\mathsf{u}(\phi(c/v)\wedge\dep(c\mathsf{x},\mathsf{y}))\dashv\vdash Q\mathsf{u}\phi(c/v)\wedge\dep(\mathsf{x},\mathsf{y})$ and $Q\mathsf{u}(\phi(c/v)\wedge\dep(\mathsf{x},c))\dashv\vdash Q\mathsf{u}\phi(c/v)$. But these follow from items \ref{constancy_dep_lm_1} and \ref{constancy_dep_lm_2}. 

%\subsection*{Proof of \Cref{fotd_der_clauses}\vspace{-4pt}}
Item \ref{existso_def_prop}: The direction $\exists x(\dep(x)\wedge \phi)\vdash \existso x\phi$ follows easily from $\exists\textsf{E}$ and \existsoi. For the other direction, we first derive
\(\phi(c/x)\vdash\dep(c)\wedge\phi(c/x)\vdash\exists x(\dep(x)\wedge \phi)\) by \consi and $\exists\textsf{I}$, where $c$ is a fresh constant symbol. Then, since $\existso x\phi\vdash\existso x\phi$, we conclude $\existso x\phi\vdash\exists x(\dep(x)\wedge \phi)$ by applying \existsoe.

% by Proposition \ref{FOT_der_rules}\ref{FOTd_der_rules_forallo_existso_con}, we have $\existso x\phi\vdash\phi(c/x)$. Moreover, by rules of identity, $\vdash\dep(c)$ and $\exists\textsf{I}$, we have 
%\(\phi(c/x)\vdash\exists x(x=c\wedge \dep(c)\wedge\phi(c/x))\vdash\exists x(\dep(x)\wedge \phi).\)
%Putting these together we obtain $\existso x\phi\vdash\exists x(\dep(x)\wedge \phi)$.

%\vspace{0.5\baselineskip}

Item \ref{vvee_wi}: By $\vvee\mathsf{I}$, we have $\alpha(\mathsf{x})\vdash\alpha(\mathsf{x})\vvee\neg\alpha(\mathsf{x})$ and $\neg\alpha(\mathsf{x})\vdash\alpha(\mathsf{x})\vvee\neg\alpha(\mathsf{x})$. Thus, we obtain $\con(\mathsf{x})\vdash\alpha(\mathsf{x})\vvee\neg\alpha(\mathsf{x})$ by applying $\mathsf{conTrs}$.

%\noindent 
Item \ref{vvee_def_prop}: We first prove the right to left direction. By $\exists\textsf{E}$ it suffices to prove that $\dep(x), \dep(y),(x=y\wedge \phi)\vee (x\neq y\wedge \psi)\vdash\phi\vvee\psi$. We first derive $\dep(x),\dep(y)\vdash x=y\vvee x\neq y$ by {$\mathsf{conExt}$} and  item \ref{vvee_wi}. Next, by applying $\vee\textsf{Mon}$, $\bot\vee\textsf{E}$ and \vveei, we derive %\vspace{-4pt}
\[x=y,(x=y\wedge \phi)\vee (x\neq y\wedge \psi)\vdash \phi\vee(x=y\wedge x\neq y)\vdash\phi\vee\bot\vdash\phi\vdash\phi\vvee\psi\]
%\[\text{and }x\neq y,(x=y\wedge \phi)\vee (x\neq y\wedge \psi)\vdash (x=y\wedge x\neq y)\vee\psi\vdash\bot\vee\psi\vdash\psi\vdash\phi\vvee\psi.\]
and similarly $x\neq y,(x=y\wedge \phi)\vee (x\neq y\wedge \psi)\vdash\phi\vvee\psi$.
Hence,  we conclude by  \vveee that $x=y\vvee x\neq y,(x=y\wedge \phi)\vee (x\neq y\wedge \psi)\vdash \phi\vvee\psi$, from which the desired clause follows.

For the left to right direction, by {$\vvee\mathsf{E}$}, it suffices to prove that the right formula is derivable from both $\phi$ and $\psi$. 
We now first derive the right hand side from $\phi$. By the rules of identity,  $\vdash\exists x\forall z(x=x)$ for some fresh variables $x,z$. Thus, we conclude by applying $\mathsf{DepI}$ that $\vdash\forall z\exists x(\dep(x)\wedge x=x)$, which reduces to $\vdash\exists x\dep(x)$. Next, we derive by rules of identity that 
$\vdash \exists x(\dep(x)\wedge  \exists y(x=y))$ and thus $\vdash \exists x\exists y(\dep(x)\wedge\dep(y)\wedge x=y)$.
Lastly, we  conclude by  the introduction rule $\vee\mathsf{I}$ that %\vspace{-8pt}
%\begin{equation*}\label{vvee_existso_def_prop_eq1}
%\begin{split}
\[\phi\vdash  \exists x\exists y(\dep(x)\!\wedge\!\dep(y)\wedge x=y\wedge\phi)\vdash \exists x\exists y(\dep(x)\!\wedge\!\dep(y)\wedge ((x=y\wedge\phi)\vee (x\neq y\wedge \psi))).\]
%\end{split}
%\end{equation*}

Similarly, to derive the right hand side from $\psi$, first note that by  \textsf{Dom} we have $\vdash\exists x\exists y(x\neq y)$ for some fresh variables $x,y$%\footnote{Since we assume that all  models have at least two elements, we can assume $\vdash\exists x\exists y(x\neq y)$ as an axiom.\todof{need to decide how to treat this...}}
, which then yields $\vdash\exists x\forall z\exists y\forall v(x\neq y)$ by rules for the quantifiers. Then, by a similar argument to the above, we derive by applying $\mathsf{DepI}$  that $\vdash\forall z\exists x(\dep(x)\wedge\exists y\forall v(x\neq y))$, which reduces to $\vdash \exists x\exists y\forall v(\dep(x)\wedge x\neq y)$.  We now apply $\mathsf{DepI}$ again to obtain $\vdash \exists x\forall v\exists y(\dep(y)\wedge \dep(x)\wedge x\neq y)$, which simplifies to $\vdash \exists x\exists y(\dep(x)\wedge \dep(y)\wedge x\neq y)$. Finally, again by $\vee\mathsf{I}$, we derive
\[\psi\vdash  \exists x\exists y(\dep(x)\!\wedge\!\dep(y)\wedge x\neq y\wedge\psi)\vdash \exists x\exists y(\dep(x)\!\wedge\!\dep(y)\wedge ((x=y\wedge\phi)\vee (x\neq y\wedge \psi))).\]
%as required.

%\vspace{0.5\baselineskip}

%\noindent 
Item \ref{forallo_ex_swap}: $\forall x\forallo y\phi\dashv\vdash\forallo y\forall x\phi$ follows easily from Proposition \ref{FOT_der_rules}\ref{FOTd_der_rules_forallo_existso_con}. For the other clause, by $\exists\textsf{E}$, it suffices to prove  $\forallo y\phi(x,y,\mathsf{z})\vdash\forallo y\exists x(\dep(\mathsf{z},x)\wedge \phi)$. 
We derive by Proposition \ref{FOT_der_rules}\ref{FOTd_der_rules_forallo_existso_con} and the rules for $\forall$,$\exists$ that $\forallo y\phi(x,y,\mathsf{z})\vdash\phi(x,c,\mathsf{z})\vdash\exists x\forall w\phi(x,c,\mathsf{z})$
%\begin{equation}\label{forallo_ex_swap_eq1}
%\forallo y\phi(x,y,\mathsf{z}),\dep(y)\vdash\phi(x,y,\mathsf{z})\vdash\forall w\phi(x,y,\mathsf{z})\vdash\exists x\forall w\phi(x,y,\mathsf{z}),
%\end{equation}
for some fresh constant symbol $c$ and  variable $w$. Moreover, by $\mathsf{DepI}$ we derive that
\[\exists x\forall w\phi(x,c,\mathsf{z})\vdash\forall w\exists x(\dep(\mathsf{z},x)\wedge \phi(x,c,\mathsf{z}))\vdash\exists x(\dep(\mathsf{z},x)\wedge \phi(x,c,\mathsf{z})).\]
Putting these together  and by applying $\forallo\textsf{I}$ we conclude that  
\(\forallo y\phi(x,y,\mathsf{z})\vdash\exists x(\dep(\mathsf{z},x)\wedge \phi(x,c,\mathsf{z}))\vdash\forallo y\exists x(\dep(\mathsf{z},x)\wedge \phi).\)
\end{proof}

%\begin{lem}[Replacement Lemma]\label{replacement_lm}
%\todof{omit this, or not in theorem style}
%\end{proof}

Our proof of the completeness theorem for $\D\oplus\FOTd$ mainly follows the argument of that for the system of \D in \cite{Axiom_fo_d_KV}. In view of Proposition \ref{FOTd_der_rules}\ref{existso_def_prop}\ref{vvee_def_prop}, we will  treat  $\vvee$ and $\existso$ as defined logical constants. Moreover, by Proposition \ref{FOTd_der_rules}\ref{term_dep_atm_eli}\ref{constancy_dep_lm_decompose}, dependence atoms of the form $\dep(\mathsf{t},\mathsf{t}')$ will be interpreted as shorthands for the formula $\exists \mathsf{xy}(\bigwedge_{i}\dep(\mathsf{x},y_i)\wedge \mathsf{x}=\mathsf{t}\wedge \mathsf{y}=\mathsf{t}')$ in the original language of \D in \cite{Axiom_fo_d_KV}.  It thus remains to handle properly the weak universal quantifier $\forallo$ when going through the argument in \cite{Axiom_fo_d_KV}, which actually  gives the equivalence (\ref{eq_fotd_consequence}) for $\theta$ being a first-order sentence. In order to prove the equivalence (\ref{eq_fotd_consequence}) for $\theta$ being a \FOp-formula or pesudo-flat sentence, we will apply a trick developed in \cite{Yang_neg} that involves the rule $\textsf{RAA}_\ast$.

Now, recall from \cite{Van07dl} that every \D-formula $\phi(\mathsf{z})$ is semantically equivalent to, and provably implies a formula of the form
{\begin{equation}\label{nf_d}
\forall \mathsf{x}\exists \mathsf{y}\big(\bigwedge_{i\in I}\dep(\sigma_{\mathsf{xyz}}^i,y_i)\wedge\alpha(\mathsf{x},\mathsf{y},\mathsf{z})\big),
\end{equation}}%\todof{The normal form was not so correct. The current version should work}
where each $\sigma_{\mathsf{xyz}}^i$ is a subsequence of $\mathsf{xyz}$,  each $y_i$ is from $\mathsf{y}$, and $\alpha$ is  first-order.
%We now derive a similar normal form for every formula in $\D\oplus\FOTd$.
In the next theorem we derive a similar normal form for formulas in $\D\oplus\FOTd$. %See Appendix for the proof.
\begin{theorem}\label{FOTd_nf_thm}
Every $\D\oplus\FOTd$-formula $\phi(\mathsf{z})$ is semantically equivalent to, and provably implies a formula of the form
\begin{equation}\label{nf_dw}
{\forallo \mathsf{v}\forall \mathsf{x}\exists \mathsf{y}\big(\bigwedge_{i\in I}\dep(\sigma^i_{\mathsf{xyz}},y_i)\wedge\alpha(\mathsf{v},\mathsf{x},\mathsf{y},\mathsf{z})\big),}
\end{equation}
where each $\sigma_{\mathsf{xyz}}^i$ is a subsequence of $\mathsf{xyz}$,  each $y_i$ is from $\mathsf{y}$, and $\alpha$ is  first-order.
%Moreover, the above formula in normal form follows provably from $\psi$.
\end{theorem}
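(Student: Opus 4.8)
The plan is to argue by induction on the structure of $\phi$. First I would use Proposition~\ref{FOTd_der_rules}\ref{term_dep_atm_eli}\ref{constancy_dep_lm_decompose}\ref{existso_def_prop}\ref{vvee_def_prop} to rewrite $\phi$ into a provably (and semantically) equivalent formula built only from first-order atoms, dependence atoms of the shape $\dep(\mathsf{x},y)$, negations $\neg\alpha$ of first-order formulas, $\wedge$, $\vee$, $\exists$, $\forall$ and $\forallo$; thus the weak disjunction $\vvee$, the weak existential $\existso$, and dependence atoms with arbitrary term arguments are dispensed with at the outset. The two engines of the induction are then: (a) the normal form theorem for \D (holding semantically by \cite{Van07dl}, and as a provable implication in the deductive system of \cite{Axiom_fo_d_KV}), which puts any \D-formula into the form (\ref{nf_d}) and, crucially, is stable under $\wedge$, $\vee$, $\exists$ and $\forall$; and (b) the rules $\forallo$\textsf{Ext} and $\forallo\forall$\textsf{Trs} together with Proposition~\ref{FOT_der_rules}\ref{FOTd_der_rules_qf_conj_disj} and Proposition~\ref{FOTd_der_rules}\ref{forallo_ex_swap} (equivalently Lemma~\ref{wsqf_int} for the semantic side), which let the weak universal quantifiers be migrated past the remaining connectives towards the front.

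For the base cases, a first-order atom, a dependence atom $\dep(\mathsf{x},y)$, or a negated first-order formula $\neg\alpha$ is already a \D-formula, so the \D normal form theorem supplies (\ref{nf_dw}) with an empty block $\mathsf{v}$. If $\phi=\forallo x\,\psi$, apply the induction hypothesis to $\psi$ and absorb $x$ into the weak block. If $\phi=\forall x\,\psi$, apply the induction hypothesis and commute $\forall x$ inward past the weak block using $\forall x\forallo y\,\chi\dashv\vdash\forallo y\forall x\,\chi$ (Proposition~\ref{FOTd_der_rules}\ref{forallo_ex_swap}), after which $x$ joins the block $\mathsf{x}$ of (\ref{nf_dw}). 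If $\phi=\exists x\,\psi$, the induction hypothesis gives $\exists x\,\forallo\mathsf{v}\,\psi_0$ with $\psi_0$ a \D-formula in normal form; repeatedly applying $\exists x\forallo y\,\chi\vdash\forallo y\exists x(\dep(\mathsf{z},x)\wedge\chi)$ (Proposition~\ref{FOTd_der_rules}\ref{forallo_ex_swap}) moves $\exists x$ inside the weak block, leaving $\forallo\mathsf{v}\exists x(\dep(\mathsf{v}\mathsf{z},x)\wedge\psi_0)$, whose body is again a \D-formula; normalising it restores the shape (\ref{nf_dw}). This last step uses only one direction of the swap rule, which is exactly why the statement asks only for \emph{provably implies} and not provable equivalence. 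If $\phi=\psi\wedge\chi$, apply the induction hypothesis to both conjuncts, rename bound variables so the two normal forms are bound-variable disjoint, pull the two weak blocks to the front by iterating Proposition~\ref{FOT_der_rules}\ref{FOTd_der_rules_qf_conj_disj} to get $\forallo\mathsf{v}\forallo\mathsf{v}'(\psi_0\wedge\chi_0)$ with $\psi_0,\chi_0$ in \D normal form, and finish by normalising $\psi_0\wedge\chi_0$ as a \D-formula.

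The main obstacle is the strong disjunction $\phi=\psi\vee\chi$, where one cannot simply pull the weak universal quantifiers outward. Here I would first apply $\forallo$\textsf{Ext} repeatedly — interleaved with Proposition~\ref{FOT_der_rules}\ref{FOTd_der_rules_qf_conj_disj} to commute the remaining $\forallo$-quantifiers past the conjunctions with the fresh tests $y=z$, $y\neq z$ that $\forallo$\textsf{Ext} introduces — so as to bring $\psi\vee\chi$ (after the induction hypothesis, where all weak universals sit in the front blocks) into the shape $\exists\mathsf{u}\,\forallo\mathsf{v}\forallo\mathsf{v}'\,\eta$ with $\eta$ a \D-formula; then normalise $\eta$ to (\ref{nf_d}), and finally push the leading existentials $\exists\mathsf{u}$ inside the weak block via Proposition~\ref{FOTd_der_rules}\ref{forallo_ex_swap} (again only the provable-implication direction), turning them into further dependence atoms, and re-apply the \D normal form theorem. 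The care needed here is entirely bookkeeping: one must check that the iterated $\forallo$\textsf{Ext}-extraction terminates in the advertised shape, that the fresh witness variables it produces avoid the free variables $\mathsf{z}$, and that every step other than the two uses of the $\exists x\forallo y$-swap is a genuine provable equivalence while all steps preserve semantic equivalence — so that the resulting formula is semantically equivalent to $\phi$ and provably implied by it, as claimed.
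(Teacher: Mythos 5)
Your proposal is correct in substance and uses exactly the ingredients of the paper's own proof---elimination of $\vvee$, $\existso$ and term-valued dependence atoms via Proposition~\ref{FOTd_der_rules}, the \D normal form (\ref{nf_d}) of \cite{Axiom_fo_d_KV}, the rule $\forallo$\textsf{Ext} and Proposition~\ref{FOT_der_rules}\ref{FOTd_der_rules_qf_conj_disj} for the $\forallo$/$\vee$ and $\forallo$/$\wedge$ interactions, and the one-directional swap of Proposition~\ref{FOTd_der_rules}\ref{forallo_ex_swap} (Lemma~\ref{wsqf_int} on the semantic side)---but organizes them differently. The paper runs a pipeline: eliminate the weak connectives, bring the whole formula into prenex form with a mixed $\{\forall,\exists,\forallo\}$ prefix, normalize the quantifier-free matrix once as a \D-formula, swap quantifiers to move all $\forallo$'s to the front, and finally use Proposition~\ref{FOTd_der_rules}\ref{forall1_triv_dep_ind} to remove the weakly quantified variables that the swaps put into dependence atoms. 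You instead run a structural induction that re-establishes the full form (\ref{nf_dw}) at every connective and re-invokes the \D normal form theorem after each swap; this lets you dispense with the explicit clean-up via Proposition~\ref{FOTd_der_rules}\ref{forall1_triv_dep_ind} (the renormalized matrix automatically has its dependence atoms over the fresh $\forall$-block), at the cost of pushing one-directional steps through contexts at every stage, i.e.\ congruence of $\vdash$ and $\models$ under $\wedge,\vee,\exists,\forall,\forallo$---a point both arguments use tacitly, and which you should at least flag (for $\forallo$ it goes through the constant trick of Proposition~\ref{FOT_der_rules}\ref{FOTd_der_rules_forallo_existso_con} and \foralloi). Also note that the interleaving with Proposition~\ref{FOT_der_rules}\ref{FOTd_der_rules_qf_conj_disj} that you mention for the disjunction case is needed in the $\exists x\psi$ case as well, since each swap leaves the remaining $\forallo$'s under a conjunction with the new dependence atom.

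One bookkeeping claim is off, though it does not break the argument: iterated $\forallo$\textsf{Ext} does \emph{not} yield the shape $\exists\mathsf{u}\,\forallo\mathsf{v}\forallo\mathsf{v}'\eta$. Each application of the rule places its fresh pair $\exists yz$ \emph{inside} the scope of the weak quantifiers extracted earlier, so what you actually reach is an alternating prefix $\exists y_1z_1\forallo v_1\exists y_2z_2\forallo v_2\cdots$, and the inner existentials cannot be hoisted to the front, since $\forallo y\exists x\,\chi\not\models\exists x\forallo y\,\chi$. Your next step---pushing existentials rightward past weak universals with Proposition~\ref{FOTd_der_rules}\ref{forallo_ex_swap}---handles the alternating prefix just as well and still delivers $\forallo\mathsf{v}\mathsf{v}'$ followed by a \D-formula to be renormalized, so only the advertised intermediate shape needs correcting, not the strategy.
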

\begin{proof}
%See Appendix.
We adapt the argument for the corresponding proof in \cite[Proposition 9]{Axiom_fo_d_KV}. We give the semantic and syntactic proof at the same time, and the semantic equivalence clearly follows from the syntactic equivalence (by soundness theorem) whenever the latter is available in the following steps of the proof. First, rewrite every occurrence of $\dep(\mathsf{t},\mathsf{t}')$, $\theta\vvee\chi$ and $\existso x\eta$ in $\phi(\mathsf{z})$ using the equivalent formulas with dependence atoms and logical constants from \D given by Proposition \ref{FOTd_der_rules}\ref{term_dep_atm_eli}\ref{constancy_dep_lm_decompose}\ref{existso_def_prop}\ref{vvee_def_prop}, and denote the resulting provably equivalent formula %(which is $\vvee$ and $\existso$-free and has dependence atoms of the form $\dep(\mathsf{x},y)$ only) 
by $\phi'$. Next, turn the formula $\phi'$ into an equivalent formula $Q_1v_1\dots Q_nv_n\theta$ in prenex normal form, where each $Q_i\in\{\forall,\exists ,\forallo \}$ and $\theta(\mathsf{v},\mathsf{z})$ is quantifier-free. This step is done, as in \cite[Proposition 9]{Axiom_fo_d_KV}, by induction on the complexity of the formula $\phi'$, where the base case and induction steps for all the connectives and quantifiers from \D are proved exactly as in  \cite[Proposition 9]{Axiom_fo_d_KV}, and the induction steps for $\forallo$ follow from the provable equivalences given by Proposition \ref{FOT_der_rules}\ref{FOTd_der_rules_qf_conj_disj} and the rule $\forallo$\textsf{Ext}.

%\begin{equation}\label{FOTd_nf_thm_eq1}
%\forallo x(\phi\wedge\psi)\equiv\forallo x\phi\wedge\psi~~\text{and}~~\existso x \phi\vee\psi\equiv \exists yz\forallo x((y=z\wedge \phi)\vee(y\neq z\wedge\psi))
%\end{equation}
%whenever $x\notin \mathsf{Fv}(\psi)$. 

Now, observe that $\theta(\mathsf{v},\mathsf{z})$ is a formula of \D. We then proceed in the same way as in  \cite[Proposition 9]{Axiom_fo_d_KV} to turn $\theta$ into a formula of the form $\forall \mathsf{u}\exists\mathsf{w}(\bigwedge_{i\in I} \dep(\sigma^i_{\mathsf{uwvz}},w_i)\wedge\beta)=\forall \mathsf{u}\exists\mathsf{w}\theta'$, where the variables in each sequence $\mathsf{u}_i\mathsf{w}_i\mathsf{v}_i\mathsf{z}_i$ are from $\mathsf{uwvz}$, each $w_i$ is from $\mathsf{w}$, and $\beta$ is first-order. The formula $\theta$ is semantically equivalent to $\forall \mathsf{u}\exists\mathsf{w}\theta'$, and in the deduction system of \D we can prove (as is done in  \cite[Proposition 9]{Axiom_fo_d_KV}) that $\theta\vdash\forall \mathsf{u}\exists\mathsf{w}\theta'$. Thus, altogether we now have $\phi\equiv Q\mathsf{v}\forall \mathsf{u}\exists\mathsf{w}\theta'$ and $\phi\vdash Q\mathsf{v}\forall \mathsf{u}\exists\mathsf{w}\theta'$.

To turn the formula $Q\mathsf{v}\forall \mathsf{u}\exists\mathsf{w}\theta'$ finally into the required normal form (\ref{nf_dw}), we first swap the order of the existential and universal quantifiers and obtain an equivalent formula of the form $\forallo \mathsf{v}\forall \mathsf{x}\exists \mathsf{y}\big(\bigwedge_{i\in I}\dep(\sigma^i_{\mathsf{xyzv}},y_i)\wedge\alpha\big)$, where each $\sigma^i_{\mathsf{xyzv}}$ is a subsequence of $\mathsf{xyzv}$, each $y_i$ is from $\mathsf{y}$, and $\alpha$ is first-order. This is done by exhaustedly applying the rule $\mathsf{DepI}$ and Proposition \ref{FOTd_der_rules}\ref{forallo_ex_swap} on the syntactic side, and on the semantic side  the equivalences $\exists x\forall y\psi(x,y,\mathsf{z})\equiv\forall y\exists x(\dep(\mathsf{z},x)\wedge \psi)$ (see \cite{Axiom_fo_d_KV}), 
\(\exists x\forallo y\psi(x,y,\mathsf{z})\equiv\forallo y\exists x(\dep(\mathsf{z},x)\wedge \psi)\text{ and }\forall x\forallo y\psi\equiv\forallo y\forall x\psi\) (given by Lemma \ref{wsqf_int}). To conclude the proof, we apply  Proposition \ref{FOTd_der_rules}\ref{forall1_triv_dep_ind}  to remove the variables $\mathsf{v}_i$ quantified by $\forallo$ in the dependence atoms $\dep(\sigma^i_{\mathsf{xyzv}},y_i)$. %in the first conjunct of the quantifier-free formula.
\end{proof}

Recall also from \cite{Axiom_fo_d_KV} that for every \D-sentence $\psi$ in normal form (\ref{nf_d}), there is a first-order sentence $\Psi$ of infinite length (called the {\em game expression} of $\psi$) such that for any countable model $M$, 
\begin{equation}\label{game_expr_equivalence}
M\models\psi~\text{ iff }~M\models\Psi.
\end{equation}
Moreover, the infinitary first-order sentence $\Psi$ can be approximated by some first-order sentences $\Psi_n$ $(n \in\mathbb{N})$ of finite length in the sense that for any recursively saturated (or finite) model $M$, 
\begin{equation}\label{game_approx_equivalence}
M\models\Psi~\text{ iff }~M\models\Psi_n\text{ for all }n\in\mathbb{N}.
\end{equation}
In the system of \D one derives $\psi\vdash_\D\Psi_n$ for any $n\in\mathbb{N}$. {We include the technical definitions of $\Psi$ and $\Psi_n$ in the Appendix; the reader is also referred to \cite[Section 5]{Axiom_fo_d_KV} for further details.} As a simple illustration, the game expression $\Psi_0$ of the \D-sentence $\psi_0=\forall\mathsf{x}\exists \mathsf{y}(\dep(x_0,y_0)\wedge\alpha(\mathsf{x},\mathsf{y}))$ with $\mathsf{x}=\langle x_0,\dots,x_k\rangle$ and $\mathsf{y}=\langle y_0,\dots,y_m\rangle$  is defined as the infinitary first-order sentence:\allowdisplaybreaks
\begin{align*}
\Psi_0=\forall &\mathsf{x}_0\exists \mathsf{y}_0\big(\alpha(\mathsf{x}_0,\mathsf{y}_0)\wedge\tag{round 0}\\
&\forall \mathsf{x}_1\exists \mathsf{y}_1(\alpha(\mathsf{x}_1,\mathsf{y}_1)\wedge( x_{00}=x_{10}\to y_{00}=y_{10})\wedge\tag{round 1}\\
&\quad\dots\quad\dots\\
&\quad\forall \mathsf{x}_n\exists \mathsf{y}_n(\alpha(\mathsf{x}_n,\mathsf{y}_n)\wedge \bigwedge_{i\leq n}(x_{i0}=x_{n0}\to y_{i0}=y_{n0})\wedge\tag{round n}\\
&\quad\quad\quad\dots\quad\quad\dots \quad)\dots\quad\dots)\big)
\end{align*}
Intuitively,  the \D-sentence $\psi_0$ is satisfied on the team $\{\emptyset\}$ over a countable model $M$, iff, for some suitable sequence $\mathsf{F}$ of functions, in the team $X_{\mathsf{\forall x\exists y}}=\{\emptyset\}(M/\mathsf{x})(\mathsf{F}/\mathsf{y})=\{s_0,s_1,\dots,s_n,\dots\}$ generated by the quantifier block $\forall\mathsf{x}\exists\mathsf{y}$, the dependence atom $\dep(x_0,y_0)$ and the first-order formula $\alpha(\mathsf{x},\mathsf{y})$ are satisfied. One may view each assignment $s_i$ as the history of one round in the infinite evaluation game of the first-order formula $\Psi_0$ on the model $M$, played between two players $\forall$ and $\exists$. Now the first $n$ ``layers'' of the first-order formula $\Psi_0$ (which correspond to the first $n$ rounds in the infinite game) describes exactly the team semantics of the formula $\psi_0$ on the team $\{s_0,\dots,s_n\}$, namely, each $s_i$ satisfies $\alpha(\mathsf{x},\mathsf{y})$ or each $\alpha(\mathsf{x}_i,\mathsf{y}_i)$ is true, and any two assignments $s_i,s_j$ agreeing on $x_0$ also agree on $y_0$ or $x_{i0}=x_{j0}\to y_{i0}=y_{j0}$ is true.
{The $n$-approximation $(\Psi_0)_n$ of the infinitary formula $\Psi_0$ is its ``finite slice'' characterizing  ``the first $n$ rounds of the infinite evaluation game". To be more precise, }\allowdisplaybreaks
\begin{align*}
(\Psi_0)_n=\forall &\mathsf{x}_0\exists \mathsf{y}_0\big(\alpha(\mathsf{x}_0,\mathsf{y}_0)\wedge\\
&\forall \mathsf{x}_1\exists \mathsf{y}_1(\alpha(\mathsf{x}_1,\mathsf{y}_1)\wedge( x_{00}=x_{10}\to y_{00}=y_{10})\wedge\\
&\quad\dots\quad\dots\\
&\quad\forall \mathsf{x}_n\exists \mathsf{y}_n(\alpha(\mathsf{x}_n,\mathsf{y}_n)\wedge \bigwedge_{i\leq n}(x_{i0}=x_{n0}\to y_{i0}=y_{n0})))\big).
\end{align*}

{Now, consider a $\D\oplus\FOTd$-sentence $\phi=\forallo\mathsf{v}\psi(\mathsf{v})$ of the form (\ref{nf_dw})  in some vocabulary $\mathcal{L}$, where $\psi(\mathsf{v})$ is a \D-formula. Let $\mathsf{c}$ be a sequence of fresh constant symbols of the same length as $\mathsf{v}$. Observe that $\psi(\mathsf{c}/\mathsf{v})$ is a $\D$-sentence  in normal form (\ref{nf_d}) in the vocabulary $\mathcal{L}(\mathsf{c})$. Write $\Psi(\mathsf{c})$ and $\Psi_n(\mathsf{c})$ for its game expression and $n$-approximation, respectively. 
We then define the game expression $\Phi^\ast$ of $\phi$ as $\forall\mathsf{v}\Psi(\mathsf{v}/\mathsf{c})$, and the $n$-approximation $\Phi^\ast_n$  as $\forall \mathsf{v}\Psi_n(\mathsf{v}/\mathsf{c})$ for each $n\in\mathbb{N}$. We now show that the game expression and its finite approximations for $\D\oplus\FOTd$ satisfy the same properties as the original ones in $\D$, in the sense of the following lemma.}

%it is not hard to show (by the same argument as in  \cite[Proposition 12]{Axiom_fo_d_KV}) that the game expression $\Phi^\ast$ of $\phi$ can be defined as $\forall\mathsf{v}\Psi$, and the $n$-approximation $\Phi^\ast_n$ can be defined as $\forall \mathsf{v}\Psi_n$ for each $n\in\mathbb{N}$.  That is, the following lemma holds.

\begin{lem}\label{rec_sat_lm}
Let $\phi$ be a $\D\oplus\FOTd$-sentence in the normal form (\ref{nf_dw}).
\begin{enumerate}[label=(\roman*)]
\item For any countable model $M$, we have that $M\models\phi$ iff $M\models\Phi^\ast$.
\item For any recursively saturated (or finite) model $M$, we have that $M\models\Phi^\ast$ iff $M\models\Phi^\ast_n$ for all $n\in\mathbb{N}$.
\end{enumerate}
\end{lem}
\begin{proof}
{Let $\phi=\forallo\mathsf{v}\psi(\mathsf{v})$ with $\psi(\mathsf{v})$ a \D-formula in the normal form (\ref{nf_d}). Let $\mathcal{L}$ be the vocabulary of $\phi$ and $\mathsf{c}$ a sequence of fresh variables of the same length as $\mathsf{v}$.  To prove item (i), for any countable $\mathcal{L}$-model $M$, we have that
\begin{align*}
M\models \forallo\mathsf{v}\psi(\mathsf{v})&\iff \text{for any }\mathsf{a}\in M,~M\models_{\{\emptyset\}(\mathsf{a}/\mathsf{v})}\psi(\mathsf{v})\\
&\iff\text{for any }\mathsf{a}\in M,~(M,\mathsf{a})\models\psi(\mathsf{c}/\mathsf{v})\text{ with }\mathsf{c}^{(M,\mathsf{a})}=\mathsf{a}\tag{by an easy inductive proof}\\
&\iff\text{for any }\mathsf{a}\in M,~(M,\mathsf{a})\models\Psi(\mathsf{c})\text{ with }\mathsf{c}^{(M,\mathsf{a})}=\mathsf{a}\tag{by (\ref{game_expr_equivalence})}\\
&\iff M\models\forall \mathsf{v}\Psi(\mathsf{v}/\mathsf{c}).
\end{align*}}

{For item (ii), for any recursively saturated (or finite) $\mathcal{L}$-model $M$, we have that
\begin{align*}
M\models\forall \mathsf{v}\Psi(\mathsf{v}/\mathsf{c})&\iff\text{for any }\mathsf{a}\in M,~(M,\mathsf{a})\models\Psi(\mathsf{c})\text{ with }\mathsf{c}^{(M,\mathsf{a})}=\mathsf{a}\\
&\iff \text{for any }\mathsf{a}\in M,~\text{for any }n\in \mathbb{N},~(M,\mathsf{a})\models\Psi_n(\mathsf{c})\text{ with }\mathsf{c}^{(M,\mathsf{a})}=\mathsf{a}\tag{by (\ref{game_approx_equivalence})}\\
&\iff \text{for any }n\in \mathbb{N},~M\models\forall \mathsf{v}\Psi_n(\mathsf{v}/\mathsf{c}).
\end{align*}
}
\end{proof}

%\textcolor{blue}{The lemma can be proved easily by essentially the same argument as in \cite[Proposition 12]{Axiom_fo_d_KV}). We leave the technical details to the reader, and provide here only a simple illustration.} Consider the formula $\phi_0=\forallo \mathsf{v}\psi_0$, where $\psi_0$ is as in the example in the preceding paragraphs. If this formula is satisfied on the team $\{\emptyset\}$ over a countable model $M$, the quantifier-free part of $\phi_0$ is satisfied by a team $X_{\forallo \mathsf{v\forall x\exists y}}=\{s_0',s_1',\dots,s_n',\dots\}$ generated by the quantifier block $\forallo\mathsf{v}\forall\mathsf{x}\exists\mathsf{y}$. By the semantics, each $s_n'$ must assign constant values $\mathsf{a}$ to the variables in $\mathsf{v}$. This fact can be simulated by letting the $\forall$ player always choose the same values $\mathsf{a}$ for the variables $\mathsf{v}_n$ at each round in the infinite evaluation game for the game expression $\Phi^\ast_0$, or by letting the $\forall$ player choose the values for $\mathsf{v}_n$ once at round 0 and for all. This is achieved by setting $\Phi_0^\ast=\forall\mathsf{v}\Psi_0$.

 Next, as in \cite{Axiom_fo_d_KV}, we show that every $n$-approximation $\Phi^\ast_n$ can be derived from  $\phi$.

%\begin{defn}[Game expression]
%Let $\psi=\forallo\mathsf{v}\phi$ be a sentence of the form (\ref{nf_dw}). Let $\Phi$ be the game expression of the \D-sentence $\phi$, and $\Phi_n$  the finite approximation of $\Phi$ for each natural number $n$, as given in \cite{Axiom_fo_d_KV}. The game expression $\Psi^\ast$ of $\psi$ is defined as $\forall\mathsf{v}\Phi$, and its $n$-approximation $\Psi^\ast_n$ is defined as $\forall \mathsf{x}\Phi_n$ for each $n$. 
%% Define the game expression of $\forallo\mathsf{v}\phi$ $\Phi$
%%\begin{align*}
%%\Phi:=&\forall\mathsf{v}\forall\mathsf{x}_0\exists \mathsf{y}_0\big(\alpha(\mathsf{v},\mathsf{x}_0,\mathsf{y}_0)\wedge\\
%%&\forall\mathsf{x}_1\exists \mathsf{y}_1\big(\alpha(\mathsf{v},\mathsf{x}_1,\mathsf{y}_1)\wedge \bigwedge_{i\in I}(\mathsf{x}_{0,i}=\mathsf{x}_{1,i}\to y_{0,i}=y_{1,i})\wedge\\
%%&\dots\dots\big)
%%\end{align*}
%%For each $n$, define
%%\begin{align*}
%%\Phi_n:=&\forall\mathsf{v}\forall\mathsf{x}_0\exists \mathsf{y}_0\big(\alpha(\mathsf{v},\mathsf{x}_0,\mathsf{y}_0)\wedge\\
%%&\forall\mathsf{x}_1\exists \mathsf{y}_1\big(\alpha(\mathsf{v},\mathsf{x}_1,\mathsf{y}_1)\wedge \bigwedge_{i\in I}(\mathsf{x}_{0,i}=\mathsf{x}_{1,i}\to y_{0,i}=y_{1,i})\wedge\\
%%&\dots\dots\\
%%&\forall\mathsf{x}_n\exists \mathsf{y}_n\big(\alpha(\mathsf{v},\mathsf{x}_n,\mathsf{y}_n)\wedge \bigwedge_{i\in I, 0\leq j<k\leq n}(\mathsf{x}_{j,i}=\mathsf{x}_{k,i}\to y_{j,i}=y_{k,i})\big)
%%\end{align*}
%\end{defn}
%
%\todof{Need to say some more about game expression}

\begin{theorem}\label{approx_provable}
For any $\D\oplus\FOTd$-sentence $\phi$  and any  $n\in\mathbb{N}$, $\phi\vdash\Phi_n^\ast$.
\end{theorem}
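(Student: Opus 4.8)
The plan is to reduce the statement to the already-known analogue for \D from \cite{Axiom_fo_d_KV} by temporarily replacing the weak universal prefix with fresh constants, and then to put that prefix back. First I would apply Theorem~\ref{FOTd_nf_thm}: since $\phi$ is a sentence, $\phi\vdash\forallo\mathsf{v}\,\psi$, where $\psi(\mathsf{v})=\forall\mathsf{x}\exists\mathsf{y}\big(\bigwedge_{i\in I}\dep(\mathsf{x}_i,y_i)\wedge\alpha(\mathsf{v},\mathsf{x},\mathsf{y})\big)$ is a \D-formula in the normal form (\ref{nf_d}) --- note in particular that, thanks to the last step of that normal-form argument (Proposition~\ref{FOTd_der_rules}\ref{forall1_triv_dep_ind}), $\mathsf{v}$ does not occur in the dependence atoms --- and accordingly $\Phi^\ast_n=\forall\mathsf{v}\,\Psi_n$, where $\Psi$ is the game expression of $\psi$. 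By transitivity of $\vdash$ it then suffices to derive $\Phi^\ast_n$ from $\forallo\mathsf{v}\,\psi$.

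Next, fix a sequence $\mathsf{c}$ of fresh constant symbols not occurring in $\phi$. Eliminating the weak universal quantifiers by $|\mathsf{v}|$-fold application of Proposition~\ref{FOT_der_rules}\ref{FOTd_der_rules_forallo_existso_con} (still derivable in the present system, by the remark preceding Proposition~\ref{FOTd_der_rules}) gives $\forallo\mathsf{v}\,\psi\vdash\psi(\mathsf{c}/\mathsf{v})$. Now $\psi(\mathsf{c}/\mathsf{v})$ is a \D-\emph{sentence} in normal form (\ref{nf_d}) over $\LL(\mathsf{c})$, so the fact recalled above --- that a \D-sentence in normal form (\ref{nf_d}) derives every finite approximation of its game expression --- yields $\psi(\mathsf{c}/\mathsf{v})\vdash_\D\Psi_n(\mathsf{c}/\mathsf{v})$; here one checks that the $n$-th approximation of the game expression of $\psi(\mathsf{c}/\mathsf{v})$ is precisely $\Psi_n(\mathsf{c}/\mathsf{v})$, which holds because the game-expression construction only rewrites the $\forall\mathsf{x}\exists\mathsf{y}$ block and copies $\alpha$ verbatim, hence commutes with the substitution $\mathsf{c}/\mathsf{v}$. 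Since every rule of the system of \cite{Axiom_fo_d_KV} is a rule of $\D\oplus\FOTd$, this is also a derivation in $\D\oplus\FOTd$, so $\psi(\mathsf{c}/\mathsf{v})\vdash\Psi_n(\mathsf{c}/\mathsf{v})$.

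It remains to re-introduce the universal quantifiers on $\mathsf{v}=\langle v_1,\dots,v_k\rangle$. For each $j$ the formula $\forall v_{j+1}\cdots\forall v_k\Psi_n$ is first-order, so, working from $v_k$ inwards towards $v_1$, one alternates \foralloi~(b) --- applicable because the constants in $\mathsf{c}$ are fresh --- to pass from $\gamma(c_j/v_j)$ to $\forallo v_j\,\gamma$, with $\forallo\forall$\textsf{Trs} --- applicable because $\gamma$ is first-order --- to pass from $\forallo v_j\,\gamma$ to $\forall v_j\,\gamma$. After $k$ such rounds one obtains $\Psi_n(\mathsf{c}/\mathsf{v})\vdash\forall\mathsf{v}\,\Psi_n=\Phi^\ast_n$, and chaining $\phi\vdash\forallo\mathsf{v}\,\psi\vdash\psi(\mathsf{c}/\mathsf{v})\vdash\Psi_n(\mathsf{c}/\mathsf{v})\vdash\Phi^\ast_n$ finishes the argument. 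The only genuinely new ingredient is the handling of $\forallo$, which is routine once one notes that each approximation $\Psi_n$ is first-order and hence amenable to $\forallo\forall$\textsf{Trs}; the part requiring care is rather to confirm that the entire game-expression machinery of \cite{Axiom_fo_d_KV} --- the definition of the approximations and the derivations $\psi(\mathsf{c}/\mathsf{v})\vdash_\D\Psi_n(\mathsf{c}/\mathsf{v})$ --- is insensitive to enlarging the signature by the fresh constants $\mathsf{c}$ and commutes with the substitution $\mathsf{c}/\mathsf{v}$.
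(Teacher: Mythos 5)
Your proposal is correct and follows essentially the same route as the paper's proof: reduce to the normal form $\forallo\mathsf{v}\,\psi$ of Theorem~\ref{FOTd_nf_thm}, instantiate $\mathsf{v}$ with fresh constants via Proposition~\ref{FOT_der_rules}\ref{FOTd_der_rules_forallo_existso_con}, invoke the result of \cite{Axiom_fo_d_KV} for the \D-sentence $\psi(\mathsf{c}/\mathsf{v})$, and then restore the quantifiers with \foralloi\ and $\forallo\forall$\textsf{Trs}. Your variable-by-variable alternation of \foralloi~(b) and $\forallo\forall$\textsf{Trs}, and your remark that the game-expression construction commutes with the substitution $\mathsf{c}/\mathsf{v}$, only spell out details the paper leaves implicit.
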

\begin{proof}
By Theorem \ref{FOTd_nf_thm}, we may, without loss of generality, assume that the $\mathcal{L}$-sentence $\phi=\forallo\mathsf{v}\psi(\mathsf{v})$ is of the form (\ref{nf_dw}) with $\psi(\mathsf{v})$ a \D-sentence. Let $\mathsf{c}$ be a sequence of fresh constant symbols of the same length as the sequence $\mathsf{v}$. %Observe that the $\mathcal{L}(\mathsf{c})$-sentence $\psi(\mathsf{c}/\mathsf{v})$ is in the normal form (\ref{nf_d}) for \D-formulas. 
{%Let $\Psi(\mathsf{c})$ and $\Psi_n(\mathsf{c})$ be respectively the game expression and $n$-approximation  of the $\mathcal{L}(\mathsf{c})$-sentence $\psi(\mathsf{c}/\mathsf{v})$ of \D in the normal form (\ref{nf_d}). 
Since the $\mathcal{L}(\mathsf{c})$-sentence $\psi(\mathsf{c}/\mathsf{v})$ is in the normal form (\ref{nf_d}) for \D-formulas, by the result in \cite{Axiom_fo_d_KV} we have $\psi(\mathsf{c}/\mathsf{v})\vdash\Psi_n(\mathsf{c})$. Moreover, by Proposition \ref{FOT_der_rules}\ref{FOTd_der_rules_forallo_existso_con}, we obtain that $\forallo\mathsf{v}\psi(\mathsf{v})\vdash\psi(\mathsf{c}/\mathsf{v})$. It thus follows that $\forallo\mathsf{v}\psi(\mathsf{v})\vdash\Psi_n(\mathsf{c})$. Since the constant symbols in $\mathsf{c}$ are fresh, by \foralloi we further derive $\forallo\mathsf{v}\psi(\mathsf{v})\vdash\forallo\mathsf{v}\Psi_n(\mathsf{v}/\mathsf{c})$. Next, by $\forallo\forall\mathsf{Trs}$, we conclude that $\forallo\mathsf{v}\psi(\mathsf{v})\vdash\forall\mathsf{v}\Psi_n(\mathsf{v}/\mathsf{c})$, namely $\phi\vdash\Phi_n^\ast$.}
%
%Thus, by Proposition \ref{FOT_der_rules}\ref{FOTd_der_rules_forallo_existso_con}, \foralloi and  $\forallo\forall\mathsf{Trs}$, we derive that
%\[\forallo\mathsf{v}\psi(\mathsf{v})\vdash\psi(\mathsf{c}/\mathsf{v})\vdash\Psi_n(\mathsf{c}/\mathsf{v})\vdash\forallo\mathsf{v}\Psi_n(\mathsf{v})\vdash\forall\mathsf{v}\Psi_n(\mathsf{v}),\] 
%thereby $\phi\vdash\Phi_n^\ast$.
% from which   we conclude  by \Cref{FOT_der_rules}\ref{FOTd_der_rules_forallo_existso_con}, \foralloi and  $\forallo\forall\mathsf{Trs}$ that $\forallo\mathsf{v}\psi\vdash\forall\mathsf{v}\Psi_n(\mathsf{v})$, i.e., $\phi\vdash\Phi_n^\ast$.
\end{proof}

%In our main proof, we will make use of the compactness theorem for  $\D\oplus\FOTd$. Recall that by using the compactness of existential second-order logic, it was shown in \cite{Van07dl}  that compactness theorem holds for sentences of \D. This result can be generalized to formulas of $\D\oplus\FOTd$.
% We say that a set $\Gamma$ of $\D\oplus\FOTd$-formulas is {\em satisfiable} if $M\models_X\Gamma$ for some model $M$ and some nonempty team $X$ of $M$. That is, $\Gamma$ is satisfiable iff $\Gamma\not\models\bot$.  We now state the compactness theorem for formulas of $\D\oplus\FOTd$. The proof of the theorem  will be postponed to the end of this section.%\todof{added compactness for formulas}
%
%\begin{theorem}[Compactness]\label{compact}
%For any set $\Gamma$ of $\D\oplus\FOTd$-formulas, if every finite subset $\Gamma_0$ of $\Gamma$   is satisfiable, then $\Gamma$ is satisfiable.%\todof{Quite an involved proof in the end. Maybe move the proof, to at least the end of the section, or to appendix}
%\end{theorem}

\begin{theorem}[Completeness]\label{completeness}
%Suppose the set $\Var$ of free variables is countable.
Let $\Gamma\cup\{\theta\}$ be a set of $\D\oplus\FOTd$-formulas with countably many free variables.  If $\theta$ is an  \FOp-formula or a pseudo-flat sentence, then $\Gamma\models\theta\Longrightarrow\Gamma\vdash\theta$. %\todof{Can also split this into weak and strong completeness: the weak one does not rely on compactness, but the weak one makes less sense}
\end{theorem}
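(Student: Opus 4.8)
The plan is to reduce the completeness statement to the known first-order-consequence completeness of the system of \cite{Axiom_fo_d_KV}, using the game-expression machinery recalled above together with the trick built around $\textsf{RAA}_\ast$. First I would reduce to a finite $\Gamma$: since $\Gamma\cup\{\theta\}$ has countably many free variables and (by Theorem~\ref{fot2fo} together with the remark that $\D\oplus\FOTd$ has the same expressive power as \D, hence an \ESO translation) the compactness theorem (Theorem~\ref{compact}) applies, $\Gamma\models\theta$ together with Lemma~\ref{cn_defin_in_FOTd}\ref{raa_star_sound} gives $\Gamma,\neg\theta_\ast\models\bot$, and then some finite $\Gamma_0\subseteq\Gamma$ already satisfies $\Gamma_0,\neg\theta_\ast\models\bot$; it suffices to derive $\Gamma_0\vdash\theta$, and by the rule $\textsf{RAA}_\ast$ it suffices to derive $\Gamma_0,\neg\theta_\ast\vdash\bot$. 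Replacing the conjunction of $\Gamma_0$ and $\neg\theta_\ast$ by a single formula (and absorbing its free variables via fresh constants, as in Lemma~\ref{FO2FOT_eqiv}), we are reduced to: if $\chi$ is a $\D\oplus\FOTd$-sentence with $\chi\models\bot$, then $\chi\vdash\bot$.

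Next I would put $\chi$ into the normal form \eqref{nf_dw} of Theorem~\ref{FOTd_nf_thm}, so $\chi\equiv\chi'=\forallo\mathsf{v}\psi(\mathsf{v})$ with $\psi$ in \D-normal form \eqref{nf_d}, and $\chi\vdash\chi'$. Since $\chi\models\bot$, i.e.\ $\chi$ is unsatisfiable, $\chi'$ is unsatisfiable as well, so its game expression $\Phi^\ast=\forall\mathsf{v}\Psi$ fails in every countable (in particular every countable recursively saturated) model; by the approximation property recalled above, for every recursively saturated model $M$ we have $M\models\Phi^\ast\iff M\models\Phi^\ast_n$ for all $n$, hence the infinitary sentence $\Phi^\ast$ being unsatisfiable in recursively saturated models forces, by a compactness/saturation argument exactly as in \cite{Van07dl,Axiom_fo_d_KV}, the finite set $\{\Phi^\ast_n\mid n\in\mathbb{N}\}$ of first-order sentences to be unsatisfiable, hence by first-order compactness some finite conjunction $\Phi^\ast_0\wedge\cdots\wedge\Phi^\ast_N$ is unsatisfiable, and since the $\Phi^\ast_n$ are decreasing one gets a single $\Phi^\ast_N$ with $\Phi^\ast_N\models\bot$. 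Then by completeness of the first-order part of the system (which is included in the system of \cite{Axiom_fo_d_KV}, hence in ours) $\Phi^\ast_N\vdash\bot$. Finally Theorem~\ref{approx_provable} gives $\chi'\vdash\Phi^\ast_N$, and stringing the derivations together $\chi\vdash\chi'\vdash\Phi^\ast_N\vdash\bot$, as desired; unwinding, $\Gamma_0,\neg\theta_\ast\vdash\bot$, so $\Gamma_0\vdash\theta$ by $\textsf{RAA}_\ast$, hence $\Gamma\vdash\theta$.

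I expect the main obstacle to be the bookkeeping in the reduction step rather than any genuinely new idea: one must check that $\neg\theta_\ast$ behaves correctly (Lemma~\ref{cn_defin_in_FOTd}\ref{raa_star_sound} handles exactly the two cases, \FOp-formula and pseudo-flat sentence, hence the hypothesis on $\theta$), that free variables of $\Gamma_0\cup\{\neg\theta_\ast\}$ can be substituted by fresh constants without affecting derivability (an analogue of the substitution used in Lemma~\ref{FO2FOT_eqiv} and Corollary~\ref{completeness_fot_str}), and that the passage from ``$\chi$ unsatisfiable'' to ``$\Phi^\ast_N\models\bot$ for some $N$'' really does go through verbatim from \cite{Van07dl,Axiom_fo_d_KV} once $\forallo$ has been pushed to the front and absorbed into the $\forall$-prefix of the game expression via $\Phi^\ast=\forall\mathsf{v}\Psi$. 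The only place where $\forallo$ is not simply a universal quantifier is in the normal-form proof and in Theorem~\ref{approx_provable}, both of which are already available; so the remaining work is to assemble these pieces and to spell out the compactness reduction to a finite $\Gamma$. A secondary point worth isolating as a lemma is the observation that $\D\oplus\FOTd$-formulas have \ESO-translations (so Theorem~\ref{compact} applies), which follows from Theorem~\ref{fot2fo} extended to dependence atoms exactly as indicated in the text.
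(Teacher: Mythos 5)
Your overall strategy is the paper's own: Lemma~\ref{cn_defin_in_FOTd}\ref{neg2cn} plus compactness (\Cref{compact}) to reduce to a finite $\Gamma_0$ with $\Gamma_0,\neg\theta_\ast\models\bot$, then $\textsf{RAA}_\ast$, the normal form of \Cref{FOTd_nf_thm}, the game-expression approximations and \Cref{approx_provable}, and the fact that the first-order fragment of the system is standard; that you run the last stage ``forward'' (unsatisfiability of $\chi$ forces some $\Phi^\ast_N\models\bot$, hence $\Phi^\ast_N\vdash\bot$, hence $\chi\vdash\bot$) rather than contrapositively via a recursively saturated model of $\{\Phi^\ast_n\mid n\in\mathbb{N}\}$ as in the paper is only a cosmetic difference. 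The genuine gap is in your reduction to a single sentence. You replace the free variables $\mathsf{x}$ of $\Gamma_0\cup\{\neg\theta_\ast\}$ by fresh constants $\mathsf{c}$ and, at the end, ``unwind'' from $\chi\vdash\bot$ (with $\chi=\bigwedge\Gamma_0(\mathsf{c}/\mathsf{x})\wedge\neg\theta_\ast(\mathsf{c}/\mathsf{x})$) to $\Gamma_0,\neg\theta_\ast\vdash\bot$. That step is not available: $\chi$ is not derivable from $\Gamma_0\cup\{\neg\theta_\ast\}$ --- it is not even a semantic consequence, since substituting constants forces the variables to take constant values on the team, which $\Gamma_0\cup\{\neg\theta_\ast\}$ does not guarantee --- so you cannot cut; and the metalevel move of replacing $\mathsf{c}$ by fresh variables throughout the refutation of $\chi$ breaks in this system, because the derivation may use \consi to obtain $\dep(c_i)$ (and, through it, the constancy premises of \foralloe and \existsoi), which has no analogue for free variables. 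Lemma~\ref{FO2FOT_eqiv} and Corollary~\ref{completeness_fot_str} give only the variables-to-constants direction, not the converse transfer you need; and proving that converse semantically (via downward closure and singleton teams) and then appealing to completeness would be circular.

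The repair is exactly the paper's move: instead of constants, existentially quantify, putting $\phi=\exists\mathsf{x}\bigl(\bigwedge\Gamma_0\wedge\neg\theta_\ast\bigr)$. This is a sentence, $\Gamma_0,\neg\theta_\ast\vdash\phi$ by $\wedge$\textsf{I} and $\exists$\textsf{I}, and $\Gamma_0,\neg\theta_\ast\models\bot$ still yields $\phi\models\bot$ (any nonempty team satisfying $\phi$ produces a nonempty team satisfying $\Gamma_0\cup\{\neg\theta_\ast\}$). Running your game-expression argument on $\phi$ (via \Cref{FOTd_nf_thm} and \Cref{approx_provable}) then gives $\phi\vdash\bot$, which does cut back to $\Gamma_0,\neg\theta_\ast\vdash\bot$, and $\textsf{RAA}_\ast$ finishes. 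The remaining points you flag --- decreasingness of the approximations (or simply taking their conjunction), the recursively-saturated-model argument linking $\{\Phi^\ast_n\}$ to $\phi$, and carrying the first-order refutation of $\Phi^\ast_N$ into the extended system --- are all sound and coincide with the intended proof.
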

\begin{proof}
We only give a sketch of the proof, which combines the arguments in \cite{Axiom_fo_d_KV} and in \cite{Yang_neg}. Suppose $\Gamma\models\theta$. By Lemma \ref{cn_defin_in_FOTd}\ref{neg2cn}, we have that $\Gamma,\neg\theta_\ast\models\bot$. Since $\D\oplus\FOTd$ has the same expressive power as \D (Theorem \ref{D=D+FOTd}), by \Cref{compact},  $\D\oplus\FOTd$ is compact. We may thus assume that $\Gamma$ is finite. Now, suppose that $\Gamma\nvdash\theta$. Then  by $\textsf{RAA}_{\ast}$ we have $\Gamma,\neg\theta_\ast\nvdash\bot$. Let $\mathsf{x}$ list all free variables in the finite set $\Gamma$ and in $\neg\theta_\ast$. Put $\phi=\exists \mathsf{x}(\bigwedge\Gamma\wedge\neg\theta_\ast)$, which is a sentence. By $\exists\textsf{I}$ we derive $\phi\nvdash\bot$.

Now, by \Cref{approx_provable}, we have that $\{\Phi_n^\ast\mid n\in\mathbb{N}\}\nvdash\bot$, where each $\Phi_n^\ast$ is the $n$-approximation of the game expression $\Phi^\ast$ of the formula $\phi$. Since restricted to first-order formulas our extended system (or the system of \D as defined in \cite{Axiom_fo_d_KV}) has the same rules as the  system of the usual  first-order logic, we obtain $\{\Phi_n^\ast\mid n\in\mathbb{N}\}\nvdash_{\FO}\bot$. From this point on we follow exactly the same argument as in \cite{Axiom_fo_d_KV}: By the completeness theorem of first-order logic, the set $\{\Phi_n^\ast\mid n\in\mathbb{N}\}$ has a model $M$. It is known that for every infinite model $N$, there exists a recursively saturated countable model $N'$ such that $N$ and $N'$ are elementary equivalent (see e.g., \cite{BarwiseSchlipf76}). We may thus assume w.l.o.g. that $M$ itself is countable and recursively saturated or finite. Now, by Lemma \ref{rec_sat_lm}, we obtain that $M$ is a model for the original $\D\oplus\FOTd$-sentence $\phi=\exists \mathsf{x}(\bigwedge\Gamma\wedge\neg\theta_\ast)$.
%From this point on we follow exactly the same argument in \cite{Axiom_fo_d_KV} to find a model $M$ for the sentence $\phi=\exists \mathsf{x}(\bigwedge\Gamma\wedge\neg\theta_\ast)$. 
Thus, $M\models_X\Gamma$ and $M\models_X\neg\theta_\ast$ for some (nonempty) team $X=\{\emptyset\}(\mathsf{F}/\mathsf{x})$ and sequence $\mathsf{F}$ of suitable functions. This contradict the assumption that $\Gamma,\neg\theta_\ast\models\bot$.
%Finally, by \Cref{cn_defin_in_FOTd}\ref{neg2cn} we conclude that $M\not\models_X\theta$, which completes the proof.
%
%Let 
%\[\Gamma^\ast=\{\Phi_n^\ast\mid \phi\in\Gamma\cup\{\neg\theta\},~n\in\mathbb{N}\}\] 
%be the set of all approximations of sentences in $\Gamma\cup\{\neg\theta\}$. By \Cref{approx_provable}, we have $\Gamma^\ast\nvdash\bot$. Since restricted to first-order formulas our extended system (or the system of \D as defined in \cite{Axiom_fo_d_KV}) has the same rules as the  system of the usual  first-order logic, we derive $\Gamma^\ast\nvdash_{\FO}\bot$. From this point on we follow exactly the argument in \cite{Axiom_fo_d_KV} to find a model $M$ for $\Gamma\cup\{\neg\theta\}$. Finally, we conclude that $M\models\Gamma$  and $M\not\models\theta$. 
%%\todof{In this proof, the weak classical negation has to play a role, which is something we forgot about. I made some adjustments to handle this.}
\end{proof}

Clearly \FO-formulas  are \FOp-formulas. When applied to \FO-formulas $\alpha$, the $\mathsf{RAA}_\ast$ rule is identical to the $\mathsf{RAA}$ rule, which was included in the original system of \D in \cite{Axiom_fo_d_KV}. From this we conclude that the system given in \cite{Axiom_fo_d_KV} is complete for first-order consequences of \D over (arbitrary) formulas (under the countability assumption).

\begin{cor}\label{dl_complete}
%Suppose the set $\Var$ of free variables is countable.
For any set $\Gamma$ of \D-formulas with countably many free variables and \FO-formula $\alpha$,  we have that $\Gamma\models\alpha\Longrightarrow\Gamma\vdash_\D\alpha$, where $\vdash_\D$ is the consequence relation induced by the system of \D given in \cite{Axiom_fo_d_KV}.
\end{cor}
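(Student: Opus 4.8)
The plan is to derive \Cref{dl_complete} as an immediate corollary of \Cref{completeness} by observing that \FO-formulas form a subclass of the class of formulas to which \Cref{completeness} applies, and that on this subclass the extended system $\D\oplus\FOTd$ collapses to the original system of \D. First I would note that every \FO-formula $\alpha$ is, trivially, an \FOp-formula (it contains no occurrence of $\forallo$ at all), so it falls under the hypothesis ``$\theta$ is an \FOp-formula'' of \Cref{completeness}. Hence, given a set $\Gamma$ of \D-formulas with countably many free variables and an \FO-formula $\alpha$ with $\Gamma\models\alpha$, \Cref{completeness} yields a derivation $\Gamma\vdash\alpha$ in the system for $\D\oplus\FOTd$.

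The second step is to argue that this derivation can be taken to live entirely inside the original system of \D from \cite{Axiom_fo_d_KV}, i.e.\ that $\Gamma\vdash_\D\alpha$. For this I would trace which rules of the $\D\oplus\FOTd$ system are actually invoked. Inspecting the proof of \Cref{completeness}: the only rule of the extended system that is genuinely new and gets used is $\mathsf{RAA}_\ast$, applied to $\theta=\alpha$. But when $\theta$ is an \FO-formula, $\theta_\ast=\alpha$ (the operation $(\cdot)_\ast$ leaves first-order formulas untouched, since $\alpha$ has no dependence atoms and no weak constants), so the premise $\Gamma,\neg\theta_\ast\vdash\bot$ becomes $\Gamma,\neg\alpha\vdash\bot$ and the conclusion $\Gamma\vdash\theta$ becomes $\Gamma\vdash\alpha$ — which is exactly an instance of the ordinary \raa rule already present in the system of \cite{Axiom_fo_d_KV} (see \Cref{rules_d}). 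Everything else in the proof of \Cref{completeness} after that point is the verbatim game-expression argument of \cite{Axiom_fo_d_KV} together with the first-order completeness theorem, both of which already take place in $\vdash_\D$; and the compactness of \D used there is \Cref{compact}, which is a semantic fact, not a proof-theoretic one.

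The main obstacle, such as it is, is making precise the claim that no other rule of \Cref{rules_d_fotd} (or the weak-quantifier rules of \Cref{rules_weak_cnt_qtf}) sneaks into the derivation when $\Gamma$ consists of \D-formulas and $\theta$ is \FO. The cleanest way to handle this is not to re-audit the proof line by line but to observe that $\D\oplus\FOTd$ and \D are expressively equivalent and that the target statement $\Gamma\models\alpha$ is a statement purely about \D-formulas; so by the completeness theorem for \D over \FO-sentences in \cite{Axiom_fo_d_KV} combined with the open-formula generalization machinery already established (normal form \eqref{nf_d}, game expressions, and \Cref{approx_provable} specialized to \D-sentences $\psi(\mathsf{c}/\mathsf{v})$ with no $\forallo$), one obtains directly a derivation in $\vdash_\D$. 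Concretely, the short proof I would write is: \emph{Every \FO-formula is an \FOp-formula, so \Cref{completeness} gives $\Gamma\vdash\alpha$ in $\D\oplus\FOTd$. Since on \FO-formulas $\theta_\ast=\theta$, the rule $\mathsf{RAA}_\ast$ used in that proof coincides with the rule \raa of the system of \cite{Axiom_fo_d_KV}, and every other rule invoked is already a rule of that system; hence $\Gamma\vdash_\D\alpha$.} This is the two-sentence corollary proof, and no serious calculation is required.
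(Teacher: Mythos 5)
Your proposal is correct and follows essentially the same route as the paper: \FO-formulas are \FOp-formulas, so \Cref{completeness} applies, and since $\theta_\ast=\theta$ for first-order $\theta$, the rule $\textsf{RAA}_\ast$ coincides with the \raa rule already present in the system of \cite{Axiom_fo_d_KV}, while the remaining steps of the completeness argument (normal form, game approximations, first-order completeness) take place within $\vdash_\D$ when $\Gamma$ consists of \D-formulas. The paper's justification is exactly this observation, stated in the paragraph preceding the corollary.
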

\begin{proof}
We use the same argument as in Theorem \ref{completeness}, where for a first-order formula $\alpha$, we have $\neg\alpha_\ast=\neg\alpha$, and the rule $\mathsf{RAA}_\ast$ applied to $\alpha$ and a set $\Gamma$ of \D-formulas is now identical to the $\mathsf{RAA}$ rule of the system of \D, as given in \cite{Axiom_fo_d_KV}  (listed also in Table \ref{rules_d}). Also note that the game expression $\Phi^\ast$ and its finite approximations $\Phi^\ast_n$ of the $\D$-sentence $\phi=\exists \mathsf{x}(\bigwedge\Gamma\wedge\neg\alpha)$ are then identical to the original game expression $\Psi$ and its finite approximations $\Psi_n$.
For these reasons, the proof does not use any other rules than  those already in the original system of \D, as given in \cite{Axiom_fo_d_KV}.
\end{proof}

%\todof{I wanted to conclude that the same completeness for independence logic and inclusion logic... but actually:}
%Finally, let us remark that it is tempting to conclude, by the same sort of argument, that  similar completeness result holds also for independence logic (which is \FO extended with independence atoms) and inclusion logic (which is \FO extended with inclusion atoms), namely that the original system of independence logic as introduced in \cite{Hannula_fo_ind_13} is also already complete for first-order consequences over formulas, and the system of inclusion logic given in \cite{YangInc20} is also already complete in the same sense even without the rule $\subseteq\!\textsf{Exp}$ that was included in the system specifically to derive the $\textsf{RAA}$ rule with respect to weak classical negation  of first-order formulas $\wcn\alpha$. However, note that the $\textsf{RAA}$ rule  with respect to (the usual) negation of first-order formulas $\neg\alpha$ for independence and inclusion logic has an extra side condition  that the formulas in the context set $\Gamma$ have to be all first-order. The rule is in general not sound for these two logic (that are not downward closed). For this reason, the same argument as in \Cref{dl_complete} for dependence logic does not go through for the other two logics.

%\todof{maybe the above is too long-winded, how about simply:} 
Let us end this section with a remark that the $\textsf{RAA}$ rule with respect to (the usual) negation of first-order formulas $\neg\alpha$ is not in general  sound in independence logic (which is \FO extended with independence atoms) or in inclusion logic (which is \FO extended with inclusion atoms). In the systems of these two logics (introduced in \cite{Hannula_fo_ind_13} and \cite{YangInc20}) the corresponding $\textsf{RAA}$ rule has an extra side condition  that  all formulas in the context set $\Gamma$ have to be first-order.  For this reason, the same argument as in Corollary \ref{dl_complete} for dependence logic does not go through for the other two logics. Nevertheless, the system of independence logic in \cite{Hannula_fo_ind_13} together with the $\textsf{RAA}$ rule with respect to $\wcn$ was proved in \cite{Yang_neg} to be  complete (over formulas) for  first-order consequences, and the system of inclusion logic  in \cite{YangInc20} (in which the $\textsf{RAA}$ rule with respect to $\wcn$ and classical formula is derivable) is indeed also complete (over formulas) for first-order consequences.

%Finally, also note that by a similar argument,  the system for first-order independence logic (which is \FO extended with independence atoms $\mathsf{t}\perp\mathsf{t}'$) introduced in \cite{Hannula_fo_ind_13} is also already complete for first-order consequences over formulas. \todof{added the above remark and corollary}

\section{Applications}\label{sec:app}

%\todof{Arrow's Theorem can be formalized in \FOT, and thus derived in our system!}

%\todof{Armstrong's axioms and other axioms for the dependency atoms can be derived in our system.}

In this section, we illustrate the power of our proof systems for \FOT and $\D\oplus\FOTd$ by discussing some examples.

We first consider the system of \FOT. Recall that first-order formulas  themselves (with respect to team semantics) are  expressible in \FOT (Lemma \ref{indepa-def-fot}\ref{foteam2fot}).
%\begin{equation}\label{fo-def-fot}
%\alpha(\mathsf{x})\equiv\forallo \mathsf{v}(\mathsf{v}\subseteq\mathsf{x}\cimp\alpha^\ast(\mathsf{v}/\mathsf{x})).
%\end{equation}
Therefore valid entailments $\Delta\models\alpha$ of the usual first-order logic are all derivable in our system for \FOT (via  translations). We now give some examples of derivations of entailments  of arbitrary (not necessarily first-order) \FOT-formulas.

%To state Armstrong's Axioms axioms in full generality, we now introduce a generalized version of dependence atoms, atoms $\dep(\mathsf{x},\mathsf{y})$ that can have a sequence of variables in the last coordinate. The semantics  of $\dep(\mathsf{x},\mathsf{y})$ is defined as:
%\begin{itemize}
%\item {\em $M\models_X\dep(\mathsf{x},\mathsf{y})$ ~iff~ for all $s,s'\in X$, if $s(\mathsf{x})=s'(\mathsf{x})$, then $s(\mathsf{y})=s'(\mathsf{y})$.}
%\end{itemize}
%Clearly, 
%\begin{equation}\label{depa-def-fot}
%\dep(\mathsf{x},\mathsf{y})\equiv \forallo \mathsf{u}_0\mathsf{v}_0\mathsf{u}_1\mathsf{v}_1((\mathsf{u}_0\mathsf{v}_0\subseteq \mathsf{x}\mathsf{y}\wedge \mathsf{u}_1\mathsf{v}_1\subseteq \mathsf{x}\mathsf{y}\wedge \mathsf{u}_0=\mathsf{u}_1)\cimp \mathsf{v}_0=\mathsf{v}_1),
%\end{equation}

Recall from Lemma \ref{indepa-def-fot}\ref{depatm2fot}\ref{indepatm2fot} that dependence and independence atoms are expressible in \FOT. These atoms correspond, respectively, to  {\em functional dependencies} \cite{Fagin77_prop} and {\em independences} \cite{Geiger-Paz-Pearl_1991} in database theory.
The {\em implication problem} of dependence atoms (i.e., the problem of whether $\Gamma\models\phi$ for a set $\Gamma\cup\{\phi\}$ of dependence atoms) is known to be completely axiomatizable by the set of {\em Armstrong's Axioms} \cite{Armstrong_Axioms}, and  {\em Geiger-Paz-Pearl axioms} \cite{Geiger-Paz-Pearl_1991} are known to axiomatize completely the implication problem of independence atoms. By our completeness theorem, all these axioms are derivable in our system for \FOT. We now demonstrate the derivations of Armstrong's Axioms. %Geiger-Paz-Pearl axioms can be derived in the same manner. %\todof{I've commented out the derivations of Geiger-Paz-Pearl axioms, because it takes too much space, and we already have the Arrow example to illustrate derivations invovling independence atoms}

\begin{exmp}\label{ex_Armstrong}
%The rules $\mathsf{DepWI}$, $\mathsf{DepWE}$ are derivable in \FOT, that is, \todof{NOT true in non-downward closed environment!}
%\begin{itemize}
%\item Let $\Delta$ be a set of inclusion atom and weak classical negation-free formula. If $\Delta,\dep(\mathsf{x})\vdash\dep(\mathsf{y})$, then $\Delta\vdash\dep(\mathsf{x},\mathsf{y})$.
%\item If $\Gamma\vdash\dep(\mathsf{x},\mathsf{y})$ and $\Gamma\vdash\dep(\mathsf{x})$, then $\Gamma\vdash\dep(\mathsf{y})$.
%\end{itemize}
%
%As a consequence, 
The following clauses, known as Armstrong's Axioms (when written as rules), are derivable in the system of \FOT.
\begin{enumerate}[label=(\roman*)]
\item $\vdash\dep(\mathsf{x},\mathsf{x})$
\item $\dep(\mathsf{x}\mathsf{y},\mathsf{z})\vdash\dep(\mathsf{y}\mathsf{x},\mathsf{z})$
\item $\dep(\mathsf{x}\mathsf{x},\mathsf{y})\vdash\dep(\mathsf{x},\mathsf{y})$
\item $\dep(\mathsf{y},\mathsf{z})\vdash\dep(\mathsf{x}\mathsf{y},\mathsf{z})$
\item $\dep(\mathsf{x},\mathsf{y}),\dep(\mathsf{y},\mathsf{z})\vdash\dep(\mathsf{x},\mathsf{z})$
\end{enumerate}
\end{exmp}
\begin{proof}
Items (ii), (iii) and (iv) follow easily from \incpro. Item (i) is also easy to prove. We only give the detailed proof for item (v).
It suffices to derive that
\begin{align}\label{ex_Armstrong_eq2}
%\begin{split}
\nonumber
&\forallo \mathsf{u}\mathsf{v}\mathsf{v}'\big((\mathsf{u}\mathsf{v}\subseteq \mathsf{x}\mathsf{y}\wedge\mathsf{u}\mathsf{v}'\subseteq \mathsf{x}\mathsf{y})\cimp\mathsf{v}=\mathsf{v}'\big),\forallo \mathsf{v}\mathsf{w}\mathsf{w}'\big((\mathsf{v}\mathsf{w}\subseteq \mathsf{y}\mathsf{z}\wedge\mathsf{v}\mathsf{w}'\subseteq \mathsf{y}\mathsf{z})\cimp\mathsf{w}=\mathsf{w}'\big)\\
\vdash& \forallo \mathsf{u}\mathsf{w}\mathsf{w}'\big((\mathsf{u}\mathsf{w}\subseteq \mathsf{x}\mathsf{z}\wedge\mathsf{u}\mathsf{w}'\subseteq \mathsf{x}\mathsf{z})\cimp\mathsf{w}=\mathsf{w}'\big).
%\end{split}
\end{align}
For any sequences $\mathsf{c},\mathsf{e}$ of constant symbols, by \consi and \incweo we have that
\begin{equation}\label{ex_Armstrong_eq5}
\mathsf{c}\mathsf{e}\subseteq \mathsf{x}\mathsf{z}\vdash\existso \mathsf{v}(\mathsf{c}\mathsf{v}\mathsf{e}\subseteq  \mathsf{x}\mathsf{y}\mathsf{z})~\text{ and }~\mathsf{c}\mathsf{e}'\subseteq \mathsf{x}\mathsf{z}\vdash\existso \mathsf{v}'(\mathsf{c}\mathsf{v}'\mathsf{e}\subseteq  \mathsf{x}\mathsf{y}\mathsf{z}).
\end{equation}
For any sequences $\mathsf{d},\mathsf{d}',\mathsf{e}'$ of constant symbols,  by Proposition \ref{FOT_der_rules}\ref{FOTd_der_rules_forallo_existso_con} we have that
\begin{align}\label{ex_Armstrong_eq3}\nonumber
&\forallo \mathsf{u}\mathsf{v}\mathsf{v}'\big((\mathsf{u}\mathsf{v}\subseteq \mathsf{x}\mathsf{y}\wedge\mathsf{u}\mathsf{v}'\subseteq \mathsf{x}\mathsf{y})\cimp\mathsf{v}=\mathsf{v}'\big),\forallo \mathsf{v}\mathsf{w}\mathsf{w}'\big((\mathsf{v}\mathsf{w}\subseteq \mathsf{y}\mathsf{z}\wedge\mathsf{v}\mathsf{w}'\subseteq \mathsf{y}\mathsf{z})\cimp\mathsf{w}=\mathsf{w}'\big)\\
\vdash&\big((\mathsf{c}\mathsf{d}\subseteq \mathsf{x}\mathsf{y}\wedge\mathsf{c}\mathsf{d}'\subseteq \mathsf{x}\mathsf{y})\cimp\mathsf{d}=\mathsf{d}'\big)\wedge\big((\mathsf{d}\mathsf{e}\subseteq \mathsf{y}\mathsf{z}\wedge\mathsf{d}\mathsf{e}'\subseteq \mathsf{y}\mathsf{z})\cimp\mathsf{e}=\mathsf{e}'\big).
\end{align}
Furthermore, we derive that
\begin{align*}
&\mathsf{c}\mathsf{d}\mathsf{e}\subseteq  \mathsf{x}\mathsf{y}\mathsf{z},\mathsf{c}\mathsf{d}'\mathsf{e}'\subseteq  \mathsf{x}\mathsf{y}\mathsf{z},(\mathsf{c}\mathsf{d}\subseteq \mathsf{x}\mathsf{y}\wedge\mathsf{c}\mathsf{d}'\subseteq \mathsf{x}\mathsf{y})\cimp\mathsf{d}=\mathsf{d}',(\mathsf{d}\mathsf{e}\subseteq \mathsf{y}\mathsf{z}\wedge\mathsf{d}\mathsf{e}'\subseteq \mathsf{y}\mathsf{z})\cimp\mathsf{e}=\mathsf{e}'\\
\vdash&\mathsf{c}\mathsf{d}\mathsf{e}\subseteq  \mathsf{x}\mathsf{y}\mathsf{z}\wedge\mathsf{c}\mathsf{d}'\mathsf{e}'\subseteq  \mathsf{x}\mathsf{y}\mathsf{z}\wedge\mathsf{d}=\mathsf{d}'\wedge \big((\mathsf{d}\mathsf{e}\subseteq \mathsf{y}\mathsf{z}\wedge\mathsf{d}\mathsf{e}'\subseteq \mathsf{y}\mathsf{z})\cimp\mathsf{e}=\mathsf{e}'\big)\tag{$=$\textsf{Sub}, \incpro}\\
\vdash&\mathsf{e}=\mathsf{e}'.\tag{$=$\textsf{Sub}, \incpro}
\end{align*}
Putting this and (\ref{ex_Armstrong_eq5}) together, by \existsoe and the deduction theorem we obtain that
\begin{align*}
&(\mathsf{c}\mathsf{d}\subseteq \mathsf{x}\mathsf{y}\wedge\mathsf{c}\mathsf{d}'\subseteq \mathsf{x}\mathsf{y})\cimp\mathsf{d}=\mathsf{d}',(\mathsf{d}\mathsf{e}\subseteq \mathsf{y}\mathsf{z}\wedge\mathsf{d}\mathsf{e}'\subseteq \mathsf{y}\mathsf{z})\cimp\mathsf{e}=\mathsf{e}'\\
\vdash&\big(\mathsf{c}\mathsf{e}\subseteq \mathsf{x}\mathsf{z}\wedge\mathsf{c}\mathsf{e}'\subseteq \mathsf{x}\mathsf{z}\big)\cimp\mathsf{e}=\mathsf{e}'
\end{align*}
Hence, by (\ref{ex_Armstrong_eq3}), we finally obtain the desired clause (\ref{ex_Armstrong_eq2}).
\end{proof}

Next, without going into much detail we point out that the well-known {\em Arrow's Impossibility Theorem}  \cite{Arrow50} in social choice can be formulated in \FOT, and thus also has a formal proof in our system of \FOT. A recent work \cite{PacuitYang2016} provides  in the language of independence logic a formalization of Arrow's Theorem. The theorem is formulated as the entailment 
\(\Gamma_{\textsf{Arrow}}\models\theta_{\textsf{dictator}},\)
where $\Gamma_{\textsf{Arrow}}$ is a set of assumptions for the aggregation function  imposed in Arrow's Theorem, and $\theta_{\textsf{dictator}}$ is a condition of the form $\delta(v_1)\vvee\dots\vvee\delta(v_n)$ stating that one of the voters among $v_1,\dots,v_n$ is a dictator. Formulas in $\Gamma_{\textsf{Arrow}}\cup\{\delta(v_1),\dots,\delta(v_n)\}$ are either first-order or simple formulas formed using dependence, independence or inclusion atoms.
All of these  formulas  are  expressible in \FOT (see Lemma \ref{indepa-def-fot})). One non-trivial condition that is part of the so-called {\em universal domain} condition is characterized by the atom $\mathsf{All}(\mathsf{v})$ with $\mathsf{v}=\langle v_1,\dots,v_n\rangle$, defined as
\begin{itemize}
\item $M\models_X\mathsf{All}(\mathsf{v})\iff X=\emptyset$ or $\{s(\mathsf{v})\mid s\in X\}=M^n$.
\end{itemize}
Intuitively, the atom $\mathsf{All}(\mathsf{v})$ expresses that the sequence $\mathsf{v}$ takes all possible values from the Cartesian product of the domain. Clearly, $\mathsf{All}(\mathsf{v})$ can be defined in \FOT as
\(\mathsf{All}(\mathsf{v})\equiv\forallo \mathsf{x}(\mathsf{x}\subseteq \mathsf{v}).\)
We illustrate in the next example one type of argument that constitutes an important step in the (formal) proof of Arrow's Theorem. Intuitively, the example states that if each of the variables $v_1,\dots,v_n$ satisfies the universal domain condition, and each variable $v_i$ behaves independently of all the other variables, then the sequence $\langle v_1,\dots,v_n\rangle$ as a whole satisfies the universal domain condition.

\begin{exmp}\label{ex_arrow}
%$\mathsf{All}(u),\mathsf{All}(v),u\perp v\vdash \mathsf{All}(uv)$, and in general 
\(\displaystyle\mathsf{All}(v_1),\dots,\mathsf{All}(v_n),\bigwedge_{i=1}^nv_i\perp\langle v_j\rangle_{j\neq i}\vdash\mathsf{All}(v_1,\dots,v_n).\)
\end{exmp}
\begin{proof}
We only give the proof for the case  $n=3$. We show that %$\mathsf{All}(v_1),\mathsf{All}(v_2),\mathsf{All}(v_3),v_1\perp v_2v_3,v_2\perp v_1v_3,v_3\perp v_1v_2\vdash \mathsf{All}(v_1v_2v_3)$, i.e.,
\begin{equation}\label{ex_arrow_eq1}
\begin{split}
\forallo x_1(x_1\subseteq v_1),&\forallo x_2(x_2\subseteq v_2),\forallo x_3(x_3\subseteq v_3),\\
&v_1\perp v_2v_3,v_2\perp v_1v_3,v_3\perp v_1v_2\vdash \forallo x_1x_2x_3(x_1x_2x_3\subseteq v_1v_2v_3).
\end{split}
\end{equation}

Let $c,d,e$ be arbitrary constant symbols. We first derive that
\begin{align*}
&\forallo x_1(x_1\subseteq v_1),\forallo x_2(x_2\subseteq v_2),\forallo x_3(x_3\subseteq v_3)\\
\vdash& c\subseteq v_1\wedge d\subseteq v_2\wedge e\subseteq v_3 \tag{\foralloe}\\
\vdash& c\subseteq v_1\wedge \existso y(dy\subseteq v_2v_3)\wedge e\subseteq v_3\tag{\consi, \incweo}.
\end{align*}
Next, for an arbitrary constant symbol $e'$, we derive that
\begin{align*}
&c\subseteq v_1,de'\subseteq v_2v_3,v_1\perp v_2v_3\\
\vdash &c\subseteq v_1\wedge de'\subseteq v_2v_3\wedge \big((c\subseteq v_1\wedge de'\subseteq v_2v_3)\cimp cde'\subseteq v_1v_2v_3\big)\tag{\foralloe}\\
\vdash&cde'\subseteq v_1v_2v_3.
\end{align*}
Furthermore, we derive that
\begin{align*}
&e\subseteq v_3,cde'\subseteq v_1v_2v_3,v_3\perp v_1v_2\\
\vdash &e\subseteq v_3\wedge cd\subseteq v_1v_2\wedge \big((e\subseteq v_3\wedge cd\subseteq v_1v_2)\cimp cde\subseteq v_1v_2v_3\big)\tag{\incpro, \foralloe}\\
\vdash&cde\subseteq v_1v_2v_3.
\end{align*}
Putting all these three derivations together, by \existsoe we conclude that (\ref{ex_arrow_eq1}) holds.
\end{proof}

Let us now turn to the system of $\D\oplus\FOTd$. %, which is complete partially. 
First note that Armstrong's Axioms (Example \ref{ex_Armstrong}) are also derivable in $\D\oplus\FOTd$, from the rules $\mathsf{DepWI}$ and $\mathsf{DepWE}$.
%
%Since the rules $\mathsf{DepWI}$ and $\mathsf{DepWE}$ are in the system of $\D\oplus\FOTd$, we know that Armstrong's Axioms are derivable in $\D\oplus\FOTd$, even though our (partial) completeness theorem \Cref{completeness} does not apply to the entailments of Armstrong's Axioms.
%
%
%\begin{exmp}
%Armstrong's Axioms (\Cref{ex_Armstrong}) are derivable in $\D\oplus\FOTd$.
%\end{exmp}
%\begin{proof}
%Easy.
%%Items (i)-(iv) of \Cref{ex_Armstrong}  follow easily  from $\mathsf{DepWI}$ and $\mathsf{DepWE}$. We derive item (v) as follows:
%%\begin{align*}
%%(1)~~~&\dep(\mathsf{x},\mathsf{y}),\dep(\mathsf{y},\mathsf{z}),\dep(\mathsf{x})\vdash\dep(\mathsf{y})\tag{\,$\mathsf{DepWE}$\,}\\
%%(2)~~~&\dep(\mathsf{x},\mathsf{y}),\dep(\mathsf{y},\mathsf{z}),\dep(\mathsf{y})\vdash\dep(\mathsf{z})\tag{\,$\mathsf{DepWE}$\,}\\
%%(3)~~~&\dep(\mathsf{x},\mathsf{y}),\dep(\mathsf{y},\mathsf{z}),\dep(\mathsf{x})\vdash\dep(\mathsf{z})\tag{\,(1),(2)\,}\\
%%(4)~~~&\dep(\mathsf{x},\mathsf{y}),\dep(\mathsf{y},\mathsf{z})\vdash\dep(\mathsf{x,z})\tag{\,(3),$\mathsf{DepWI}$\,}
%%\end{align*}
%
%\end{proof}
%
%
For a second example, consider the $\D\oplus\FOTd$-sentence
\[\phi_{\infty}:=\existso z\forall x\exists y(\dep(y,x)\wedge y\neq z)\footnotemark\]
that satisfies $M\models\phi_{\infty}$ iff $|M|$ is infinite.
Intuitively, the sentence expresses that there is a function $f(x)=y$ (guaranteed by the team semantics of $\forall x\exists y$) on $M$ that is injective (since $\dep(y,x)$) but not surjective (since $y\neq z$). Consider also the \FOTd-sentence \footnotetext{This sentence is a slight modification of an equivalent one in \D defined in \cite[Section 4.2]{Van07dl}.}
\[\phi_{\geq n}:=\existso v_1\dots \existso v_n\bigwedge_{i<j\leq n}v_i\neq v_j.\]
Clearly, $M\models\phi_{\geq n}$ iff $|M|\geq n$.
\begin{exmp}
$\phi_{\infty}\vdash\phi_{\geq n}$ for all $n\in\mathbb{N}$.
\end{exmp}
\begin{proof}
We only illustrate the proof for  $n=3$. By Proposition \ref{FOT_der_rules}(\ref{FOTd_der_rules_forallo_existso_con}) it suffices to derive that $\phi_{\infty}\vdash c\neq d\wedge d\neq e\wedge c\neq e$ for constant symbols $c,d,e$, which by \existsoe is reduced to deriving that
\begin{equation*}%\label{infty_der_eq1}
\forall x\exists y(\dep(y,x)\wedge y\neq c)\vdash c\neq d\wedge d\neq e\wedge c\neq e,
\end{equation*}
%\todof{fixed the proof, was not correct}
which is further equivalent to
\begin{equation}\label{infty_der_eq1}
\forall x\exists y\exists z(\dep(y,z)\wedge y\neq c\wedge x=z)\vdash c\neq d\wedge d\neq e\wedge c\neq e,
\end{equation}
Now, by $\mathsf{DepE}$, we have that
%\begin{align*}
\[\begin{split}
&\forall x\exists y\exists z(\dep(y,z)\wedge y\neq c\wedge x=z)\\
\vdash\,&\forall x_0\exists y_0\exists z_0\big(y_0\neq c\wedge x_0=z_0\wedge \forall x_1\exists y_1\exists z_1(y_1\neq c\wedge x_1=z_1
\wedge (y_0=y_1\to z_0=z_1))\big)\\
\vdash\,&\forall x_0\exists y_0\big(y_0\neq c\wedge \forall x_1\exists y_1(y_1\neq c\wedge (y_0=y_1\to x_0=x_1))\big).
\end{split}\]
%\end{align*}
Then, to prove (\ref{infty_der_eq1}) by $\forall \textsf{E}$, $\exists \textsf{E}$ we need to derive (by instantiating $x_0=c$, $y_0=d=x_1$, $y_1=e$) that
\(d\neq c\wedge e\neq c\wedge (d=e\to c=d)\vdash c\neq d\wedge d\neq e\wedge c\neq e.\)
But this is clear.
\end{proof}

Finally, we derive that the constancy atom $\dep(x)$ is equivalent to $\existso y(y=x)$, even though our completeness theorem (\Cref{completeness}) actually does not apply to the equivalence, as $\dep(x)$ is not in $\FO(\forallo)$ or a pseudo-flat sentence. Note also that the consequences of many derivable clauses in \Cref{sec:fotd} (especially in Proposition \ref{FOTd_der_rules}) are not in $\FO(\forallo)$ or pseudo-flat sentences either. These illustrate that our deduction system can actually derive more (sound) consequences than what is guaranteed by \Cref{completeness}.
%confined by the limitation of \Cref{completeness} either.

\begin{exmp}
$\dep(x)\dashv\vdash\existso y(y=x)$.
\end{exmp}
\begin{proof}
By \existsoi, we have $x=x,\dep(x)\vdash\existso y(y=x)$. Since $\vdash x=x$ by $=$\textsf{I}, we conclude that $\dep(x)\vdash\existso y(y=x)$.
Conversely, we have $y=x,\dep(y)\vdash\dep(x)$ by $=$\textsf{Sub}. Thus, by \existsoe we conclude that $\existso y(y=x)\vdash \dep(x)$.
\end{proof}

\section{Conclusion and open problems}

In this paper, we have defined a team-based logic \FOT that characterizes exactly the elementary (or first-order) team properties. %(expressed via an additional relation symbol $R$ encoding the team). 
This expressivity result implies that the essentially first-order logic \FOT can be axiomatized completely. We then introduced a sound and complete system of natural deduction for \FOT. Syntactically \FOT is first-order logic with a  set of weaker connectives and quantifiers extended with inclusion atoms.  We have illustrated that a number of interesting first-order properties can be expressed naturally in \FOT, including different atoms of dependencies. We have given as examples formal derivations of Armstrong's Axioms, as well as certain interaction between different dependency notions in the context of Arrow's Theorem. Logics based on team semantics are usually so strong in expressive power that complete axiomatizations are not possible to obtain (e.g., dependence logic is expressively equivalent to existential second-order logic and thus not effectively axiomatizable, etc.). Finding sufficiently expressive language with good computational properties is one of the main challenges for team-based logics. It is our hope that the logic \FOT defined in this paper will lead to fruitful further developments in this direction. In a different setting that uses partly the idea of team semantics, a recent work \cite{BaltagBenthem2020} by Baltag and van Benthem also proposed a weak logic of dependence with a complete axiomatization.

We have also defined a logic that characterizes exactly the downward closed elementary (or first-order) team properties, the fragment of \FOT that we call \FOTd. We studied the axiomatization problem of $\D\oplus\FOTd$, dependence logic \D enriched with the weak connectives and quantifiers in \FOTd. On the basis of the known partial axiomatization of dependence logic, we have defined a system of natural deduction for $\D\oplus\FOTd$ that is  complete for a certain class of pseudo-flat consequences. First-order formulas belong to this class, and thus our result also implies that the system of dependence logic in \cite{Axiom_fo_d_KV} is complete for first-order consequences (over formulas), %for first-order formulas over (possibly open) formulas, 
resolving an open problem in \cite{Axiom_fo_d_KV} in the affirmative. Generalizing the axiomatization of $\D\oplus\FOTd$ we obtained in this paper to a larger fragment is left as future work. Also, finding similar partial axiomatizations of independence logic enriched with inclusion atoms and logical constants in \FOT would be an interesting addition to this line of research.

In the proofs of the (strong) completeness theorem of the above systems, we made use of the compactness of the logics with respect to formulas (Theorem \ref{compact}). The proof we gave for compactness  has an extra assumption that the set $\Gamma$ of formulas has countably many free variables, and the argument does not directly work for $\Gamma$ having uncountably many free variables. How to prove compactness for formulas in general is unclear.
%Whether compactness for formulas  holds in general  and how to prove it is unclear.

Having identified the logics \FOT and \FOTd that characterize exactly all first-order and all downward closed elementary team properties, respectively, it is natural to ask what is a logic that characterize exactly all union closed elementary team properties. There does not seem to be an obvious candidate for such a logic, since the usual disjunction $\vee$ and quantifiers $\forall,\exists$ appear to be too strong (in expressive power) to be allowed in such a logic, whereas the weak disjunction $\vvee$ and weak existential quantifier $\existso$ do not preserve union closure. It was proved in \cite{inclusion_logic_GH} that every first-order union closed team property can be defined by a so-called {\em myopic} \FO-sentence, which is further equivalent to a formula in inclusion logic. By using this result, it may be possible to identify a fragment of \FOT that characterizes union close elementary team properties. We leave this for future work.

%\todof{cite Pietro myopic fragment, et.c} \todoj{Pietro and Lauri showed that any FO-definable union closed property is definable by a myopic sentence of FO (Thm 26 in their CLS paper). From that we probably get a similar fragment of FOT that captures union closed properties.}%We have axiomatized partially $\D\oplus\FOTd$ using the methods of \cite{Axiom_fo_d_KV} and \cite{Yang_neg}. 

%\todof{do we mention compactness for uncountable case as an open problem?}

\section{Appendix}

%\begin{defn}
{Let $\psi=\forall \mathsf{x}\exists \mathsf{y}\big(\bigwedge_{i\in I}\dep(\sigma^i_{\mathsf{xy}},y_i)\wedge\alpha(\mathsf{x},\mathsf{y})\big)$ be a $\D$-sentence %(without free variables) 
in the normal form (\ref{nf_d}) given in \cite{Van07dl}, where each $\sigma_{\mathsf{xy}}^i$ is a subsequence of $\mathsf{xy}$,  each $y_i$ is from $\mathsf{y}$, and $\alpha$ is  first-order. The game expression $\Psi$ of $\psi$ is the following infinitary sentence:
\begin{align*}
\Psi=&\forall \mathsf{x}_0\exists \mathsf{y}_0\Big(\alpha(\mathsf{x}_0,\mathsf{y}_0)%\wedge\gamma_0\\
\wedge\forall \mathsf{x}_1\exists \mathsf{y}_1\big(\alpha(\mathsf{x}_1,\mathsf{y}_1)\wedge \gamma_1\\
%&~~\forall \mathsf{x_2}\exists \mathsf{y_2}\big(\alpha(\mathsf{x_2},\mathsf{y_2})\wedge \gamma_2\wedge\dots\\
&\quad\quad\quad\quad\quad\quad\dots ~\dots
~~~~\wedge\forall \mathsf{x}_n\exists \mathsf{y}_n\big(\alpha(\mathsf{x}_n,\mathsf{y}_n)\wedge \gamma_n\wedge~\dots
~\dots\quad\big)\dots\big)\big)\Big),
\end{align*}
%\begin{align*}
%\Phi^\ast=&\forallo \mathsf{v}\forall \mathsf{x}_0\exists \mathsf{y}_0\Big(\alpha(\mathsf{x}_0,\mathsf{y}_0)\\%\wedge\gamma_0\\
%&~\wedge\forall \mathsf{x}_1\exists \mathsf{y}_1\big(\alpha(\mathsf{x}_1,\mathsf{y}_1)\wedge \gamma_1\\
%%&~~\forall \mathsf{x_2}\exists \mathsf{y_2}\big(\alpha(\mathsf{x_2},\mathsf{y_2})\wedge \gamma_2\wedge\dots\\
%&\quad\quad\dots ~\dots\\
%&~~~~\wedge\forall \mathsf{x}_n\exists \mathsf{y}_n\big(\alpha(\mathsf{x}_n,\mathsf{y}_n)\wedge \gamma_n\wedge\dots\\
%&\quad\quad\quad\quad\quad\quad\quad\quad\quad\quad\quad\quad\quad\quad\dots~\dots\quad\quad\quad\big)\dots\big)\big)\Big),
%\end{align*}
where each $\mathsf{x}_\xi=\langle x_{\xi 1},\dots,x_{\xi m}\rangle$, $\mathsf{y}_\xi=\langle y_{\xi 1},\dots,y_{\xi k}\rangle$, %$\gamma_0=\top$, 
and 
\[\gamma_n=\bigwedge\{\sigma^i_{\mathsf{x}_\xi\mathsf{y}_\xi}=\sigma^i_{\mathsf{x}_n\mathsf{y}_n}\to y_{\xi i}=y_{ni}\mid i\in I, ~0\leq \xi\leq n\}.\]}

{For every $n\in \mathbb{N}$, the $n$-approximation $\Psi_n$ of $\Psi$ is the finite first-order sentence:
\[
\Psi_n=\forall \mathsf{x}_0\exists \mathsf{y}_0\big(\alpha(\mathsf{x}_0,\mathsf{y}_0)%\wedge\gamma_0\\
\wedge\forall \mathsf{x}_1\exists \mathsf{y}_1\big(\alpha(\mathsf{x}_1,\mathsf{y}_1)\wedge \gamma_1
%&~~\forall \mathsf{x_2}\exists \mathsf{y_2}\big(\alpha(\mathsf{x_2},\mathsf{y_2})\wedge \gamma_2\wedge\dots\\
\dots %~\dots
\wedge\forall \mathsf{x}_n\exists \mathsf{y}_n(\alpha(\mathsf{x}_n,\mathsf{y}_n)\wedge \gamma_n)\dots\big)\big).
\]}
%\end{defn}

\subsection*{Acknowledgements} 
The authors would like to thank Aleksi Anttila, Noora Ethol\'{e}n, Miika Hannula, Tapani Hyttinen, and Jouko V\"{a}\"{a}n\"{a}nen for interesting discussions on topics related to the paper. %\todof{I added some students from the dependence logic course. Noora and Aleksi have pointed out a mistake in the proof of Example 6.3.} %Jesse made the correct proof anyway. Not completely sure whether Jesse is to be thanked here though.} %The authors also thank two anonymous referees for helpful suggestions concerning the presentation of this paper.

Both authors were supported by grant 308712 of the Academy of Finland, and the second author was supported also by grant 330525 of  the Academy of Finland and Research Funds of the University of Helsinki.

\bibliographystyle{asl}
\bibliography{fan}

\end{document}